\documentclass[11pt]{amsart}

\usepackage{amssymb, latexsym, amsfonts, pigpen, enumitem, mathrsfs, amsthm, bbm, stackrel, etoolbox, color, tikz, tikz-cd, amsmath, nicefrac, eucal}
\usepackage[matrix,arrow,curve]{xy}
\usepackage[colorlinks=true,pagebackref=true,citecolor=cyan,linkcolor=magenta]{hyperref}
\hypersetup{linktocpage}
\usepackage[capitalise]{cleveref}

\usepackage{verbatim}
\usepackage[letterpaper, top=0.9in, bottom=0.9in, left=0.9in, right=0.9in]{geometry}

\usetikzlibrary{matrix,arrows}
\usepackage{stmaryrd}
\usepackage{listings}

\numberwithin{equation}{section}

\newtheorem{theorem}[equation]{Theorem}
\newtheorem{lemma}[equation]{Lemma}
\newtheorem{proposition}[equation]{Proposition}
\newtheorem{corollary}[equation]{Corollary}
\newtheorem{theoremintr}{Theorem}

\newtheorem{propintr}{Proposition}

\theoremstyle{definition}
\newtheorem{definition}[equation]{Definition}
\newtheorem{example}[equation]{Example}
\newtheorem{notation}[equation]{Notation}
\newtheorem{remark}[equation]{Remark}
\newtheorem{construction}[equation]{Construction}

\newcommand{\ev}{{\mathrm{ev}}}
\newcommand{\Sp}{\mathbf{Sp}}
\newcommand{\realiz}{\mathrm{Re}}

\newcommand{\ret}{\mathrm{r\acute{e}t}}
\newcommand{\MUtf}{\mathrm{M}\tilde{\mathrm{U}}}
\newcommand{\slice}{\mathrm{s}}

\newcommand{\Jhom}{\mathrm{J}}
\newcommand{\mot}{\mathrm{mot}}
\newcommand{\Spc}{\mathbf{Spc}}
\newcommand{\SmS}{\mathbf{Sm}_S}
\newcommand{\Sm}{\mathbf{Sm}}
\newcommand{\SHS}{\mathbf{SH}(S)}
\newcommand{\sph}{\mathbbm{1}}
\newcommand{\Th}{{\mathrm{Th}}}

\newcommand{\SL}{\mathrm{SL}}
\newcommand{\SLc}{\mathrm{SL}^c}
\newcommand{\ML}{\mathrm{ML}}
\newcommand{\GL}{\mathrm{GL}}
\newcommand{\Krk}{\mathrm{K_{rk=0}}}

\newcommand{\KSLrk}{\mathrm{K^{SL}_{rk=0}}}
\newcommand{\PShv}{\mathrm{PShv}}

\newcommand{\C}{\mathbb{C}}
\newcommand{\R}{\mathbb{R}}
\newcommand{\BU}{\mathrm{BU}}
\newcommand{\Eone}{\mathbb{E}_1}
\newcommand{\Einf}{\mathbb{E}_\infty}

\newcommand{\Gr}{{\mathrm{Gr}}}
\newcommand{\SGr}{{\mathrm{SGr}}}
\newcommand{\MGr}{{\mathrm{MGr}}}
\newcommand{\Proj}{\mathbb{P}}
\newcommand{\Z}{{\mathbb{Z}}}
\newcommand{\MGL}{\mathrm{MGL}}
\newcommand{\MSL}{\mathrm{MSL}}
\newcommand{\MML}{\mathrm{MML}}
\newcommand{\MSp}{\mathrm{MSp}}
\newcommand{\MSU}{\mathrm{MSU}}
\newcommand{\MU}{\mathrm{MU}}

\newcommand{\GW}{\mathrm{GW}}
\newcommand{\Vect}{{\rm Vect}}
\newcommand{\sVect}{{\rm sVect}}
\newcommand{\W}{\mathrm{W}}
\newcommand{\K}{\mathrm{K}}
\newcommand{\A}{\mathbb{A}}

\newcommand{\id}{\operatorname{id}}
\newcommand{\colim}{\operatorname*{colim}}
\newcommand{\limit}{\operatorname*{lim}}
\newcommand{\cofib}{\operatorname*{cofib}}
\newcommand{\struct}{\mathcal{O}}
\newcommand{\Gm}{{\mathbb{G}_m}}
\newcommand{\SH}{\mathbf{SH}}
\newcommand{\Spec}{\operatorname{Spec}}
\newcommand{\Sper}{\operatorname{Sper}}
\newcommand{\Q}{\mathbb{Q}}
\newcommand{\Pic}{\operatorname{Pic}}
\newcommand{\et}{\mathrm{\acute{e}t}}
\newcommand{\EE}{\mathrm{E}}
\newcommand{\etatop}{\eta_{\mathrm{top}}}

\newcommand{\bigslant}[2]{{\left.\raisebox{.2em}{$#1$}\middle/\raisebox{-.2em}{$#2$}\right.}}

\makeatletter
\DeclareMathOperator{\thom}{th}

\title{On the metalinear algebraic cobordism spectrum}
\author{Ahina Nandy}
\address{Radboud Universiteit, Mathematical Institute, Postbus 9010, 6500 GL Nijmegen, Netherlands}
\email{\href{mailto:ahina.nandy@ru.nl}{ahina.nandy@ru.nl}}
\author{Egor Zolotarev}
\address{LMU M\"unchen, Mathematisches Institut, Theresienstr. 39, 80333 M\"unchen, Germany}
\email{\href{mailto:zolotarev@math.lmu.de}{zolotarev@math.lmu.de}, \href{mailto:zolotarev-egv@yandex.ru}{zolotarev-egv@yandex.ru}}
\urladdr{\href{https://sites.google.com/view/egor-zolotarev}{https://sites.google.com/view/egor-zolotarev}}
\keywords{Motivic homotopy theory, metalinear algebraic cobordism, special linear algebraic cobordism}
\subjclass[2020]{Primary 14F42; Secondary 55P42, 57R90}

\begin{document}
\begin{abstract}
In this paper, we study the metalinear algebraic cobordism spectrum $\mathrm{MML}$ (also sometimes denoted $\mathrm{MSL}^c$), which is built from the structure groups of oriented vector bundles. We establish an interpolation between $\mathrm{MSL}$ and $\mathrm{MML}$ and deduce that the canonical morphism $\mathrm{MSL}\to \mathrm{MML}$ admits a retraction. We parametrize all such retractions in the category of $\mathrm{MSL}$-modules and, after fixing one of them, obtain an equivalence $\mathrm{MML}\cong\mathrm{MSL}\oplus \Sigma^{2,1}\mathrm{MGL}$. As an application of these results, we determine various invariants of the metalinear algebraic cobordism spectrum over a field (after inverting the exponential characteristic). More precisely, we determine the first few Milnor--Witt stems of $\mathrm{MML}$ in terms of the very effective algebraic and hermitian K-theory spectra, and the geometric diagonal of $\mathrm{MML}$ in terms of Stong's complex-spin cobordism ring. We also compute the slices and use them to describe the category of $2$-inverted modules over the $\mathbb{E}_\infty$-ring spectrum $\mathrm{MML}$.
\end{abstract}
\maketitle
\tableofcontents
\section{Introduction}
The algebraic cobordism spectrum $\MGL$ is an object of the Morel--Voevodsky $\Proj^1$-stable $\A^1$-homotopy category that was introduced by Vladimir Voevodsky \cite{Voev98} for his original approach to the proof of Milnor's conjecture on Galois cohomology \cite{Voev_MC} (note that $\MGL$ was removed in the published proof \cite{Voevodsky_MC}). The construction of this spectrum is similar to the one of the unoriented cobordism and complex cobordism spectra in topology. 
Since then, this spectrum has found a special role in motivic homotopy theory. One reason for this is its universal property among oriented (i.e., those that admit Thom classes for vector bundles) motivic homotopy commutative ring spectra \cite{PPR08}. The universal property is reflected in the structure of the formal group law associated with $\MGL$, which is known to be the universal one at least over fields (after inverting the exponential characteristic) by the work of Hopkins--Morel--Hoyois \cite{Hoy15}. This result might be viewed as a motivic version of the celebrated theorem of Daniel Quillen \cite{Quillen}. Classical examples of motivic spectra, such as algebraic K‑theory and motivic cohomology, are endowed with canonical orientations and fit naturally into the framework provided by $\MGL$.

In recent years, people have intensively studied ``quadratic'' analogues of the usual motivic cohomology theories, which are not oriented in the usual sense. Examples of such theories include hermitian K-theory, Balmer--Witt theory (also known as L-theory), Milnor--Witt motivic cohomology (and Chow--Witt groups as its geometric diagonal), and cohomology represented by a homotopy module. In this context, it is natural to ask what the quadratic analogue of algebraic cobordism is -- a theory that would play the role of $\MGL$ for these spectra. There are at least two reasonable answers to this question that are closely related.
\subsection{Special linear vs. metalinear algebraic cobordism}
One candidate is called \textit{special linear algebraic cobordism} and denoted by $\MSL$. It was introduced by Ivan Panin and Charles Walter \cite{PW-Thom} via a construction similar to that of $\MGL$, but using the special linear groups instead of the general linear ones. However, while $\MSL$ admits Thom classes for vector bundles with trivialized determinant (which we call \textit{strictly oriented vector bundles}, following Morel--Sawant \cite[Definition 2.14]{MS_Cellular}), all the above examples of non-orientable motivic spectra (apart from $\MSL$ itself) admit a more general theory of Thom classes. More precisely, they admit multiplicative Thom classes for vector bundles with a chosen square root of the determinant (which we call \textit{oriented vector bundles}, following Fabien Morel \cite[Definition 4.3]{Morel_book}). One can define the structure groups of such vector bundles (we call them \textit{metalinear groups}, following Asok--Hoyois--Wendt \cite[Remark 3.3.2]{AHW_BG}) and construct the corresponding \textit{metalinear algebraic cobordism spectrum} $\MML$ (see \cite[Remark 7.11]{HJNY-HermKtheory}, \cite[\S 6.3]{BrazeltonWendtMSLc}, \cite[\S 9]{Hoyois_Land}). This is the main object of study in this paper.

By construction, there is a forgetful morphism of motivic $\Einf$-ring spectra $\MSL\to \MML$. Fabien Morel, in his talk at the Vladimir Voevodsky Memorial Conference, asked whether this morphism is an equivalence\footnote{Strictly speaking, he mentioned this question for the corresponding geometric cobordism theories, which are still conjectural. It is natural to expect that these theories, if they exist, would satisfy Pontryagin--Thom theorems (at least over fields of characteristic zero), similar to the one proved by Marc Levine for Levine--Morel algebraic cobordism $\Omega^*$ and $\MGL$ \cite{Levine}. If this is the case, Morel's question is weaker than the corresponding question for the associated spectra. Nevertheless, Theorem~\ref{theorem_A} implies that it would have a negative answer as well.}. The following theorem shows that this is not the case:
\begin{theoremintr}\label{theorem_A}
    Let $S$ be a qcqs scheme. The forgetful morphism $\MSL\to \MML$ is included into the following (split) cofiber sequence
    $$ \MSL\to \MML\xrightarrow{\partial} \Sigma^{2,1} \MGL\xrightarrow{0} \Sigma^{1,0}\MSL $$
    of $\MSL$-modules over $S$,
    where $\partial$ is the unique $\MSL$-linear operation that corresponds to the characteristic class $[-1]_F(c_1(\sqrt{\det\gamma_\infty^{\ML}}))$ under the Thom isomorphism, see Lemmas \ref{lem:MGL(MML)} and \ref{lem:MGL(MGr)}. The set of homotopy classes of splittings of this cofiber sequence (in the stable category of $\MSL$-modules) is in bijection with the set
    $$ \{ t\in \MSL^{2,1}(\Th_{\Proj^\infty}(\struct(-2)))\,|\,t\vert_{\Proj^0}=\Sigma^{2,1} 1 \}. $$
\end{theoremintr}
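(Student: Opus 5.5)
The plan is to realize $\MML$ as an $\MSL$-module Thom spectrum over $\Proj^\infty$, and then read off both the cofiber sequence and its splittings from the cell filtration $\Proj^0\subset\Proj^\infty$.

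\emph{Step 1: the interpolation.} Let $\ML_n=\{(A,t)\,|\,\det A=t^2\}$. The homomorphism $(A,t)\mapsto t$ gives an extension $\SL_n\to\ML_n\to\Gm$, hence a fiber sequence $B\SL_n\to B\ML_n\to B\Gm\simeq\Proj^\infty$. Over $B\ML_n$ the tautological bundle $\gamma_n^{\ML}$ comes with the line bundle $\lambda=\sqrt{\det\gamma_n^{\ML}}$, which is pulled back from $\struct(-1)$ on $\Proj^\infty$. I would stabilize the standard comparison between $\MGL$ and $\MSL\wedge\Th_{\Proj^\infty}(\struct(-1))$ by replacing $\det$ with $\lambda^{\otimes 2}$. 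Concretely, $\gamma^{\ML}_n\oplus\lambda^{\otimes -2}$ has trivialized determinant, and $\Th(\lambda^{\otimes-2})$ sits over $\Proj^\infty$ as $\Th(\struct(-2))$ up to sign conventions. This should give an equivalence of $\MSL$-modules
$$\MML\simeq \MSL\wedge\Sigma^{-2,-1}\Th_{\Proj^\infty}(\struct(-2)),$$
under which the unit map $\MSL\to\MML$ is induced by the inclusion of the bottom cell $\Th_{\Proj^0}(\struct(-2))=T$. I would verify this levelwise on Grassmannian models and then pass to colimits; this only needs the Thom-space formalism over a qcqs base.

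\emph{Step 2: the cofiber.} The cofiber of $\Th_{\Proj^0}(\struct(-2))\to\Th_{\Proj^\infty}(\struct(-2))$ is the Thom space of $\struct(-2)$ restricted to $\Proj^\infty\setminus\Proj^0$. That complement is an $\A^1$-bundle over a $\Proj^\infty$, along which $\struct(-2)$ restricts to $\struct(-2)$ and the normal direction contributes an $\struct(1)$-type summand. So the cofiber is $\Th_{\Proj^\infty}(\struct(-2)\oplus\struct(1))$, a rank-$2$ bundle with determinant $\struct(-1)$. After smashing with $\MSL$ and desuspending, I would identify this with $\Sigma^{2,1}\MGL$ using the equivalence $\MGL\simeq\MSL\wedge\Sigma^{-2,-1}\Th(\struct(-1))$ together with the $\MSL$-Thom isomorphism for the rank-$2$ bundle after twisting by a trivial-determinant correction. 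This yields the cofiber sequence $\MSL\to\MML\xrightarrow{\partial}\Sigma^{2,1}\MGL$. To identify $\partial$ with the stated characteristic class, I would compute it on Thom classes via Lemmas \ref{lem:MGL(MML)} and \ref{lem:MGL(MGr)}. Since $\MSL$-linear maps $\MML\to\Sigma^{2,1}\MGL$ are determined by a class in $\MGL^{2,1}(B\ML)$ under the Thom isomorphism, it suffices to check that the collapse map corresponds to the Euler class of $\lambda\otimes\lambda$ relative to $\lambda^{-1}$. In formal group law terms this class is $[-1]_F(c_1(\lambda))$.

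\emph{Step 3: splittings.} By Step 1, $\MSL$-module maps $\MML\to\MSL$ are the same as maps of spectra $\Sigma^{-2,-1}\Th_{\Proj^\infty}(\struct(-2))\to\MSL$, that is, elements of $\MSL^{2,1}(\Th_{\Proj^\infty}(\struct(-2)))$. A map is a retraction of the unit exactly when its restriction to the bottom cell $\Proj^0$ is $\Sigma^{2,1}1$. Homotopy classes of splittings of a cofiber sequence in a stable category are in bijection with retractions of the first map. This gives the parametrization, and once the set is nonempty, the connecting map $\Sigma^{2,1}\MGL\to\Sigma^{1,0}\MSL$ vanishes.

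\emph{Main obstacle: existence of a retraction.} The hardest step is showing that such a $t$ exists, i.e.\ that $\MSL$ admits a Thom-type class for $\struct(-2)$ on all of $\Proj^\infty$. I would first try Ananyevskiy's result that SL-oriented theories have canonical Thom isomorphisms $E(\Th(V\otimes L^{\otimes 2}))\cong E(\Th(V))$ for line bundles $L$. Applied with $V=\struct$ and $L=\struct(-1)$, this produces $t$ as the image of the unit, and one checks that it restricts correctly on $\Proj^0$. Failing that, I would use a Milnor $\lim^1$ argument along the skeleta $\Proj^n$ and show that the obstructions vanish because the relevant groups are generated by even cells.
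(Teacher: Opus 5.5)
Your proposal is correct in outline, provided you commit to option (a). Much of it matches the paper: Step 1 is Theorem~\ref{thm:mslc_free}, and Step 3 is the paper's parametrization of splittings. Option (a) rests on the same input the paper uses, namely Ananyevskiy's non-linear equivalence $\Th_X(\mathcal{L}\oplus\mathcal{L})\simeq\Th_X(\mathcal{L}\oplus\mathcal{L}^\vee)$.

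The genuine difference is Step 2. The paper first cancels the square, $\MSL\wedge\Th_{\Proj^\infty}(\struct(-2))\simeq\MSL\wedge\Sigma^{\infty+1}_{\Proj^1}\Proj^\infty_+$ (Lemma~\ref{lemma:modulo_squares}). The cofiber and a splitting then come out of one diagram, via $\Sigma^\infty_{\Proj^1}\Proj^\infty_+\simeq\sph\oplus\Sigma^\infty_{\Proj^1}(\Proj^\infty,p)$ and $\MSL\wedge\Sigma^\infty_{\Proj^1}(\Proj^\infty,p)\simeq\Sigma^{2,1}\MGL$. The price is that $\partial$ must be traced through a non-linear map, which is why the divided class $c_1(\mathcal{L}^\vee)/c_1(\mathcal{L})$ of Lemma~\ref{lem:module_squares_onMGL} appears.

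You instead obtain the cofiber from homotopy purity as $\Th_{\Proj^\infty}(\struct(-2)\oplus\struct(1))$, reduce to the determinant $\struct(-1)$ by Lemma~\ref{lem:red_to_det}, and use $\MGL\simeq\MSL\wedge\Sigma^{-2,-1}\Th_{\Proj^\infty}(\struct(-1))$. This identifies the cofiber and $\partial$ with no non-linear input. By Lemma~\ref{lem:red_to_det_onMGL} and the fundamental class $[H]=c_1(\struct(1))$ of the hyperplane at infinity, the collapse map pulls $\thom(\struct(-2)|_H\oplus\struct(1))$ back to $c_1(\struct(1))\cup\thom(\struct(-2))$. Then $c_1(\struct(1))=[-1]_F(c_1(\struct(-1)))$ corresponds to $[-1]_F(c_1(\sqrt{\det}))$ under Lemma~\ref{lem:MGL(MML)}. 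This computation should replace your phrase about ``the Euler class of $\lambda\otimes\lambda$ relative to $\lambda^{-1}$''.

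So in your route Ananyevskiy's lemma is needed only for the vanishing of the connecting map. In exchange, the paper's route yields a specific $t$ and the decomposition $\MML\simeq\MSL\wedge\Sigma^\infty_{\Proj^1}\Proj^\infty_+$ directly.

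Several points need correcting.

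\emph{The cofiber justification.} The cofiber of $\Th_{\Proj^0}(\struct)\to\Th_{\Proj^\infty}(\struct(-2))$ is not a Thom space over $\Proj^\infty\setminus\Proj^0$. The right argument is purity for the hyperplane $H=\Proj^{n-1}\subset\Proj^n$, whose complement $\A^n$ is $\A^1$-equivalent to $\Proj^0$; this gives $\Th_H(\struct(-2)|_H\oplus\struct(1))$. Your formula is right; the stated reason is not.

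\emph{Step 1 is not levelwise.} Since $\ML_n\cong\SL_n\rtimes\Gm$ is only a semidirect product, Step 1 cannot be checked on finite Grassmannian models. The product decomposition exists only after stabilization, using the $\Einf$-group structure and the section $\mathcal{L}\mapsto(\mathcal{L}^{\otimes 2}\oplus\struct,\mathcal{L},\id)$ of $\sqrt{\det}$. This is also what fixes your ``sign conventions'': the honest Thom space is that of $\lambda^{\otimes 2}=\struct(-2)$, not of $\lambda^{\otimes -2}$.

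\emph{Fallback (b) fails.} The obstruction groups do not vanish; the first one is $\pi_{1,1}(\MSL)$, which contains $\eta\neq 0$. Moreover, $\Th_{\Proj^\infty}(\struct(-1))$ has exactly the same cells, yet there the obstructions cannot all vanish. An $\MSL$-Thom class for $\struct(-1)$ restricting to $1$ would make $\MSL$ an $\MSL$-module retract of $\MGL\simeq\MSL\wedge\Sigma^{-2,-1}\Th_{\Proj^\infty}(\struct(-1))$, forcing $\eta=0$ in $\pi_{1,1}(\MSL)$. Existence of $t$ for $\struct(-2)$ is a genuinely quadratic phenomenon, which is exactly what Ananyevskiy's lemma supplies.
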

This set can be interpreted as the set of possible choices of normalized Thom classes for the universal oriented vector bundle of rank $1$ in $\MSL$-cohomology. Of course, if we fix such a Thom class $t$, then we obtain an equivalence of $\MSL$-modules:
\[ \begin{pmatrix}
    t \\ \partial
\end{pmatrix}\colon \MML\xrightarrow{\simeq} \MSL\oplus \Sigma^{2,1}\MGL, \]
which allows us to reduce many questions about $\MML$ to those about $\MSL$ and $\Sigma^{2,1}\MGL$.
The proof of the above theorem proceeds in two steps. The first step is the interpolation between $\MSL$ and $\MML$, which states that $\MML$ is the free $\MSL$-module on $\Sigma^{-2,-1}\Th_{\Proj^\infty}(\struct(-2))$, see Theorem~\ref{thm:mslc_free}. The second one uses the cancellation of squares of line bundles in the stable $\infty$-category of $\MSL$-modules, which is based on a non-linear equivalence of Thom spaces due to Alexey Ananyevskiy \cite[Lemma~4.1]{AnSL} (see also the earlier observation of Oliver R\"ondigs \cite[Lemma 4.2]{Oliver_theta}). In fact, this cancellation gives a specific choice of $t$, but we use it only for the computation of the cofiber because it is unclear whether this particular choice is better than the others. 
Along the way, we answer a question of Alexey Ananyevskiy about the relation between $\SL$- and $\ML$-oriented spectra asked in \cite[Remark 4.4]{AnSL}, see Remark \ref{ques:twisted_thom_mult}. We also show that there is no element $t$ giving rise to a multiplicative retraction $\MML\to\MSL$, even in a weak sense, see Remark~\ref{rem:mult_splitting}.
\subsection{Applications}
We then determine several invariants of $\MML$ when $S=\Spec(F)$ for a field $F$. To the best of our knowledge, all these results are new.

First, we compute the zeroth line of the homotopy groups of $\MML$. A similar result for $\MSL$ follows from an easy connectivity argument that is based on $\SL_1=1$, see \cite[Remark~16.35]{BH-Norms}.
\setcounter{propintr}{1}
\begin{propintr}\label{proposition_B}
    Let $F$ be a field. The unit map $\sph\to \MML$ induces an isomorphism of graded rings
    \[ \bigoplus_{n\in\Z}\pi_{n,n}(\MML)\cong \K^{\mathrm{MW}}_{-*}(F), \]
    where the right hand side is the Milnor--Witt K-theory of $F$, see \cite[Definition 1.21]{Morel_book}.
\end{propintr}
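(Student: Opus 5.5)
We deduce Proposition~\ref{proposition_B} from Theorem~\ref{theorem_A} together with the known description of the zeroth line of $\MSL$. Over a field (after inverting the exponential characteristic if necessary), the unit map $\sph\to\MSL$ induces an isomorphism $\pi_{n,n}(\sph)\cong\pi_{n,n}(\MSL)$ for all $n$. This is the connectivity argument of \cite[Remark~16.35]{BH-Norms}, based on $\SL_1=1$: the cofiber of $\sph\to\MSL$ is built from Thom spectra of $\SL_n$-bundles with $n\geq 2$. These Thom spectra lie in $\SH(F)^{\mathrm{veff}}$ suspended by $\Sigma^{2,1}$, so their cofiber is in $\Sigma^{2,1}\SH(F)^{\mathrm{veff}}$, and hence has vanishing $\pi_{n,n}$ and $\pi_{n+1,n}$ for all $n$. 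Morel's theorem identifies $\bigoplus_n \pi_{n,n}(\sph)\cong \K^{\mathrm{MW}}_{-*}(F)$.

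Fixing a splitting $t$ as in Theorem~\ref{theorem_A}, we get an equivalence of $\MSL$-modules $\MML\simeq \MSL\oplus\Sigma^{2,1}\MGL$, under which the forgetful map $\MSL\to\MML$ is the inclusion of the first summand. The unit of $\MML$ factors through $\MSL\to\MML$ since that map is a ring map. It therefore suffices to show $\pi_{n,n}(\Sigma^{2,1}\MGL)=\pi_{n-2,n-1}(\MGL)=0$ for all $n$. Since $\MGL$ is very effective, $\Sigma^{2,1}\MGL\in\Sigma^{2,1}\SH(F)^{\mathrm{veff}}$. Objects there are colimits of $\Sigma^{2,1}\Sigma^{\infty}_+X$ with $X$ smooth, and $\pi_{n,n}$ of such a spectrum is $[\Sigma^{n,n}\sph,\,\cdot\,]$. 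Morel's connectivity gives $\pi_{a,b}(\Sigma^\infty_+X)=0$ for $a<b$ over a field. Hence $\pi_{n-2,n-1}$ vanishes on the generators, and this vanishing persists under colimits because $\pi_{p,q}$ with $p-q=-1$ is controlled by the homotopy t-structure, where very effective objects are connective. Alternatively, one can use Hopkins--Morel--Hoyois: $\pi_{n-2,n-1}\MGL$ has weight exceeding the stem, $\MGL$ is effective and connective, and its negative-Chow-degree homotopy vanishes.

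We then check that the isomorphism is one of graded rings. The map $\bigoplus\pi_{n,n}\sph\to\bigoplus\pi_{n,n}\MML$ is induced by a ring map, so multiplicativity is automatic once bijectivity is known. Bijectivity follows from the two steps above, since the composite $\sph\to\MSL\to\MML$ is iso on $\pi_{n,n}$.

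The main obstacle is the connectivity bound $\pi_{n-2,n-1}(\MGL)=0$, together with the analogous statement for the cofiber of $\sph\to\MSL$ in the stems $\pi_{*,*}$ with stem equal to weight. I would first try to handle both uniformly using Bachmann's result that $\Sigma^{2,1}\SH(F)^{\mathrm{veff}}\subset \SH(F)^{\mathrm{eff}}(1)\cap\SH(F)_{\geq 1}$ in the homotopy t-structure. Then $\pi_{n,n}$ of such objects is $\underline{\pi}_0(-)_{-n}(F)$, which vanishes for $1$-connective objects.
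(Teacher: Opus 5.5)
Your route is the paper's. The split cofiber sequence of Theorem~\ref{theorem_A} reduces the claim to two inputs:
\begin{itemize}
\item the vanishing $\pi_{n,n}(\Sigma^{2,1}\MGL)=\pi_{n-2,n-1}(\MGL)=0$, which follows from connectivity of $\MGL$ in the homotopy $t$-structure;
\item the isomorphism $\pi_{n,n}(\sph)\cong\pi_{n,n}(\MSL)$ of \cite[Example 16.35]{BH-Norms}.
\end{itemize}
Your treatment of the $\MGL$ summand and of multiplicativity is fine.

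The problem is the justification you sketch for the $\MSL$ input, and with it the ``uniform'' plan in your last paragraph. An object of $\Sigma^{2,1}\SH(F)^{\mathrm{veff}}$ is only $1$-connective in the homotopy $t$-structure. It has vanishing $\pi_{n,n}$ but not vanishing $\pi_{n+1,n}$; for instance $\pi_{n+1,n}(\Sigma^{2,1}\sph)=\pi_{n-1,n-1}(\sph)\cong\K^{\mathrm{MW}}_{1-n}(F)\neq 0$. So if $C$ denotes the cofiber of $\sph\to\MSL$, your bound on $C$ gives only surjectivity of $\pi_{n,n}(\sph)\to\pi_{n,n}(\MSL)$. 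Injectivity needs the boundary map $\pi_{n+1,n}(C)\to\pi_{n,n}(\sph)$ to vanish, and your bound does not give that.

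Your reasoning would apply verbatim to $\MGL$. Its unit cofiber also lies in $\Sigma^{2,1}\SH(F)^{\mathrm{veff}}$, since the bottom cell comes from $\Proj^2$ and is attached by $\eta$. Yet there the map $\K^{\mathrm{MW}}_{-n}(F)\to\pi_{n,n}(\MGL)\cong\K^{\mathrm{M}}_{-n}(F)$ is not injective.

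What $\SL_1=1$ actually buys is one more $\Proj^1$-suspension. Since $\mathrm{B}\SL_1$ is a point, everything beyond the unit comes from $\mathrm{B}\SL_n$ with $n\geq 2$:
\begin{itemize}
\item it starts with $\mathrm{B}\SL_2\simeq\mathrm{HP}^\infty$, whose bottom cell is $\mathrm{HP}^1\simeq S^{4,2}$;
\item the successive cofibers of $\mathrm{B}\SL_n\to\mathrm{B}\SL_{n+1}$ are Thom spaces of rank $n+1$ bundles.
\end{itemize}
Hence $C\in\Sigma^{4,2}\SH(F)^{\mathrm{veff}}$, which is $2$-connective in the homotopy $t$-structure, and this kills both $\pi_{n,n}(C)$ and $\pi_{n+1,n}(C)$.

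With this corrected bound, or simply by quoting \cite{BH-Norms} as the paper does, your proof goes through. For the $\MGL$ summand $1$-connectivity really is enough, but only because the splitting makes the boundary map zero: $\pi_{n+1,n}(\Sigma^{2,1}\MGL)\cong\K^{\mathrm{M}}_{1-n}(F)$ is nonzero.
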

After that we describe the Milnor--Witt stems $\bigoplus_{n\in\Z}\pi_{n+m,n}(\MML)$ for $1\leq m \leq 3$ (away from the exponential characteristic of $F$) in terms of the very effective hermitian K‑theory $\mathrm{kq}$ and (very) effective algebraic K-theory $\mathrm{kgl}$ (see Propositions \ref{prop:pi_1_MML}, \ref{prop:pi_2_MML}, \ref{prop:pi_3_MML}). These results use Theorem \ref{theorem_A} together with the analogous computations for $\MSL$ presented in \cite{Ahina, MSL-slices}.

Second, we determine the geometric diagonal (also known as the Chow zero line) of the homotopy groups of $\MML$. This graded ring is of particular interest, because it contains classes of oriented smooth projective varieties over $F$ via a motivic version of the Pontryagin--Thom construction, see \cite[\S 3]{LYZ} for a discussion of classes of Calabi--Yau varieties in the geometric diagonal of $\MSL$. The approach here is very close to the one presented by the second author in \cite{Egor} for $\MSL$. As in \textit{loc.cit.}, the answer is expressed by gluing the topological counterpart and the $\eta$-periodic part via a pullback square. To establish the topological part, we compute the complex realization of $\MML$, which turns out to be Stong's complex-spin cobordism spectrum $\mathrm{M}\Sigma$ \cite{Stong} (i.e., the Thom spectrum corresponding to the cobordism theory of manifolds equipped with compatible stable unitary and stable spin structures), see also Appendix \ref{appendix_Stong} for an overview. For the following theorem, we stress that the homotopy groups of $\mathrm{M}\Sigma$ are known (see Theorem \ref{theorem:homotopy_groups_MSigma}), whereas the ring structure of the coefficient ring is not well understood.
\setcounter{theoremintr}{2}
\begin{theoremintr}\label{theorem_C}
    Let $F$ be a field of characteristic zero. Then the following diagram is a pullback square of graded $\GW(F)$-algebras:
        \[\begin{tikzcd} {\pi_{2*,*}(\MML)} \arrow[dr, phantom, "\lrcorner", very near start] \arrow[dd, two heads] \arrow[rr, two heads] & & {\pi_{2*}(\mathrm{M}\Sigma)} \arrow[dd, two heads] \\  & {} & & \\{\W(F)[u_4,u_8,\dots]} \arrow[rr, "\overline{\mathrm{rk}}", two heads] &  & {\Z/2[u_4,u_8,\dots]},\end{tikzcd}\]
    where the left vertical map is the quotient by the annihilator of $\eta\in\pi_{1,1}(\MML)$ (see Lemma \ref{lemma:quotient_by_eta_tors}), the right one is the quotient by the annihilator of $\etatop\in\pi_{1}(\mathrm{M}\Sigma)$, and the top horizontal one is described in Lemma \ref{lemma:quotient_by_fund_ideals}. If $F$ is a field of characteristic $p>0$, then the same result holds after inverting $p$.
\end{theoremintr}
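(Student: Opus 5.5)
We follow the strategy of the second author's computation for $\MSL$ in \cite{Egor}: for a field $F$ of characteristic zero (or after inverting $p$), we glue the geometric diagonal of $\MML$ from its complex realization and its $\eta$-inverted part. Theorem~\ref{theorem_A} gives $\MML\simeq \MSL\oplus\Sigma^{2,1}\MGL$ as $\MSL$-modules, so additively
\[\pi_{2*,*}(\MML)\cong \pi_{2*,*}(\MSL)\oplus \pi_{2*-2,*-1}(\MGL).\]
The second summand is the Lazard ring in shifted degrees (Hopkins--Morel--Hoyois), hence $\eta$-torsion and torsion-free. For the first summand we use the known description of $\pi_{2*,*}(\MSL)$ from \cite{Egor}: it is a pullback of $\pi_{2*}(\MSU)$ and $\W(F)[y_4,y_8,\dots]$ over $\Z/2[y_4,\dots]$.

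\textbf{Step 1: the $\eta$-periodic part.} We show that $\MSL[\eta^{-1}]\to\MML[\eta^{-1}]$ is an equivalence, since $\MGL[\eta^{-1}]=0$. The quotient of $\pi_{2*,*}(\MML)$ by the annihilator of $\eta$ then injects into $\pi_{2*,*}(\MML[\eta^{-1}])\cong \W(F)[u_4,u_8,\dots]$. Its image is all of $\W(F)[u_4,\dots]$ because this already holds for $\MSL$ by \cite{Egor}. This is Lemma~\ref{lemma:quotient_by_eta_tors}.

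\textbf{Step 2: the topological part.} We identify the complex realization of $\MML$ with $\mathrm{M}\Sigma$. Realization commutes with Thom spectra, and the complex points of the metalinear groups give the structure group of $\mathrm{M}\Sigma$. The realization of the splitting of Theorem~\ref{theorem_A} is Stong's splitting $\mathrm{M}\Sigma\simeq \MSU\oplus\Sigma^2\MU$ (Appendix~\ref{appendix_Stong}). Combining Theorem~\ref{theorem:homotopy_groups_MSigma} with the surjectivity $\pi_{2*,*}\MSL\to\pi_{2*}\MSU$ and $\pi_{2*,*}\MGL\cong\pi_{2*}\MU$, the realization map $\pi_{2*,*}(\MML)\to\pi_{2*}(\mathrm{M}\Sigma)$ is surjective. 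Its kernel is the kernel on the $\MSL$ summand, namely $I(F)$-multiples in the fundamental ideal, which is described in Lemma~\ref{lemma:quotient_by_fund_ideals}.

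\textbf{Step 3: the pullback.} The square commutes by naturality: realization sends $\eta$ to $\etatop$ and real realization factors through $\overline{\mathrm{rk}}$. The comparison map from $\pi_{2*,*}(\MML)$ to the pullback is a map of graded $\GW(F)$-algebras, since all maps are ring maps. It decomposes compatibly with the splitting:
\begin{itemize}
\item On the $\MSL$ summand it is the pullback square of \cite{Egor}.
\item On the $\Sigma^{2,1}\MGL$ summand, which is $\eta$-torsion and maps isomorphically onto the $\Sigma^2\MU$ summand, it is an isomorphism onto the fiber of the right vertical map restricted to that summand. The $\Sigma^2\MU$ part is $\etatop$-torsion after projecting, by Stong.
\end{itemize}
A five-lemma argument on the two summands then gives bijectivity.

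\textbf{Main obstacle.} The delicate point is that the splitting is only $\MSL$-linear, not multiplicative (Remark~\ref{rem:mult_splitting}). We must therefore check that the quotient maps and annihilators of $\eta$ and $\etatop$ are computed summandwise. Concretely, we need that the $\etatop$-annihilator of $\pi_{2*}(\mathrm{M}\Sigma)$ is exactly the image of the $\eta$-annihilator, which also includes the $\Sigma^{2,1}\MGL$ classes. We would verify this degreewise using Stong's explicit description of the torsion in $\pi_*(\mathrm{M}\Sigma)$. In positive characteristic everything goes through after inverting $p$, using Hoyois' version of Hopkins--Morel.
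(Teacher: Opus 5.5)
Your characteristic-zero argument is a valid alternative to the paper's. The paper never uses the splitting for this step. It notes that $\pi_{2*,*}(\MML)$ modulo $\mathrm{I}_{\MML}(F)\cap{}_\eta\pi_{2*,*}(\MML)$ is the fiber product of the quotients by $\mathrm{I}_{\MML}(F)$ and by ${}_\eta\pi_{2*,*}(\MML)$ over the quotient by their sum. It identifies these quotients via Lemmas~\ref{lemma:quotient_by_eta_tors} and~\ref{lemma:quotient_by_fund_ideals}. Finally, the intersection is zero because $\mathrm{I}_{\MML}(F)$ injects into the $\eta$-periodization. You instead transport \cite[Theorem D]{Egor} along $\MML\simeq\MSL\oplus\Sigma^{2,1}\MGL$.

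The ``main obstacle'' you name is not actually one. Multiplication by $\eta$ commutes with every map of spectra, so annihilators are computed summandwise for any additive splitting. Bijectivity of the comparison ring map can also be checked additively, so non-multiplicativity of the splitting is irrelevant. No degreewise inspection of Stong's torsion is needed.

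Two points do need fixing. In Step 1, injectivity into $\pi_{2*,*}(\MML[\eta^{-1}])$ does not follow from $\MML[\eta^{-1}]\simeq\MSL[\eta^{-1}]$. You need that $\eta$-primary torsion equals $\eta$-torsion, which your splitting reduces to \cite[Theorem 5.2]{Egor}. Second, over a general field of characteristic zero there is no complex realization. The top map and its compatibility with your splitting must therefore come from rigidity (Lemma~\ref{lemma:rigidity}), together with a choice of $t$ defined over $\Q$.

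The genuine gap is the case $p=2$ of the final sentence, where your claim that ``everything goes through after inverting $p$'' fails. Your proof relies on \cite[Theorem D]{Egor} and Lemma~\ref{lemma:quotient_by_eta_tors}, and neither is available in characteristic $2$. The paper notes that \cite{Egor} excluded characteristic $2$ because of its reliance on \cite{BHop}, which is exactly what Appendix~\ref{appendix_char2} replaces by the real \'etale realization. Hoyois' Hopkins--Morel only handles the $\Sigma^{2,1}\MGL$ summand.

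After inverting $2$ the bottom row vanishes, since $\W(F)$ is $2$-primary torsion. The claim then becomes $\pi_{2*,*}(\MML)[\nicefrac{1}{2}]\cong\pi_{2*}(\mathrm{M}\Sigma)[\nicefrac{1}{2}]$. Via your splitting, this requires the new statement $\pi_{2*,*}(\MSL)[\nicefrac{1}{2}]\cong\pi_{2*}(\MSU)[\nicefrac{1}{2}]$ in characteristic $2$.

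The paper proves this in Appendix~\ref{appendix_char2} using ideas absent from your proposal. The minus part vanishes by Bachmann's real \'etale equivalence, since $\Sper(F)=\emptyset$. The plus part is identified with the cycle group $Z_{2n,n}(\mathrm{MWL})[\nicefrac{1}{2}]$, because the relevant $\delta$-homology dies after inverting $2$. That group is then transported from the fraction field of a mixed characteristic $(0,2)$ discrete valuation ring with residue field $F$.
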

Next, we compute the slices of $\MML$. Below we denote by $\mathrm{M}A\in\SH(F)$ the motivic Eilenberg--MacLane spectrum associated with an abelian group $A$. The following proposition is the only fact in the paper that requires a choice of an element $t$ from Theorem \ref{theorem_A} in the proof.
\setcounter{propintr}{3}
\begin{propintr}\label{proposition_D}
    Let $F$ be a field of characteristic zero. Then the slices of $\MML$ are given by
    \[ \slice_q(\MML)\cong \bigoplus_{0\leq s<\nicefrac{q}{2}} \Sigma^{q+2s,q}\mathrm{M}(\Z/2)^{p(\lfloor\nicefrac{s}{2}\rfloor)}\oplus \Sigma^{2q,q}\mathrm{M}\Z^{p(q)}, \]
    where $p(-)$ denotes the partition function and $\lfloor-\rfloor$ denotes the floor function.
    If $F$ is a field of characteristic $p>0$, then the same result holds after inverting $p$.
\end{propintr}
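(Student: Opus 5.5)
Fix a normalized Thom class $t$ as in Theorem~\ref{theorem_A}; this gives an equivalence of $\MSL$-modules, in particular of motivic spectra,
\[ \MML\simeq \MSL\oplus\Sigma^{2,1}\MGL. \]
The slice functors $\slice_q$ are exact and commute with finite sums and with $\Sigma^{2,1}$ up to reindexing, since $\slice_q(\Sigma^{2,1}E)\simeq\Sigma^{2,1}\slice_{q-1}(E)$. This is the only place a choice of $t$ enters. We therefore get
\[ \slice_q(\MML)\simeq \slice_q(\MSL)\oplus \Sigma^{2,1}\slice_{q-1}(\MGL). \]
The proof then reduces to inputting the known slices of the two summands and reorganizing the resulting sums.

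For $\MGL$, I use the Hopkins--Morel--Hoyois computation (characteristic zero, or after inverting $p$):
\[ \slice_{q}(\MGL)\simeq\Sigma^{2q,q}\mathrm{M}\Z\otimes L_q, \]
where $L_q$ is the degree-$q$ part of the Lazard ring, free of rank $p(q)$. Hence
\[ \Sigma^{2,1}\slice_{q-1}(\MGL)\simeq\Sigma^{2q,q}\mathrm{M}\Z^{p(q-1)}. \]
For $\MSL$, I use the slice computation of \cite{MSL-slices}. The form I expect is that $\slice_q(\MSL)$ is a sum of $\Sigma^{2q,q}\mathrm{M}\Z^{a_q}$ and mod $2$ Eilenberg--MacLane pieces $\Sigma^{q+2s,q}\mathrm{M}(\Z/2)^{b_{q,s}}$, mirroring the $\eta$-torsion and $\eta$-periodic structure of $\MSU$ and $\mathrm{MSp}$. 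The multiplicities $a_q$ and $b_{q,s}$ are determined by the ranks of $\pi_*\MSU$ and by a $\W(F)[y_4,y_8,\dots]$-type piece. I will quote the precise statement from the earlier work.

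The remaining step is combinatorial bookkeeping, and this is where I expect the main obstacle. The integral summand requires $a_q+p(q-1)=p(q)$. That is, the number of integral generators of $\MSL$ in slice degree $q$ must equal the number of partitions of $q$ having no part equal to $1$. I expect this to hold, mirroring the classical rational splitting $\MU_*\cong \MSU_*\otimes\Z[x_1]$ rationally. The mod $2$ summands must reindex to $\mathrm{M}(\Z/2)^{p(\lfloor s/2\rfloor)}$ for $0\le s<q/2$. If the $\MSL$ slice formula is indexed differently, for instance by $s$ running only over even values or with a shift, I will rewrite the sum so that it matches. I will check consistency in low degrees ($q=0,1,2,3$) against Proposition~\ref{proposition_B} and the Milnor--Witt stem computations: for example, $\slice_0$ should be $\mathrm{M}\Z$, and $\slice_1$ should be $\Sigma^{1,1}\mathrm{M}\Z/2\oplus\Sigma^{2,1}\mathrm{M}\Z$. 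Finally, the positive-characteristic case follows verbatim, since all inputs hold after inverting $p$.
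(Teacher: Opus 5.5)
Your proposal is correct and is essentially the paper's own proof. You split $\MML\simeq\MSL\oplus\Sigma^{2,1}\MGL$ via a choice of $t$, use additivity of $\slice_q$ together with $\slice_q(\Sigma^{2,1}E)\simeq\Sigma^{2,1}\slice_{q-1}(E)$, and insert the known slices of $\MSL$ and $\MGL$. The bookkeeping you flag as the main obstacle is in fact trivial: $\Sigma^{2,1}\slice_{q-1}(\MGL)$ contributes only $\Sigma^{2q,q}\mathrm{M}\Z^{p(q-1)}$, so the mod $2$ summands are exactly those of $\slice_q(\MSL)$ with no reindexing, and the integral ranks add as $(p(q)-p(q-1))+p(q-1)=p(q)$.
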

The above formula has an interpretation in terms of the $E_2$-page of the Adams--Novikov spectral sequence for $\mathrm{M}\Sigma$, see Remark \ref{remark:slices_AN}.
After computing the slices, we prove that the morphism $\MML[\nicefrac{1}{2}]\to \MGL[\nicefrac{1}{2}]$ induces an isomorphism on all slices (here we omit inversion of the characteristic for simplicity) and deduce that it gives an equivalence of motivic $\Einf$-ring spectra $\MML[\nicefrac{1}{2}]^+\cong \MGL[\nicefrac{1}{2}]$. We then categorify this comparison, obtaining a description of the plus part of the category of $2$-inverted modules over the motivic $\Einf$-ring spectrum $\MML$. After that, we use Bachmann's theorem on real \'etale realization to describe the minus part. This leads to the following theorem:
\setcounter{theoremintr}{4}
\begin{theoremintr}\label{theorem_E}
    Let $F$ be a field of characteristic zero. Then there is an equivalence of symmetric monoidal $\infty$-categories:
    \[ \mathrm{Mod}_{\MML}(\SH(F))[\nicefrac{1}{2}]\cong \mathrm{Mod}_{\MGL}(\SH(F))[\nicefrac{1}{2}]\times \mathrm{Mod}_{\underline{\mathrm{MSO}}}(\Sp(\Sper(F)))[\nicefrac{1}{2}]. \]
    Here $\Sp(\Sper(F))$ is the stable $\infty$-category of sheaves of spectra on the topological space $\Sper(F)$ (see Appendix \ref{appendix_real_etale} for an overview) and $\underline{\mathrm{MSO}}$ is the constant sheaf of $\Einf$-rings associated with the (topological) special orthogonal cobordism spectrum $\mathrm{MSO}\in \mathrm{CAlg}(\Sp)$. If $F$ is a field of characteristic $p>0$, then the same result holds after inverting $p$.
\end{theoremintr}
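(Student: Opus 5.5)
The plan is to split $\mathrm{Mod}_{\MML}(\SH(F))[\nicefrac{1}{2}]$ along the idempotent decomposition of the $2$-inverted sphere. After inverting $2$, the element $\epsilon=-\langle -1\rangle$ defines orthogonal idempotents $e_\pm=(1\mp\epsilon)/2$. These give a splitting $\SH(F)[\nicefrac{1}{2}]\simeq \SH(F)[\nicefrac{1}{2}]^+\times\SH(F)[\nicefrac{1}{2}]^-$ as symmetric monoidal $\infty$-categories. Accordingly $\MML[\nicefrac{1}{2}]\simeq \MML[\nicefrac{1}{2}]^+\times\MML[\nicefrac{1}{2}]^-$ as $\Einf$-rings, and
\[ \mathrm{Mod}_{\MML}(\SH(F))[\nicefrac{1}{2}]\simeq \mathrm{Mod}_{\MML[\nicefrac{1}{2}]^+}(\SH(F)[\nicefrac{1}{2}]^+)\times \mathrm{Mod}_{\MML[\nicefrac{1}{2}]^-}(\SH(F)[\nicefrac{1}{2}]^-). \]
I will identify the two factors separately.

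For the plus part, I use the comparison stated just before the theorem: $\MML[\nicefrac{1}{2}]\to\MGL[\nicefrac{1}{2}]$ is an isomorphism on slices, and hence $\MML[\nicefrac{1}{2}]^+\simeq\MGL[\nicefrac{1}{2}]$ as $\Einf$-rings. Since $\MGL$ is oriented, $\eta=0$ in $\MGL$, so $\epsilon=-1$ there. Therefore $\MGL[\nicefrac{1}{2}]$ already lies in the plus part, and its modules in $\SH(F)[\nicefrac{1}{2}]^+$ are exactly $\mathrm{Mod}_{\MGL}(\SH(F))[\nicefrac{1}{2}]$. This identification is compatible with the symmetric monoidal structure because it is induced by an equivalence of $\Einf$-algebras.

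For the minus part, I invoke Bachmann's theorem (Appendix \ref{appendix_real_etale}): real étale realization gives a symmetric monoidal equivalence $\SH(F)[\nicefrac{1}{2}]^-\simeq \SH(F)[\rho^{-1}][\nicefrac{1}{2}]\simeq \Sp(\Sper(F))[\nicefrac{1}{2}]$. Modules over $\MML[\nicefrac{1}{2}]^-$ then become modules over its image $R$, an $\Einf$-algebra in $\Sp(\Sper(F))$. The key step is to show $R\simeq\underline{\mathrm{MSO}}[\nicefrac{1}{2}]$. I would argue as follows.
\begin{enumerate}[label=(\roman*)]
\item Real realization of the metalinear group $\ML_n$ is $\mathrm{SO}(n)$ up to $2$-torsion phenomena. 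A real point of a square root of the determinant only forces the determinant to be positive, i.e. it gives an orientation.
\item Since $\MML$ is a colimit of Thom spectra over $B\ML_n$, and real étale realization is symmetric monoidal and preserves colimits, the realization of $\MML$ is stalkwise (at each ordering) the constant $\mathrm{MSO}$. I expect this to follow as in the known real realization $\MSL\mapsto\mathrm{MSO}$.
\item As a cross-check, Theorem~\ref{theorem_A} gives $\MML[\nicefrac{1}{2}]^-\simeq\MSL[\nicefrac{1}{2}]^-\oplus\Sigma^{2,1}\MGL[\nicefrac{1}{2}]^-$. Here $\MGL[\nicefrac{1}{2}]^-=0$, so $\MSL[\nicefrac{1}{2}]^-\to\MML[\nicefrac{1}{2}]^-$ is an equivalence of $\Einf$-rings. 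It therefore suffices to know the real realization of $\MSL[\nicefrac{1}{2}]$, which is Bachmann–Hopkins' $\underline{\mathrm{MSO}}[\nicefrac{1}{2}]$.
\end{enumerate}

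Combining the two factors gives the theorem. In characteristic $p$, I would run the same argument after inverting $p$, using the corresponding slice and Bachmann results.

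The main obstacle is making the minus-part identification an equivalence of $\Einf$-rings in sheaves on $\Sper(F)$ rather than merely of underlying objects. I would handle this via the $\Einf$-map $\MSL\to\MML$ being an equivalence on minus parts, and then quoting the $\Einf$ identification of $\MSL[\nicefrac{1}{2}]^-$ with $\underline{\mathrm{MSO}}[\nicefrac{1}{2}]$.
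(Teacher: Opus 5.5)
Your proposal is correct and follows essentially the paper's route. The plus part goes through $\MML[\nicefrac{1}{2}]^+\simeq\MML^\wedge_\eta[\nicefrac{1}{2}]\simeq\MGL[\nicefrac{1}{2}]$ (Proposition~\ref{prop_mslc_mgl}). The minus part is exactly your step (iii): the $\Einf$-equivalence $\MSL[\eta^{-1}]\simeq\MML[\eta^{-1}]$ (Corollary~\ref{cor_eta_period_mslc}), then Bachmann's equivalence and $\realiz_{\ret}(\MSL)\simeq\underline{\mathrm{MSO}}$ as sheaves of $\Einf$-rings. The paper gets the latter in Example~\ref{example_mso} from Bannwart's real Betti realization and base change from $\Spec(\Z)$, not from Bachmann--Hopkins.

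Drop the heuristics (i)--(ii), which the argument does not need and which, read integrally, are wrong. The realization of $\MML$ is $\mathrm{MSO}\wedge\Sigma^\infty_+\R P^\infty\simeq\mathrm{MSO}\oplus\Sigma\mathrm{MO}$, reflecting $\ML_n(\R)\cong\GL_n^+(\R)\times\Z/2$. So it agrees with $\underline{\mathrm{MSO}}$ only after inverting $2$.
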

This theorem can be viewed as the cobordism refinement of the description of the Milnor--Witt motives with $\Z[\nicefrac{1}{2}]$-coefficients:
\[ \widetilde{\mathbf{DM}}(F,\Z[\nicefrac{1}{2}])\cong \mathbf{DM}(F,\Z[\nicefrac{1}{2}])\times \mathbf{D}(\Sper(F),\Z[\nicefrac{1}{2}])\]
obtained in \cite[Theorem 5.0.2]{Milnor_Witt_motives} and \cite[Proposition 41]{Bachmann_ret}. Here $\mathbf{DM}(F,\Z[\nicefrac{1}{2}])$ is Voevodsky's category of motives with $\Z[\nicefrac{1}{2}]$-coefficients, and $\mathbf{D}(\Sper(F),\Z[\nicefrac{1}{2}])$ is the derived category of sheaves of $\Z[\nicefrac{1}{2}]$-modules on the topological space $\Sper(F)$.

We also deduce a decomposition of the rational metalinear algebraic cobordism spectrum $\MML\otimes\Q$ over an arbitrary qcqs scheme $S$ into a product of polynomial homotopy commutative ring spectra over rational motivic cohomology and rational Witt motivic cohomology, see Remark \ref{remark:rat_decompos}.

\subsection{Outline}
The beginning of each section contains more detailed information about its contents. In Section \ref{section:2}, we recall the basics on metalinear groups and oriented vector bundles, and define the corresponding K-theory presheaf. In Section \ref{section:3}, we define the metalinear algebraic cobordism spectrum and discuss its basic properties. In Section \ref{section:4}, we prove the interpolation between $\MSL$ and $\MML$, and establish Theorem \ref{theorem_A}. This is the main technical part of the paper. In Section \ref{section:5}, we prove Proposition \ref{proposition_B}, compute the first Milnor--Witt stems of $\MML$ and its geometric diagonal, thereby proving Theorem \ref{theorem_C} (in characteristic different from $2$). In Section \ref{section:6}, we prove Proposition \ref{proposition_D}, compare the plus part of $\MML[\nicefrac{1}{2}]$ with $\MGL[\nicefrac{1}{2}]$, and prove Theorem \ref{theorem_E}.

In Appendix \ref{appendix_Stong}, we give an overview of Stong's complex-spin cobordism spectrum and prove that it is equivalent to the Thom spectrum constructed from the $2$-fold coverings of unitary groups. In Appendix \ref{appendix_real_etale}, we recall the construction of the real \'etale realization and compute it for motivic $\Einf$-ring spectra defined over $\Spec(\Z)$. In Appendix \ref{appendix_char2}, we compute the geometric diagonals of $\MSL$ and $\MML$ over fields of characteristic $2$ away from $2$ (in particular, there we prove Theorem \ref{theorem_C} in this case).
\subsection{Notations and conventions}
We use the language of higher category theory following Lurie's books \cite{LHTT} and \cite{LHA}. We consider $1$-categories as $\infty$-categories via the nerve construction. The notation $\Spc$ stands for the $\infty$-category of spaces (or anima) and $\Sp$ stands for the $\infty$-category of spectra. We denote by $\mathrm{CAlg}(\mathcal{C}^\otimes)$ (or simply $\mathrm{CAlg}(\mathcal{C})$) the $\infty$-category of $\Einf$-algebras in a symmetric monoidal $\infty$-category $\mathcal{C}^\otimes$. For $\EE\in \mathrm{CAlg}(\mathcal{C})$, we denote by $\mathrm{Mod}_\EE(\mathcal{C})$ the $\infty$-category of $\EE$-modules in $\mathcal{C}$, see \cite[\S 4.5]{LHA}.

Throughout the text $S$ denotes a base scheme, which is assumed to be quasi-compact and quasi-separated. We denote by $\SmS$ the category of qcqs smooth $S$-schemes, by $\PShv(\SmS)$ the $\infty$-category of presheaves (of spaces) on $\SmS$, and by $\SHS$ the $\Proj^1$-stable $\A^1$-homotopy $\infty$-category over $S$. Given a presheaf $X\in \PShv(\SmS)$ (resp. pointed presheaf $(X,x)$), we denote by $\Sigma^\infty_{\Proj^1}X_+$ (resp. $\Sigma^\infty_{\Proj^1}(X,x)$ or simply $\Sigma^\infty_{\Proj^1}X$) the associated motivic spectrum, omitting the motivic localization.

We use the following conventions about motivic spheres and suspension functors: $\Sigma^{p,q}$ denotes the smash product with the motivic sphere $S^{p,q}:=S^{p-q}\wedge \mathbb{G}_m^{\wedge q}$, and $\pi_{p,q}(\EE)$ denotes the corresponding homotopy group of a spectrum $\EE\in\SHS$. We also use the notation $\Sigma^q_{\Proj^1}$ for the $q$-th $\Proj^1$-suspension functor, which is canonically equivalent to $\Sigma^{2q,q}$. We write $\Omega^q_{\Proj^1}$ for the $q$-th $\Proj^1$-loop functor, which is nothing but the inverse autoequivalence of $\Sigma^q_{\Proj^1}$. 

For a vector bundle $\mathcal{E}$ over a smooth $S$-(ind)-scheme $X$, we denote by $\Th_X(\mathcal{E})\in \PShv(\SmS)$ its Thom space $\mathcal{E}/\mathcal{E}^\circ$, where $\mathcal{E}^\circ$ is the complement of the zero section of $\mathcal{E}$. By standard abuse of notation, we also denote by $\Th_X(\mathcal{E})$ the associated motivic spectrum $\Sigma^\infty_{\Proj^1}\Th_X(\mathcal{E})=\Sigma^\infty_{\Proj^1}(\mathcal{E}/\mathcal{E}^\circ,\mathcal{E}^\circ)$.
\subsection{Acknowledgments}
We would like to thank Alexey Ananyevskiy for fruitful discussions and for careful reading of a draft of this paper, Matthias Wendt for discussions and helpful comments, Tom Bachmann for answering a question about the real \'etale realization, and Julie Bannwart for sharing the latest version of her preprint before it appeared on arXiv.
Egor Zolotarev is supported by the DFG research grant AN 1545/4-1.
\section{Preliminaries on metalinear groups and oriented vector bundles}\label{section:2}
In this section, we discuss basics of metalinear groups, their torsors, oriented vector bundles, and various K-theory presheaves. The most important object for the subsequent sections is the presheaf of nonunital $\Einf$-rings $\K^\ML_\ev$, which is the K-theory of oriented vector bundles (of even rank).
\begin{definition}
Following \cite[Remark 3.3.2]{AHW_BG}, we define the \textit{metalinear group} $\ML_n$ to be the kernel of the following epimorphism of linear algebraic groups over $S$: \begin{equation}\label{def_slnc} \GL_n\times\Gm\to \Gm,\ (g,t)\mapsto t^{-2}\cdot\det(g). \end{equation} 
The original reference for this definition is \cite[\S 3]{PW-BO}, where this group is denoted by $\SLc_n$.
\end{definition}
We are interested in $\ML_n$-torsors. The key point is that this notion does not depend on the choice of the Grothendieck topology (within reasonable limits).
\begin{lemma}[{\cite[Proposition 3.3]{BrazeltonWendtMSLc}}]\label{lemma:ML_special}
    The linear algebraic group $\ML_n$ is special in the sense of Serre, i.e., every fppf $\ML_n$-torsor over a (not necessarily smooth) $S$-scheme is locally trivial in the Zariski topology.
\end{lemma}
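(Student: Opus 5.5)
The plan is to reduce the statement to the two facts that $\SL_n$ and $\Gm$ are special, using the exact sequence of group schemes $1\to \SL_n\to \ML_n\to \Gm\to 1$. Here the map $\ML_n\to\Gm$ is $(g,t)\mapsto t$. It is surjective, even split: send $t$ to $(\mathrm{diag}(t^2,1,\dots,1),t)$. Its kernel is $\{(g,1)\mid \det g=1\}\cong \SL_n$.

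Let $P$ be an fppf $\ML_n$-torsor over an $S$-scheme $X$, and work Zariski-locally on $X$. We may therefore assume $X$ is affine, or even local.

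\textbf{Step 1.} Push $P$ forward along $\ML_n\to\Gm$ to obtain the $\Gm$-torsor $P\times^{\ML_n}\Gm$. By Hilbert 90, i.e. specialness of $\Gm$, this torsor is Zariski-locally trivial. After shrinking $X$ it is trivial, so it admits a section.

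\textbf{Step 2.} Use the long exact sequence in nonabelian fppf cohomology:
\[
H^1(X,\SL_n)\to H^1(X,\ML_n)\to H^1(X,\Gm).
\]
Once the image of $P$ in $H^1(X,\Gm)$ is trivial, exactness at $H^1(X,\ML_n)$ for the pointed sets shows that $P$ comes from an $\SL_n$-torsor. Concretely, a section of the pushed-forward torsor gives a reduction of structure group of $P$ to the kernel $\SL_n$.

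\textbf{Step 3.} Since $\SL_n$ is special, the resulting fppf $\SL_n$-torsor is Zariski-locally trivial. This holds over arbitrary base schemes, because an $\SL_n$-torsor is the same as a vector bundle with a trivialized determinant, and fppf descent for vector bundles makes such a bundle Zariski-locally free. Shrinking $X$ once more makes the $\SL_n$-torsor trivial, and hence $P$ is trivial.

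\textbf{Alternative route.} One can avoid nonabelian cohomology by identifying $\ML_n$-torsors with triples $(E,L,\phi\colon L^{\otimes 2}\cong \det E)$, where $E$ is a rank $n$ vector bundle and $L$ a line bundle, using the embedding $\ML_n\subset \GL_n\times\Gm$. Descent for vector bundles shows $E$ and $L$ are Zariski-locally free. Trivializing both over a common open set, the isomorphism $\phi$ becomes a unit $u$. Rescaling the basis of $E$ by $\mathrm{diag}(u^{-1},1,\dots,1)$ then makes $\phi$ the identity, so the triple is trivial.

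\textbf{Main obstacle.} The delicate point is this identification of torsors with triples, or equivalently the reduction-of-structure-group step, in the fppf setting over non-smooth bases. I would handle it by the standard torsor twisting argument: for a closed subgroup $H\subset G$ with $G/H$ representable, reductions of a $G$-torsor $P$ to $H$ correspond to sections of $P/H$. Here $(\GL_n\times\Gm)/\ML_n\cong\Gm$, via the epimorphism \eqref{def_slnc}, is representable, so the quotient is well behaved and the correspondence applies.
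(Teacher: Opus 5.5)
Your proof is correct. The paper gives no argument of its own for this lemma, only the citation to Brazelton--Wendt, so there is no internal proof to compare against.

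Your main route is d\'evissage along the split extension $1\to\SL_n\to\ML_n\to\Gm\to 1$: trivialise the pushed-forward $\Gm$-torsor, reduce the structure group to the kernel $\SL_n$, then use that $\SL_n$ is special. This is the standard proof that an extension of special groups is special, and it works verbatim over an arbitrary base.

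Your alternative route via triples $(\mathcal{E},\mathcal{L},\lambda)$ has an extra benefit. It simultaneously identifies $\ML_n$-torsors with oriented vector bundles, which the paper instead imports from Ananyevskiy (his Lemma~2.7 and Remark~2.8) right after this lemma.

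Two cosmetic points. First, passing to a local $X$ would need a limit argument, but you never use it, since you only shrink to open neighbourhoods. Second, representability of $(\GL_n\times\Gm)/\ML_n$ is not needed, because reductions of structure group correspond to sections of the fppf quotient sheaf $P/H$ in general.
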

\begin{remark}
    For $\tau\in\{\mathrm{Zar},\,\mathrm{Nis},\,\et\}$, we denote by $\mathrm{B}_\tau \ML_n$ the $\tau$-classifying space that sends a smooth $S$-scheme to the groupoid of $\tau$-locally trivial $\ML_n$-torsors.
    It then follows from Lemma \ref{lemma:ML_special} that the natural morphisms of the classifying spaces \[\mathrm{B_{Zar}}\ML_n\to \mathrm{B_{Nis}}\ML_n\to \mathrm{B}_{\et}\ML_n\] are equivalences of presheaves on $\SmS$. This allows us to use the symbol $\mathrm{B}\ML_n$ without specifying the underlying Grothendieck topology. We use this convention for other special groups as well.
\end{remark}
The metalinear groups appear as structure groups of oriented vector bundles.
\begin{definition}[{\cite[Definition 4.3]{Morel_book}}]
    A \textit{rank $n$ oriented vector bundle} over a scheme $X$ is a triple $(\mathcal{E},\mathcal{L},\lambda)$ consisting of a vector bundle $\mathcal{E}$ over $X$ of rank $n$, a line bundle $\mathcal{L}$ over $X$, and an isomorphism of line bundles $\lambda\colon\det(\mathcal{E}) \xrightarrow{\simeq} \mathcal{L}^{\otimes 2}$. Isomorphisms of oriented vector bundles are defined in the expected way.
\end{definition}
The direct sum $(\mathcal{E},\mathcal{L},\lambda)\oplus(\mathcal{E}',\mathcal{L}',\lambda')$ of oriented vector bundles $(\mathcal{E},\mathcal{L},\lambda)$ and $(\mathcal{E}',\mathcal{L}',\lambda')$ is given by the triple $(\mathcal{E}\oplus \mathcal{E}',\mathcal{L}\otimes \mathcal{L}',\lambda\otimes\lambda')$. Here, by abuse of notation $\lambda\otimes\lambda'$ denotes the following isomorphism \[\det(\mathcal{E}\oplus \mathcal{E}')\cong\det(\mathcal{E})\otimes\det(\mathcal{E}')\xrightarrow{\lambda\otimes\lambda'}\mathcal{L}^{\otimes2}\otimes(\mathcal{L}')^{\otimes2}\cong (\mathcal{L}\otimes \mathcal{L}')^{\otimes 2}.\] 
The trivial rank $n$ oriented vector bundle over $X$ is an oriented vector bundle that is isomorphic to $(\struct_X,\struct_X,\theta)^{\oplus n}$, where $\theta$ is the canonical isomorphism $\theta\colon\struct_X\xrightarrow{\simeq}\struct_X^{\otimes2}$. According to \cite[Lemma~2.7]{AnSL}, oriented vector bundles are locally trivial in the Zariski topology. 
\begin{notation}
    We denote by $\Vect_n^\ML(X)$ the groupoid of oriented vector bundles over $X$, which is canonically equivalent to the groupoid of $\ML_n$-torsors $\mathrm{B}\ML_n(X)$ by \cite[Remark 2.8]{AnSL}.
\end{notation}
\begin{remark}
    This is the reason why oriented vector bundles are also sometimes called ``vector $\ML$-bundles'' in the literature; see \cite[\S 2]{AnSL}, \cite[\S 3]{PW-BO}\footnote{In \textit{loc.cit.} they are called ``vector $\SLc$-bundles'', but we always replace $\SLc$ with $\ML$.}. However, we prefer to use the above terminology, since it agrees with the corresponding topological one under the real Betti realization. Also, we caution the reader that some authors understand oriented vector bundles as vector $\SL$-bundles; we call these \textit{strictly oriented vector bundles} following \cite[Definition 2.14]{MS_Cellular}.
\end{remark}
\begin{notation}\label{notation_vect_ori} We denote by $\Vect_\ev^\ML(X)$ the groupoid of oriented vector bundles of even rank, i.e., the Zariski sheafification of the groupoid of oriented vector bundles of constant even rank \[ \coprod_n \Vect_{2n}^\ML(X).\]
The notation $\Vect_\ev^\ML\in \PShv(\SmS)$ stands for the corresponding presheaf on smooth $S$-schemes.
\end{notation}

The direct sum of oriented vector bundles induces an $\Einf$-monoid structure on $\Vect_\ev^\ML$; see \cite{Einf-semirings} for a discussion of $\Einf$-monoids/groups/semirings/rings in general (symmetric monoidal) $\infty$-categories with finite products. Moreover, the tensor product endows $\Vect_\ev^\ML$ with the structure of a nonunital (since the unit has rank $1$, which is not even) $\Einf$-semiring; see \cite[\S 5.4.4]{LHA} for a discussion of nonunital objects. There are forgetful morphisms of nonunital $\Einf$-semirings:
\begin{gather}\label{morphisms:stacks_vector_bundles}
    \Vect^{\SL}_\ev\to \Vect_\ev^\ML\to \Vect_\ev,
\end{gather}
where $\Vect^{\SL}_\ev$ (resp. $\Vect_\ev$) denotes the stack of strictly oriented (resp. arbitrary) vector bundles of even rank. These maps correspond to the obvious homomorphisms of linear algebraic groups $\SL_{2n}\to \ML_{2n}\to \GL_{2n}$. 
\begin{remark}
    It is possible to work with the stack of oriented vector bundles of all (not necessarily even) ranks, but it admits only an $\Eone$-monoid structure; see \cite[Example A.0.6]{Mb-modules} for the case of strictly oriented vector bundles. To avoid technicalities involved in working with $\Eone$-monoids, we work with the even-rank substack. 
\end{remark}
\begin{definition}
    The presheaf of \textit{K-theory spaces of oriented vector bundles (of even rank)} is the presheaf of nonunital $\Einf$-rings given by the Zariski localization of the group completion of the stack of oriented vector bundles of even rank: \[\K_\ev^\ML:=\mathrm{L_{Zar}}\Bigl(\Vect^{\ML,\mathrm{gp}}_\ev\Bigr).\] We also denote by $\K_{\mathrm{rk=0}}^\ML=\K^\ML_{\ev}\times_{\underline{2\Z}}\{0\}$ the associated rank-zero presheaf, where $\K_\ev^\ML\to\underline{2\Z}$ is induced by the rank of underlying vector bundles.
\end{definition}
The string of morphisms \eqref{morphisms:stacks_vector_bundles} induces the following morphisms of presheaves of nonunital $\Einf$-rings:
\begin{gather}\label{equat_kring_maps} \K^\SL_\ev\to \K_\ev^\ML\to \K_\ev
\end{gather}
and similarly for the rank zero presheaves, where $\K^\SL_\ev$ (resp. $\K_\ev$) is obtained via a similar procedure from $\Vect^\SL_\ev$ (resp. $\Vect_\ev$). We refer to them as forgetful morphisms.
\section{Motivic Thom spectrum \texorpdfstring{$\mathrm{MML}$}{MML}}\label{section:3}
In this section, we define the motivic Thom spectrum $\MML$ and equip it with a natural $\Einf$-ring structure. Then we construct Thom classes for $\MML$ and obtain an explicit description of $\MML$ in terms of metalinear Grassmannians $\MGr_{n}$.

First, we recall the construction of the motivic Thom functor from \cite[\S 16]{BH-Norms}.
For $X\in \SmS$ denote by $\Pic(\SH(X))$ the $\infty$-groupoid of tensor-invertible objects in $\SH(X)$ and by $\Pic(\SH)$ the corresponding presheaf on $\SmS$. Then the following colimit construction
\[ \mathrm{M}(\beta\colon\mathrm{B}\to \Pic(\SH)):=\colim_{\substack{f\colon X\to S\;\mathrm{smooth} \\ b\in \mathrm{B}(X)}} f_{\#}\beta(b)\]
defines a symmetric monoidal functor of $\infty$-categories $\mathrm{M}\colon\PShv(\SmS)_{/\Pic(\SH)}\to \SHS$. Restricting this functor along the $\Jhom$-homomorphism $\Jhom\colon \K\to \Pic(\SH)$, which is defined as $\mathcal{E}\mapsto \Sigma^\infty\Th(\mathcal{E})$, we obtain a symmetric monoidal functor of $\infty$-categories from the slice category $\PShv(\SmS)_{/\K}$. The motivic Thom spectra $\MSL$ and $\MGL$ are obtained by applying this functor to the rank-zero K-theory presheaves $\KSLrk$ and $\Krk$, see \cite[Theorem 16.13 and Example 16.22]{BH-Norms}. This leads to the following definition.

\begin{definition}
    The \textit{metalinear algebraic cobordism spectrum} is the motivic Thom spectrum associated with the composite $\K_{\mathrm{rk=0}}^\ML\to \Krk\xrightarrow{\iota}\K\colon$ \[\MML_S:=\mathrm{M}(\K^\ML_{\mathrm{rk=0}}\to\K)\in\mathrm{CAlg}(\SHS).\] We usually omit $S$ from the notation and denote this spectrum simply by $\MML$. Applying the motivic Thom functor to the rank zero version of the string of morphisms \eqref{equat_kring_maps}, we obtain the following forgetful morphisms of motivic $\Einf$-ring spectra:
    \[\MSL\to\MML\to\MGL.\]
\end{definition}
\begin{remark}
    Applying the motivic Thom functor to the whole K-theory presheaf of oriented vector bundles $\K^\ML_\ev$, we obtain a $(4,2)$-periodization of $\MML$ that is a motivic $\Einf$-ring spectrum $\mathrm{PMML}$ that admits a decomposition
    \[ \mathrm{PMML}\cong \bigoplus_{n\in\Z} \Sigma^{2n}_{\Proj^1} \MML. \]
    Both $\MML$ and $\mathrm{PMML}$ are not just $\Einf$-ring spectra, but even normed motivic spectra in the sense of Bachmann--Hoyois; see \cite[Example 16.22]{BH-Norms}.
\end{remark}
The above definition can be rephrased in terms of metalinear groups as follows.
\begin{lemma}\label{lemma:motivic_model_tildaK}
    The morphism $\mathrm{B}\ML_\infty = \colim_n \mathrm{B}\ML_{2n} \to \K^\ML_{\mathrm{rk}=0}$, obtained as the colimit over $n$ of the differences between the universal $\ML_{2n}$-torsor and the trivial one, is a motivic equivalence. Moreover, it induces an equivalence of motivic spectra over $S$:
    $$ \colim_{n}\Omega^{2n}_{\Proj^1}\MML_{2n}\xrightarrow{\simeq}\MML, $$
    where $\MML_{2n}$ denotes the Thom spectrum $\mathrm{M}(\mathrm{B}\ML_{2n}\to\mathrm{B}\GL_n\to \K)$.
\end{lemma}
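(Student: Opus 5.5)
The plan mirrors the standard argument for $\MSL$ and $\MGL$ in \cite[\S 16]{BH-Norms}.

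\emph{First claim.} Write $\Vect^\ML_\ev\simeq\coprod_n \mathrm{B}\ML_{2n}$ Zariski-locally. The map $\colim_n \mathrm{B}\ML_{2n}\to \K^\ML_{\mathrm{rk}=0}$ is induced by stabilization along adding the trivial rank $2$ oriented bundle, i.e.\ along the maps $\mathrm{B}\ML_{2n}\to\mathrm{B}\ML_{2n+2}$. By the group completion theorem for $\Einf$-monoids, applied objectwise,
\[
\Vect^{\ML,\mathrm{gp}}_\ev\simeq \Z\times\colim_n \mathrm{B}\ML_{2n}
\]
holds after plus-construction. I would not invoke the plus-construction directly. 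Instead I would use the motivic group completion argument of \cite[Appendix]{BH-Norms} (or Hoyois' motivic version): it suffices that the cyclic permutation on $\mathcal{T}^{\oplus 3}$, for the trivial oriented bundle $\mathcal{T}$ of rank $2$, is $\A^1$-homotopic to the identity. This is \cite[Theorem 16.13]{BH-Norms}'s criterion. Here the permutation matrix of order three lies in $\SL_6\subset\ML_6$ (with $t=1$), and it is a product of elementary matrices, so it is naively $\A^1$-homotopic to the identity in $\SL_6(\Z)$. Consequently $\mathrm{L_{mot}}$ of the Zariski-localized group completion is $\Z\times\colim_n\mathrm{B}\ML_{2n}$. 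Passing to rank zero gives the first statement. Lemma~\ref{lemma:ML_special} is used here to identify the Zariski and Nisnevich classifying stacks, so the Zariski localization causes no trouble. The main obstacle is making the group completion theorem apply in the motivic setting. The key input is the $\A^1$-triviality of the cyclic permutation, and I would verify it explicitly via elementary matrices.

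\emph{Second claim.} The Thom functor $\mathrm{M}$ is colimit-preserving, and by the first claim it inverts motivic equivalences over $\K$. Hence
\[
\MML\simeq \mathrm{M}\Bigl(\colim_n \mathrm{B}\ML_{2n}\to\K\Bigr)\simeq \colim_n\mathrm{M}\bigl(\mathrm{B}\ML_{2n}\xrightarrow{\gamma_{2n}-\struct^{2n}}\K\bigr).
\]
Since $\mathrm{M}$ is symmetric monoidal and the constant map at $-\struct^{2n}$ contributes $\Sigma^{-2n}_{\Proj^1}$, the $n$-th term is $\Omega^{2n}_{\Proj^1}\MML_{2n}$. The transition maps are the expected structure maps $\Sigma^{2}_{\Proj^1}\MML_{2n}\to\MML_{2n+2}$, so the colimit is the stated one.
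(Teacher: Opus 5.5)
Your proposal is correct and follows the paper's proof. The paper likewise gets the motivic equivalence $\mathrm{B}\ML_\infty\times\underline{2\Z}\to\K^\ML_\ev$ from the $\A^1$-triviality of the cyclic permutation of $(\struct^2,\struct,\id)^{\oplus 3}$, and then concludes because the Thom functor preserves colimits and inverts motivic equivalences over $\K$ \cite[Proposition 16.9(2)]{BH-Norms}; note that the group-completion criterion it cites is \cite[Proposition 5.1]{Inf_loop_plus}, not \cite[Theorem 16.13]{BH-Norms}. Your explicit check that this permutation, paired with $t=1$, lies in $\SL_6(\Z)=E_6(\Z)\subset\ML_6(\Z)$ supplies a detail the paper leaves implicit, while your appeal to Lemma~\ref{lemma:ML_special} is harmless but unnecessary.
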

\begin{proof}
    By \cite[Proposition 5.1]{Inf_loop_plus}, the following morphism of presheaves
    $$ \mathrm{B}\ML_\infty\times \underline{2\Z}\to\K_{\ev}^\ML $$
    is an $\A^1$-equivalence on affine schemes, since the cyclic permutation of $\struct^6$ (considered as the direct sum of three copies of the oriented vector bundle $(\struct^2,\struct,\id)$) becomes identity in $\mathrm{L}_{\A^1}(\Vect_{\ev}^\ML)$. In particular, this map is a motivic equivalence. Passing to the rank-zero presheaves, we obtain the first claim. The second one follows since the motivic Thom functor commutes with colimits and inverts motivic equivalences over $\K$ (see \cite[Proposition 16.9(2) and Remark 16.11]{BH-Norms}).
\end{proof}
\begin{lemma}
    The motivic spectrum $\MML$ is stable under base change.
\end{lemma}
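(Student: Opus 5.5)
The statement asks that, for every morphism $f\colon T\to S$ of qcqs schemes, the canonical map $f^*\MML_S\to\MML_T$ is an equivalence. I would deduce this from the explicit colimit description in Lemma~\ref{lemma:motivic_model_tildaK}, since $f^*$ is a left adjoint and so commutes with colimits.

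\emph{Step 1: reduce to Thom spectra of the metalinear classifying spaces.} Lemma~\ref{lemma:motivic_model_tildaK} gives an equivalence
\[
\MML_S\simeq\colim_n\Omega^{2n}_{\Proj^1}\MML_{2n,S}.
\]
Since $f^*$ is symmetric monoidal, it commutes with $\Omega^{2n}_{\Proj^1}=\Sigma^{-4n,-2n}$. It therefore suffices to show $f^*\MML_{2n,S}\simeq\MML_{2n,T}$, compatibly with the transition maps in $n$.

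\emph{Step 2: realize $\MML_{2n}$ as a Thom spectrum of a bundle over an ind-scheme.} I would model $\mathrm{B}\ML_{2n}$ by a metalinear Grassmannian $\MGr_{2n}=\colim_N\MGr_{2n,N}$, which is a colimit of smooth $S$-schemes defined over $\Spec\Z$. One concrete choice is the $\Gm$-torsor of square roots of $\det$ of the tautological bundle over $\Gr_{2n,N}$, i.e.\ the complement of the zero section of $\mathcal{O}(-1)$-type data $\mathcal{L}$ with $\mathcal{L}^{\otimes 2}\cong\det$; equivalently, $\Gr_{2n,N}$ together with a line bundle $\mathcal{L}$ and an isomorphism $\det\gamma\cong\mathcal{L}^{\otimes2}$, realized as a scheme by a $\Gm$-quotient construction. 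Then I would show that the map
\[
\MGr_{2n}\to\mathrm{B}\ML_{2n}
\]
classifying the tautological oriented bundle is a motivic equivalence. The argument is the standard one for $\mathrm{B}\GL_n$ and $\mathrm{B}\SL_n$, as in Morel--Voevodsky or Asok--Hoyois--Wendt. By Lemma~\ref{lemma:ML_special}, $\mathrm{B}\ML_{2n}$ is the Nisnevich (equivalently Zariski) classifying stack. It is then modeled by the quotient $\colim_N (\mathrm{Stiefel}_{2n,N}\times\Gm)/\ML_{2n}$, with $\ML_{2n}$ acting freely on an $\A^1$-contractible colimit of open subschemes of affine spaces. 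Given this model, the Thom spectrum is
\[
\MML_{2n,S}\simeq\colim_N\Sigma^{-\infty}\Th_{\MGr_{2n,N}}(\gamma),
\]
which is a colimit of suspension spectra of Thom spaces of vector bundles over smooth $S$-schemes.

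\emph{Step 3: base change of the building blocks.} For a smooth $S$-scheme $X$ and a vector bundle $E$ on it, $f^*\Sigma^\infty\Th_X(E)\simeq\Sigma^\infty\Th_{X_T}(E_T)$. This holds because $f^*$ on spaces sends representables to representables and preserves cofibers. All the schemes $\MGr_{2n,N}$ and bundles $\gamma$ are pulled back from $\Spec\Z$, so the transition maps pull back as well.

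The main obstacle is Step 2. One must check that the geometric model of $\mathrm{B}\ML_{2n}$ is a motivic equivalence uniformly over any qcqs base, and that the identification of Thom spectra is natural in $S$. If the geometric model proves awkward, I would fall back on a direct argument with the Bachmann--Hoyois Thom functor:
\[
\mathrm{M}(\mathrm{B}\to\K)=\colim_{(X,b)} f_\#\Th(b).
\]
The presheaf $\mathrm{B}\ML_{2n}$ is the left Kan extension from $\Spec\Z$ of $\mathrm{B}\ML_{2n,\Z}$ after sheafification, since $\ML_{2n}$-torsors are Zariski locally trivial. Then $f^*$ commutes with this colimit by smooth base change $f^*p_\#\simeq p_{T\#}f'^*$. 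This is the argument of \cite[Lemma 16.7]{BH-Norms} for $\MGL$ and $\MSL$, and it applies verbatim here.
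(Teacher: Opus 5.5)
Your strategy is sound, but it differs from the paper's, which gives no direct argument. The paper cites Bachmann--Wickelgren's general base-change result for Thom spectra built from families of affine, finitely presented group schemes \cite[Proposition 4.10]{BW_Euler}, and notes only that the $\ML_n$ satisfy its hypotheses. You instead unwind the mechanism for $\ML_{2n}$. You reduce via Lemma~\ref{lemma:motivic_model_tildaK} to $\MML_{2n}$, model $\mathrm{B}\ML_{2n}$ by smooth ind-schemes defined over $\Spec\Z$, and use $f^*\Sigma^\infty\Th_X(\mathcal{E})\simeq\Sigma^\infty\Th_{X_T}(\mathcal{E}_T)$. The citation buys generality, since it needs neither smoothness nor a geometric model; your route is self-contained.

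The ``main obstacle'' you identify is already settled in the paper. Lemma~\ref{lemma:MGr_BML} and the proposition right after it identify $\MML$ with $\colim_n\Omega^{2n}_{\Proj^1}\Th_{\MGr_{2n}}(\gamma^\ML_{2n})$ over every qcqs base, via classifying maps compatible with pullback, so your Step 3 finishes the proof directly. Your fallback argument also works, but you should state that it uses smoothness of $\ML_{2n}$ over $\Z$. For instance, $(h,t)\mapsto(h\cdot\mathrm{diag}(t^2,1,\dots,1),t)$ is an isomorphism of schemes $\SL_{2n}\times\Gm\cong\ML_{2n}$. Smoothness is what makes $f^*$ send the representable $\ML_{2n,S}\in\Sm_S$ to $\ML_{2n,T}$, and hence makes $f^*\mathrm{B}\ML_{2n,S}\to\mathrm{B}\ML_{2n,T}$ a Zariski-local equivalence.

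There is, however, a concrete mistake in your explicit model in Step 2: $\colim_N(\mathrm{Stiefel}_{2n,N}\times\Gm)/\ML_{2n}$ is not $\mathrm{B}\ML_{2n}$. The action is free, but $\colim_N\mathrm{Stiefel}_{2n,N}\times\Gm\simeq\Gm$ is not $\A^1$-contractible. The $\ML_{2n}$-invariant of $(v,s)$ is $s^{-2}\det(v)\in\det(\mathrm{span}(v))\setminus 0$. So the quotient is the complement of the zero section of $\det(\gamma)$ over $\Gr_{2n}(\A^N)$, i.e. the special linear Grassmannian. That models $\mathrm{B}\SL_{2n}$, the fiber of $\mathrm{B}\ML_{2n}\to\mathrm{B}\Gm$, $(g,t)\mapsto t$.

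Likewise, there is no scheme of square roots of $\det\gamma$ over $\Gr_{2n}(\A^N)$ alone. The pairs $(\mathcal{L},\lambda\colon\det\gamma\cong\mathcal{L}^{\otimes2})$ form a stack with $\mu_2$-automorphisms, not a $\Gm$-torsor. The fix is to replace $\Gm$ by $\A^M\setminus 0$, with $t$ acting by scaling. Then $(\mathrm{Stiefel}_{2n,N}\times(\A^M\setminus 0))/\ML_{2n}$ is exactly the paper's $\MGr_{2n}(\A^N;\Proj^{M-1})$: the complement of the zero section of $\det(\gamma)\boxtimes\struct(2)$ over $\Gr_{2n}(\A^N)\times\Proj^{M-1}$. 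The auxiliary factor $\Proj^{M-1}$ supplies the candidate square root $\struct(-1)$.
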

\begin{proof}
    This follows from \cite[Proposition 4.10]{BW_Euler}, since each $\ML_{n}$ is an affine, finitely presented group scheme.
\end{proof}

Like any Thom spectrum, $\MML$ comes equipped with the Thom isomorphisms for certain vector bundles, see \cite[Theorem 16.28]{BH-Norms}. We now describe the corresponding Thom classes.
\begin{construction}\label{construction:thom_classes}
    Let $(\mathcal{E},\mathcal{L},\lambda)$ be an oriented vector bundle of rank $n$ over a smooth $S$-(ind)-scheme $X$. By the Yoneda lemma we obtain a morphism of presheaves $X\to \mathrm{B}\ML_{n}$ that classifies $(\mathcal{E},\mathcal{L},\lambda)$. Composing this morphism with $\mathrm{B}\ML_n\to \K^\ML_{\mathrm{rk=0}}$ (if $n$ is odd, this map is the composite $\mathrm{BML}_n\to \mathrm{BML}_{n+1}\to \K^\ML_{\mathrm{rk=0}}$) and applying the motivic Thom functor, we obtain the morphism of motivic spectra
    $$ \Omega^{n}_{\Proj^1}\Th_X(\mathcal{E})\cong\mathrm{M}([\mathcal{E}]-[\struct^n]\colon X\to \K)\to \MML. $$
    The $n$-th $\Proj^1$-suspension of this morphism yields an element in $\MML^{2n,n}(\Th_X(\mathcal{E}))$. 
    We call this element the \textit{Thom class of $(\mathcal{E},\mathcal{L},\lambda)$ in $\MML$-cohomology} and denote it by $\thom(\mathcal{E},\mathcal{L},\lambda)$ (or simply $\thom(\mathcal{E})$ if the orientation is clear from the context).
\end{construction}

\begin{remark}
    In the construction of the Thom class $\thom(\mathcal{E},\lambda)$ in $\MSL$-cohomology for a strictly oriented vector bundle $(\mathcal{E},\lambda)$ over $X$ \cite[\S 5]{PW-Thom}, the authors construct a morphism $X \xrightarrow{} \mathrm{B}\SL_n$ by first mapping to a finite special linear Grassmannians. This requires the use of Jouanolou's device, which is why they impose the condition that every scheme admit an ample family of line bundles. This condition is, in fact, redundant.
\end{remark}

\begin{lemma}
    The above construction endows $\MML$ with a normalized $\ML$-orientation in the sense of \cite[Definition 3.3]{AnSL}. 
\end{lemma}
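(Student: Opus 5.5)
To show that the Thom classes of Construction~\ref{construction:thom_classes} form a normalized $\ML$-orientation in the sense of \cite[Definition 3.3]{AnSL}, I will check the axioms of that definition one by one. They are: (i) functoriality (naturality under pullback along morphisms of smooth schemes); (ii) invariance under isomorphisms of oriented vector bundles; (iii) multiplicativity, $\thom(\mathcal{E}\oplus\mathcal{E}')=q_1^*\thom(\mathcal{E})\cup q_2^*\thom(\mathcal{E}')$ for the direct sum of oriented bundles; and (iv) normalization, namely that for the trivial oriented line bundle $(\struct,\struct,\theta)$ over $S$ the class $\thom(\struct)\in\MML^{2,1}(\Th(\struct))=\MML^{2,1}(\Proj^1/\Proj^0)$ is the $\Proj^1$-suspension of the unit.

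\emph{Functoriality and isomorphism invariance.} Both are formal consequences of the construction. The classifying map $X\to\mathrm{B}\ML_n$ is natural in $X$. Isomorphic oriented bundles give homotopic (indeed equivalent) classifying maps in the groupoid $\mathrm{B}\ML_n(X)\simeq\Vect_n^\ML(X)$. The motivic Thom functor is functorial in the slice $\PShv(\SmS)_{/\K}$. Applying it to the composite $Y\xrightarrow{f}X\to\K^\ML_{\mathrm{rk}=0}$ therefore gives a morphism that factors through $\mathrm{M}(X\to\K)\cong\Omega^n_{\Proj^1}\Th_X(\mathcal{E})$, which is compatible with $\Th(f)$. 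For odd $n$, I also need that stabilizing by the trivial oriented line bundle is compatible with $\Sigma_{\Proj^1}$; this holds because $\mathrm{M}$ sends $[\mathcal{E}\oplus\struct]-[\struct^{n+1}]$ and $[\mathcal{E}]-[\struct^n]$ to the same object.

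\emph{Multiplicativity.} This is the step requiring care, and it should come from the symmetric monoidality of $\mathrm{M}$. The direct sum map $\mathrm{B}\ML_n\times\mathrm{B}\ML_m\to\mathrm{B}\ML_{n+m}$ is compatible with the $\Einf$ addition on $\K^\ML_{\mathrm{rk}=0}$. The ring structure on $\MML$ is induced by exactly this addition, and $\mathrm{M}$ sends products over $\K$ to tensor products. Hence the Thom class of $\mathcal{E}\boxplus\mathcal{E}'$ over $X\times X'$ is the external product of the two Thom classes. Pulling back along the diagonal, using functoriality, gives the internal formula. The expected obstacle is bookkeeping in the odd rank case: the chosen stabilizations $\mathrm{B}\ML_n\to\mathrm{B}\ML_{n+1}$ must be compatible with sums. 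Since $\K^\ML_\ev$ only has an $\Einf$ structure on even ranks, I would handle this by first treating even ranks and then reducing odd ranks via $\mathcal{E}\oplus\struct$. This reduction uses the normalization together with the fact that adding a trivial summand corresponds to $\Sigma_{\Proj^1}$; any permutation of the resulting $\struct$ summands only introduces signs that cancel in pairs.

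\emph{Normalization.} For the trivial bundle, the classifying map is the base point. $\mathrm{M}$ of the base point $S\to\K^\ML_{\mathrm{rk}=0}$ is the unit map $\sph\to\MML$. The identification $\Omega^1_{\Proj^1}\Th(\struct)\cong\sph$ is the standard one, so the class is $\Sigma^{2,1}1$, as required.
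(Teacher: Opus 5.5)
Your proposal is correct and follows the paper's proof. In both, functoriality and normalization are read off from the construction. Multiplicativity then comes from applying the symmetric monoidal Thom functor $\mathrm{M}$ to the square comparing $\mathrm{B}\ML_n\times\mathrm{B}\ML_m\to\mathrm{B}\ML_{n+m}$ with the addition $\mu$ on $\K^\ML_{\mathrm{rk=0}}$. The paper maps $X$ directly to $\mathrm{B}\ML_n\times\mathrm{B}\ML_m$ instead of using the external product and the diagonal, which amounts to the same thing.

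The paper leaves your odd-rank bookkeeping implicit. If you write it out, note that the reordering you need moves an $\struct$ past the odd-rank summand $\mathcal{E}'$, not merely among the $\struct$ summands. That reordering is an isomorphism of oriented bundles only after you also negate a trivial coordinate. The two resulting factors $\langle -1\rangle$, one from the reordering and one from the negation, then cancel.
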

\begin{proof}
    The functoriality and normalization properties of Thom classes follow from the construction. Hence, it remains to check that the Thom class of the direct sum $\mathcal{E}\oplus \mathcal{E}'$ over $X$ is given by the product of Thom classes of $\mathcal{E}$ and $\mathcal{E}'$ (here we omit orientations for simplicity). Assume that $\mathcal{E}$ (resp. $\mathcal{E}'$) has rank $n$ (resp. $m$) and consider the following commutative diagram:
    \[\xymatrix{ X \ar[r]^-{([\mathcal{E}],[\mathcal{E}'])} & \mathrm{B}\ML_n\times \mathrm{B}\ML_m \ar[r] \ar[d] & \mathrm{B}\ML_{n+m} \ar[d] \\ & \K^\ML_{\mathrm{rk=0}}\times\K^\ML_{\mathrm{rk=0}} \ar[r]^-\mu & \K^\ML_{\mathrm{rk=0}}, }\] where the map $\mu$ comes from the $\Einf$-group structure (it corresponds to the direct sum of oriented vector bundles). Applying the motivic Thom functor, we obtain the result. Indeed, the resulting composition via the top right is the Thom class of $\mathcal{E}\oplus \mathcal{E}'$ (up to $(n+m)$-th $\Proj^1$-suspension), while the composition via the bottom left is the product of the Thom classes (up to the same suspension), as required.
\end{proof}

\begin{remark}\label{remark:univ_prop_MML}
    Let $\EE$ be a motivic homotopy commutative ring spectrum over $S$ equipped with a morphism of homotopy commutative ring spectra $\MML\to \EE$. It follows that such a morphism defines a normalized $\ML$-orientation on $\EE$. Moreover, one can prove a weak universality of $\MML$ among such spectra similar to that established for $\MSL$ in the work of Panin and Walter \cite{PW-Thom}. Weak here means that for every $\ML$-oriented homotopy commutative ring spectrum $\EE$ there exists a map $\MML\to \EE$, but it might be non-unique and non-multiplicative in general (there are potentially non-trivial $\limit^1$-obstruction groups; see \cite[Theorem 5.9]{PW-Thom} for the case of $\MSL$).
\end{remark}

We now describe $\MML$ in terms of metalinear Grassmannians.

\begin{definition}
    The \textit{metalinear Grassmannian} $\MGr_n(\A^m;\Proj^N)\in \SmS$ is the complement of the zero section of the line bundle $\det(\gamma_{n,m}) \boxtimes \struct(2)$ over $\Gr_n(\A^m) \times \Proj^N$. Here $\Gr_n(\A^m)$ denotes the Grassmannian of $n$-dimensional vector subbundles of $\mathcal{O}^m_S$ and $\gamma_{n,m}$ is the tautological rank $n$ vector bundle over $\Gr_n(\A^m)$. 
\end{definition}

Denote by $\gamma^\ML_{n,m,N}$ the pullback of the tautological vector bundle along $\MGr_n(\A^m;\Proj^N)\to \Gr_n(\A^m)$. It admits a canonical structure of an oriented vector bundle, since the 
pullback of $\det(\gamma_{n,m})\boxtimes\struct(2)$ to $\MGr_n(\A^m;\Proj^N)$ has a canonical trivialization.
Denote by $\MGr_n$ the presheaf $\colim_{m,N} \MGr_n(\A^m;\Proj^N)$. We also denote by $\gamma_n^\ML$ the colimit of the oriented vector bundles $\gamma_{n,m,N}^\ML$.

\begin{lemma}\label{lemma:MGr_BML}
    The morphism $\MGr_n\to \mathrm{B}\ML_n$ that classifies $\gamma_n^\ML$ is a motivic equivalence.
\end{lemma}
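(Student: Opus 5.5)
Two reductions, after which it suffices to identify both sides with a quotient of a contractible-in-the-limit space by a free $\ML_n$-action. The first reduction identifies $\MGr_n(\A^m;\Proj^N)$ with $\widetilde{V}_{n,m}\times^{\ML_n} U_N$, the quotient of an open Stiefel-type variety by $\ML_n$. Here $\widetilde V_{n,m}\subset \A^{nm}$ is the variety of injective maps $\struct^n\to\struct^m$, with its free right $\GL_n$-action, and $U_N=\A^{N+1}\setminus\{0\}$ carries the $\Gm$-action by scaling.

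\textbf{Step 1 (torsor description).} The product $\widetilde V_{n,m}\times U_N$ is a $\GL_n\times\Gm$-torsor over $\Gr_n(\A^m)\times\Proj^N$; it is the frame bundle of $\gamma_{n,m}\boxtimes\struct(-1)$. Consider its quotient by the subgroup $\ML_n\subset\GL_n\times\Gm$. The quotient $(\GL_n\times\Gm)/\ML_n\cong\Gm$, via the character $(g,t)\mapsto t^{-2}\det g$ from \eqref{def_slnc}. Hence the quotient is the $\Gm$-torsor associated with this character, i.e.\ the complement of the zero section of $\det(\gamma_{n,m})\otimes\struct(2)$ (up to sign conventions for $\struct(\pm1)$). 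This is exactly $\MGr_n(\A^m;\Proj^N)$. Under this identification, the map to $\mathrm{B}\ML_n$ classifying $\gamma^\ML_{n,m,N}$ is the one classifying the $\ML_n$-torsor $\widetilde V_{n,m}\times U_N\to\MGr_n(\A^m;\Proj^N)$. One checks directly that the associated oriented vector bundle is $(\gamma,\struct(-1),\lambda)$, with $\lambda$ the tautological trivialization. This step is bookkeeping.

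\textbf{Step 2 (motivic equivalence).} Pass to the colimit over $m,N$. Then $\MGr_n\simeq (\widetilde V_{n,\infty}\times U_\infty)/\ML_n$, where $\ML_n$ acts freely on $E:=\widetilde V_{n,\infty}\times U_\infty$ through the closed embedding $\ML_n\subset\GL_n\times\Gm$. Now apply the standard criterion of Morel--Voevodsky (or \cite[Proposition 2.3.1]{AHW_BG}, in the form used for $\mathrm{B}\GL_n$ via Grassmannians). If $G$ acts freely on a presheaf $E$ that is $\A^1$-contractible and whose quotient is formed in the Nisnevich (equivalently Zariski, since $\ML_n$ is special by Lemma~\ref{lemma:ML_special}) topology, then $E/G\to\mathrm{B}G$ is a motivic equivalence. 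Indeed, $E/G\simeq E\times^G\mathrm{E}G\to\mathrm{B}G$ is a fibration with fiber $E$, and $E$ is $\A^1$-contractible. Contractibility of $U_\infty=\A^\infty\setminus 0$ is standard. For $\widetilde V_{n,\infty}$, use the usual shifting-coordinates homotopy from \cite{MV}, as in the $\GL_n$ case, where $\colim_m\widetilde V_{n,m}$ is shown $\A^1$-contractible.

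\textbf{Main obstacle.} The delicate point is that the quotient $E\to E/\ML_n$ is a Zariski-locally trivial torsor, so that the fiber-sequence argument applies in $\PShv$ after Zariski/Nisnevich localization. This follows because $E\to\MGr_n$ is an $\ML_n$-torsor, hence Zariski-locally trivial by Lemma~\ref{lemma:ML_special}. Therefore $\mathrm{L_{Zar}}(E/\ML_n)\simeq E\times^{\ML_n}\mathrm{E}\ML_n$. Combined with contractibility of $E$ and the motivic invariance of the Borel construction, this gives the equivalence.
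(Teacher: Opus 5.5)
Your argument is correct, but it does not follow the paper, which gives no proof of its own and simply cites \cite[Corollary 3.7]{BrazeltonWendtMSLc} (and \cite[proof of Proposition 6.2.1]{Haut}). You instead give a self-contained geometric classifying space argument in the style of Morel--Voevodsky.

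Your Step 1 checks out, and there is no sign ambiguity. If $U_N$ is the frame torsor of $\struct(-1)$, the line bundle attached to $\chi(g,t)=t^{-2}\det g$ is exactly $\det(\gamma_{n,m})\otimes\struct(2)$. The induced isomorphism $\det\gamma\cong\struct(-1)^{\otimes 2}$ is the tautological one defining $\gamma^\ML_{n,m,N}$.

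Conceptually, Step 1 is the finite-level shadow of the extension $1\to\ML_n\to\GL_n\times\Gm\xrightarrow{\chi}\Gm\to1$. That extension exhibits $\mathrm{B}\ML_n$ as the $\Gm$-torsor of $\det\boxtimes\mathcal{L}^{\otimes -2}$ over $\mathrm{B}\GL_n\times\mathrm{B}\Gm$, with $\mathcal{L}$ the universal line bundle. The value of your Step 2 is that it spares you from showing that this torsor, pulled back along the motivic equivalence $\Gr_n(\A^\infty)\times\Proj^\infty\to\mathrm{B}\GL_n\times\mathrm{B}\Gm$, still gives a motivic equivalence. That is not formal, because motivic localization is not left exact.

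Your only inputs are naive $\A^1$-homotopies (the shift trick on $\widetilde V_{n,\infty}$ and $U_\infty$) and Zariski-local triviality. So the proof works verbatim over an arbitrary qcqs base, which is the generality of Section~\ref{section:3}. The paper's citation is simply shorter.

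Two small corrections. First, $\widetilde V_{n,m}\times U_N$ is the product of the frame torsors of $\gamma_{n,m}$ and $\struct(-1)$ (pairs of frames), not the frame bundle of $\gamma_{n,m}\boxtimes\struct(-1)$.

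Second, ``a fibration with $\A^1$-contractible fiber'' is not by itself a valid inference in $\A^1$-homotopy theory. What carries the step is the levelwise argument you allude to with the ``motivic invariance of the Borel construction''. The map $E_{h\ML_n}=\colim_{\Delta^{\mathrm{op}}}E\times\ML_n^{\times k}\to\colim_{\Delta^{\mathrm{op}}}\ML_n^{\times k}$ is a colimit of motivic equivalences, because motivic localization preserves finite products. Since the action is free and each $\widetilde V_{n,m}\times U_N\to\MGr_n(\A^m;\Proj^N)$ is a Zariski-locally trivial torsor, one gets $\mathrm{L_{Zar}}E_{h\ML_n}\simeq\mathrm{L_{Zar}}\MGr_n$.

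For that local triviality you do not even need Lemma~\ref{lemma:ML_special}. The character $\chi$ has the section $s\mapsto(\mathrm{diag}(s,1,\dots,1),1)$, so the quotient map is Zariski-locally trivial wherever the $\GL_n\times\Gm$-torsor is.
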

\begin{proof}
    This is proven in \cite[Corollary 3.7]{BrazeltonWendtMSLc} (see also \cite[proof of Proposition 6.2.1]{Haut}).
\end{proof}

\begin{proposition}
    The maps $[\gamma^\ML_{2n}]-[\struct^{2n}]\colon\MGr_{2n}\to \K^\ML_{\mathrm{rk=0}}$ induce an equivalence of motivic spectra
    \[
    \colim_{n} \Omega^{2n}_{\Proj^1}\Th_{\MGr_{2n}} (\gamma^{\ML}_{2n})\cong\MML.
    \]
\end{proposition}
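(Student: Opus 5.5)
The plan is to combine Lemma~\ref{lemma:motivic_model_tildaK} with Lemma~\ref{lemma:MGr_BML}, and to compare the Thom spectra of $\MGr_{2n}$ and $\mathrm{B}\ML_{2n}$ through the functoriality of the motivic Thom functor $\mathrm{M}$.

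First, consider the composite
\[
\MGr_{2n}\to \mathrm{B}\ML_{2n}\to \K^\ML_{\mathrm{rk=0}}\to \K,
\]
where the first map classifies $\gamma^\ML_{2n}$. By construction this composite is the map $[\gamma^\ML_{2n}]-[\struct^{2n}]$, viewed as a point of the slice $\PShv(\SmS)_{/\K}$. Applying $\mathrm{M}$ gives a morphism
\[
\mathrm{M}\bigl([\gamma^\ML_{2n}]-[\struct^{2n}]\colon \MGr_{2n}\to\K\bigr)\to \mathrm{M}(\mathrm{B}\ML_{2n}\to\K)=\Omega^{2n}_{\Proj^1}\MML_{2n}.
\]
Lemma~\ref{lemma:MGr_BML} says $\MGr_{2n}\to\mathrm{B}\ML_{2n}$ is a motivic equivalence, and it is a morphism over $\K$. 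By \cite[Proposition 16.9(2) and Remark 16.11]{BH-Norms}, $\mathrm{M}$ inverts motivic equivalences over $\K$, so this morphism is an equivalence.

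Second, I identify the source with the Thom spectrum of $\gamma^\ML_{2n}$. Write $\MGr_{2n}=\colim_{m,N}\MGr_{2n}(\A^m;\Proj^N)$ and use that $\mathrm{M}$ commutes with colimits. For a smooth scheme $X$ with a vector bundle $\mathcal{E}$ of rank $r$, one has $\mathrm{M}([\mathcal{E}]-[\struct^r]\colon X\to\K)\cong \Omega^{r}_{\Proj^1}\Th_X(\mathcal{E})$; this is the identification already used in Construction~\ref{construction:thom_classes}, and it follows from $f_\#$ of the Thom spectrum being the Thom space. Passing to the colimit gives $\Omega^{2n}_{\Proj^1}\Th_{\MGr_{2n}}(\gamma^\ML_{2n})$, with the Thom space of the ind-scheme understood as the colimit of the finite Thom spaces.

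Third, I check compatibility with stabilization in $n$. The transition maps $\MGr_{2n}\to\MGr_{2n+2}$ are given by adding the trivial oriented bundle $(\struct^2,\struct,\id)$, and they cover the stabilization maps $\mathrm{B}\ML_{2n}\to\mathrm{B}\ML_{2n+2}$. Because $[\gamma\oplus\struct^2]-[\struct^{2n+2}]=[\gamma]-[\struct^{2n}]$ in $\K^\ML_{\mathrm{rk=0}}$, the triangles over $\K^\ML_{\mathrm{rk=0}}$ commute, so the equivalences from the first two steps assemble into an equivalence of colimit diagrams. Taking colimits and composing with Lemma~\ref{lemma:motivic_model_tildaK}, which gives $\colim_n\Omega^{2n}_{\Proj^1}\MML_{2n}\cong\MML$, yields the claim. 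Equivalently, one can note that $\colim_n\MGr_{2n}\to\K^\ML_{\mathrm{rk=0}}$ is a motivic equivalence, because a filtered colimit of motivic equivalences is one, and then apply $\mathrm{M}$ once.

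The main obstacle is the coherence in the third step: the maps $\MGr_{2n}\to\MGr_{2n+2}$ must be constructed so that they commute strictly, or at least coherently, with the classifying maps to $\mathrm{B}\ML$ and $\K^\ML_{\mathrm{rk=0}}$. On a point-set level one also needs an embedding $\Gr_{2n}(\A^m)\to\Gr_{2n+2}(\A^{m+2})$ compatible with the $\Proj^N$ factor, using the canonical trivialization of $\det(\struct^2)\cong\struct^{\otimes 2}$. I expect this to be routine once the maps are written down, since commutativity only needs to hold up to a specified homotopy in the slice $\infty$-category.
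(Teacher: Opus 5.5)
Your proposal is correct and follows the paper's own argument: you combine Lemma~\ref{lemma:MGr_BML} with Lemma~\ref{lemma:motivic_model_tildaK}, using that the motivic Thom functor inverts motivic equivalences over $\K$. Your identification of $\mathrm{M}([\mathcal{E}]-[\struct^r])$ with $\Omega^r_{\Proj^1}\Th_X(\mathcal{E})$ and your check that the stabilization maps are compatible fill in details the paper leaves implicit. The same holds for your shortcut through $\colim_n\MGr_{2n}\to\K^\ML_{\mathrm{rk=0}}$.
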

\begin{proof}
    This follows from Lemma \ref{lemma:MGr_BML} and Lemma \ref{lemma:motivic_model_tildaK}, since the motivic Thom functor inverts motivic equivalences over $\K$ (see \cite[Proposition 16.9(2) and Remark 16.11]{BH-Norms}).
\end{proof}

\section{Interpolation and the main cofiber sequence}\label{section:4}
In this section, we show that the Thom spectrum $\MML$ is equivalent to the free $\MSL$-module associated with $\Omega^1_{\Proj^1}\Th_{\Proj^\infty}(\struct(-2))$, establish the main cofiber sequence and parametrize its splittings. Along the way, we prove several important properties of the K-theory space $\K_\ev^\ML$.
\subsection{Free \texorpdfstring{$\MSL$}{MSL}-module model}
First we recall basics about Quillen's plus construction in $\infty$-topoi (see e.g., \cite{Hoyois_plus}). Let $\mathcal{C}$ be an $\infty$-topos and let $f\colon X\to Y$ be a map in $\mathcal{C}$. Then $f$ is called \textit{acyclic} if it is an epimorphism in the categorical sense. The class of such maps is closed under composition, (co)base change, colimits, and finite products; see \cite[Lemma 3.1]{Inf_loop_plus}. Moreover, if $g\circ f$ and $f$ are acyclic, then $g$ is acyclic. We denote by $X\to X^+$ the final object in the $\infty$-category of acyclic maps out of $X$, and call the respective functor $X\mapsto X^+$ the \textit{plus construction}. We need the following fact.
\begin{lemma}\label{lemma:plus_pullback}
    Suppose that $\mathcal{C}$ is a hypercomplete $\infty$-topos where $\pi_1$ preserves products, and that we have a diagram $X\to Z\leftarrow Y$ in $\mathcal{C}$ such that the objects $Y$ and $Z$ are equivalent to their plus constructions (e.g., $\Eone$-groups). Then the canonical morphism $(X\times_Z Y)^+\to X^+\times_Z Y$ is an equivalence.
\end{lemma}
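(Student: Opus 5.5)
We will compare the two objects through the canonical map \(X\to X^+\) and reduce the claim to two facts: a certain map is acyclic, and the target \(X^+\times_Z Y\) has the universal property of the plus construction. Set \(P=X\times_Z Y\) and \(Q=X^+\times_Z Y\). Since \(Z\simeq Z^+\), the map \(X\to Z\) factors uniquely through \(X\to X^+\). This gives a map \(X^+\to Z\), and hence the object \(Q\) is defined. The canonical map \(P\to Q\) is the base change of \(X\to X^+\) along the projection \(Q\to X^+\), because \(P\simeq X\times_{X^+}Q\). Acyclic maps are closed under base change, so \(P\to Q\) is acyclic. By the universal property of the plus construction, \(P\to Q\) therefore factors as \(P\to P^+\to Q\). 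Moreover \(P^+\to Q\) is acyclic by the two-out-of-three property for acyclic maps recalled above. It then suffices to show that \(Q\) is equivalent to its own plus construction, i.e. that \(Q\) is ``plus-local''. Then the acyclic map \(P\to Q\) exhibits \(Q\) as \(P^+\), and the comparison map is an equivalence.

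To show that \(Q\) is plus-local, we will use the standard characterization in a hypercomplete \(\infty\)-topos: an object \(W\) satisfies \(W\simeq W^+\) if and only if it is local with respect to acyclic maps. In terms of homotopy sheaves, this means that the sheaf \(\pi_1(W)\) (at every basepoint) has no nontrivial perfect normal subgroup, i.e. it is ``hypoabelian''. One could take this from Hoyois' treatment of the plus construction, where \(X^+\) is described as the localization killing the maximal perfect normal subgroup of \(\pi_1\). Since \(\mathcal{C}\) is hypercomplete, equivalences can be detected on homotopy sheaves, so the argument can be run sheaf-theoretically.

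Now consider the long exact sequence of homotopy sheaves for the pullback \(Q=X^+\times_Z Y\). This exhibits \(\pi_1(Q)\) as an extension involving \(\pi_2(Z)\) (an abelian sheaf) and a subgroup of \(\pi_1(X^+)\times\pi_1(Y)\). The hypothesis that \(\pi_1\) preserves products is used here: it identifies \(\pi_1(X^+\times Y)\) with \(\pi_1(X^+)\times\pi_1(Y)\), so that \(Q\to X^+\times Y\) can be analysed as the fiber of a map to \(Z\times Z\) through the diagonal. The three groups \(\pi_1(X^+)\), \(\pi_1(Y)\) and \(\pi_1(Z)\) are each hypoabelian. Being hypoabelian is inherited by subgroups of products of hypoabelian groups and by extensions by abelian groups, since a perfect group maps trivially to abelian groups. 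Hence \(\pi_1(Q)\) is hypoabelian, and \(Q\) is plus-local.

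The main obstacle is the second step: making the characterization ``plus-local \(\Leftrightarrow\) no perfect normal subgroup sheaf in \(\pi_1\)'' precise in a general hypercomplete \(\infty\)-topos, and handling basepoints sheaf-theoretically. If this becomes awkward, we would instead verify locality directly: show \(\mathrm{Map}(B,Q)\to\mathrm{Map}(A,Q)\) is an equivalence for every acyclic \(A\to B\). Since mapping spaces into a pullback are pullbacks of mapping spaces, this follows from the corresponding equivalences for \(X^+\), \(Y\) and \(Z\), each of which is plus-local. This second route is cleaner and avoids homotopy sheaves entirely; the hypotheses on hypercompleteness and on \(\pi_1\) are then needed only to ensure that plus-local objects are exactly the objects local for acyclic maps.
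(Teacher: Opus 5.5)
Your proposal is correct if you take your second route, and it differs genuinely from the paper's argument. Both arguments start the same way: $P=X\times_Z Y\to Q=X^+\times_Z Y$ is acyclic, being a base change of $X\to X^+$.

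\textbf{The paper's route.} It uses two-out-of-three to see that the comparison map $P^+\to Q$ is acyclic. It then concludes on the \emph{source} side: via Hoyois's criterion it reduces to showing that $\pi_1(P^+)$ is hypoabelian. This holds because the plus construction kills the maximal perfect subgroup. This is where hypercompleteness and the condition on $\pi_1$ are used.

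\textbf{Your route.} You work on the \emph{target} side. $Q$ is local for acyclic maps because $\mathrm{Map}(-,Q)$ is the pullback of $\mathrm{Map}(-,X^+)\to\mathrm{Map}(-,Z)\leftarrow\mathrm{Map}(-,Y)$. So the acyclic map $P\to Q$ exhibits $Q$ as $P^+$.

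\textbf{Your route needs neither extra hypothesis.} This contradicts your closing sentence. The equivalence ``$W\simeq W^+$ iff $W$ is local for acyclic maps'' holds in any $\infty$-topos:
\begin{enumerate}
\item To extend $A\to W$ along an acyclic $A\to B$, retract the acyclic cobase change $W\to B\sqcup_A W$ onto $W$, using finality of $\mathrm{id}_W$.
\item Uniqueness holds because acyclic maps are epimorphisms.
\end{enumerate}
The idempotence $X^+\simeq (X^+)^+$ that you use is equally formal. So your argument proves the lemma in an arbitrary $\infty$-topos, whereas the paper's proof is short only given Hoyois's $\pi_1$-criteria. Your first, homotopy-sheaf route is just a target-side version of the paper's $\pi_1$ argument and can be dropped.

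Even the locality check can be avoided. If $g\colon P^+\to Q$ is acyclic under $P$, finality yields $r\colon Q\to P^+$ under $P$ with $rg\simeq\mathrm{id}$. Also $gr\simeq\mathrm{id}$, because $P\to Q$ is an epimorphism.

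One correction to your first paragraph. With the paper's definition of $P\to P^+$ as the final acyclic map out of $P$, acyclicity of $P\to Q$ gives a map $Q\to P^+$, not a factorization $P\to P^+\to Q$. The canonical map $P^+\to Q$ comes instead from functoriality ($P^+\to X^+$ and $P^+\to Y^+\simeq Y$ over $Z^+\simeq Z$). Alternatively, once $Q$ is known to be local, it comes from the universal property of $P\to P^+$ among maps to local objects. In your route the two-out-of-three step is then unnecessary.
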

\begin{proof}
    We have the following commutative diagram in $\mathcal{C}$
    \[\begin{tikzcd}
	 & {X\times_Z Y} & \\
	{(X\times_Z Y)^+} &  & {X^+\times_Z Y.}
	\arrow[from=1-2, to=2-1]
	\arrow[from=1-2, to=2-3]
	\arrow[from=2-1, to=2-3]
    \end{tikzcd}\]
    The left diagonal map is acyclic by definition of the plus construction, and the right diagonal map is acyclic being the base change of $X\to X^+$. It follows that the desired morphism is acyclic as well. By \cite[Corollary 5]{Hoyois_plus}, it remains to show that $\pi_1((X\times_Z Y)^+)$ is hypoabelian (i.e., it does not contain nontrivial perfect subgroups). This holds by the assumption on $\pi_1$, since the map $X\times_Z Y\to (X\times_Z Y)^+$ kills the maximal perfect subgroup of $\pi_1(X\times_Z Y)$; see \cite[Corollary 10]{Hoyois_plus}.
\end{proof}
We will use Quillen's plus construction in the hypercomplete $\infty$-topos $\mathcal{C}=\PShv(\SmS)$.
\begin{notation}
    Consider the presheaf of stable oriented vector bundles of even rank $\sVect_\ev^\ML$ that is defined as the colimit of the sequence \[\Vect_\ev^\ML\xrightarrow{\oplus\, \struct^2}\Vect_\ev^\ML\xrightarrow{\oplus\, \struct^2}\Vect^\ML_\ev\xrightarrow{\oplus\, \struct^2}\cdots\] in $\PShv(\SmS)$. By definition, the canonical map $\Vect_\ev^\ML\to \Vect^{\ML,\mathrm{gp}}_\ev$ factor through $\sVect^\ML_\ev$. We also denote by $\sVect_\ev$ the presheaf of stable vector bundles (of even rank) defined by a similar colimit. The forgetful morphism $\Vect_\ev^\ML\to\Vect_\ev$ induces $\sVect_\ev^\ML\to\sVect_\ev$.
\end{notation}
\begin{lemma}\label{lemma:sVect_ori_plus}
    The morphism of presheaves of nonunital $\Einf$-semirings on $\SmS$ $$\sVect^\ML_\ev\to \Vect^{\mathrm{\ML,\mathrm{gp}}}_\ev$$ is Quillen's plus construction.
\end{lemma}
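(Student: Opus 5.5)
We want to show that $\sVect^\ML_\ev\to \Vect^{\ML,\mathrm{gp}}_\ev$ is acyclic and that its target is plus-local. The plan follows the classical group-completion theorem (McDuff--Segal / Quillen), now in the $\infty$-topos $\PShv(\SmS)$, in the form used in \cite{Inf_loop_plus} and \cite{Hoyois_plus}. Since everything is computed objectwise in presheaves, I would work sectionwise: fix $X\in\SmS$ and let $M=\Vect_\ev^\ML(X)$, an $\Einf$-monoid in spaces. Acyclicity of a map of presheaves can be checked objectwise, since epimorphisms and colimits in $\PShv(\SmS)$ are computed pointwise. The plus construction is also computed objectwise, because the plus construction in a presheaf topos is the sectionwise plus construction: the localization is generated by sectionwise acyclic maps. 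So it suffices to show, for each $X$, that $\colim(M\xrightarrow{\oplus\struct^2}M\to\cdots)\to M^{\mathrm{gp}}$ is the plus construction in spaces.

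Next, I would apply the group completion theorem for the $\Einf$-monoid $M$ with respect to the element $s=[\struct^2]$. Its hypothesis is that $\pi_0(M)[s^{-1}]$ coincides with the group completion $\pi_0(M)^{\mathrm{gp}}$, i.e.\ that $s$ is cofinal in $\pi_0 M$. Concretely, every oriented vector bundle $(\mathcal{E},\mathcal{L},\lambda)$ of even rank must be a direct summand of a trivial one $(\struct^{2k},\struct,\theta)$, up to adding trivial bundles. This fails for general $X$, because $\pi_0$ of a non-affine scheme's bundles need not be cofinal. I would therefore first reduce to affine $X$, or alternatively use the version of the theorem for which it suffices that the homology $H_*(M)[\pi_0^{-1}]$ is obtained by inverting $s$. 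On affine $X$, $\mathcal{E}$ is a summand of some $\struct^{m}$ with complement $\mathcal{E}'$, and we need to orient $\mathcal{E}'$ compatibly. I would take the complement to be $\mathcal{E}'\oplus\mathcal{E}$-twisted: the bundle $\mathcal{E}^\vee$ carries the orientation $(\mathcal{L}^{\vee},\lambda^{\vee})$, and $\mathcal{E}\oplus\mathcal{E}^\vee$ with $\mathcal{L}\otimes\mathcal{L}^\vee\cong\struct$ is a trivial-determinant bundle. I would then use that $\SL$-bundles of the form $\mathcal{E}\oplus\mathcal{E}^\vee$ become stably trivial after adding a complement, handling this via the same trick used for $\Vect^\SL_\ev$ in \cite{Inf_loop_plus}.

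Alternatively, and likely more cleanly, I would cite \cite[Proposition 5.1]{Inf_loop_plus} as in the proof of Lemma~\ref{lemma:motivic_model_tildaK}. The condition there is the cyclic-permutation criterion, and the target of the comparison is the group completion. The key observation, already used earlier, is that the cyclic permutation of $\struct^6=(\struct^2,\struct,\id)^{\oplus3}$ acts trivially up to homotopy after stabilization; this is exactly what makes the $\pi_1$-action on $H_*$ of the telescope abelian, hence makes the McDuff--Segal map a homology equivalence with the telescope of $M$ along $s$. In this version no cofinality issue arises, because the Zariski localization defining $\K^\ML_\ev$ is not involved: the statement is about $\Vect^{\ML,\mathrm{gp}}_\ev$ before $\mathrm{L_{Zar}}$. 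What remains is the $\pi_0$ hypothesis, which I would dispatch objectwise by noting that in the statement the source $\sVect$ is built only by adding $\struct^2$.

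The last step is to verify that the map is acyclic (a homology isomorphism with local coefficients) and that $M^{\mathrm{gp}}$ is plus-local, being an $\Einf$-group. Acyclic plus plus-local target identifies the map with $X\to X^+$. The main obstacle is the $\pi_0$/cofinality hypothesis of the group completion theorem. I would first try to bypass it by using the homology formulation, in which only the abelianness of the $\pi_1$-action (guaranteed by the cyclic permutation argument) is needed.
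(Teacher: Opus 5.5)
Your basic route matches the paper's one-line citation of \cite{McDuff-Segal} and \cite[Corollary 3.4]{Inf_loop_plus}. You apply McDuff--Segal sectionwise to $M=\Vect^\ML_\ev(X)$ stabilized along $s=[\struct^2]$, with hypothesis $\pi_0(M)[s^{-1}]=\pi_0(M)^{\mathrm{gp}}$, and then use that $M^{\mathrm{gp}}$ is plus-local. Your cofinality argument on affine $X$ is also fine. A direct complement $\mathcal{E}''$ of $\mathcal{E}$ in some $\struct^{2k}$, oriented by $\mathcal{L}^\vee$ with $\det\mathcal{E}''\cong(\mathcal{L}^\vee)^{\otimes 2}$ induced by $\lambda$, already works, so the detour through $\mathcal{E}^\vee$ is unnecessary.

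The gap is in how you propose to finish. The $\pi_0$ hypothesis cannot be bypassed. Acyclic maps in $\PShv(\SmS)$ are objectwise acyclic, and an acyclic map of spaces has nonempty connected fibres, so it is a bijection on $\pi_0$. Hence $\pi_0(M)[s^{-1}]\cong\pi_0(M)^{\mathrm{gp}}$ is a necessary condition, and a ``homology formulation'' cannot avoid it: in degree $0$ it is the same condition. The cyclic-permutation criterion of \cite[Proposition 5.1]{Inf_loop_plus} does not help either. In the $1$-groupoid $\Vect^\ML_\ev(X)$ the cyclic permutation of $\struct^6$ is a nontrivial element of $\ML_6(X)$, and it becomes trivial only in $\mathrm{L}_{\A^1}\Vect^\ML_\ev$. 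In any case, such a condition serves to make the telescope an H-space (so that no plus construction is needed); that is a statement about the source, not about acyclicity of the map. Your remark that $\sVect^\ML_\ev$ ``is built only by adding $\struct^2$'' does not dispatch anything.

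Likewise, ``reduce to affine $X$'' is not a reduction. Everything is sectionwise, so the values at non-affine $X$ are independent claims, and there they fail. Take $X=\Proj^1_k$ and the class $-[(\struct(1)^{\oplus 2},\struct(1),\mathrm{can})]\in\pi_0(M)^{\mathrm{gp}}$. For it to lie in the image of $\pi_0(\sVect^\ML_\ev(X))$ you would need bundles $\mathcal{F},\mathcal{G}$ with $\mathcal{F}\oplus\struct(1)^{\oplus 2}\oplus\mathcal{G}\cong\struct^{\oplus 2n}\oplus\mathcal{G}$. This is impossible by Grothendieck's splitting theorem and Krull--Schmidt on $\Proj^1_k$.

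So the lemma, and the citation in the paper, has to be read on affine smooth $S$-schemes. There your cofinality argument gives exactly the hypothesis of \cite[Corollary 3.4]{Inf_loop_plus}. This is all that is used downstream. In Proposition~\ref{prop:KML_pulback_descr} one can run Lemma~\ref{lemma:plus_pullback} in presheaves on affine smooth $S$-schemes and then Zariski-localize, since $\K^\ML_\ev=\mathrm{L_{Zar}}(\Vect^{\ML,\mathrm{gp}}_\ev)$ depends only on the restriction to affines.
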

\begin{proof}
    The statement is a content of the McDuff--Segal group completion theorem \cite{McDuff-Segal} (see also \cite[Corollary 3.4]{Inf_loop_plus}).
\end{proof}
\begin{notation}
Denote by $\sqrt{\det}\colon\Vect^\ML_\ev\to \Pic$ the morphism that maps an oriented vector bundle $(\mathcal{E},\mathcal{L},\lambda)$ to the line bundle $\mathcal{L}$. It is a morphism of presheaves of $\Einf$-monoids, which induces morphisms $\sVect^\ML_\ev\to\Pic$, $\Vect^{\ML,\mathrm{gp}}_\ev\to\Pic$, and $\K^\ML_\ev\to \Pic$.  Slightly abusing notation, we denote all these maps also by $\sqrt{\det}$.
\end{notation}
\begin{proposition}\label{prop:KML_pulback_descr}
    The following square of $\Einf$-groups in $\PShv(\SmS)$ is cartesian:
    \[\begin{tikzcd}
	{\K^\ML_\ev} & {\Pic} \\
	{\K_\ev} & {\Pic.}
	\arrow["\sqrt{\det}", from=1-1, to=1-2]
	\arrow[from=1-1, to=2-1]
	\arrow["\cdot 2", from=1-2, to=2-2]
	\arrow["\det", from=2-1, to=2-2]
\end{tikzcd}\]
\end{proposition}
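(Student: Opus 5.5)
First I would establish the analogous square at the level of monoids, before group completion:
\[ \begin{tikzcd} \Vect^\ML_\ev \arrow[r,"\sqrt{\det}"] \arrow[d] & \Pic \arrow[d,"\cdot 2"] \\ \Vect_\ev \arrow[r,"\det"] & \Pic. \end{tikzcd} \]
This square is cartesian by definition. Indeed, the homotopy fiber product $\Vect_\ev\times_{\Pic}\Pic$ over $X$ is the groupoid of triples $(\mathcal{E},\mathcal{L},\lambda)$ with $\lambda\colon\det\mathcal{E}\simeq\mathcal{L}^{\otimes 2}$. Here $\Pic$ is $1$-truncated, so the fiber product of groupoids is computed by the usual $2$-categorical pullback. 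Since filtered colimits in $\PShv(\SmS)$ commute with pullbacks, we may stabilize along $\oplus\,\struct^2$ (whose image in $\Pic$ is $\struct$ and whose square root is $\struct$, compatibly). This gives a cartesian square with $\sVect^\ML_\ev$ and $\sVect_\ev$ in the left column.

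Next I would pass to group completions via the plus construction. By Lemma~\ref{lemma:sVect_ori_plus} and its non-oriented analogue (McDuff--Segal again), $\sVect_\ev^+\simeq\Vect_\ev^{\mathrm{gp}}$ and $(\sVect^\ML_\ev)^+\simeq\Vect^{\ML,\mathrm{gp}}_\ev$. I would then apply Lemma~\ref{lemma:plus_pullback} in $\PShv(\SmS)$, which is hypercomplete and in which $\pi_1$, computed objectwise, preserves products. I take $X=\sVect_\ev$ and the diagram $X\to Z=\Pic\xleftarrow{\cdot 2}Y=\Pic$; here $Y$ and $Z$ are $\Einf$-groups and hence plus-constructed. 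The lemma gives
\[ (\sVect^\ML_\ev)^+\simeq(\sVect_\ev\times_{\Pic}\Pic)^+\simeq \Vect_\ev^{\mathrm{gp}}\times_{\Pic}\Pic. \]
One must check that the resulting map agrees with the canonical one induced by $\sqrt{\det}$ and the forgetful map. This follows from the universal property of the plus construction, because all maps factor compatibly. So the square is cartesian before Zariski localization.

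Finally I would apply $\mathrm{L_{Zar}}$. Sheafification is left exact, so it preserves the pullback. Moreover $\Pic$ is already a Zariski sheaf, so the right column is unchanged. This yields exactly the square in the statement, with $\K^\ML_\ev$ and $\K_\ev$ in the left column. The square is one of $\Einf$-groups because all maps are $\Einf$-maps and limits of $\Einf$-groups are computed on underlying objects.

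I expect the main obstacle to be the middle step, namely the precise compatibility needed to apply Lemma~\ref{lemma:plus_pullback}. One must verify that the comparison map it produces is the one induced by group completion of the monoid square. One must also check that the hypothesis on $\pi_1$ preserving products holds in $\PShv(\SmS)$, which I expect to be immediate objectwise. The stabilization step also needs a little care to make the transition maps in the $\Pic$ column identities up to coherent equivalence. For this I would use that $\oplus\,\struct^2$ acts on $\Pic$ by tensoring with the trivial line bundle $\struct$, which is canonically the identity.
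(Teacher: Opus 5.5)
Your proposal is correct and follows essentially the same route as the paper. The square of stacks is cartesian by definition and stays cartesian after stabilizing along $\oplus\,\struct^2$; the paper cites universality of colimits here, where you cite commutation of filtered colimits with pullbacks. Lemma~\ref{lemma:plus_pullback} together with Lemma~\ref{lemma:sVect_ori_plus} then identifies the group completions, and left exactness of Zariski localization finishes the argument. The compatibility of the comparison map with $\sqrt{\det}$ and the forgetful map, which you flag as the main obstacle, is also treated as routine in the paper.
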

\begin{proof}
    First, note that by definition, the stack $\Vect^\ML_\ev$ is the pullback of the diagram $$\Vect_\ev\xrightarrow{\det} \Pic \xleftarrow{\cdot 2} \Pic$$ with the maps $\Pic\leftarrow\Vect^\ML_\ev\to\Vect_\ev$ being $\sqrt{\det}$ and the forgetful map, respectively. Then by universality of colimits in the $\infty$-topos of presheaves, we see that the following square is cartesian
    \[\begin{tikzcd}
	{\sVect^\ML_\ev} & {\Pic} \\
	{\sVect_\ev} & {\Pic.}
	\arrow["\sqrt{\det}", from=1-1, to=1-2]
	\arrow[from=1-1, to=2-1]
	\arrow["\cdot 2", from=1-2, to=2-2]
	\arrow["\det", from=2-1, to=2-2]
    \end{tikzcd}\]
    Now using Lemma \ref{lemma:plus_pullback} for $\mathcal{C}=\PShv(\SmS)$ and Lemma \ref{lemma:sVect_ori_plus} (together with a similar equivalence $\sVect^+_\ev\xrightarrow{\simeq}\Vect^{\mathrm{gp}}_\ev$), we obtain that the limit \[\limit(\Vect^{\mathrm{gp}}_\ev\xrightarrow{\det}\Pic\xleftarrow{\cdot 2} \Pic)\] is given by the group completion $\Vect^{\ML,\mathrm{gp}}_\ev$. Moreover, morphisms out of $\Vect^{\ML,\mathrm{gp}}_\ev$ in this pullback square are $\sqrt{\det}$ and the forgetful one, as usual. In order to proceed to the K-theory spaces, it remains to notice that the Zariski localization commutes with finite limits.
\end{proof}
\begin{remark}
    In \cite[Remark 7.11]{HJNY-HermKtheory} and \cite[\S 6.3]{BrazeltonWendtMSLc} (see also \cite[Construction 9.17]{Hoyois_Land} in the non-$\A^1$-invariant context) the authors define the spectrum $\MML$ as the Thom spectrum associated with the rank zero part of \[\limit(\K\xrightarrow{\det} \Pic \xleftarrow{\cdot 2}\Pic).\] This definition is equivalent to ours according to Proposition \ref{prop:KML_pulback_descr}.
\end{remark}
\begin{corollary}\label{cor:KSL_KML_fiber_seq}
    There is a fiber sequence $\K^\SL_\ev\to \K^\ML_\ev\xrightarrow{\sqrt{\det}} \Pic$ in $\PShv(\SmS)$, where the first map is a forgetful morphism. A similar fiber sequence exists on the level of rank zero presheaves. 
\end{corollary}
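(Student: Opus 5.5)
The plan is to deduce the corollary from the cartesian square of Proposition~\ref{prop:KML_pulback_descr}, by comparing fibers over the base point of $\Pic$. Since the square is cartesian, the fiber of $\sqrt{\det}\colon \K^\ML_\ev\to\Pic$ over the trivial line bundle is equivalent to the fiber of the bottom map $\det\colon\K_\ev\to\Pic$ over the trivial line bundle. Here we use that $\cdot 2$ sends the base point of $\Pic$ to the base point, and that fibers of a pullback agree with fibers of the base-changed map. Taking fibers is computed objectwise in $\PShv(\SmS)$, and finite limits commute with each other, so no sheafification issues arise.

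It therefore remains to identify $\mathrm{fib}(\det\colon\K_\ev\to\Pic)$ with $\K^\SL_\ev$, compatibly with the forgetful map. I would prove this by the same pattern as in Proposition~\ref{prop:KML_pulback_descr}. On the level of stacks, $\Vect^\SL_\ev$ is the fiber of $\det\colon\Vect_\ev\to\Pic$, since a strictly oriented vector bundle is a vector bundle together with a trivialization of its determinant. Universality of colimits in $\PShv(\SmS)$ then gives $\sVect^\SL_\ev\simeq\sVect_\ev\times_\Pic\{*\}$, where we stabilize by $\struct^2$ with its canonical trivialized determinant. Next I would apply Lemma~\ref{lemma:plus_pullback} to the diagram $\sVect_\ev\to\Pic\leftarrow *$; here $*$ and $\Pic$ are $\Eone$-groups, hence equal to their plus constructions. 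Together with the McDuff--Segal identification $\sVect^{\SL,+}_\ev\simeq\Vect^{\SL,\mathrm{gp}}_\ev$, which is the analog of Lemma~\ref{lemma:sVect_ori_plus}, this yields $\Vect^{\SL,\mathrm{gp}}_\ev\simeq\mathrm{fib}(\Vect^\mathrm{gp}_\ev\to\Pic)$. Zariski localization commutes with finite limits, so $\K^\SL_\ev\simeq\mathrm{fib}(\det\colon\K_\ev\to\Pic)$.

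Finally, I need to check that the composite equivalence $\K^\SL_\ev\simeq\mathrm{fib}(\sqrt{\det})$ is the forgetful map. This should follow because the forgetful maps $\Vect^\SL_\ev\to\Vect^\ML_\ev\to\Vect_\ev$ are compatible with the pullback descriptions: a trivialization $\det\mathcal{E}\cong\struct$ gives $(\mathcal{E},\struct,\lambda)$ with trivial $\mathcal{L}$. For the rank-zero version, I would pull back along $\{0\}\to\underline{2\Z}$; this commutes with fibers over $\Pic$, since rank is compatible with all maps.

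The main obstacle is the bookkeeping that the maps appearing in the cartesian squares really are the forgetful ones after group completion, and that Lemma~\ref{lemma:plus_pullback} applies. The latter requires that $\pi_1$ preserve products in presheaves; this holds objectwise, so I expect no real difficulty.
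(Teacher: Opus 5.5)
Your argument is correct and essentially matches the paper's. The corollary is obtained from the cartesian square of Proposition~\ref{prop:KML_pulback_descr} by pasting pullbacks; the only difference is that the paper cites the fiber sequence $\K^\SL_\ev\to\K_\ev\xrightarrow{\det}\Pic$ from \cite[Example 3.3.4]{Mb-modules} rather than re-deriving it, as you do. If you re-derive it, note that the identification $\sVect^{+}\simeq\Vect^{\mathrm{gp}}$ needs $\struct^2$ to be cofinal on $\pi_0$: this holds objectwise on affine schemes but fails on $\Proj^1$, where no bundle $\mathcal{E}$ makes $\struct(1)\oplus\struct(-1)\oplus\mathcal{E}$ trivial, so that step should be run on affines before Zariski-localizing (the same caveat applies to Lemma~\ref{lemma:sVect_ori_plus}).
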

\begin{proof}
    This follows from Proposition \ref{prop:KML_pulback_descr} and the fiber sequence $\K^\SL_\ev\to \K_\ev\xrightarrow{\det}\Pic$ (see \cite[Example 3.3.4]{Mb-modules}).
\end{proof}
\begin{construction}
    Consider the morphism of groupoids $s(X)\colon\Pic(X) \to \Vect^\ML_\ev(X)$ that maps a line bundle $\mathcal{L}$ over $X \in \SmS$ to $(\mathcal{L}^{\otimes 2}\oplus \struct_X, \mathcal{L}, \id)$. It yields a section of the map $\sqrt{\det}\colon\Vect^\ML_\ev(X) \to \Pic(X)$. After group completion and Zariski localization of the corresponding presheaves, we obtain a section of $\sqrt{\det}\colon\K^\ML_\ev \to \Pic$, which we denote by $s$. Restricting to the rank zero summand, we get a section $s_{\mathrm{rk=0}}\colon\Pic \to \K^\ML_{\mathrm{rk=0}}$ of $\sqrt{\det}_{\mathrm{rk=0}}\colon \K^\ML_{\mathrm{rk=0}}\to\Pic$.
\end{construction}
\begin{proposition}\label{prop:KSLxPic=KML}
    The morphism of presheaves on $\SmS$
    \[\K^{\SL}_\ev \times \Pic \xrightarrow{\id \times s} \K^{\SL}_\ev \times \K^\ML_\ev \xrightarrow{\mathrm{act}} \K^\ML_\ev\]
    is an equivalence.
    Here $\mathrm{act}$ denotes the action on the $\K^{\SL}_\ev$-module $\K^\ML_\ev$. For the rank zero parts of the previous presheaves, this leads to the equivalence $\K^{\SL}_{\mathrm{rk=0}} \times \Pic \cong \K^\ML_{\mathrm{rk=0}}.$
\end{proposition}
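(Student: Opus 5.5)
Throughout, $\mathrm{act}$ is the module structure that $\K^\ML_\ev$ acquires, as an $\Einf$-group under direct sum, from the forgetful morphism $\iota\colon \K^\SL_\ev\to \K^\ML_\ev$ of \eqref{equat_kring_maps}. So $\mathrm{act}(a,x)\simeq \iota(a)+x$, and the composite in the statement is $\Phi(a,\mathcal{L})=\iota(a)+s(\mathcal{L})$. This is the only reading under which the claim can hold. If $\mathrm{act}$ were the tensor-product action, ranks would multiply: $a$ of rank $2m$ and $s(\mathcal{L})$ of rank $2$ would give an element of rank $4m$, so rank-$2$ classes would never be hit. The plan is to show that $\Phi$ is a morphism over $\Pic$ and that it is an equivalence on fibers. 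Equivalences in $\PShv(\SmS)$ are detected objectwise, so I will argue for each $X\in\SmS$ with the space-level fibration $\K^\SL_\ev(X)\to\K^\ML_\ev(X)\to\Pic(X)$ of Corollary \ref{cor:KSL_KML_fiber_seq}.

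\emph{Step 1: $\Phi$ lies over $\Pic$.} Let $\mathrm{pr}_2\colon \K^\SL_\ev\times\Pic\to\Pic$ be the projection. Since $\sqrt{\det}$ is a morphism of $\Einf$-groups,
\[\sqrt{\det}\circ\Phi\simeq (\sqrt{\det}\circ\iota)\otimes(\sqrt{\det}\circ s)\circ\mathrm{pr}_2 .\]
The first factor $\sqrt{\det}\circ\iota$ is canonically trivial, via the null-homotopy that comes with the fiber sequence of Corollary \ref{cor:KSL_KML_fiber_seq}. The second factor $\sqrt{\det}\circ s$ is canonically the identity, because $s$ is a section: on $\Vect$ one has $\sqrt{\det}(\mathcal{L}^{\otimes2}\oplus\struct,\mathcal{L},\id)=\mathcal{L}$ on the nose, and this persists after group completion and Zariski localization. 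Together these give a homotopy $\sqrt{\det}\circ\Phi\simeq\mathrm{pr}_2$, so $\Phi$ becomes a map of objects over $\Pic$.

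\emph{Step 2: fiberwise equivalence.} A map over $\Pic$ is an equivalence precisely when it induces equivalences on all homotopy fibers, and since $\Pic(X)$ is a groupoid it suffices to check each point $\mathcal{L}$. The fiber of $\mathrm{pr}_2$ over $\mathcal{L}$ is $\K^\SL_\ev(X)$. The fiber of $\sqrt{\det}$ over $\mathcal{L}$ is $F_\mathcal{L}$. The induced map $\K^\SL_\ev(X)\to F_{\mathcal{L}}$ is $a\mapsto\iota(a)+s(\mathcal{L})$, which factors as
\[\K^\SL_\ev(X)\xrightarrow{\ \simeq\ }F_{1}\xrightarrow{\ +s(\mathcal{L})\ }F_{\mathcal{L}}.\]
The first map is the fiber identification from Corollary \ref{cor:KSL_KML_fiber_seq}. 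The second map is an equivalence because $\K^\ML_\ev(X)$ is grouplike and $\sqrt{\det}$ is a homomorphism: translation by $-s(\mathcal{L})$ restricts to an inverse $F_\mathcal{L}\to F_1$. Hence $\Phi$ is an equivalence.

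\emph{Step 3: rank zero.} Replace $s$ by $s_{\mathrm{rk=0}}=s-[\struct^2]$, where $\struct^2$ carries its trivial orientation $(\struct^2,\struct,\id)$. This changes $\Phi$ only by translation by a fixed element with trivial $\sqrt{\det}$, so $\Phi$ remains an equivalence over $\Pic$. Since $\Phi$ adds ranks and $s_{\mathrm{rk=0}}$ has rank $0$, the equivalence is compatible with the rank maps to $\underline{2\Z}$. Taking fibers over $0$ then gives $\K^\SL_{\mathrm{rk=0}}\times\Pic\cong\K^\ML_{\mathrm{rk=0}}$.

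The main obstacle is Step 1: making the homotopy $\sqrt{\det}\circ\Phi\simeq\mathrm{pr}_2$ coherent, so that Step 2 legitimately compares homotopy fibers. I would handle this by building everything on the level of $\Vect^\SL_\ev\times\Pic\to\Vect^\ML_\ev$, where the identity holds strictly as a map of groupoids. I would then apply group completion and Zariski localization, which preserve the commuting triangle, and use the pullback description of Proposition \ref{prop:KML_pulback_descr} to identify the fibers.
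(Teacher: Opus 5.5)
Your proposal is correct and takes essentially the same route as the paper. The paper deduces the statement in one line from the fiber sequence $\K^\SL_\ev\to\K^\ML_\ev\xrightarrow{\sqrt{\det}}\Pic$ of Corollary \ref{cor:KSL_KML_fiber_seq} together with the section $s$, via the standard splitting argument (citing Devalapurkar--Haine, Lemma 3.12), with $\mathrm{act}$ the additive action exactly as you read it. Your Steps 1--3 unpack that argument objectwise, and the coherence worry you raise is mild: a map in the slice over $\Pic$ needs only the single homotopy $\sqrt{\det}\circ\Phi\simeq\mathrm{pr}_2$ that you construct.
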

\begin{proof}
    This follows from Corollary \ref{cor:KSL_KML_fiber_seq} by the usual topological argument (see \cite[Lemma 3.12]{Splitting_lemma} for a discussion in general $\infty$-categories with finite limits).
\end{proof}
\begin{remark}\label{remark:SGrxP=MGr}
    Combining Proposition \ref{prop:KSLxPic=KML} with the motivic equivalences $\KSLrk\cong_\mot\mathrm{SGr}_\infty$ (here $\SGr_\infty$ is the infinite special linear Grassmannian, see \cite[\S 4]{PW-Thom}), $\Pic\cong_{\mot} \Proj^\infty$, $\mathrm{K^{ML}_{rk=0}}\cong_{\mot}\MGr_\infty$, we obtain that the following composite 
    \begin{equation}\label{equation:SGrxP=MGr}
        \SGr_\infty\times \Proj^\infty\to \SGr_\infty\times \MGr_\infty\to \MGr_\infty
    \end{equation}
    is a motivic equivalence. Here the second map is the action morphism, which actually factors through the multiplication map $\MGr_\infty\times \MGr_\infty\to \MGr_\infty$ on the motivic $\mathrm{H}$-space $\MGr_\infty$. One can also rephrase this at the level of classifying spaces as 
    \[ \mathrm{BSL}_\infty\times \mathrm{B}\Gm\xrightarrow{\simeq}_{\mot}\mathrm{BML}_\infty. \]
\end{remark}
\begin{theorem}\label{thm:mslc_free}
    Let $S$ be a qcqs scheme. The morphism of $\MSL$-modules over $S$
    \[\MSL\wedge \Omega^{1}_{\Proj^1}\Th_{\Proj^\infty}(\struct(-2))\xrightarrow{\id\wedge \Omega^{1}_{\Proj^1}\thom(\struct(-2))} \MSL\wedge \MML\xrightarrow{\mathrm{act}} \MML\] is an equivalence. Here $\mathrm{act}$ is the action on the $\MSL$-module $\MML$, and $\thom(\struct(-2))$ is the Thom class of $\struct(-2)$ in $\MML$-cohomology.
\end{theorem}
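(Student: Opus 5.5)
The plan is to deduce the statement from the equivalence of presheaves over $\K$ in Proposition~\ref{prop:KSLxPic=KML}, using that the motivic Thom functor $\mathrm{M}\colon \PShv(\SmS)_{/\K}\to\SHS$ is symmetric monoidal, commutes with colimits and inverts motivic equivalences over $\K$.

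\textbf{Step 1: decompose the rank-zero presheaf.} By Proposition~\ref{prop:KSLxPic=KML} the map
\[\KSLrk\times\Pic\xrightarrow{\id\times s_{\mathrm{rk=0}}}\KSLrk\times\K^\ML_{\mathrm{rk=0}}\xrightarrow{\mathrm{act}}\K^\ML_{\mathrm{rk=0}}\]
is an equivalence. We compose it with $\K^\ML_{\mathrm{rk=0}}\to\K$ and regard it as an equivalence in $\PShv(\SmS)_{/\K}$. The structure map of the source is the sum (for the $\Einf$-group structure on $\K$) of the map $\KSLrk\to\K$ and the map $\Pic\to\K$ given by
\[\mathcal L\mapsto[\mathcal L^{\otimes2}\oplus\struct]-[\struct^2]=[\mathcal L^{\otimes 2}]-[\struct].\]
The identification uses that the forgetful maps $\K^\SL_\ev\to\K^\ML_\ev\to\K_\ev$ are compatible with the module and ring structures.

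\textbf{Step 2: apply the Thom functor.} Since $\mathrm{M}$ is symmetric monoidal, it sends a product of objects over $\K$, with structure map the sum, to the smash product. It also sends the action map to the $\MSL$-module action on $\MML$. Hence $\MML$ is equivalent to $\MSL\wedge \mathrm{M}(\Pic\to\K)$, and the equivalence is the composite of $\id\wedge \mathrm{M}(s_{\mathrm{rk=0}})$ with $\mathrm{act}$. It remains to identify $\mathrm{M}(\Pic\to\K)$ and the map $\mathrm{M}(s_{\mathrm{rk=0}})$.

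Using $\Pic\simeq_\mot\Proj^\infty$ and the computation $\mathrm{M}(\xi-[\struct^n]\colon X\to\K)\cong\Omega^n_{\Proj^1}\Th_X(\xi)$ from Construction~\ref{construction:thom_classes}, we get
\[\mathrm{M}(\Pic\to\K)\cong\Omega^1_{\Proj^1}\Th_{\Proj^\infty}(\mathcal L^{\otimes2}),\]
where $\mathcal L$ is the tautological line bundle. If $\Pic\simeq_\mot\Proj^\infty$ is realized by $\struct(-1)$, this line bundle is $\struct(-2)$. The map $\mathrm{M}(s_{\mathrm{rk=0}})$ is then, by Construction~\ref{construction:thom_classes}, exactly the (desuspended) Thom class of the oriented vector bundle $(\struct(-2)\oplus\struct,\struct(-1),\id)$, i.e.\ of $\struct(-2)$ with its evident orientation. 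This is the map $\Omega^1_{\Proj^1}\thom(\struct(-2))$ in the statement.

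\textbf{Step 3: consistency checks.} Two points need care. The first is the rank and normalization bookkeeping: $s$ lands in rank $2$, and we must subtract $[\struct^2]$. This is consistent with Construction~\ref{construction:thom_classes}, where rank-$1$ classes are stabilized via $\mathrm{BML}_1\to\mathrm{BML}_2$; this stabilization adds the summand $\struct$ on which $\Th$ becomes a $\Proj^1$-suspension. The second is that passing through motivic equivalences over $\K$ is harmless, by \cite[Proposition 16.9(2)]{BH-Norms}.

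\textbf{Main obstacle.} The hardest step is making precise that $\mathrm{M}$ turns the action map $\KSLrk\times\K^\ML_{\mathrm{rk=0}}\to\K^\ML_{\mathrm{rk=0}}$ into the $\MSL$-module action. I would handle this by viewing $\K^\ML_{\mathrm{rk=0}}$ as a $\KSLrk$-module in the $\Einf$-monoidal category $\PShv(\SmS)_{/\K}$, i.e.\ as a module over an $\Einf$-algebra. Since $\mathrm{M}$ is symmetric monoidal, it sends module objects to module objects, so $\MML$ becomes an $\MSL$-module with this action. That this module structure agrees with the one coming from the ring map $\MSL\to\MML$ then follows because both are induced from the same map of $\Einf$-algebras $\KSLrk\to\K^\ML_{\mathrm{rk=0}}$ over $\K$.
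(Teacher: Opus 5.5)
Your proposal is correct and follows essentially the same route as the paper. You apply the motivic Thom functor to the equivalence $\KSLrk\times\Pic\simeq\K^\ML_{\mathrm{rk=0}}$ over $\K$ from Proposition~\ref{prop:KSLxPic=KML}, then identify $\mathrm{M}(\Pic\to\K)$ with $\Omega^1_{\Proj^1}\Th_{\Proj^\infty}(\struct(-2))$ via $\struct(-1)\colon\Proj^\infty\to\Pic$, and $\mathrm{M}(s_{\mathrm{rk=0}})$ with the desuspended Thom class. The only difference is presentational: the paper first notes the map is $\MSL$-linear and so only checks an equivalence in $\SHS$, whereas you spell out why $\mathrm{M}$ carries the presheaf-level action to the $\MSL$-module action on $\MML$, a point the paper leaves as ``easy to see.''
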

\begin{proof}
    It is easy to see that this morphism is a map of $\MSL$-modules. Hence, it suffices to prove that it is an equivalence in $\SHS$. By Proposition \ref{prop:KSLxPic=KML}, the following composition
    \[\K^\SL_{\mathrm{rk=0}}\times \Pic\xrightarrow{\id\times s_{\mathrm{rk=0}}} \K^\SL_{\mathrm{rk=0}}\times \K^\ML_{\mathrm{rk=0}}\xrightarrow{\mathrm{act}} \K^\ML_{\mathrm{rk=0}}\] is an equivalence. Moreover, a straightforward verification shows that this map is an equivalence in $\PShv(\SmS)_{/\K}$ (where $\Pic$ viewed as an object of the slice $\infty$-category via $\Pic\xrightarrow{s_{\mathrm{rk=0}}}\K^\ML_{\mathrm{rk=0}}\to\K$). Therefore, applying the motivic Thom functor, we obtain that the composite
    \[\MSL\wedge \mathrm{M}(\Pic\xrightarrow{s_{\mathrm{rk=0}}}\K^\ML_{\mathrm{rk=0}}\to\K)\xrightarrow{\id\wedge \mathrm{M}(s_{\mathrm{rk=0}})} \MSL\wedge \MML\xrightarrow{\mathrm{act}} \MML\]
    is an isomorphism in $\SHS$.
    It remains to identify the Thom spectrum of $\Pic\to\K$. For this purpose, consider the isomorphism of Thom spectra induced by the usual motivic equivalence $\struct(-1)\colon\Proj^\infty\to\Pic$. It remains to notice that the composition \[\Omega^{1}_{\Proj^1}\Th_{\Proj^\infty}(\struct(-2))\cong \mathrm{M}(\Proj^\infty\xrightarrow{[\struct(-2)]-[\struct]}\K) \xrightarrow{\mathrm{M}(\struct(-1))} \mathrm{M}(\Pic\xrightarrow{s_{\mathrm{rk=0}}}\K^\ML_{\mathrm{rk=0}}\to\K)\xrightarrow{\mathrm{M}(s_{\mathrm{rk=0}})} \MML\] is homotopic to the $\Proj^1$-loops of the Thom class of $\struct(-2)$ in $\MML$-cohomology, see Construction~\ref{construction:thom_classes}.
\end{proof}
\subsection{Proof of the main cofiber sequence}
To compute the cofiber of the morphism $\MSL\to\MML$ and thereby prove Theorem \ref{theorem_A}, we establish several equivalences in the stable $\infty$-category of $\MSL$-modules. We also track the induced maps on $\MGL$-cohomology in order to determine the map $\partial$. 

\begin{lemma}\label{lem:red_to_det}
    Let $X$ be a smooth ind-scheme over $S$ and let $\mathcal{E}$ be a vector bundle of rank $d+1$ over $X$. Then there is an isomorphism of $\MSL$-modules:
    \[ \MSL\wedge \Th_X(\mathcal{E})\xrightarrow{\simeq}\MSL\wedge \Sigma^d_{\Proj^1}\Th_X(\det(\mathcal{E})). \]
\end{lemma}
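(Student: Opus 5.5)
We will compare $\mathcal{E}$ with $\det(\mathcal{E})\oplus\struct^d$ using the $\SL$-orientation of $\MSL$. The key observation is that $\mathcal{E}\oplus\det(\mathcal{E})^{\vee}$ is a vector bundle of rank $d+2$ with trivialized determinant. Likewise $\det(\mathcal{E})\oplus\struct^d\oplus\det(\mathcal{E})^\vee$ has determinant $\det(\mathcal{E})\otimes\det(\mathcal{E})^\vee\cong\struct_X$, canonically trivialized. For a strictly oriented bundle $(\mathcal{F},\lambda)$ of rank $r$ over $X$, the Thom class of $\MSL$ gives the Thom isomorphism of $\MSL$-modules
\[ \MSL\wedge\Th_X(\mathcal{F})\cong \MSL\wedge\Sigma^r_{\Proj^1}X_+ . \]
Relative versions over a Thom space are obtained by the usual multiplicativity. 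Concretely, for any bundle $\mathcal{V}$ over $X$ one has $\Th_X(\mathcal{V}\oplus\mathcal{F})\cong\Th_X(\mathcal{V})\wedge_X\Th_X(\mathcal{F})$. The Thom diagonal together with the Thom class of $\mathcal{F}$ then yields an $\MSL$-linear equivalence
\[ \MSL\wedge\Th_X(\mathcal{V}\oplus\mathcal{F})\simeq \MSL\wedge\Sigma^r_{\Proj^1}\Th_X(\mathcal{V}). \]
This is \emph{stable} $\SL$-cancellation. Since ind-schemes are colimits of schemes, and Thom classes are compatible with pullback, it suffices to construct everything compatibly over each stage of $X$, or directly at the presheaf level.

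The steps are as follows. First, set $\mathcal{L}=\det\mathcal{E}$ and apply the cancellation above with $\mathcal{V}=\mathcal{E}$ and $\mathcal{F}=?$. This does not work directly, because $\mathcal{E}$ alone is not $\SL$. Instead we use the ``twisted'' Thom isomorphism. For any $\mathcal{V}$ and any $\mathcal{F}$ with $\det\mathcal{F}\cong\det\mathcal{V}$ we will show
\[ \MSL\wedge\Th_X(\mathcal{V})\simeq\MSL\wedge\Th_X(\mathcal{F})\quad\text{when }\operatorname{rk}\mathcal{V}=\operatorname{rk}\mathcal{F}. \]
Take $\mathcal{V}=\mathcal{E}$ and $\mathcal{F}=\mathcal{L}\oplus\struct^d$. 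The argument runs over the total space of an auxiliary bundle. Consider $Y=\Th$-type ``difference'' comparison: both $\mathcal{V}\oplus\mathcal{L}^\vee$ and $\mathcal{F}\oplus\mathcal{L}^\vee$ are $\SL$-bundles of rank $d+2$. By the Thom isomorphism applied to the $\SL$-bundle $\mathcal{V}\oplus\mathcal{L}^\vee$, we get
\[ \MSL\wedge\Th_X(\mathcal{V}\oplus\mathcal{L}^\vee)\simeq\MSL\wedge\Sigma^{d+2}_{\Proj^1}X_+\simeq\MSL\wedge\Th_X(\mathcal{F}\oplus\mathcal{L}^\vee). \]
It remains to cancel the common summand $\mathcal{L}^\vee$. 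For this we use the stable cancellation of squares of line bundles, i.e.\ Ananyevskiy's equivalence \cite[Lemma~4.1]{AnSL}. Adding $\mathcal{L}^{\vee}\oplus\mathcal{L}$, or rather $\mathcal{L}\oplus\mathcal{L}^\vee$, which is canonically $\SL$ with trivial determinant, one can pass from $\mathcal{L}^\vee$ summands to $\SL$ summands. Concretely, $\mathcal{V}\oplus\mathcal{L}^\vee\oplus\mathcal{L}$ relates to $\mathcal{V}$ by adding the $\SL$-bundle $\mathcal{L}^\vee\oplus\mathcal{L}$. Hence
\[ \MSL\wedge\Th_X(\mathcal{V}\oplus\mathcal{L}^\vee\oplus\mathcal{L})\simeq\MSL\wedge\Sigma^2_{\Proj^1}\Th_X(\mathcal{V}), \]
and similarly for $\mathcal{F}$. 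Desuspending, it remains to compare $\MSL\wedge\Th(\mathcal{V}\oplus\mathcal{L}^\vee\oplus\mathcal{L})$ with $\MSL\wedge\Th(\mathcal{F}\oplus\mathcal{L}^\vee\oplus\mathcal{L})$. The bundles $\mathcal{V}\oplus\mathcal{L}^\vee$ and $\mathcal{F}\oplus\mathcal{L}^\vee$ are both $\SL$, so each side is identified, via the Thom isomorphism of $\mathcal{V}\oplus\mathcal{L}^\vee$ (respectively $\mathcal{F}\oplus\mathcal{L}^\vee$) relative to $\mathcal{L}$, with $\MSL\wedge\Sigma^{d+2}_{\Proj^1}\Th_X(\mathcal{L})$. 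This gives the chain
\[ \MSL\wedge\Sigma^2_{\Proj^1}\Th(\mathcal{E})\simeq\MSL\wedge\Sigma^{d+2}_{\Proj^1}\Th(\mathcal{L})\simeq\MSL\wedge\Sigma^2_{\Proj^1}\Sigma^d_{\Proj^1}\Th(\mathcal{L}). \]
Desuspending by $\Sigma^2_{\Proj^1}$, which is an autoequivalence of $\MSL$-modules, gives the claim. In fact this second chain alone suffices, and the detour through $\mathcal{F}$ is unnecessary.

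The main obstacle is making the relative Thom isomorphism precise. We need that for an $\SL$-bundle $\mathcal{F}$ of rank $r$ and any bundle $\mathcal{W}$, the map
\[ \MSL\wedge\Th(\mathcal{W}\oplus\mathcal{F})\to\MSL\wedge\MSL\wedge\Sigma^r_{\Proj^1}\Th(\mathcal{W})\to\MSL\wedge\Sigma^r_{\Proj^1}\Th(\mathcal{W}), \]
built from the Thom diagonal $\Th(\mathcal{W}\oplus\mathcal{F})\to\Th(\mathcal{F})\wedge\Th(\mathcal{W})$, the class $\thom(\mathcal{F})$, and multiplication, is an $\MSL$-linear equivalence. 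We would prove this by Zariski-local reduction: $\mathcal{F}$ is locally trivial as an $\SL$-bundle, both sides satisfy Nisnevich/Zariski descent in $X$, and on a trivializing cover the Thom class restricts to the suspension of $1$ by normalization, so the map is an equivalence there. For ind-schemes we pass to colimits. Applying this with $\mathcal{W}=\mathcal{L}$ and $\mathcal{F}=\mathcal{E}\oplus\mathcal{L}^\vee$, and with $\mathcal{W}=\mathcal{E}$ and $\mathcal{F}=\mathcal{L}^\vee\oplus\mathcal{L}$, finishes the proof.
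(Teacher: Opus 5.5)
Your argument is correct, but it takes a genuinely different route from the paper's.

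\textbf{The paper's route.} The paper never uses Thom classes here. The class $[\mathcal{E}]-[\det(\mathcal{E})]-[\struct_X^d]$ has trivial determinant, so it lifts to $v\colon X\to\KSLrk$ through the fiber sequence $\KSLrk\to\Krk\xrightarrow{\det}\Pic$. The shearing automorphism $\sigma_v$ of $\KSLrk\times X$ identifies the two objects of $\PShv(\SmS)_{/\K}$ given by $[\mathcal{E}]$ and $[\det(\mathcal{E})\oplus\struct_X^d]$. Applying the motivic Thom functor then yields the $\MSL$-module equivalence in one step. This avoids any local-triviality or Mayer--Vietoris argument and any stabilization, and it works for an arbitrary presheaf $X$ carrying a vector bundle.

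\textbf{Your route.} You read the single bundle $\mathcal{E}\oplus\mathcal{L}\oplus\mathcal{L}^\vee$ in two ways: as $\mathcal{E}$ plus the $\SL$-bundle $\mathcal{L}\oplus\mathcal{L}^\vee$, and as $\mathcal{L}$ plus the $\SL$-bundle $\mathcal{E}\oplus\mathcal{L}^\vee$. You then apply the relative $\SL$-Thom isomorphism twice and desuspend by $\Sigma^2_{\Proj^1}$.

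\textbf{What each buys.} Your argument uses only normalized, pullback-compatible $\SL$-Thom classes, so it works verbatim for any $\SL$-oriented ring spectrum in place of $\MSL$. It also makes the effect on $\MGL$-cohomology transparent. The map $\MSL\to\MGL$ carries $\SL$-Thom classes to $\MGL$-Thom classes, and the latter are multiplicative. Hence your composite sends $\alpha\cup\thom(\det\mathcal{E})$ to $\alpha\cup\thom(\mathcal{E})$, which is exactly the formula of Lemma \ref{lem:red_to_det_onMGL}. The price is that you must prove the relative Thom isomorphism yourself. That means the Thom diagonal, naturality in open immersions, the pushout property of Thom spaces over Zariski covers, induction over a finite trivializing cover of each qcqs stage, and a colimit over the ind-system. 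Your sketch of this is adequate.

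\textbf{Cosmetic points.} The appeal to Ananyevskiy's equivalence \cite[Lemma~4.1]{AnSL} is not used anywhere in your final chain. That lemma compares $\Th_X(\mathcal{L}\oplus\mathcal{L})$ with $\Th_X(\mathcal{L}\oplus\mathcal{L}^\vee)$, and the paper needs it only later, in Lemma \ref{lemma:modulo_squares}. You should delete that reference along with the detour through $\mathcal{F}=\mathcal{L}\oplus\struct^d$, as you yourself note.
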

\begin{proof}
    Consider the formal linear combination $\mathcal{E}-\det(\mathcal{E})-\struct_X^d$ as a map of the form $X\to \Krk$. It admits a lift to a map $X\to \KSLrk$ because of the fiber sequence (see \cite[Example 3.3.4]{Mb-modules})
    \[\KSLrk\to\Krk\xrightarrow{\det}\Pic.\]
    We denote such a lift by $v$. Now let us look at the following triangle
    \begin{center}
        \begin{tikzcd}
            \KSLrk\times X \ar[rr, "\sigma_v"] \ar[rd, "{\mu\circ(\mathrm{can}\times[\mathcal{E}]})"'] & & \KSLrk\times X \ar[ld, "{\mu\circ(\mathrm{can}\times[\det(\mathcal{E})\oplus\struct_X^d])}"] \\ & \K
        \end{tikzcd}
    \end{center}
    Here $\mathrm{can}\colon\KSLrk\to \K$ is the canonical map, $[\mathcal{E}]$ and $[\det(\mathcal{E})\oplus\struct^d_X]$ are classes of the corresponding vector bundles in the K-theory space, the map $\mu\colon\K\times\K\to\K$ comes from the $\Einf$-group structure (it corresponds to the direct sum of vector bundles), and $\sigma_v$ is the shearing map associated with $v$, see \cite[\S 16.5]{BH-Norms}. The diagram commutes up to homotopy, the choice of which is determined by the choice of a path 
    \[[\mathcal{E}]\sim [\mathcal{E}]-[\det(\mathcal{E})]-[\struct^d_X]+[\det(\mathcal{E})\oplus\struct^d_X]\ \text{in}\ \K(X).\]
    Moreover, the morphism $\sigma_v$ is an automorphism of $\KSLrk$-modules since $\KSLrk$ is an $\Einf$-group. Applying the motivic Thom functor, we obtain that the equivalence $\sigma_v$ induces the desired isomorphism of $\MSL$-modules.
\end{proof}

By the usual free-forgetful adjunction
\[ \MSL\wedge-\colon\SHS\leftrightarrows \mathrm{Mod}_{\MSL}(\SHS)\colon \mathrm{Forg} \]
there is an isomorphism $[\MSL\wedge X,\mathrm{E}]_{\MSL}\cong [X,\mathrm{E}]$ for $X\in \SHS$ and $\mathrm{E}\in \mathrm{Mod}_{\MSL}(\SHS)$. In particular, for an $\MSL$-module $\mathrm{E}$ and a map of free $\MSL$-modules $f\colon\MSL\wedge X\to \MSL\wedge Y$, we have the pullback map
\[ f^*\colon[\MSL\wedge Y, \Sigma^{*,*}\EE]_\MSL\to[\MSL\wedge X, \Sigma^{*,*}\EE]_\MSL \]
that corresponds under the above adjunction to a map on $\EE$-cohomology:
\[ f^*\colon \EE^{*,*}(Y)\to \EE^{*,*}(X). \]

\begin{lemma}\label{lem:red_to_det_onMGL}
    Let $X$ be a smooth ind-scheme over $S$ and let $\mathcal{E}$ be a vector bundle of rank $d+1$ over $X$. Then the pullback in $\MGL$-cohomology along the equivalence stated in Lemma \ref{lem:red_to_det} corresponds to the following isomorphism of $\MGL^{*,*}(X)$-modules:
    \begin{align*}
         \MGL^{*-2d,*-d}(\Th_X(\det(\mathcal{E})))&\xrightarrow{\simeq} \MGL^{*,*}(\Th_X(\mathcal{E})) \\
         \alpha\cup\thom(\det(\mathcal{E}))&\mapsto \alpha\cup\thom(\mathcal{E}).
    \end{align*}
\end{lemma}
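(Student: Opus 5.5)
The plan is to recognize both Thom classes as Thom spectra maps and to compare them through the shearing equivalence $\sigma_v$ from the proof of Lemma \ref{lem:red_to_det}. An $\MSL$-linear map $\MSL\wedge\Th_X(\mathcal{E})\to\Sigma^{*,*}\MGL$ corresponds under adjunction to a class in $\MGL^{*,*}(\Th_X(\mathcal{E}))$. It therefore suffices to do two things:
\begin{itemize}
\item identify where the equivalence sends the single class $\thom(\det(\mathcal{E}))$ (suspended by $\Sigma^d_{\Proj^1}$);
\item check that the induced map is $\MGL^{*,*}(X)$-linear.
\end{itemize}
Linearity should be formal. The equivalence of Lemma \ref{lem:red_to_det} comes from $\sigma_v$, which is a map over $X$, so after applying the Thom functor it is compatible with the coaction of $\Sigma^\infty_{\Proj^1}X_+$ given by the Thom diagonals $\Th_X(\mathcal{E})\to X_+\wedge\Th_X(\mathcal{E})$. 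The cup product with $\MGL^{*,*}(X)$ is defined through exactly these diagonals. Since both sides are free rank one $\MGL^{*,*}(X)$-modules on their Thom classes, by the Thom isomorphism for $\MGL$, the lemma reduces to showing that $\Sigma^{2d,d}\thom(\det\mathcal{E})$ pulls back to $\thom(\mathcal{E})$.

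To compute this pullback, I will write the $\MGL$-Thom class as a Thom spectrum map. Under $\Omega^{d+1}_{\Proj^1}\Th_X(\mathcal{E})\cong \mathrm{M}([\mathcal{E}]-[\struct^{d+1}]\colon X\to\K)$, the class $\thom(\mathcal{E})$ is $\mathrm{M}$ of the composite $X\to\Krk\xrightarrow{\mathrm{id}}\Krk$ over $\K$, followed by $\MGL=\mathrm{M}(\Krk)$. In the same way, $\Sigma^d_{\Proj^1}\thom(\det\mathcal{E})$ corresponds to $\thom(\det(\mathcal{E})\oplus\struct^d)$, using multiplicativity and normalization of Thom classes of $\MGL$. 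The $\MSL$-linear extension of such a class is $\mathrm{M}$ of the map $\KSLrk\times X\to\Krk$, $(a,x)\mapsto \mathrm{can}(a)+[\det\mathcal{E}\oplus\struct^d]-[\struct^{d+1}]$, viewed over $\K$, and composed with the $\MSL$-action on $\MGL$.

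Precomposing with $\sigma_v$ gives $(a,x)\mapsto \mathrm{can}(a)+\mathrm{can}(v)+[\det\mathcal{E}\oplus\struct^d]-[\struct^{d+1}]$. Via the chosen path, this is homotopic over $\K$ to $(a,x)\mapsto\mathrm{can}(a)+[\mathcal{E}]-[\struct^{d+1}]$, which is the map inducing the linear extension of $\thom(\mathcal{E})$.

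The main obstacle is making this homotopy one of maps over $\K$, not merely of maps into $\Krk$. The structure maps to $\K$ are themselves $\mu\circ(\mathrm{can}\times[-])$, and the homotopy making the triangle in Lemma \ref{lem:red_to_det} commute is the same path. I would use that $\Krk\to\K$ is the inclusion of the rank zero part. A map into $\Krk$ over $\K$ is then just a map into $\K$ landing in rank zero, compatible with the given structure map. So the needed homotopy over $\K$ is exactly the one used to define $\sigma_v$ as a morphism in the slice, and no independent coherence datum arises.
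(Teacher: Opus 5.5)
Your argument is correct, but it splits the work differently from the paper.

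\textbf{The paper's route.} The paper applies $\MGL\wedge_{\MSL}-$ to the equivalence of Lemma \ref{lem:red_to_det}, so everything takes place in $\MGL$-modules. It then asserts that the base-changed equivalence, followed by the $\Sigma^d_{\Proj^1}$-suspension of the $\MGL$-Thom isomorphism for $\det(\mathcal{E})$, is homotopic to the $\MGL$-Thom isomorphism for $\mathcal{E}$. Since these Thom isomorphisms act on cohomology by cup product with the Thom classes, the formula and the $\MGL^{*,*}(X)$-linearity follow together.

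\textbf{Your route.} You stay with $\MSL$-modules and the free-forgetful adjunction. You check linearity by hand, using that $\mathrm{M}(\sigma_v)$ is compatible with the Thom diagonals and that $\MGL$ is a homotopy commutative $\MSL$-algebra. You then compute the image of a single generator. That computation reduces to comparing maps in $\PShv(\SmS)_{/\K}$ with target $(\Krk,\iota)$.

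\textbf{Tightening your last step.} Here the ``straightforward verification'' that the paper leaves implicit becomes automatic. The map $\iota\colon\Krk\to\K$ is a base change of $\{0\}\hookrightarrow\underline{\Z}$, hence a monomorphism. So every mapping space into $(\Krk,\iota)$ in the slice is empty or contractible. Any two slice maps from the same source, such as your $f\circ\sigma_v$ and $g$, are therefore homotopic over $\K$. This is the cleaner way to say what your final paragraph is reaching for. It also shows directly that the answer does not depend on the choice of the lift $v$ or of the path.

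\textbf{Comparison.} Your approach makes the key homotopy rigorous via the monomorphism property, but you pay with the separate linearity check. The paper gets linearity for free from the module-level comparison with the Thom isomorphism.
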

\begin{proof}
    The morphism of motivic $\Einf$-ring spectra $\MSL\to\MGL$ induces an adjunction (see \cite[Proposition 4.6.2.17]{LHA}):
    \[ \MGL\wedge_{\MSL}-\colon\mathrm{Mod}_{\MSL}(\SH(S))\leftrightarrows \mathrm{Mod}_{\MGL}(\SH(S))\colon\mathrm{Forg}. \]
    It follows that the desired pullback on $\MGL$-cohomology can be computed in the stable $\infty$-category of $\MGL$-modules after applying the left adjoint to the equivalence stated in Lemma \ref{lem:red_to_det}. Consider the following maps of $\MGL$-modules:
    \[ \MGL\wedge\Th_X(\mathcal{E})\xrightarrow{\simeq} \MGL\wedge \Sigma^d_{\Proj^1}\Th_X(\det(\mathcal{E}))\xrightarrow{\simeq} \MGL\wedge \Sigma^{\infty+d+1}_{\Proj^1}X_+,\]
    where the second equivalence is the $\Sigma^d_{\Proj^1}$-suspension of the $\MGL$-Thom isomorphism for $\det(\mathcal{E})$ (in particular, it induces multiplication by $\thom(\det(\mathcal{E}))$ on $\MGL$-cohomology), and we need to compute the pullback along the first equivalence. A straightforward verification shows that the above composite is homotopic to the usual $\MGL$-Thom isomorphism for $\mathcal{E}$, which induces multiplication by $\thom(\mathcal{E})$ on $\MGL$-cohomology. The result follows.
\end{proof}

\begin{lemma}[cf. {\cite[7.13]{DFJK}}]\label{lemma:modulo_squares}
    Let $X$ be a smooth ind-scheme over $S$ and let $\mathcal{L}$ be a line bundle over $X$. Then there is an isomorphism of $\MSL$-modules
    \[ \MSL\wedge \Th_X(\mathcal{L}^{\otimes 2})\xrightarrow{\simeq} \MSL\wedge\Sigma^{\infty+1}_{\Proj^1} X_+. \]
\end{lemma}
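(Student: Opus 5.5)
The plan is to deduce the statement from Lemma~\ref{lem:red_to_det}, applied to the rank-two bundle $\mathcal{E}=\mathcal{L}\oplus\mathcal{L}$, together with Ananyevskiy's non-linear equivalence of Thom spaces \cite[Lemma~4.1]{AnSL}. First, $\det(\mathcal{L}\oplus\mathcal{L})\cong\mathcal{L}^{\otimes 2}$. Lemma~\ref{lem:red_to_det} with $d=1$ then gives an isomorphism of $\MSL$-modules
\[ \MSL\wedge\Th_X(\mathcal{L}\oplus\mathcal{L})\xrightarrow{\simeq}\MSL\wedge\Sigma^{1}_{\Proj^1}\Th_X(\mathcal{L}^{\otimes 2}). \]
So it suffices to identify $\MSL\wedge\Th_X(\mathcal{L}\oplus\mathcal{L})$ with $\MSL\wedge\Sigma^{2}_{\Proj^1}X_+$ as $\MSL$-modules, and then cancel one $\Proj^1$-suspension, which is an autoequivalence.

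For that identification I would use the unstable equivalence $\Th_X(\mathcal{L}\oplus\mathcal{L})\simeq\Th_X(\mathcal{L}\oplus\mathcal{L}^{\vee})$ of \cite[Lemma~4.1]{AnSL} (cf. \cite[Lemma 4.2]{Oliver_theta}). This equivalence is induced by a non-linear automorphism of the total space that preserves the complement of the zero section. The bundle $\mathcal{L}\oplus\mathcal{L}^\vee$ has trivialized determinant via the canonical pairing $\mathcal{L}\otimes\mathcal{L}^\vee\cong\struct_X$, so it is strictly oriented, i.e.\ an $\SL_2$-bundle. The $\MSL$-Thom isomorphism (equivalently, the Thom functor applied to a lift $X\to\KSLrk$ of $[\mathcal{L}\oplus\mathcal{L}^\vee]-[\struct^2]$, combined with the shearing argument of Lemma~\ref{lem:red_to_det}) therefore gives
\[ \MSL\wedge\Th_X(\mathcal{L}\oplus\mathcal{L}^\vee)\simeq\MSL\wedge\Sigma^2_{\Proj^1}X_+. \]
Composing the three isomorphisms yields $\MSL\wedge\Sigma^1_{\Proj^1}\Th_X(\mathcal{L}^{\otimes2})\simeq\MSL\wedge\Sigma^2_{\Proj^1}X_+$, and desuspending gives the claim. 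Each map is $\MSL$-linear: the first and third by construction, and the middle one because it is $\MSL\wedge(-)$ applied to a map of spectra.

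The main obstacle is the ind-scheme setting and the naturality of Ananyevskiy's equivalence. His lemma is stated for schemes, so I would check that his explicit formula is natural in $X$; it is built from the multiplication $\mathcal{L}\otimes\mathcal{L}^\vee\to\struct$ and is compatible with pullbacks. Given that, the equivalence passes to colimits over the filtered system defining $X$. The strictly oriented Thom isomorphism must also be compatible with these colimits. This holds because the Thom functor commutes with colimits, and a lift to $\KSLrk$ can be chosen compatibly through the fiber sequence $\KSLrk\to\Krk\to\Pic$ applied to the ind-object.
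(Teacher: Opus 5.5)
Your proposal is correct and follows the paper's proof essentially step for step. The paper applies Lemma~\ref{lem:red_to_det} to $\mathcal{L}\oplus\mathcal{L}$, then Ananyevskiy's non-linear equivalence $\Th_X(\mathcal{L}\oplus\mathcal{L})\simeq\Th_X(\mathcal{L}\oplus\mathcal{L}^\vee)$, then Lemma~\ref{lem:red_to_det} again for $\mathcal{L}\oplus\mathcal{L}^\vee$ (this last step is your strictly oriented Thom isomorphism, since $\det(\mathcal{L}\oplus\mathcal{L}^\vee)\cong\struct_X$), and finally $\Proj^1$-desuspends.

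One correction to your description of the cited equivalence: it cannot come from an isomorphism of total spaces carrying the complement of the zero section onto the complement of the zero section. Such an isomorphism would identify the normal bundles $\mathcal{L}\oplus\mathcal{L}$ and $\mathcal{L}\oplus\mathcal{L}^\vee$ of the zero sections, which forces $\mathcal{L}^{\otimes 2}\cong\struct_X$. Treat it simply as the cited motivic equivalence, which is all the argument uses.
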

\begin{proof}
    Define an equivalence $\MSL\wedge \Sigma^1_{\Proj^1}\Th_X(\mathcal{L}^{\otimes 2})\xrightarrow{\simeq} \MSL\wedge\Sigma^{\infty+2}_{\Proj^1}X_+$ to be the unique (up to homotopy) morphism such that the following diagram commutes
    \begin{equation}\label{cd:modulo_squares}
        \begin{tikzcd}
            \MSL\wedge \Sigma^1_{\Proj^1}\Th_X(\mathcal{L}^{\otimes 2}) \arrow[r, dashed]                                                       & \MSL\wedge\Sigma^{\infty+2}_{\Proj^1}X_+                                                   \\
            \MSL\wedge \Th_X(\mathcal{L}\oplus\mathcal{L}) \arrow[u, "\simeq"] \arrow[r, "\simeq"] & \MSL\wedge \Th_X(\mathcal{L}\oplus\mathcal{L}^\vee), \arrow[u, "\simeq"']
        \end{tikzcd}
    \end{equation}
    where the vertical morphisms are from Lemma \ref{lem:red_to_det} and the bottom horizontal morphism is induced by the non-linear motivic equivalence of Thom spaces $\Th_X(\mathcal{L}\oplus\mathcal{L})\xrightarrow{\simeq}\Th_X(\mathcal{L}\oplus \mathcal{L}^\vee)$ from \cite[Lemma~4.1]{AnSL}. Then the desired equivalence is the $\Proj^1$-desuspension of the equivalence constructed above.
\end{proof}
\begin{lemma}\label{lem:module_squares_onMGL}
    Let $X$ and $\mathcal{L}$ be as above. The element $c_1(\mathcal{L}^\vee)\in\MGL^{2,1}(X)$ is expressed as
    \[ [-1]_F(c_1(\mathcal{L}))=-c_1(\mathcal{L})+a_{11}c_1(\mathcal{L})-(a_{12}+a_{11}^2)c_1(\mathcal{L})^2+\dots, \]
    where $[-1]_F(x)$ is the formal inverse of $x$ with respect to the formal group law of $\MGL$. This element is divisible by $c_1(\mathcal{L})$. More precisely, the element 
    \[ \frac{c_1(\mathcal{L}^\vee)}{c_1(\mathcal{L})}:=\frac{[-1]_F(x)}{x}\Big\vert_{x=c_1(\mathcal{L})}\in \MGL^{0,0}(X) \]
    satisfies the relation $\frac{c_1(\mathcal{L}^\vee)}{c_1(\mathcal{L})}c_1(\mathcal{L})=c_1(\mathcal{L}^\vee)$. 
    
    The pullback on $\MGL$-cohomology along the equivalence stated in Lemma \ref{lemma:modulo_squares} corresponds to the following isomorphism of $\MGL^{*,*}(X)$-modules:
    \begin{align*}
        \MGL^{*-2,*-1}(X) & \xrightarrow{\simeq} \MGL^{*,*}(\Th_X(\mathcal{L}^{\otimes 2})) \\
        \alpha & \mapsto \frac{c_1(\mathcal{L}^\vee)}{c_1(\mathcal{L})}\alpha \cup \thom(\mathcal{L}^{\otimes 2}).
    \end{align*}
\end{lemma}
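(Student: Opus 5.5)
The first claim, the power series expansion, is routine. Since $c_1(\mathcal{L}^\vee)=[-1]_F(c_1(\mathcal{L}))$ for the formal group law of $\MGL$, we expand $[-1]_F(x)$ from $F(x,[-1]_F(x))=0$ with $F(x,y)=x+y+\sum a_{ij}x^iy^j$. Its constant term vanishes, so $[-1]_F(x)/x$ is a well-defined power series. Evaluating it at the nilpotent (or pro-nilpotent on ind-schemes, via the usual $\limit$ description of $\MGL^{*,*}(X)$) element $c_1(\mathcal{L})$ gives the stated relation.

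For the main claim, we trace the square \eqref{cd:modulo_squares} in $\MGL$-cohomology. As in Lemma \ref{lem:red_to_det_onMGL}, we base change along $\MSL\to\MGL$, so pullbacks can be computed on $\MGL$-cohomology of the underlying spaces. We identify each of the three known arrows.

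By Lemma \ref{lem:red_to_det_onMGL}, with $d=1$, the left vertical map corresponds to
\[\alpha\cup\Sigma^{2,1}\thom(\mathcal{L}^{\otimes2})\mapsto\alpha\cup\thom(\mathcal{L}\oplus\mathcal{L}).\]
The right vertical map, using $\det(\mathcal{L}\oplus\mathcal{L}^\vee)\cong\struct$, corresponds to
\[\alpha\cup\Sigma^{4,2}1\mapsto\alpha\cup\thom(\mathcal{L}\oplus\mathcal{L}^\vee).\]

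The key input is the bottom map, Ananyevskiy's non-linear equivalence $\Th_X(\mathcal{L}\oplus\mathcal{L})\simeq\Th_X(\mathcal{L}\oplus\mathcal{L}^\vee)$. I would recall its construction from \cite[Lemma~4.1]{AnSL}: it is the identity on the first summand and on the second is induced by something like $(u,v)\mapsto(u,\,u^{-1}v)$, suitably glued. Its effect on $\MGL$-Thom classes should be
\[\thom(\mathcal{L}\oplus\mathcal{L}^\vee)\mapsto \frac{c_1(\mathcal{L}^\vee)}{c_1(\mathcal{L})}\thom(\mathcal{L}\oplus\mathcal{L}).\]
To verify this, I would reduce to the universal case $X=\Proj^\infty$. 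There $\MGL^{*,*}(\Th(\mathcal{L}\oplus\mathcal{L}))$ is free of rank one over $\MGL^{*,*}(\Proj^\infty)=\MGL^{*,*}[[x]]$ on the Thom class. So the pullback equals $u\cdot\thom$ for a unique $u$, and it suffices to determine $u$.

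Restricting along the zero section gives Euler classes. The pullback of $\thom(\mathcal{L}\oplus\mathcal{L}^\vee)$ restricts to $e(\mathcal{L}\oplus\mathcal{L}^\vee)=c_1(\mathcal{L})c_1(\mathcal{L}^\vee)$, because the Ananyevskiy map is compatible with zero sections. Meanwhile $\thom(\mathcal{L}\oplus\mathcal{L})$ restricts to $c_1(\mathcal{L})^2$. This forces $u\,x^2=x\,[-1]_F(x)$ in $\MGL^{*,*}[[x]]$, and since $x$ is a non-zero-divisor there, $u=[-1]_F(x)/x$. This restriction argument is the step I expect to be the main obstacle. It needs the compatibility of the non-linear map with zero sections, and it needs injectivity of multiplication by the Euler class, which holds for $\Proj^\infty$. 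It would also be worth checking the sign, i.e. that $u$ has constant term $-1$; this is consistent with $\langle-1\rangle$ becoming $-1$ in $\MGL$.

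Finally, we compose. Going around the square, $\alpha\cdot\Sigma^{4,2}1$ pulls back to
\[\tfrac{c_1(\mathcal{L}^\vee)}{c_1(\mathcal{L})}\alpha\cup\thom(\mathcal{L}\oplus\mathcal{L}),\]
which under the left vertical map corresponds to $\tfrac{c_1(\mathcal{L}^\vee)}{c_1(\mathcal{L})}\alpha\cup\Sigma^{2,1}\thom(\mathcal{L}^{\otimes2})$. Desuspending by $\Proj^1$ yields the claimed formula.
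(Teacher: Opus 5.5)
Your proposal is correct and follows essentially the same route as the paper: you identify both vertical maps via Lemma \ref{lem:red_to_det_onMGL}, and you determine the bottom non-linear map by restricting along zero sections to get $c_1(\mathcal{L})^2\, g(1)=c_1(\mathcal{L})\,c_1(\mathcal{L}^\vee)$. You then divide in the universal case $X=\Proj^\infty$, $\mathcal{L}=\struct(1)$, and conclude by naturality and $\MGL^{*,*}(X)$-linearity. The paper likewise takes the compatibility of Ananyevskiy's equivalence with the zero sections for granted, so the step you flag as the main obstacle is handled in the same way there.
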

\begin{proof}
    Consider the universal line bundle $\struct(1)$ over $\Proj^\infty$. The first statement follows for $\mathcal{L}=\struct(1)$ from \[c_1(\struct(1))+_F c_1(\struct(1)^\vee)=c_1(\struct(1)\otimes\struct(1)^\vee)=c_1(\struct)=0,\]
    where $+_F$ denotes the formal sum. The divisibility of $c_1(\struct(1)^\vee)$ follows from the construction of the formal inverse. The general case of these statements follows from the universality of $\Proj^\infty$.
    
    To determine the induced map on $\MGL$-cohomology, consider the commutative diagram
    \begin{center}
        \begin{tikzcd}
            {\MGL^{*-2,*-1}(\Th_X(\mathcal{L}^{\otimes 2}))} \arrow[d, "\simeq"'] & {\MGL^{*-4,*-2}(X)} \arrow[d, "\simeq"] \arrow[l, dashed]                  \\
            {\MGL^{*,*}(\Th_X(\mathcal{L}\oplus\mathcal{L}))}                     & {\MGL^{*,*}(\Th_X(\mathcal{L}\oplus\mathcal{L}^\vee))} \arrow[l, "\simeq"']
        \end{tikzcd}
    \end{center}
    that is induced by \eqref{cd:modulo_squares}. By Lemma \ref{lem:red_to_det_onMGL}, the right vertical map sends an element $\alpha$ to $\alpha\cup\thom(\mathcal{L}\oplus\mathcal{L}^\vee)$ and the left vertical map sends $\beta\cup \thom(\mathcal{L}^{\otimes 2})$ to $\beta\cup \thom(\mathcal{L}\oplus \mathcal{L})$. Hence, it remains to determine the bottom map, which we denote by $f^*$ (this is the pullback along $f\colon\Th_X(\mathcal{L}\oplus\mathcal{L})\xrightarrow{\simeq}\Th_X(\mathcal{L}\oplus\mathcal{L}^\vee)$). 
    For $\gamma\in\MGL^{*,*}(X)$ denote by $g(\gamma)$ the unique element of $\MGL^{*,*}(X)$ that satisfies 
    \[ f^*(\gamma\cup\thom(\mathcal{L}\oplus\mathcal{L}^\vee))=g(\gamma)\cup\thom(\mathcal{L}\oplus\mathcal{L}). \]
    We claim that $g(\gamma)=\frac{c_1(\mathcal{L}^\vee)}{c_1(\mathcal{L})}\gamma$. It suffices to prove the claim for $\gamma=1$ since $f^*$ is an isomorphism of $\MGL^{*,*}(X)$-modules.
    Consider the commutative diagram
    \begin{center}
        \begin{tikzcd}
            {\MGL^{*,*}(\Th_X(\mathcal{L}\oplus \mathcal{L}))} &                                       & {\MGL^{*,*}(\Th_X(\mathcal{L}\oplus \mathcal{L}^\vee))} \arrow[ll, "\simeq"'] \\
                        & {\MGL^{*,*}(X),} \arrow[ru] \arrow[lu] &                                                   
        \end{tikzcd}
    \end{center}
    where the diagonal maps are induced by zero sections. It gives the following equality
    \[ c_1(\mathcal{L})^2 g(1)=c_1(\mathcal{L}) c_1(\mathcal{L}^\vee) \]
    and it remains to divide both parts by $c_1(\mathcal{L})^2$. For that we first obtain the claim for  $X=\Proj^\infty$ and $\mathcal{L}=\struct(1)$, and then conclude by universality. This is done in order to get around the problem that $c_1(\mathcal{L})$ is zero divisor in general.
\end{proof}
For the sake of completeness, we also recover the following more general fact.
\begin{proposition}
    Let $X$ be a smooth ind-scheme over $S$ and let $(\mathcal{E},\mathcal{L},\lambda)$ be an oriented vector bundle of rank $d$ over $X$. Then there is an isomorphism of $\MSL$-modules
    \[\MSL\wedge \Th_X(\mathcal{E})\cong \MSL\wedge\Sigma^{\infty+d}_{\Proj^1} X_+. \]
\end{proposition}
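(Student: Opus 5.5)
The plan is to combine the two previous lemmas. Write $d=n+1$ with $n\geq 0$ (the case $d=0$ is trivial, since then $\mathcal{E}=0$ and both sides equal $\MSL\wedge\Sigma^\infty_{\Proj^1}X_+$). Applying Lemma~\ref{lem:red_to_det} to $\mathcal{E}$ of rank $n+1$ gives an isomorphism of $\MSL$-modules
\[ \MSL\wedge\Th_X(\mathcal{E})\xrightarrow{\simeq}\MSL\wedge\Sigma^{n}_{\Proj^1}\Th_X(\det(\mathcal{E})). \]
This reduces the problem to a single line bundle.

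Next I use the orientation $\lambda\colon\det(\mathcal{E})\xrightarrow{\simeq}\mathcal{L}^{\otimes 2}$. It induces an isomorphism of Thom spaces $\Th_X(\det(\mathcal{E}))\cong\Th_X(\mathcal{L}^{\otimes 2})$, hence an isomorphism of the corresponding free $\MSL$-modules. Lemma~\ref{lemma:modulo_squares} then gives
\[ \MSL\wedge\Th_X(\mathcal{L}^{\otimes 2})\xrightarrow{\simeq}\MSL\wedge\Sigma^{\infty+1}_{\Proj^1}X_+. \]
Both $\MSL\wedge-$ and $\Sigma^n_{\Proj^1}$ are compatible with these maps, so I apply $\Sigma^n_{\Proj^1}$ (which commutes with $\MSL\wedge-$ and preserves $\MSL$-module isomorphisms) and compose the three isomorphisms. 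This yields
\[ \MSL\wedge\Th_X(\mathcal{E})\cong\MSL\wedge\Sigma^{n}_{\Proj^1}\Th_X(\mathcal{L}^{\otimes 2})\cong\MSL\wedge\Sigma^{\infty+n+1}_{\Proj^1}X_+=\MSL\wedge\Sigma^{\infty+d}_{\Proj^1}X_+. \]

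No step seems hard, since all the nontrivial input (the lift to $\KSLrk$ and Ananyevskiy's non-linear equivalence $\Th_X(\mathcal{L}\oplus\mathcal{L})\simeq\Th_X(\mathcal{L}\oplus\mathcal{L}^\vee)$) is already packaged in the two lemmas. The only points to check are, first, that Lemma~\ref{lem:red_to_det} indeed applies for arbitrary rank $n+1\geq 1$ and, second, that the isomorphism is only claimed up to the choices made: the resulting equivalence depends on the lift $v$, the chosen path in $\K(X)$, and $\lambda$. This is harmless because the statement asserts existence only. If one also wanted the effect on $\MGL$-cohomology, Lemmas~\ref{lem:red_to_det_onMGL} and \ref{lem:module_squares_onMGL} would identify it as $\alpha\mapsto\frac{c_1(\mathcal{L}^\vee)}{c_1(\mathcal{L})}\alpha\cup\thom(\mathcal{E})$, but that is not needed here.
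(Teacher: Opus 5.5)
Your proposal is correct and takes the same approach as the paper. The paper's proof reduces to $\det(\mathcal{E})\cong\mathcal{L}^{\otimes 2}$ via Lemma~\ref{lem:red_to_det} and $\lambda$, then concludes with Lemma~\ref{lemma:modulo_squares}, which you apply after a $\Sigma^{d-1}_{\Proj^1}$-suspension. Your separate treatment of the case $d=0$ is a harmless detail that the paper leaves implicit.
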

\begin{proof}
    Applying Lemma \ref{lem:red_to_det}, we can replace $\mathcal{E}$ by $\det(\mathcal{E})\cong \mathcal{L}^{\otimes 2}$. Then the result follows from Lemma~\ref{lemma:modulo_squares}.
\end{proof}
\begin{remark}\label{ques:twisted_thom_mult}
    Note, however, that these Thom isomorphisms are not multiplicative. More precisely, assume that $(\mathcal{E}_1,\mathcal{L}_1,\lambda_1)$ and $(\mathcal{E}_2,\mathcal{L}_2,\lambda_2)$ are oriented vector bundles over $X\in \SmS$ and denote by $t(\mathcal{E}_i)\in\MSL^{2d_i,d_i}(\Th_X(\mathcal{E}_i))$ the ``Thom class'' that is induced by the constructed Thom isomorphism. Then we have
    \[ t(\mathcal{E}_1\oplus \mathcal{E}_2)\neq t(\mathcal{E}_1)\cup t(\mathcal{E}_2).\]
    Indeed, if the equality holds, then replacing $\MSL$ with $\MGL$ via the map $\MSL\to\MGL$, we obtain a similar formula in $\MGL$-cohomology. However, one can check using Lemma \ref{lem:red_to_det_onMGL} and Lemma \ref{lem:module_squares_onMGL} that the following formula is valid 
    \[  t^\MGL(\mathcal{E})=\frac{[-1]_F(c_1(\det(\mathcal{E})))}{c_1(\det(\mathcal{E}))}\cup \thom(\mathcal{E}) \]
    for an oriented vector bundle $\mathcal{E}$.
    Here $t^\MGL(\mathcal{E})$ is the image of $t(\mathcal{E})$ in $\MGL$-cohomology and $\thom(\mathcal{E})$ is the Thom class of $\mathcal{E}$ for the canonical orientation of $\MGL$.
    Now it is easy to see that this additional multiplier is not multiplicative. This provides a negative answer to Ananyevskiy's question \cite[Remark 4.4]{AnSL} (the isomorphism he constructed is slightly different from ours, but the same multiplier appears in his context as well because of the equivalence from \cite[Lemma 4.1]{AnSL}). We also remark that by Haution's result this problem disappears after $\eta$-periodization \cite[Proposition 3.2.8]{Haut}.
\end{remark}
The equivalences obtained above are enough to deduce the split cofiber sequence from Theorem \ref{theorem_A} without determination of the operation $\partial$. To do so, we need to control the group of operations (of $\MSL$-modules) of the form $\MML\to\Sigma^1_{\Proj^1}\MGL$. This requires two more lemmas.
\begin{lemma}\label{lem:MGL(MGr)}
    There are isomorphisms of $\MGL^{*,*}(S)$-algebras:
    \begin{align*}
        \MGL^{*,*}(\MGr_\infty)&\cong\limit_n\MGL^{*,*}(\MGr_n) \\ &\cong \limit_n\biggl(\bigslant{\MGL^{*,*}(S)[\![c_1(\sqrt{\det}),c_1,c_2,\dots,c_n]\!]}{\Bigl(c_1(\det)-[2]_F(c_1(\sqrt{\det}))\Bigr)}\biggr).
    \end{align*}
    Here $c_i$ is the $i$-th Chern class $c_i(\gamma_n^\ML)$, $c_1(\sqrt{\det})$ is the first Chern class of the line bundle $\sqrt{\det\gamma^\ML_n}$ (which is the pullback of $\struct(-1)$ along the obvious projection $\MGr_n\to\Proj^\infty$), $[2]_F(c_1(\sqrt{\det}))$ denotes the formal multiplication of $c_1(\sqrt{\det})$ by $2$ with respect to the formal group law of $\MGL$, and $c_1(\det)$ is the shorthand for $c_1(\det\gamma_n^\ML)$ (which is a power series in the $c_i$).
\end{lemma}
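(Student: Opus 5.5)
We compute the $\MGL$-cohomology of $\MGr_n$ via its geometric description as a complement of a zero section, and then pass to the colimit.

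\emph{Step 1 (finite level).} Recall that $\MGr_n(\A^m;\Proj^N)$ is the complement of the zero section of the line bundle $\mathcal{M}:=\det(\gamma_{n,m})\boxtimes\struct(2)$ over $Y:=\Gr_n(\A^m)\times\Proj^N$. The Gysin (localization) cofiber sequence
\[ \MGr_n(\A^m;\Proj^N)_+\to Y_+\to \Th_Y(\mathcal{M}), \]
combined with the $\MGL$-Thom isomorphism, gives a long exact sequence in which the map $\MGL^{*-2,*-1}(Y)\to\MGL^{*,*}(Y)$ is multiplication by the Euler class $c_1(\mathcal{M})$. By the projective bundle formula and the standard computation for Grassmannians, $\MGL^{*,*}(Y)$ is a quotient of $\MGL^{*,*}(S)[c_1,\dots,c_n,y]$. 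Here $y=c_1(\struct(1))$ satisfies $y^{N+1}=0$, and $c_1(\mathcal{M})=c_1(\det)+_F[2]_F(y)$. Since $c_1(\sqrt{\det})$ is by definition the pullback of $c_1(\struct(-1))=[-1]_F(y)$, substituting $x=c_1(\sqrt{\det})$ rewrites the relation as $c_1(\det)-_F[2]_F(x)$. Up to a unit (the formal difference $a-_Fb$ equals $(a-b)$ times a unit), this is the ideal generated by $c_1(\det)-[2]_F(x)$.

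\emph{Step 2 (injectivity).} To split the long exact sequence into short exact ones, I need multiplication by $c_1(\mathcal{M})$ to be injective on $\MGL^{*,*}(Y)$. This is the main obstacle, since at finite $m,N$ there are truncation relations. I would handle it by passing to the limit first. In $\MGL^{*,*}(S)[\![x,c_1,\dots,c_n]\!]$, the element $c_1(\det)-[2]_F(x)$ has the form $c_1-2x+(\text{higher order terms})$, and its linear part involves $c_1$ with coefficient $1$. It is therefore a nonzerodivisor, and the quotient is again a power series ring in $x,c_2,\dots,c_n$. At finite stages I would instead argue with the pro-system: the transition maps and the Mittag-Leffler property of the surjective systems $\MGL^{*,*}(\Gr_n(\A^m)\times\Proj^N)$ give the exactness needed in the limit. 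Concretely, I would show that the $\lim$-systems of kernels of multiplication by $c_1(\mathcal{M})$ are pro-zero.

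\emph{Step 3 (colimit to limit).} $\MGr_n$ is the colimit over $m,N$, so there is a Milnor sequence
\[ 0\to\limit^1\MGL^{*-1,*}(\MGr_n(\A^m;\Proj^N))\to\MGL^{*,*}(\MGr_n)\to\limit\MGL^{*,*}(\MGr_n(\A^m;\Proj^N))\to0. \]
The $\limit^1$ term vanishes because the transition maps are surjective: the finite-level rings are quotients of one another by Step 1. This yields the displayed description of $\MGL^{*,*}(\MGr_n)$. The same Milnor argument applied to $\MGr_\infty=\colim_n\MGr_n$, using surjectivity of the restriction maps (they kill $c_{n+1}$), gives $\MGL^{*,*}(\MGr_\infty)\cong\limit_n\MGL^{*,*}(\MGr_n)$. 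All identifications are via Chern classes of pulled-back bundles, so they are maps of $\MGL^{*,*}(S)$-algebras.
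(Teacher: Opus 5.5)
\textbf{There is a genuine gap in Step 3.} You justify $\limit^1=0$ over $(m,N)$ by saying the transition maps are surjective because ``the finite-level rings are quotients of one another by Step 1''. Step 1 does not give this.

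As you note yourself in Step 2, multiplication by $e=c_1(\mathcal{M})$ is not injective on the truncated ring $\MGL^{*,*}(Y_{m,N})$. So the Gysin sequence only exhibits $\MGL^{*,*}(\MGr_n(\A^m;\Proj^N))$ as an extension of a shift of $\ker(e)$ by $\mathrm{coker}(e)$. The $\ker(e)$-part is not hit from larger stages; it dies in the pro-system. Ordinary cohomology already shows this. For $n=1$, the space $U_{m,N}=\MGr_1(\A^m;\Proj^N)$ is a $\C^\times$-bundle over $\Proj^{m-1}\times\Proj^N$. Its group $H^{2(m+N)-1}(U_{m,N})\cong\Z$ comes from the top class in $\ker(e)$. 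But $H^{2(m+N)-1}(U_{m+k,N+k})=0$ for $k\gg 0$, since $e$ acts injectively in low degrees there.

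So the system is not surjective. Both $\limit^1=0$ and the identification $\limit_{m,N}\MGL^{*,*}(\MGr_n(\A^m;\Proj^N))\cong\MGL^{*,*}(S)[\![x,c_1,\dots,c_n]\!]/(e)$ depend on the claim you only announce in Step 2: that the kernels of $e$ form a pro-zero system. That claim is true and can be proved as follows.
\begin{enumerate}
\item After the coordinate change making $e$ a variable, one has $(\mathfrak{m}^{k+1}:e)=\mathfrak{m}^k$ for $\mathfrak{m}=(x,c_1,\dots,c_n)=(x,e,c_2,\dots,c_n)$.
\item The kernels $I_{m,N}$ of $\MGL^{*,*}(S)[\![x,c_1,\dots,c_n]\!]\to\MGL^{*,*}(Y_{m,N})$ are cofinal with the powers of $\mathfrak{m}$.
\item Hence, choosing $\mathfrak{m}^k\subseteq I_{m,N}$ and $I_{m',N'}\subseteq\mathfrak{m}^{k+1}$, we get $(I_{m',N'}:e)\subseteq I_{m,N}$, i.e.\ $\ker(e)$ at stage $(m',N')$ restricts to zero at stage $(m,N)$.
\end{enumerate}
With this, a Mittag-Leffler argument gives the correct limit. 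As written, though, this is the missing core of the proof, and Step 3 uses an incorrect replacement for it.

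The paper avoids the finite-level analysis altogether by working with the ind-objects from the start. It uses the cofiber sequence $\Sigma^\infty_{\Proj^1}(\MGr_n)_+\to\Sigma^\infty_{\Proj^1}(\Gr_n\times\Proj^\infty)_+\to\Th(\det(\gamma_n)\boxtimes\struct(2))$, where $\MGL^{*,*}(\Gr_n\times\Proj^\infty)$ is already the power series ring. Your Step 2 observation that $e$ is a nonzerodivisor there then splits the long exact sequence at once and gives the quotient. The only Milnor sequence still needed is the one in $n$, whose restriction maps really are surjective (they kill the top Chern class). Restructuring your argument this way removes Step 3's problem.

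Your unit argument identifying $(c_1(\det)-_F[2]_F(x))$ with $(c_1(\det)-[2]_F(x))$ is correct, as is your explicit nonzerodivisor check. Both make precise points the paper leaves implicit.
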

\begin{proof}
    First, we compute $\MGL^{*,*}(\MGr_n)$. Consider the cofiber sequence of motivic spectra over $S$:
    \begin{equation}\label{equation:cofib_MGr_GrP}
        \Sigma^\infty_{\Proj^1}(\MGr_{n})_+\to \Sigma^\infty_{\Proj^1}(\Gr_n\times\Proj^\infty)_+\to \Th(\det(\gamma_n)\boxtimes\struct(2)), 
    \end{equation}
    which comes from the definition of the Thom space.
    It induces the following long exact sequence of $\MGL$-cohomology groups:
    \[ \dots\to \MGL^{*,*}(\Th(\det(\gamma_n)\boxtimes\struct(2)))\xrightarrow{(1)} \MGL^{*,*}(\Gr_n\times\Proj^\infty)\to \MGL^{*,*}(\MGr_n)\to \cdots. \]
    Since the second morphism in \eqref{equation:cofib_MGr_GrP} is induced by the zero section, the homomorphism (1) sends the Thom class $\thom(\det(\gamma_n)\boxtimes\struct(2))$ to the first Chern class $c_1(\det(\gamma_n)\boxtimes\struct(2))$. Therefore, the $\MGL^{*,*}(S)$-algebra $\MGL^{*,*}(\MGr_n)$ is the quotient of (see \cite[Proposition 6.2(1)]{NSOLandw})
    \[ \MGL^{*,*}(\Gr_n\times \Proj^\infty)\cong \MGL^{*,*}(S)[\![h,c_1,\dots,c_n]\!] \]
    by the principal ideal generated by $c_1(\det(\gamma_n)\boxtimes\struct(2))=c_1(\det\gamma_n)+_F[-2]_F(h)$, where $h=c_1(\struct(-1))$. Here the element $c_i$ corresponds to the $i$-th Chern class $c_i(\gamma_n^\ML)$ and $h$ corresponds to the first Chern class of the pullback of $\struct(-1)$ along $\MGr_n\to \Proj^\infty$. Because of that, we adopt the notation from the statement of the lemma. 
    We claim that the principal ideal generated by $c_1(\det)+_F[-2]_F(c_1(\sqrt{\det}))$ coincides with the principal ideal generated by $c_1(\det)-[2]_F(c_1(\sqrt{\det}))$. This holds because the vanishing of both elements in $\MGL^{*,*}(\MGr_n)$ is a consequence of the isomorphism of line bundles $\det(\gamma_n^\ML)\cong\struct(-2)$ (more precisely, any of them is zero in the quotient by the principal ideal generated by another one).

    Taking the limit over $n$, we obtain the second isomorphism. Applying the Milnor exact sequence, we obtain the first isomorphism since all homomorphisms in the inverse system are surjective, namely quotients by the top Chern classes.
\end{proof}
\begin{lemma}\label{lem:MGL(MML)}
    The bigraded group of $\MSL$-linear operations $[\MML,\Sigma^{*,*}\MGL]_{\MSL}$ maps injectively into the bigraded group of all operations $[\MML,\Sigma^{*,*}\MGL]$. Moreover, this inclusion can be identified with the map
    \[ \MGL^{*,*}(\Proj^\infty)\to \MGL^{*,*}(\MGr_\infty) \]
    of $\MGL^{*,*}(S)$-modules that sends a power series in the first Chern class $f(c_1(\struct(-1)))$ to $f(c_1(\sqrt{\det}))$.
\end{lemma}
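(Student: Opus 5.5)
We will use Theorem~\ref{thm:mslc_free} to compute the $\MSL$-linear operations. Write $\Phi\colon \MSL\wedge \Omega^1_{\Proj^1}\Th_{\Proj^\infty}(\struct(-2))\xrightarrow{\simeq}\MML$ for the equivalence constructed there. By the free--forgetful adjunction we get
\[ [\MML,\Sigma^{*,*}\MGL]_{\MSL}\cong[\Omega^1_{\Proj^1}\Th_{\Proj^\infty}(\struct(-2)),\Sigma^{*,*}\MGL]=\MGL^{*+2,*+1}(\Th_{\Proj^\infty}(\struct(-2))). \]
The $\MGL$-Thom isomorphism identifies the right-hand side with $\MGL^{*,*}(\Proj^\infty)$. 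Hence the source is the group of power series $f(c_1(\struct(-1)))$, up to the bigrading shift absorbed by the Thom class.

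The forgetful map $[\MML,\Sigma^{*,*}\MGL]_{\MSL}\to[\MML,\Sigma^{*,*}\MGL]$ is given by restriction along the unit $\MML\to\MSL\wedge\MML$. Under the identification above, it becomes the composite
\[ \MGL^{*,*}(\Proj^\infty)\cong\MGL^{*+2,*+1}(\Th_{\Proj^\infty}(\struct(-2)))\cong[\MSL\wedge\Omega^1_{\Proj^1}\Th(\struct(-2)),\Sigma^{*,*}\MGL]_{\MSL}\xrightarrow{(\Phi^{-1})^*}[\MML,\Sigma^{*,*}\MGL]_{\MSL}\to[\MML,\Sigma^{*,*}\MGL]. \]
The target is $\MGL^{*,*}(\MML)$, and the $\MGL$-Thom isomorphism identifies it with $\MGL^{*,*}(\MGr_\infty)$ via the model $\MML\cong\colim_n\Omega^{2n}_{\Proj^1}\Th_{\MGr_{2n}}(\gamma^\ML_{2n})$. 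The inverse limit poses no $\limit^1$ problem, since Lemma~\ref{lem:MGL(MGr)} gives surjective transition maps.

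To compute the composite, we unwind the definition of $\Phi$ at the level of Thom spectra over $\K$. In the proof of Theorem~\ref{thm:mslc_free}, $\Phi$ is $\mathrm{M}$ applied to the equivalence $\KSLrk\times\Pic\to\K^\ML_{\mathrm{rk=0}}$ over $\K$; by Remark~\ref{remark:SGrxP=MGr} this is $\SGr_\infty\times\Proj^\infty\simeq_\mot\MGr_\infty$. An $\MSL$-linear map out of $\MSL\wedge Y$ that extends $g\colon Y\to\Sigma^{*,*}\MGL$ is the composite $\MSL\wedge Y\to\MGL\wedge Y\xrightarrow{\mu\circ(\id\wedge g)}\MGL$. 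Since $\MGL$ is oriented, the Thom spectra of $\SGr_\infty$ and of $\MGr_\infty$ over $\K$ become Thom-isomorphic to $\MGL\wedge(-)_+$. Under these identifications, the operation attached to the class $f(c_1(\struct(-1)))\cup\thom(\struct(-2))$ corresponds in $\MGL^{*,*}(\SGr_\infty\times\Proj^\infty)$ to $1\otimes f(c_1(\struct(-1)))$. This uses the multiplicativity of $\MGL$-Thom classes under the direct sum in $\K$, which is exactly what the Thom functor's symmetric monoidality gives. Pulling back along the inverse equivalence to $\MGr_\infty$, the projection to $\Proj^\infty$ corresponds to $\sqrt{\det}$, and $\struct(-1)$ pulls back to $\sqrt{\det\gamma^\ML_\infty}$. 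So the class becomes $f(c_1(\sqrt{\det}))$.

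Injectivity then follows from Lemma~\ref{lem:MGL(MGr)}. The map $\MGL^{*,*}(S)[\![h]\!]\to\limit_n\MGL^{*,*}(S)[\![h,c_1,\dots,c_n]\!]/(c_1(\det)-[2]_F(h))$ is injective: $c_1(\det)$ is a power series in the $c_i$ with leading term $c_1$, so the quotient is $\MGL^{*,*}(S)[\![h,c_2,\dots]\!]$ after eliminating $c_1$. Alternatively, the projection $\MGr_\infty\to\Proj^\infty$ has a section, namely $s$ from Construction before Proposition~\ref{prop:KSLxPic=KML}, so the pullback is split injective.

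The main obstacle is the middle step: checking that the Thom isomorphisms for $\MGL$ on the two sides are compatible with $\Phi$, with no twisting multiplier of the kind in Remark~\ref{ques:twisted_thom_mult}. The point is that $\Phi$ involves only multiplicative Thom classes (the $\MML$-class of $\struct(-2)$ and the $\MSL$-action), with no non-linear equivalences. We will verify this by factoring everything through $\mathrm{M}$ applied to maps over $\K$ and using that $\MGL\wedge\mathrm{M}(\beta)\simeq\MGL\wedge\mathrm{B}_+$ naturally in $\beta\colon\mathrm{B}\to\K$.
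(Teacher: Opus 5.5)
Your proposal is correct and takes essentially the same route as the paper. Both arguments use the free-module description from Theorem~\ref{thm:mslc_free} together with the free--forgetful adjunction and the $\MGL$-Thom isomorphisms. Both identify the image of $\alpha$ with $\mathrm{pr}_2^*(\alpha)$ through the motivic equivalence $\SGr_\infty\times\Proj^\infty\simeq\MGr_\infty$. Both then use that $\sqrt{\det}$ restricts to $\struct(-1)$ on the $\Proj^\infty$ factor, and both get injectivity from Lemma~\ref{lem:MGL(MGr)} and the projective bundle formula.

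Your added detail fills in the step the paper calls a ``straightforward verification'': the compatibility follows from naturality of $\MGL\wedge\mathrm{M}(\beta)\simeq\MGL\wedge\mathrm{B}_+$ over $\K$ and multiplicativity of Thom classes. Your alternative split-injectivity argument, using the section $s$ of $\sqrt{\det}$, is also valid.
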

\begin{proof}
    It suffices to prove the second statement since injectivity then will follow from Lemma \ref{lem:MGL(MGr)} and the projective bundle formula for $\MGL$-cohomology. Theorem \ref{thm:mslc_free} implies that the bigraded group of $\MSL$-linear operations $[\MML,\Sigma^{*,*}\MGL]_\MSL$ is isomorphic to $\MGL^{*+2,*+1}(\Th_{\Proj^\infty}(\struct(-2)))$. Applying the Thom isomorphisms, we identify the desired homomorphism with a map of the form
    \[ \MGL^{*,*}(\Proj^\infty)\to \MGL^{*,*}(\MGr_\infty) \]
    and it remains to compute the image of $f(c_1(\struct(-1)))=:\alpha$. A straightforward verification shows that this map sends an element $\alpha$ of bidegree $(p,q)$ to the composition
    \[ \Sigma^\infty_{\Proj^1}(\MGr_\infty)_+\xleftarrow{\simeq} \Sigma^\infty_{\Proj^1}(\SGr_\infty\times\Proj^\infty)_+\xrightarrow{\mathrm{pr}_2^*(\alpha)} \Sigma^{p,q}\MGL, \]
    where the first isomorphism is induced by the motivic equivalence $\SGr_\infty\times\Proj^\infty\xrightarrow{\simeq}_{\mot}\MGr_\infty$ (see Remark \ref{remark:SGrxP=MGr}). Cohomology of the product $\SGr_\infty\times \Proj^\infty$ is given by the tensor product of cohomology rings (over $\MGL^{*,*}(S)$) by the projective bundle formula. It remains to notice that the pullback of $c_1(\sqrt{\det})$ along $\SGr_\infty\times \Proj^\infty\to \MGr_\infty$ is given by $c_1(\struct(-1))$ by the construction of this map, functoriality of Chern classes, and $c_1(\det\gamma^{\SL}_\infty)=0$.
\end{proof}

Now we are ready to prove the main result of this section.
\begin{theorem}\label{thm:main_cofib_seq}
    The forgetful morphism $\MSL\to \MML$ is included into the following (split) cofiber sequence
    $$ \MSL\to \MML\xrightarrow{\partial} \Sigma^1_{\Proj^1} \MGL\xrightarrow{0} \Sigma\MSL, $$
    of $\MSL$-modules over $S$, where $\partial$ is the unique $\MSL$-linear operation that corresponds to the characteristic class $[-1]_F(c_1(\sqrt{\det}))$ under the Thom isomorphism, see Lemma \ref{lem:MGL(MML)}.
    The set of homotopy classes of splittings of this cofiber sequence (in the category of $\MSL$-modules) is in bijection with
    $$ \{ t\in \MSL^{2,1}(\Th_{\Proj^\infty}(\struct(-2)))\,|\,t\vert_{\Proj^0}=\Sigma^1_{\Proj^1} 1 \}. $$
    Here the restriction is taken along $p\colon S=\Proj^0\to \Proj^\infty$ given by the point $(1:0:\cdots)$ of $\Proj^\infty$.
\end{theorem}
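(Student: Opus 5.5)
The plan is to work entirely in $\mathrm{Mod}_{\MSL}(\SHS)$ and use Theorem~\ref{thm:mslc_free}, which identifies $\MML$ with the free module $\MSL\wedge\Omega^1_{\Proj^1}\Th_{\Proj^\infty}(\struct(-2))$. Under this identification the forgetful map $\MSL\to\MML$ should become the map induced by the inclusion of the base point $p\colon\Proj^0\to\Proj^\infty$, i.e. by $\Th_{\Proj^0}(\struct(-2))=S^{2,1}\to\Th_{\Proj^\infty}(\struct(-2))$. This is because $s_{\mathrm{rk=0}}$ sends the trivial line bundle to the unit of $\K^\ML_{\mathrm{rk=0}}$; I will check that compatibility first. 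The cofiber of $\MSL\to\MML$ is then
\[ \MSL\wedge\Omega^1_{\Proj^1}\bigl(\Th_{\Proj^\infty}(\struct(-2))/\Th_{\Proj^0}(\struct(-2))\bigr). \]

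Next I identify this cofiber with $\Sigma^1_{\Proj^1}\MGL$. Lemma~\ref{lemma:modulo_squares}, applied to $X=\Proj^\infty$ and $\mathcal{L}=\struct(-1)$, gives $\MSL\wedge\Th_{\Proj^\infty}(\struct(-2))\simeq\MSL\wedge\Sigma^{\infty+1}_{\Proj^1}\Proj^\infty_+$. This equivalence is natural in $X$, so it is compatible with restriction to $\Proj^0$. The cofiber therefore becomes $\MSL\wedge\Sigma^{\infty}_{\Proj^1}(\Proj^\infty,p)$. Since $\Proj^{n+1}\smallsetminus\{pt\}$ is the total space of $\struct(1)$ over $\Proj^n$, we have $(\Proj^\infty,p)\simeq\Th_{\Proj^\infty}(\struct(1))$. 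It remains to know that $\MSL\wedge\Omega^1_{\Proj^1}\Th_{\Proj^\infty}(\struct(\pm1))\simeq\MGL$. I would prove this exactly as in Theorem~\ref{thm:mslc_free}: use the splitting $\Krk\simeq\KSLrk\times\Pic$ coming from the fiber sequence $\KSLrk\to\Krk\xrightarrow{\det}\Pic$ and the section $\mathcal{L}\mapsto\mathcal{L}$, then apply the Thom functor. The sign of the twist can be changed by the same shearing argument as in Lemma~\ref{lem:red_to_det}. So the cofiber is $\Sigma^1_{\Proj^1}\MGL$, up to an $\MSL$-linear equivalence.

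Splitness is formal. The map $\Proj^0\to\Proj^\infty$ admits a retraction $\Proj^\infty\to\Proj^0$ on unpointed spaces, which gives a retraction $\MSL\wedge\Sigma^{\infty+1}_{\Proj^1}\Proj^\infty_+\to\MSL\wedge\Sigma^{\infty+1}_{\Proj^1}\Proj^0_+$. Hence $\MSL\to\MML$ is a split monomorphism, and the boundary map $\Sigma^1_{\Proj^1}\MGL\to\Sigma\MSL$ is zero.

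For the set of splittings, retractions $r\colon\MML\to\MSL$ of $\MSL$-modules correspond, via Theorem~\ref{thm:mslc_free} and the free--forgetful adjunction, to classes $t\in\MSL^{2,1}(\Th_{\Proj^\infty}(\struct(-2)))$. The condition $r\circ i=\id$ translates into $t|_{\Proj^0}=\Sigma^1_{\Proj^1}1$. In a split cofiber sequence, homotopy classes of retractions of $i$ are in bijection with splittings; both form torsors under $[\Sigma^1_{\Proj^1}\MGL,\MSL]_{\MSL}$.

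The main obstacle is identifying $\partial$ precisely as the class $[-1]_F(c_1(\sqrt{\det}))$. By Lemma~\ref{lem:MGL(MML)}, an $\MSL$-linear map $\MML\to\Sigma^1_{\Proj^1}\MGL$ is determined by its underlying characteristic class, so it suffices to compute the class of $\partial$. I would track the composite of equivalences above on $\MGL$-cohomology. Lemma~\ref{lem:module_squares_onMGL} contributes the factor $c_1(\mathcal{L}^\vee)/c_1(\mathcal{L})$, and the identification $(\Proj^\infty,p)\simeq\Th(\struct(1))$ contributes a factor $c_1$, since the zero section corresponds to the inclusion. Their product is $c_1(\mathcal{L}^\vee)=[-1]_F(c_1(\struct(-1)))$. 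Transporting this through Lemma~\ref{lem:MGL(MML)} yields $[-1]_F(c_1(\sqrt{\det}))$. The delicate points are the sign conventions for $\struct(\pm1)$ and checking that the ``straightforward'' homotopies really compose as claimed. Finally, I will argue that the choice of identification of the cofiber is fixed by this normalization, which makes $\partial$ unique.
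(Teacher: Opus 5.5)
Your proposal is correct and follows the paper's proof essentially step for step: the free-module model of Theorem~\ref{thm:mslc_free} (with $\MSL\to\MML$ induced by $p$), Lemma~\ref{lemma:modulo_squares} over $\Proj^\infty$ compatibly with $p$, and the identification $\MSL\wedge\Sigma^\infty_{\Proj^1}(\Proj^\infty,p)\simeq\Sigma^1_{\Proj^1}\MGL$, which the paper quotes from earlier work instead of rederiving it from $\Krk\simeq\KSLrk\times\Pic$; the rest (splitness via $\Proj^\infty\to\Proj^0$, retractions turned into classes $t$ by adjunction, and the $\MGL$-cohomology bookkeeping through Lemmas~\ref{lem:module_squares_onMGL} and~\ref{lem:MGL(MML)}) also matches. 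The one point to pin down is the sign you flagged: to get exactly $\partial$, the identification with $\Sigma^1_{\Proj^1}\MGL$ must correspond to $c_1(\struct(-1))\in\MGL^{2,1}(\Proj^\infty,p)$, i.e.\ you identify $(\Proj^\infty,p)$ with $\Th_{\Proj^\infty}(\struct(-1))$ compatibly with the zero section and use the $\MGL$-Thom class of $\struct(-1)$ (using $\Th(\struct(1))$ with its own Thom class gives $c_1(\struct(1))$ and a cofiber map that differs from $\partial$ by an automorphism), and this switch cannot be done by a shearing as in Lemma~\ref{lem:red_to_det}, because $\struct(1)$ and $\struct(-1)$ have different determinants.
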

\begin{proof}
    Consider the following commutative diagram in which rows are cofiber sequences of $\MSL$-modules:
    \begin{center}
        \begin{tikzcd}
            \MSL \arrow[d, equal] \arrow[r] & \MML \arrow[r]                                                                                                 & \cofib_1                     \\
            \MSL \arrow[d, equal] \arrow[r] & \MSL\wedge\Omega^1_{\Proj^1}\Th_{\Proj^\infty}(\struct(-2)) \arrow[r] \arrow[u, "\simeq"', "(1)"] \arrow[d, "(2)"', "\simeq"] & \cofib_2 \arrow[u] \arrow[d] \\
            \MSL \arrow[d, equal] \arrow[r] & \MSL\wedge\Sigma^\infty_{\Proj^1}\Proj^\infty_+ \arrow[r] \arrow[d, "(3)"', "\simeq"]                                 & \MSL\wedge\Sigma^\infty_{\Proj^1}(\Proj^\infty,p) \arrow[d]           \\
            \MSL \arrow[r, "\mathrm{inc}_1"]              & \MSL\oplus\Sigma^{1}_{\Proj^1}\MGL \arrow[r, "\mathrm{pr}_2"]                                                  & \Sigma^{1}_{\Proj^1}\MGL.   
        \end{tikzcd}
    \end{center}
    Here morphism (1) is from Theorem \ref{thm:mslc_free}, morphism (2) is from Lemma \ref{lemma:modulo_squares}, and morphism (3) has components $\MSL\wedge\Sigma^{\infty}_{\Proj^1}(\Proj^\infty\to S)_+$ and $\MSL\wedge\Sigma^{\infty}_{\Proj^1}(\Proj^\infty,p)\cong \Sigma^1_{\Proj^1}\MGL$ from \cite[Corollary 3.3]{Egor} (see also \cite[Theorem 1.1]{Ahina}). All other morphisms are the obvious ones. The diagram implies that $\cofib_1\cong \Sigma^{1}_{\Proj^1}\MGL$ and the boundary morphism in the first cofiber sequence is zero. Hence, it remains to compute the map $\MML\to \Sigma^{1}_{\Proj^1}\MGL$ and check the last statement.

    First, we compute the second map in the cofiber sequence. The above diagram shows that it is given by the following composition:
    \begin{align*}
        \MML\xleftarrow{\simeq} \MSL\wedge \Omega^1_{\Proj^1}\Th_{\Proj^\infty}(\struct(-2)) \xrightarrow{\simeq} \MSL\wedge\Sigma^\infty_{\Proj^1}\Proj^\infty_+\to \MSL\wedge\Sigma^\infty_{\Proj^1}(\Proj^\infty,p)\xrightarrow{\simeq}\Sigma^1_{\Proj^1}\MGL.
    \end{align*}
    The last equivalence corresponds to $c_1(\struct(-1))\in \MGL^{2,1}(\Proj^\infty,p)$ by the construction of this equivalence. The composition of third and fourth morphisms corresponds to the same element since the pullback along the third map on $\MGL$-cohomology is the inclusion of $\MGL^{*,*}(\Proj^\infty,p)$ into $\MGL^{*,*}(\Proj^\infty)$. Applying Lemma \ref{lem:module_squares_onMGL}, we obtain that the composition of the last three morphisms corresponds to the element
    \[ \frac{c_1(\struct(1))}{c_1(\struct(-1))}c_1(\struct(-1))\cup \thom(\struct(-2))=c_1(\struct(1))\cup \thom(\struct(-2)). \]
    The first Chern class $c_1(\struct(1))$ is expressed as $[-1]_F(c_1(\struct(-1)))$ by Lemma \ref{lem:module_squares_onMGL} again. Applying Lemma \ref{lem:MGL(MML)}, we obtain that the whole composition is given by a unique $\MSL$-linear map that under the forgetful functor and Thom isomorphism corresponds to $[-1]_F(c_1(\sqrt{\det}))$, as required.

    Now we prove the last statement. The set of homotopy splittings of the first cofiber sequence is in bijection with the set of homotopy retractions of the morphism $\MSL\to \MML$. By the left upper commutative square in the diagram above, this set is in bijection with the set of (homotopy) retractions of $\MSL\to \MSL\wedge \Omega^1_{\Proj^1}\Th_{\Proj^\infty}(\struct(-2))$, which is induced by $p\colon \Proj^0\hookrightarrow \Proj^\infty$. Applying the free-forgetful adjunction, we obtain that the desired set is in bijection with
    \[ \{ t\in\MSL^{0,0}(\Omega^1_{\Proj^1}\Th_{\Proj^\infty}(\struct(-2)))\, |\, (\Omega^1_{\Proj^1}\Th(p))^*(t)=1\}. \]
    Here $\Th(p)$ denotes the map on Thom spaces $\Th_S(\struct)\to \Th_{\Proj^\infty}(\struct(-2))$ induced by $p$, and the equality means that the pullback of $t$ corresponds to $1\in\MSL^{0,0}(S)$ under the equivalence $\Omega^1_{\Proj^1}\Th_S(\struct)\cong \Omega^1_{\Proj^1}\Sigma^1_{\Proj^1}\sph\cong \sph$.
    This datum is equivalent to the desired one via the isomorphism
    \[ \MSL^{0,0}(\Omega^1_{\Proj^1}\Th_{\Proj^\infty}(\struct(-2))) \cong \MSL^{2,1}(\Th_{\Proj^\infty}(\struct(-2))), \]
    which is induced by the autoequivalence $\Sigma^1_{\Proj^1}$. This concludes the proof.
\end{proof}
\begin{remark}\label{rem:mult_splitting}
    A natural question arising from Theorem \ref{thm:main_cofib_seq} is whether there exists a multiplicative splitting of our cofiber sequence. The answer is already negative for the corresponding commutative algebras in the homotopy category of $\MSL$-modules. More precisely, we claim that $\MSL$ is not a retract of $\MML$ in the category of homotopy commutative $\MSL$-algebras\footnote{Here by a homotopy commutative $\MSL$-algebra we mean a commutative monoid in the homotopy category of $\MSL$-modules.}. In fact, we prove more: the morphism $\MSL\to \MML$ does not admit a retraction in the category of homotopy commutative ring spectra (i.e., the category of commutative monoids in the homotopy category $\mathrm{Ho}(\SHS)$). To prove this, we use computations of homotopy groups from \S \ref{section_5.2} and Appendix~\ref{appendix_char2}. 

    Assume that there is a retraction of $\MSL\to\MML$ in the category of homotopy commutative ring spectra over $S$. Taking the base change to the algebraic closure of the residue field of a point $s\in S$, we obtain the corresponding retraction over an algebraically closed field of exponential characteristic $e$. Applying $\pi_{2*,*}(-)[\nicefrac{1}{2e}]$ (the inversion of $e$ is omitted from the notation below) and using Theorem \ref{thm:pullback_descr} and \cite[Theorem D]{Egor} (or Theorem \ref{theorem:geom_diags_char2} if the characteristic of $\kappa(s)$ is $2$), we obtain a ring retraction of 
    \[ \pi_{2*}(\MSU)[\nicefrac{1}{2}]\to \pi_{2*}(\mathrm{M}\Sigma)[\nicefrac{1}{2}]\cong \pi_{2*}(\MU)[\nicefrac{1}{2}], \]
    see Corollaries \ref{corollary:geom_diag_away_2} and \ref{corollary:MML_MGL_char2} for the last isomorphism. However, this map does not admit a retraction in the category of graded commutative rings. The ring $\pi_{2*}(\MSU)[\nicefrac{1}{2}]$ is isomorphic to $\Z[\nicefrac{1}{2}][y_2,y_3,\dots]$ with $\mathrm{deg}(y_i)=2i$ (see \cite[Theorem 7.1]{CLP}), while the ring $\pi_{2*}(\mathrm{M}\Sigma)[\nicefrac{1}{2}]$ is isomorphic to $\mathbb{L}[\nicefrac{1}{2}]\cong\Z[\nicefrac{1}{2}][a_1,a_2,\dots]$ with $\mathrm{deg}(a_i)=2i$ (see \cite{Quillen}). If the characteristic of $\kappa(s)$ is not $3$, the absence of a ring retraction follows from the fact that $y_3$ maps to $-3a_3+2a_1a_2$ by \cite[Lemma~4.4]{MSL-slices} (the generator $y_3$ coincides with $x_3$ by the discussion just below \cite[Theorem~7.1]{CLP}). In the remaining case, one can use that the image of $y_5$ equals to $\pm 5a_5$ modulo decomposable elements (this follows from the fact that the polynomial generators $a_i$ and $y_i$ are determined by their Milnor numbers: $s_5(y_5)=\pm 5$ and $s_5(a_5)=\pm 1$, see \cite[Theorems 1.5 and 2.1]{CLP}).
\end{remark}

As a corollary, we generalize an equivalence proved by Brazelton and Wendt over fields \cite[Corollary 1.3]{BrazeltonWendtMSLc} (see also Haution's result \cite[Proposition 3.2.8]{Haut}).

\begin{corollary}\label{cor_eta_period_mslc}
    The map $\MSL\to\MML$ induces an equivalence of motivic $\Einf$-ring spectra over $S$
    \[\MSL[\eta^{-1}]\xrightarrow{\simeq} \MML[\eta^{-1}].\]
\end{corollary}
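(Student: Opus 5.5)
Since $\MSL\to\MML$ is a map of motivic $\Einf$-ring spectra, $\eta$-periodization (a smashing localization) turns it into a map of $\Einf$-rings $\MSL[\eta^{-1}]\to\MML[\eta^{-1}]$. So it suffices to show that the underlying map of spectra is an equivalence. The plan is to use the cofiber sequence of $\MSL$-modules from Theorem~\ref{thm:main_cofib_seq},
\[ \MSL\to \MML\xrightarrow{\partial}\Sigma^1_{\Proj^1}\MGL, \]
and to smash it with $\sph[\eta^{-1}]$. Localization is exact, so we obtain a cofiber sequence
\[ \MSL[\eta^{-1}]\to \MML[\eta^{-1}]\to \Sigma^1_{\Proj^1}\MGL[\eta^{-1}]. \]
Hence the statement reduces to showing that $\MGL[\eta^{-1}]\simeq 0$.

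The vanishing of $\MGL[\eta^{-1}]$ is standard. Since $\MGL$ is oriented, the Hopf map $\eta\colon\Sigma^{1,1}\sph\to\sph$ acts by zero on $\MGL$. Concretely, $\eta$ is the attaching map of the $2$-cell of $\Proj^2$, i.e.\ the map $\A^2\setminus 0\to\Proj^1$. An orientation gives the projective bundle formula for $\Proj^2$, and this forces $\eta$ to be zero in $\MGL^{*,*}(S)$. Because $\MGL$ is a ring spectrum, multiplication by $\eta$ on $\MGL$ is the product with the image of $\eta$ in $\pi_{1,1}\MGL$, which is zero. The colimit $\colim(\MGL\xrightarrow{\eta}\Sigma^{-1,-1}\MGL\xrightarrow{\eta}\cdots)$ then vanishes, and so does every suspension of it.

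Therefore the cofiber of $\MSL[\eta^{-1}]\to\MML[\eta^{-1}]$ is zero, and this map is an equivalence in $\SHS$. Being the image of an $\Einf$-map, it is an equivalence of motivic $\Einf$-ring spectra. There is no real obstacle. The only point to check is that the first map of the localized cofiber sequence is indeed the $\eta$-periodization of the forgetful map, which is clear from the exactness of $-\wedge\sph[\eta^{-1}]$. The splitting, and the identification of $\partial$, are not needed.
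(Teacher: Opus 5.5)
Your proposal is correct and matches the paper's proof: smash the cofiber sequence of Theorem~\ref{thm:main_cofib_seq} with $\sph[\eta^{-1}]$ and observe that the third term $\Sigma^1_{\Proj^1}\MGL[\eta^{-1}]$ vanishes because $\eta$ acts trivially on $\MGL$. The only difference is that you sketch the standard projective-bundle argument for $\eta=0$ in $\MGL$, whereas the paper simply cites Hoyois for this fact.
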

\begin{proof}
    Smashing the cofiber sequence from Theorem \ref{thm:main_cofib_seq} with $\sph[\eta^{-1}]$, we obtain the cofiber sequence $$\MSL[\eta^{-1}]\to \MML[\eta^{-1}]\to \Sigma^{1}_{\Proj^1}\MGL[\eta^{-1}]=0,$$
    where the vanishing follows from the fact that $\eta$ is trivial in $\MGL$ (see \cite[Theorem 3.8]{Hoy15}). Hence, the first map is an isomorphism.
\end{proof}
\section{Computations of homotopy groups}\label{section:5}
In this section, we apply the split cofiber sequence obtained in \S\ref{section:4}  to compute certain homotopy groups of $\MML$ over a field. We deduce formulas for the first Milnor--Witt stems of $\MML$ and compute its geometric diagonal (away from the exponential characteristic).
\subsection{First Milnor--Witt stems}
In this subsection we work over a field $F$. 
We begin with the following proposition concerning the negative and zeroth lines. For the definition of the Milnor--Witt K-theory see \cite[Definition 1.21]{Morel_book}.
\begin{proposition}\label{prop:pi_0_MML}
    We have $\pi_{n+m,n}(\MML)=0$ for $m<0$. The unit map $\sph\to\MML$ induces an isomorphism \[ \bigoplus_{n\in\Z}\pi_{n,n}(\sph)\cong \bigoplus_{n\in\Z}\pi_{n,n}(\MML)\]
    of graded rings. The left hand side can be identified with the Milnor--Witt K-theory $\mathrm{K}^{\mathrm{MW}}_{-*}(F)$ by Morel's computation; see \cite[Theorem 1.23]{Morel_book}.
\end{proposition}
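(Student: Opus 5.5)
The plan is to use the split cofiber sequence of Theorem~\ref{thm:main_cofib_seq}, which gives an equivalence of $\MSL$-modules (hence of underlying spectra) $\MML\simeq\MSL\oplus\Sigma^{2,1}\MGL$ after choosing any splitting $t$. On homotopy groups this yields
\[ \pi_{n+m,n}(\MML)\cong\pi_{n+m,n}(\MSL)\oplus\pi_{n+m-2,n-1}(\MGL). \]
The second summand sits in Milnor--Witt stem $m-1$ of $\MGL$, i.e.\ $\pi_{(n-1)+(m-1),\,n-1}(\MGL)$.

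The first step is to recall the connectivity facts. $\MGL$ is very effective, and over a field $\pi_{a+b,a}(\MGL)=0$ for $b<0$ (Morel's stable connectivity for connective spectra), while $\pi_{a,a}(\MGL)\cong\K^{\mathrm{M}}_{-a}(F)$ is the Milnor K-theory line. The decisive point is that for $m\le 0$ the relevant $\MGL$-stem has index $m-1<0$, so it vanishes. For $\MSL$, I will invoke the known facts (the reference \cite[Remark~16.35]{BH-Norms}, based on $\SL_1=1$): $\pi_{n+m,n}(\MSL)=0$ for $m<0$, and the unit $\sph\to\MSL$ induces an isomorphism on $\pi_{n,n}$. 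Concretely, the cofiber of the unit $\sph\to\MSL$ is built from Thom spectra of $\SL_k$-bundles with $k\ge 2$, which are in $\SH(F)^{\mathrm{veff}}(1)$. This cofiber is therefore $1$-connective in the homotopy t-structure, and the isomorphism on $\pi_{n,n}$ follows from the long exact sequence.

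Combining these gives $\pi_{n+m,n}(\MML)=0$ for $m<0$, and an isomorphism $\pi_{n,n}(\MSL)\to\pi_{n,n}(\MML)$ induced by the forgetful map. Composing with the unit isomorphism for $\MSL$ shows that $\sph\to\MML$ induces an isomorphism on $\pi_{n,n}$, since the unit of $\MML$ factors through $\MSL\to\MML$. Because the unit is a ring map, this isomorphism is automatically one of graded rings. Finally, Morel's theorem identifies the left side with $\K^{\mathrm{MW}}_{-*}(F)$.

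The only subtle step is the connectivity of the cofiber of $\sph\to\MSL$. If it is not available in citable form, I would prove it directly by filtering $\MSL$ by the $\Omega^{2n}_{\Proj^1}\Th(\gamma^{\SL}_{2n})$ over $\SGr$. The cells other than the bottom one are Thom spaces over the positive-dimensional part, whose $\A^1$-connectivity increases because $\SGr_\infty$ is $\A^1$-$1$-connected. Alternatively, one can bypass this via Theorem~\ref{thm:mslc_free}: $\MML$ is a free $\MSL$-module on $\Omega^1_{\Proj^1}\Th_{\Proj^\infty}(\struct(-2))$, whose cofiber from the bottom cell is a very effective $\Proj^1$-suspension, which again kills stems $\le 0$ beyond the bottom cell.
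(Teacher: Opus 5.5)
Your argument is correct and is essentially the paper's proof. The zeroth line comes from the split cofiber sequence of Theorem~\ref{thm:main_cofib_seq}, the vanishing of the $(-1)$-st Milnor--Witt stem of $\MGL$, and the corresponding fact for $\MSL$ cited from \cite{BH-Norms}; the only difference is that the paper gets the vanishing of negative stems directly from the connectivity of the Thom spectrum $\MML$ \cite[Lemma 3.1]{Hoy15}, rather than summand by summand.

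One caution about your side justifications, which are not load-bearing since you cite the $\MSL$ result. A cofiber lying in $\SH(F)^{\mathrm{veff}}(1)$, i.e.\ one that is merely $1$-connective, only gives surjectivity on $\pi_{n,n}$. The cofiber of $\sph\to\MGL$ is also in $\SH(F)^{\mathrm{veff}}(1)$, yet $\eta\mapsto 0$. What $\SL_1=1$ actually buys is that the cofiber of $\sph\to\MSL$ lies in $\Sigma^{4,2}\SH(F)^{\mathrm{veff}}$, hence is $2$-connective. Likewise, your proposed bypass via Theorem~\ref{thm:mslc_free} yields only surjectivity of $\pi_{n,n}(\MSL)\to\pi_{n,n}(\MML)$, unless you also show that the connecting map out of the $(2,1)$-cell vanishes, which is exactly what the splitting supplies.
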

\begin{proof}
    The vanishing of the negative lines follows from the connectivity of $\MML$ with respect to the homotopy $t$-structure, which is standard; see \cite[Lemma 3.1]{Hoy15}. The isomorphism for the zeroth line follows from our main cofiber sequence, connectivity of $\MGL$, and the analogous result for $\MSL$; see \cite[Example 16.35]{BH-Norms}.
\end{proof}
We now turn to the first, second, and third lines. Since these computations use the slices of the algebraic cobordism spectrum, we need to invert the exponential characteristic. 
\begin{notation}
    Following \cite{ARO}, we denote by $\mathrm{kq}\in\mathrm{CAlg}(\SH(F))$ the very effective cover of the hermitian K-theory spectrum (see \cite[\S 4]{KRO_hermitian_k_theory_char2} for the case $\operatorname{char}(F)=2$). There is a canonical morphism $\MML\to\mathrm{kq}$ of motivic $\Einf$-ring spectra (see \cite[Remark 7.11 and Remark 7.3]{HJNY-HermKtheory}). We also denote by $\mathrm{kgl}\in\mathrm{CAlg}(\SH(F))$ the (very) effective cover of the algebraic K-theory spectrum, and by $\tilde{\partial}\colon \MML\to \Sigma^{1}_{\Proj^1}\mathrm{kgl}$ the composite of $\partial$ with the $\Proj^1$-suspension of the canonical $\Einf$-ring map $\MGL\to \mathrm{kgl}$ (see \cite[\S 6]{Hilb_K_theory}).
\end{notation}
\begin{proposition}\label{prop:pi_1_MML}
    Assume that the field $F$ has characteristic zero. Then the morphisms $\MML\to \mathrm{kq}$ and $\tilde{\partial}\colon\MML\to \Sigma^{1}_{\Proj^1}\mathrm{kgl}$ induce an isomorphism \[\pi_{n+1,n}(\MML)\cong\pi_{n+1,n}(\mathrm{kq})\oplus \mathrm{K}^{\mathrm{M}}_{1-n}(F), \]
    where $\mathrm{K}^{\mathrm{M}}_{*}(F)$ denotes the Milnor K-theory. If $F$ has characteristic $p>0$, the same result holds after inverting $p$.
\end{proposition}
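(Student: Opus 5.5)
We use the splitting of Theorem~\ref{thm:main_cofib_seq} to reduce the statement to two separate computations. Fix a splitting $t$, which gives an $\MSL$-linear equivalence $(t,\partial)\colon\MML\simeq\MSL\oplus\Sigma^1_{\Proj^1}\MGL$. Applying $\pi_{n+1,n}$, we get
\[
\pi_{n+1,n}(\MML)\cong\pi_{n+1,n}(\MSL)\oplus\pi_{n-1,n-1}(\MGL).
\]
The second summand does not depend on the choice of $t$, since it is detected by $\partial$. By Hopkins--Morel--Hoyois (after inverting $p$), $\pi_{n-1,n-1}(\MGL)\cong\mathrm{K}^{\mathrm{M}}_{1-n}(F)$. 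The map $\MGL\to\mathrm{kgl}$ induces an isomorphism on the zeroth Milnor--Witt line, because both spectra have zeroth slice $\mathrm{M}\Z$ and are effective connective. Hence $\tilde\partial$ identifies the second summand with $\pi_{n-1,n-1}(\mathrm{kgl})\cong\mathrm{K}^{\mathrm{M}}_{1-n}(F)$.

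For the first summand we use the known computation $\pi_{n+1,n}(\MSL)\cong\pi_{n+1,n}(\mathrm{kq})$, induced by $\MSL\to\mathrm{kq}$ \cite{Ahina, MSL-slices}. The map $\MSL\to\mathrm{kq}$ factors as $\MSL\to\MML\to\mathrm{kq}$, because $\MML\to\mathrm{kq}$ is an $\Einf$-map compatible with the forgetful map. Consider
\[
\Phi=(\,\MML\to\mathrm{kq},\ \tilde\partial\,)\colon\ \pi_{n+1,n}(\MML)\to\pi_{n+1,n}(\mathrm{kq})\oplus\mathrm{K}^{\mathrm{M}}_{1-n}(F).
\]
With respect to the decomposition $\pi_{n+1,n}(\MSL)\oplus\pi_{n-1,n-1}(\MGL)$, the map $\Phi$ is block lower-triangular. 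On the $\MSL$ summand, embedded via the unit inclusion, its first component is the known isomorphism, and its second component is zero because $\partial$ kills the image of $\MSL$. On the $\MGL$ summand, its second component is the isomorphism described above. A triangular map whose diagonal entries are isomorphisms is itself an isomorphism, whatever the off-diagonal term, which is the composite of the $\Sigma^1_{\Proj^1}\MGL$ summand into $\MML$ followed by $\MML\to\mathrm{kq}$. So we never need to compute that off-diagonal term.

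The main obstacle is the input for $\MSL$ in the generality needed. We must check that the cited results of \cite{Ahina, MSL-slices} give the isomorphism $\pi_{n+1,n}(\MSL)\to\pi_{n+1,n}(\mathrm{kq})$ induced exactly by the canonical map, in every weight $n$, and after inverting $p$ in positive characteristic. If the reference only provides an abstract isomorphism, we instead deduce it from the slices. The slices $\slice_q\MSL\to\slice_q\mathrm{kq}$ agree for $q\le 1$ in the relevant range: $\MSL$ has no $\Sigma^{2,1}$ generator since $\SL_1=1$, and $\mathrm{kq}$ has slices $\mathrm{M}\Z$, $\Sigma^{1,1}\mathrm{M}\Z/2$, and so on. We then compare the slice spectral sequences in Milnor--Witt stem $1$, combining this with the fact that $\eta$-completions and $\eta$-inverted parts both agree; the latter holds because $\MSL[\eta^{-1}]\to\mathrm{kq}[\eta^{-1}]$ is an isomorphism in low degrees. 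The identification of the $\MGL$ summand is standard.
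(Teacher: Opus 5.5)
Your architecture is the paper's. You use the split cofiber sequence of Theorem~\ref{thm:main_cofib_seq}, the isomorphism $\pi_{n+1,n}(\MSL)\cong\pi_{n+1,n}(\mathrm{kq})$ induced by the canonical map, and $\pi_{n-1,n-1}(\MGL)\cong\K^{\mathrm{M}}_{1-n}(F)$. The canonical-map isomorphism is exactly what \cite[Theorem 1.4]{Ahina} provides (and \cite[Theorem B(1)]{MSL-slices} in characteristic $2$), so your slice-theoretic fallback is not needed. Your triangular-matrix argument for a fixed $t$ is equivalent to the paper's: the short exact sequence $0\to\pi_{n+1,n}(\MSL)\to\pi_{n+1,n}(\MML)\xrightarrow{\partial_*}\pi_{n-1,n-1}(\MGL)\to 0$ is split by the map to $\mathrm{kq}$.

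The step that fails as written is your reason for $\MGL\to\mathrm{kgl}$ being an isomorphism on $\pi_{n-1,n-1}$, namely ``because both spectra have zeroth slice $\mathrm{M}\Z$ and are effective connective''. That principle is false. The unit $\sph\to\mathrm{M}\Z$ is a map of very effective spectra inducing an isomorphism on $\slice_0$. Yet on the zeroth line it is the surjection $\K^{\mathrm{MW}}_{-*}(F)\to\K^{\mathrm{M}}_{-*}(F)$, which is not injective: $\pi_{1,1}(\sph)\cong\W(F)\neq 0$, while $\pi_{1,1}(\mathrm{M}\Z)=0$. The underlying problem is that $f_1E$ of a very effective $E$ need not be $1$-connective in the homotopy $t$-structure, so $\slice_0$ does not see the $\eta$-part of the zeroth line.

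The conclusion is still true, but it needs an input beyond $\slice_0$ (morally, that $\eta$ vanishes on these oriented spectra). The paper argues as follows. In the cofiber sequence $\Sigma^1_{\Proj^1}\mathrm{kgl}\xrightarrow{\beta}\mathrm{kgl}\to\mathrm{M}\Z$, the fiber is the $\Proj^1$-suspension of a very effective spectrum and hence $1$-connective. Therefore $\mathrm{kgl}\to\mathrm{M}\Z$ is an isomorphism on $\pi_{n,n}$. By \cite[Lemma~7.5]{Hoy15}, $\MGL\to\mathrm{M}\Z$ is also an isomorphism on $\pi_{n,n}$, which gives the claim for $\MGL\to\mathrm{kgl}$. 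With this repair your proof is complete.
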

\begin{proof}
    Below we implicitly invert the exponential characteristic. The cofiber sequence from Theorem \ref{thm:main_cofib_seq} yields the following short exact sequence
    \[ 0\to\pi_{n+1,n}(\MSL)\to \pi_{n+1,n}(\MML)\xrightarrow{\partial_*} \pi_{n-1,n-1}(\MGL) \to 0. \]
    The group on the right is isomorphic to $\mathrm{K}^{\mathrm{M}}_{1-n}(F)$ by \cite[Remark 3.10]{Hoy15}. The exact sequence splits since the composite $\MSL\to \MML\to \mathrm{kq}$ induces an isomorphism $\pi_{n+1,n}(\MSL)\cong\pi_{n+1,n}(\mathrm{kq})$ by \cite[Theorem 1.4]{Ahina} (see \cite[Theorem B(1)]{MSL-slices} in characteristic $2$). Therefore we obtain the decomposition
    \[\pi_{n+1,n}(\MML)\cong \pi_{n+1,n}(\mathrm{kq})\oplus \mathrm{K}^{\mathrm{M}}_{1-n}(F),\]
    induced by the canonical map $\MML\to \mathrm{kq}$ and $\partial$. It remains to observe that the $\Einf$-ring map $\MGL\to \mathrm{kgl}$ induces an isomorphism on $\pi_{n-1,n-1}$. This follows, even without inversion of the exponential characteristic, from the cofiber sequence $\Sigma^{1}_{\Proj^1}\mathrm{kgl}\xrightarrow{\beta}\mathrm{kgl}\to \mathrm{M}\Z$ (see \cite[Proposition 2.7]{ARO}) 
    and the fact that the morphism $\MGL\to \mathrm{M}\Z$ induces an isomorphism on $\pi_{n-1,n-1}$, see \cite[Lemma~7.5]{Hoy15}.
\end{proof}
\begin{remark}
    One can show that the $\K^{\mathrm{MW}}_{-*}(F)$-submodule $\K^\mathrm{M}_{1-*}(F)\subset \pi_{*+1,*}(\MML)$ is generated by the class of the projective line $[\Proj^1,\theta]\in\pi_{2,1}(\MML)$ for an obvious orientation $\theta$ of $\det(T_{\Proj^1})\cong \struct(2)$. The construction of the class of an oriented smooth projective variety here is analogous to the construction of the class of a Calabi--Yau variety (or strictly oriented smooth projective variety, following our terminology) in the geometric diagonal of $\MSL$, see \cite[Definition 3.2]{LYZ}.
\end{remark}
\begin{notation}
    We denote by $H^{s,w}$ the motivic cohomology group $H^s(F,\Z(w))$. We also use standard notations for operations on motivic cohomology groups such as the projection on mod-2 motivic cohomology $\mathrm{pr}^\infty_2$, the Steenrod squares $\mathrm{Sq}^i$, and so on.
\end{notation}
\begin{proposition}\label{prop:pi_2_MML}
    Assume that the field $F$ has characteristic zero. Then the morphisms $\MML\to \mathrm{kq}$ and $\tilde\partial\colon\MML\to \Sigma^{1}_{\Proj^1}\MGL$ induce an isomorphism \[\pi_{n+2,n}(\MML)\cong\pi_{n+2,n}(\mathrm{kq})\oplus \pi_{n,n-1}(\mathrm{kgl}), \]
    where the group $\pi_{n,n-1}(\mathrm{kgl})$ fits into the following exact sequence
    \[0\to\bigslant{\mathrm{K}^{\mathrm{M}}_{2-n}(F)}{\Bigl(\partial^2_\infty\mathrm{Sq}^2\mathrm{pr^\infty_2}H^{-1-n,1-n}\Bigr)}\to \pi_{n,n-1}(\mathrm{kgl})\to H^{-n,1-n}\to 0.\] If $F$ has characteristic $p>0$, the same result holds after inverting $p$.
\end{proposition}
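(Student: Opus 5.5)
The plan follows the proof of Proposition~\ref{prop:pi_1_MML}, shifted up one stem. Throughout we implicitly invert the exponential characteristic.

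First, the split cofiber sequence of Theorem~\ref{thm:main_cofib_seq} gives a split short exact sequence
\[ 0\to \pi_{n+2,n}(\MSL)\to \pi_{n+2,n}(\MML)\xrightarrow{\partial_*}\pi_{n,n-1}(\MGL)\to 0. \]
We only need that the boundary map $\Sigma^1_{\Proj^1}\MGL\to\Sigma\MSL$ is zero, so exactness on both sides is automatic. For the left summand we invoke the computation of the second Milnor--Witt stem of $\MSL$: by \cite{Ahina} (and \cite[Theorem B]{MSL-slices} in positive characteristic), the map $\MSL\to\mathrm{kq}$ induces an isomorphism $\pi_{n+2,n}(\MSL)\cong\pi_{n+2,n}(\mathrm{kq})$. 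Since this map factors through $\MML\to\mathrm{kq}$, the composite $\pi_{n+2,n}(\MML)\to\pi_{n+2,n}(\mathrm{kq})$ retracts the inclusion. Hence $\MML\to\mathrm{kq}$ and $\partial$ together induce an isomorphism $\pi_{n+2,n}(\MML)\cong \pi_{n+2,n}(\mathrm{kq})\oplus\pi_{n,n-1}(\MGL)$.

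Second, we show that $\MGL\to\mathrm{kgl}$ induces an isomorphism on $\pi_{n,n-1}$, i.e.\ on the first Milnor--Witt stem. To do this, compare slices. By Hopkins--Morel--Hoyois, $\slice_q\MGL\cong\Sigma^{2q,q}\mathrm{M}\Z\otimes\mathbb{L}_q$, and the slices of $\mathrm{kgl}$ are $\Sigma^{2q,q}\mathrm{M}\Z$ for $q\geq0$. Since $\mathbb{L}_1\to\Z$ is an isomorphism (the Todd genus of $\Proj^1$ is $1$), the map is an isomorphism on $\slice_0$ and $\slice_1$. The slices $\slice_q$ with $q\geq2$ contribute to $\pi_{m+1,m}$ only through $\pi_{m+1,m}\Sigma^{2q,q}\mathrm{M}\Z=H^{2q-m-1,q-m}$, which vanishes for $q\geq2$ by the vanishing $H^{a,b}(F)=0$ for $a>b$. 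The same vanishing handles the adjacent $\pi_{m+2,m}$ terms entering the extension. Via the slice spectral sequence (or the slice tower, using convergence for connective effective spectra over a field), the map $\mathrm{f}_0/\mathrm{f}_2$ of the two spectra already determines $\pi_{m+1,m}$, so the map on this stem is an isomorphism. Alternatively, one can use the cofiber sequences $\Sigma^1_{\Proj^1}\mathrm{kgl}\xrightarrow{\beta}\mathrm{kgl}\to\mathrm{M}\Z$ and the analogous quotient of $\MGL$ by $a_1$, together with the isomorphism on $\pi_{m,m}$ from Proposition~\ref{prop:pi_1_MML}.

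Third, we derive the exact sequence for $\pi_{n,n-1}(\mathrm{kgl})$ from the cofiber sequence $\Sigma^{2,1}\mathrm{kgl}\xrightarrow{\beta}\mathrm{kgl}\to\mathrm{M}\Z$. Set $m=n-1$. The long exact sequence reads
\[\pi_{m+2,m+1}(\mathrm{M}\Z)\to\pi_{m-1,m-1}(\mathrm{kgl})\to\pi_{m+1,m}(\mathrm{kgl})\to\pi_{m+1,m}(\mathrm{M}\Z)\to\pi_{m-2,m-1}(\mathrm{kgl})=0.\]
Here $\pi_{m+1,m}(\mathrm{M}\Z)=H^{-n,1-n}$ and $\pi_{m-1,m-1}(\mathrm{kgl})=\K^{\mathrm{M}}_{2-n}(F)$. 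The connecting map $\mathrm{M}\Z\to\Sigma^{3,1}\mathrm{kgl}$, composed to $\Sigma^{3,1}\mathrm{M}\Z$, is the first slice $k$-invariant of $\mathrm{kgl}$, which is known to be $\partial^2_\infty\mathrm{Sq}^2\mathrm{pr}^\infty_2$ (Voevodsky; see \cite{ARO}). Its image on $H^{-1-n,1-n}$ is therefore as claimed.

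The main obstacle is this last identification. The image of $\pi_{m+2,m+1}(\mathrm{M}\Z)$ in $\pi_{m-1,m-1}(\mathrm{kgl})$ must be computed through the composite $\Sigma^{3,1}\mathrm{kgl}\to\Sigma^{3,1}\mathrm{M}\Z$, which is an isomorphism on $\pi_{m-1,m-1}$ by the argument of Proposition~\ref{prop:pi_1_MML}. We therefore need to know that the differential equals the composite of Voevodsky's operations, and we cite this rather than reprove it.
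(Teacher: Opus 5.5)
Your proposal is correct and is essentially the paper's argument. The paper splits off $\pi_{n+2,n}(\mathrm{kq})$ the same way, citing \cite[Theorem B(1)]{MSL-slices} for $\pi_{n+2,n}(\MSL)\cong\pi_{n+2,n}(\mathrm{kq})$ in all characteristics, and reads both $\pi_{n,n-1}(\MGL)\cong\pi_{n,n-1}(\mathrm{kgl})$ and the extension off the slice spectral sequences; it takes the first slice differential from the $\MGL$ side via \cite[Theorem 4.2]{MSL-slices} instead of from $\mathrm{kgl}$ as you do, which is equivalent because $\MGL\to\mathrm{kgl}$ is an isomorphism on $\slice_0$ and $\slice_1$. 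There are only minor slips: the first term of your long exact sequence should be $\pi_{m+2,m}(\mathrm{M}\Z)=H^{-1-n,1-n}$, the vanishing you need for $\mathrm{f}_2$ is of $\pi_{m+1,m}$ and $\pi_{m,m}$ (which holds since all slices here have the form $\Sigma^{2q,q}\mathrm{M}A$), and slice convergence is a property of $\MGL$ and $\mathrm{kgl}$ specifically, not of arbitrary connective effective spectra (for the sphere it only converges to the $\eta$-completion).
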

\begin{proof}
    We omit inversion of the exponential characteristic below. Similarly to the proof of Proposition \ref{prop:pi_1_MML}, one can deduce $\pi_{n+2,n}(\MML)\cong \pi_{n+2,n}(\mathrm{kq})\oplus \pi_{n,n-1}(\MGL)$ from the fact that the $\Einf$-ring map $\MSL\to \mathrm{kq}$ induces an isomorphism $\pi_{n+2,n}(\MSL)\cong \pi_{n+2,n}(\mathrm{kq})$ (see \cite[Theorem B(1)]{MSL-slices}). To prove that the map $\MGL\to \mathrm{kgl}$ induces an isomorphism on $\pi_{n,n-1}$, consider the slice spectral sequences for $\MGL$ and $\mathrm{kgl}$ (see the beginning of \S\ref{subsection:6-1} and the references therein). The slices of $\MGL$ and $\mathrm{kgl}$ are computed in \cite[\S 8.3]{Hoy15} and \cite{Voev_slices, Levine_coniveau} respectively. The morphism $\slice_q(\MGL)\to \slice_q(\mathrm{kgl})$ can be identified with the map $\Sigma^{q}_{\Proj^1}\mathrm{M}\mathbb{L}_q\to\Sigma^{q}_{\Proj^1}\mathrm{M}\Z\{\beta^q\}$ induced by the ring homomorphism $\mathbb{L}\to \Z[\beta]$ classifying the multiplicative formal group law. Here, $\mathbb{L}_q$ is the degree $q$ part of the Lazard ring and $\beta^q$ is the $q$-th power of the Bott periodicity element. In particular, $\MGL\to \mathrm{kgl}$ induces an isomorphism on $\slice_q$ for $q\leq 1$. The first slice differential for $\MGL$ is determined in \cite[Theorem 4.2]{MSL-slices}. It follows that the only nontrivial entries on the $E^2$-page for $\MGL$ (resp. $\mathrm{kgl}$) contributing to $\pi_{n,n-1}(\MGL)$ (resp. $\pi_{n,n-1}(\mathrm{kgl})$) are the right and the left hand groups of the desired extension. There is no room for higher differentials and the result follows from the strong convergence of the slice spectral sequences for $\MGL$ and $\mathrm{kgl}$.
\end{proof}
In turn, the third line is not covered by the corresponding lines of $\mathrm{kq}$ and $\Sigma^1_{\Proj^1}\mathrm{kgl}$:
\begin{proposition}\label{prop:pi_3_MML}
    Assume that the field $F$ has characteristic zero. Then the morphisms $\MML\to \mathrm{kq}$ and $\tilde\partial\colon\MML\to \Sigma^{1}_{\Proj^1}\mathrm{kgl}$ induce a short exact sequence
    \[ \bigslant{\mathrm{K}^\mathrm{M}_{3-n}(F)}{\Bigl(\partial^2_\infty\mathrm{Sq}^6\mathrm{pr}^\infty_2H^{-n-4,-n}\Bigr)}\oplus\mathrm{K}^\mathrm{M}_{3-n}(F)\hookrightarrow \pi_{n+3,n}(\MML)\twoheadrightarrow\pi_{n+3,n}(\mathrm{kq})\oplus \pi_{n+1,n-1}(\mathrm{kgl}).\]
    If $F$ has characteristic $p>0$, the same result holds after inverting $p$.
\end{proposition}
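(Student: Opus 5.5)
The plan is to run the same argument as in Propositions \ref{prop:pi_1_MML} and \ref{prop:pi_2_MML}, with the exponential characteristic implicitly inverted throughout. Since the cofiber sequence of Theorem \ref{thm:main_cofib_seq} splits as a sequence of $\MSL$-modules, it gives a split short exact sequence
\[ 0\to \pi_{n+3,n}(\MSL)\to \pi_{n+3,n}(\MML)\xrightarrow{\partial_*}\pi_{n+1,n-1}(\MGL)\to 0. \]
So it suffices to treat the two outer terms separately. For each one I would compare it with $\mathrm{kq}$ and $\mathrm{kgl}$ respectively, and identify the kernel of the comparison map together with its surjectivity.

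\textbf{The $\MSL$ summand.} I would invoke the third-line computation for $\MSL$ from \cite{MSL-slices} (Theorem B there). It gives a short exact sequence
\[ \mathrm{K}^\mathrm{M}_{3-n}(F)/\bigl(\partial^2_\infty\mathrm{Sq}^6\mathrm{pr}^\infty_2H^{-n-4,-n}\bigr)\hookrightarrow \pi_{n+3,n}(\MSL)\twoheadrightarrow \pi_{n+3,n}(\mathrm{kq}), \]
where the kernel comes from the generator in $\slice_4(\MSL)$, in the spirit of the $y_4$ class. The map $\MSL\to\MML\to\mathrm{kq}$ is the canonical one, so this accounts for the first kernel summand and the $\mathrm{kq}$ part.

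\textbf{The $\Sigma^1_{\Proj^1}\MGL$ summand.} I would analyze $\MGL\to\mathrm{kgl}$ on $\pi_{n+1,n-1}$ using the slice spectral sequences, as in the proof of Proposition \ref{prop:pi_2_MML}. On slices this map is $\Sigma^q_{\Proj^1}\mathrm{M}\mathbb{L}_q\to\Sigma^q_{\Proj^1}\mathrm{M}\Z\{\beta^q\}$, which is an isomorphism for $q\le1$. For $q=2$ it is the surjection $\mathbb{L}_2=\Z\{a_1^2,a_2\}\to\Z$, whose kernel has rank one. The contribution of $\slice_2$ to $\pi_{n+1,n-1}$ is $H^{-n+3,-n+3}$ of that kernel, that is, $\mathrm{K}^\mathrm{M}_{3-n}(F)$.

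To justify this I would check four things:
\begin{itemize}
\item the first slice differential from \cite[Theorem 4.2]{MSL-slices} vanishes on this kernel class, or at least does not hit it;
\item its incoming differential vanishes, since the source is in negative motivic cohomological degree;
\item there are no higher differentials, for degree reasons;
\item the map on the remaining entries (the slices with $q\le1$) is an isomorphism, so $\pi_{n+1,n-1}(\MGL)\to\pi_{n+1,n-1}(\mathrm{kgl})$ is surjective with kernel $\mathrm{K}^\mathrm{M}_{3-n}(F)$.
\end{itemize}
Strong convergence then gives the short exact sequence for this summand. Combining it with the $\MSL$ summand and the splitting yields the claimed sequence.

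\textbf{Main obstacle.} The hard step is the slice bookkeeping for the $\MGL$ summand. I need to show that the kernel class in $\slice_2$ survives without being quotiented by a $\mathrm{Sq}^2$-type differential, unlike the $\mathrm{K}^\mathrm{M}_{2-n}$ term in Proposition \ref{prop:pi_2_MML}. I also need the map on $E_\infty$-pages to be surjective, so that no extension issue obstructs surjectivity onto $\pi_{n+1,n-1}(\mathrm{kgl})$.

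I would first choose the basis $a_1^2-a_2$ (or an equivalent) for the kernel of $\mathbb{L}_2\to\Z$ and compute $d_1$ on it using the explicit formula from \cite{MSL-slices}. I expect $d_1$ to be zero integrally on this element, because the mod-$2$ reductions of the $d_1$ values of the two generators cancel.
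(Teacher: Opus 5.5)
Your proposal has two genuine gaps. The first is structural; the second is in the slice bookkeeping.

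\textbf{The comparison map is not diagonal.} The step ``so it suffices to treat the two outer terms separately'' does not hold. The statement concerns the canonical map $\MML\to\mathrm{kq}$, and nothing forces it to vanish on the $\Sigma^1_{\Proj^1}\MGL$-summand of a splitting $\MML\simeq\MSL\oplus\Sigma^1_{\Proj^1}\MGL$. The composite of a section with $\MML\to\mathrm{kq}$ is an $\MSL$-linear map $\Sigma^1_{\Proj^1}\MGL\to\mathrm{kq}$. Via $\Sigma^1_{\Proj^1}\MGL\simeq\MSL\wedge\Sigma^\infty_{\Proj^1}(\Proj^\infty,p)$, this is a class in $\mathrm{kq}^{0,0}(\Proj^\infty,p)$, and there is no reason for it to vanish for any choice of $t$. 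So on $\pi_{n+3,n}$ the comparison map is upper triangular,
\[ \begin{pmatrix} f & g \\ 0 & h \end{pmatrix}\colon \pi_{n+3,n}(\MSL)\oplus\pi_{n+1,n-1}(\MGL)\to \pi_{n+3,n}(\mathrm{kq})\oplus\pi_{n+1,n-1}(\mathrm{kgl}), \]
and not $f\oplus h$. Surjectivity still follows, since $f$ and $h$ are onto. The kernel, however, is only an extension $0\to\ker f\to\mathrm{Ker}_n\to\ker h\to 0$: it is the pullback of $0\to\ker f\to\pi_{n+3,n}(\MSL)\to\pi_{n+3,n}(\mathrm{kq})\to 0$ along $-g|_{\ker h}$. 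Since $\ker h\cong\mathrm{K}^\mathrm{M}_{3-n}(F)$ is not projective, there is no reason for this to split as abelian groups. In Propositions \ref{prop:pi_1_MML} and \ref{prop:pi_2_MML} the issue does not arise, because $f$ is an isomorphism there.

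The paper obtains exactly this extension via the snake lemma and then splits it by a separate argument. Summing over $n$ gives an extension of graded $\K^{\mathrm{MW}}_*(F)$-modules, and $\mathrm{Ext}^1_{\K^{\mathrm{MW}}_*(F)}(\mathrm{K}^\mathrm{M}_{3+*}(F),M)=0$ whenever $M_{-4}=0$ by \cite[Lemma A.3]{Oliver_Suslin}. Your proposal has no substitute for this step. Note also that the resulting direct sum decomposition of the kernel is not canonical.

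\textbf{Higher differentials for $\MGL\to\mathrm{kgl}$.} Higher differentials are not excluded for degree reasons. The differential $d_2\colon E^2_{n+2,0,n-1}\to E^2_{n+1,2,n-1}$ runs from a subgroup of $H^{-n-2,1-n}$, which is nonzero in general, into the $\slice_2$-entry containing your kernel classes. Since that summand dies in $\slice_2(\mathrm{kgl})$, comparison with $\mathrm{kgl}$ cannot control the component of $d_2$ landing there. The paper kills this $d_2$ by naturality along the unit map $\sph\to\MGL$, which is an isomorphism on $E^2_{n+2,0,n-1}$. It combines this with the vanishing of the corresponding $d_2$ for the sphere from \cite[Corollary 3.23]{RSO24}.

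Your $d_1$ discussion is also aimed at the wrong differential. The outgoing $d_1$ from $\slice_2$ in this tridegree lands in $H^{6-n,4-n}\otimes\mathbb{L}_3=0$, so computing $d_1(a_1^2-a_2)$ is unnecessary. The incoming $d_1$ from $\slice_1$ has source $H^{-n,2-n}$, which is not zero in general, contrary to your second bullet. What is actually needed is that its image misses the summand $\Sigma^2_{\Proj^1}\mathrm{M}\Z\{a_2\}$. In the paper's generators this summand is the kernel of $\slice_2(\MGL)\to\slice_2(\mathrm{kgl})$, and the required fact is what \cite[Theorem 4.2]{MSL-slices} provides.
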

\begin{proof}
    Below we implicitly invert the exponential characteristic of $F$. Consider the following diagram:
    \[ \xymatrix{0 \ar[r] & \pi_{n+3,n}(\MSL) \ar[r] \ar[rd] & \pi_{n+3,n}(\MML) \ar[r]^-{\partial_*} \ar[d] & \pi_{n+1,n-1}(\MGL) \ar[r] & 0 \\ & & \pi_{n+3,n}(\mathrm{kq}). &  & } \]
    The kernel of the homomorphism $\pi_{n+3,n}(\MML)\to \pi_{n+3,n}(\mathrm{kq})\oplus \pi_{n+1,n-1}(\MGL)$ is given by the intersection of the kernels of the middle vertical map and $\partial_*$. Applying the Snake lemma, we see that this intersection coincides with the kernel of the diagonal homomorphism, which by \cite[Theorem~B(2)]{MSL-slices} is isomorphic to the quotient of $\mathrm{K}^{\mathrm{M}}_{3-n}(F)$ by the subgroup $\partial^2_\infty\mathrm{Sq}^6\mathrm{pr}^\infty_2H^{-n-4,-n}$. It is also easy to see from the diagram that the homomorphism $\pi_{n+3,n}(\MML)\to \pi_{n+3,n}(\mathrm{kq})\oplus \pi_{n+1,n-1}(\MGL)$ is surjective using that $\partial_*$ and $\pi_{n+3,n}(\MSL)\to \pi_{n+3,n}(\mathrm{kq})$ are surjective. Therefore, we obtain a short exact sequence
    \[0\to\bigslant{\mathrm{K}^\mathrm{M}_{3-n}(F)}{\Bigl(\partial^2_\infty\mathrm{Sq}^6\mathrm{pr}^\infty_2H^{-n-4,-n}\Bigr)}\to \pi_{n+3,n}(\MML)\to\pi_{n+3,n}(\mathrm{kq})\oplus \pi_{n+1,n-1}(\MGL)\to 0.\]
    We claim that the $\Einf$-ring map $\MGL\to \mathrm{kgl}$ induces an exact sequence 
    \[ 0\to \mathrm{K}^\mathrm{M}_{3-n}(F)\to \pi_{n+1,n-1}(\MGL)\to\pi_{n+1,n-1}(\mathrm{kgl})\to 0.\]
    First, let us prove that the claim implies the result. Denote by $\mathrm{Ker}_n$ the kernel of the epimorphism $\pi_{n+3,n}(\MML)\twoheadrightarrow\pi_{n+3,n}(\mathrm{kq})\oplus \pi_{n+1,n-1}(\mathrm{kgl})$. The claim and the short exact sequence proved above  together with the Snake lemma imply that there is an extension
    \[ 0\to \bigslant{\mathrm{K}^\mathrm{M}_{3-n}(F)}{\Bigl(\partial^2_\infty\mathrm{Sq}^6\mathrm{pr}^\infty_2H^{-n-4,-n}\Bigr)} \to \mathrm{Ker}_n \to \mathrm{K}^\mathrm{M}_{3-n}(F)\to 0. \]
    Taking the sum over $n$ produces an extension of graded modules over the Milnor--Witt K-theory. This extension splits, since
    \[ \mathrm{Ext}_{\K^{\mathrm{MW}}_{*}(F)}^1(\mathrm{K}^\mathrm{M}_{3+*}(F),M)=0 \]
    for any $\K^{\mathrm{MW}}_{*}(F)$-module $M$ with $M_{-4}=0$ by \cite[Lemma A.3]{Oliver_Suslin}. Any choice of a splitting yields the result.

    It remains to prove the claim. Consider the slice spectral sequences for $\MGL$ and $\mathrm{kgl}$. The morphism $\MGL\to\mathrm{kgl}$ induces an isomorphism on $\slice_q$ for $q\leq 1$ and the map $\begin{pmatrix}
        1 & 0
    \end{pmatrix}\colon \Sigma^{2}_{\Proj^1}\mathrm{M}\Z\{a_1^2\}\oplus\Sigma^{2}_{\Proj^1}\mathrm{M}\Z\{a_2\}\to\Sigma ^{2}_{\Proj^1}\mathrm{M}\Z\{\beta^2\}$ on $\slice_2$ (here $a_i$ is the $i$-polynomial generator of the Lazard ring and $\beta$ is the Bott element), see the proof of Proposition \ref{prop:pi_2_MML}.
    For both $\MGL$ and $\mathrm{kgl}$, the slices $\slice_q$ with $q\geq 3$ do not contribute to $\pi_{n+1,n-1}$. 
    The computation of the first slice differential for $\MGL$ (see \cite[Theorem~4.2]{MSL-slices}) shows that the group $\mathrm{K}^\mathrm{M}_{3-n}(F)$, which comes from the summand $\Sigma^{2}_{\Proj^1}\mathrm{M}\Z\{a_2\}$, survives to the $E^2$-page of the slice spectral sequence. 
    The only higher slice differential that may affect $\pi_{n+1,n-1}(\MGL)$ (resp. $\pi_{n+1,n-1}(\mathrm{kgl})$) is $d_2\colon E^2_{n+2,0,n-1}\to E^2_{n+1,2,n-1}$, which is trivial by \cite[Corollary 3.23]{RSO24} since the unit map $\sph\to \MGL$ (resp. $\sph\to\mathrm{kgl}$) induces an isomorphism on $E^2_{n+2,0,n-1}$ (combine \cite[Theorem 4.2]{MSL-slices} with \cite[Corollary 2.20 and Lemma 3.1]{RSO24}). The claim follows from the strong convergence of the slice spectral sequences for $\MGL$ and $\mathrm{kgl}$.
\end{proof}
\begin{remark}
    The above answers are given in terms of the first Milnor--Witt stems of the very effective algebraic and hermitian K-theory spectra. The first Milnor--Witt stems of $\mathrm{kq}$ are computed (up to extensions) via the slice spectral sequence in \cite[Proposition 2.5, Theorem 2.6]{RSO24} and \cite[Proposition 4.11, Corollary 5.4, Lemmas 5.5, and 5.7, Remarks 6.6 and 6.15]{MSL-slices}. The first Milnor--Witt stems of $\mathrm{kgl}$ are computed (up to extensions) in the proofs of Propositions \ref{prop:pi_1_MML}, \ref{prop:pi_2_MML}, \ref{prop:pi_3_MML}.
\end{remark}
\begin{remark}
    We choose to work with homotopy groups rather than homotopy sheaves for simplicity. In fact, by Morel's theorem \cite[Theorem 2.11]{Morel_hm}, the stated results hold for homotopy sheaves as well. Consequently, we obtain the computation of the homotopy modules $\underline{\pi}_{n}(\MML)_*$ for $n\leq 3$.
\end{remark}
\subsection{Geometric diagonal}\label{section_5.2}
Our approach for a computation of the geometric diagonal (also called the Chow zero line) of $\MML$ closely follows the second author's work on the analogous result for $\MSL$, see \cite{Egor}. In this subsection we assume that the characteristic of $F$ is different from $2$ (the case $\mathrm{char}(F)=2$ is considered separately in Appendix \ref{appendix_char2}). Below, ${}_\eta\pi_{2*,*}(\MML)$ denotes the annihilator of $\eta$ in $\pi_{2*,*}(\MML)$.
\begin{lemma}\label{lemma:quotient_by_eta_tors}
    Let $F$ be a field of characteristic zero. Then the morphism $\MML\to \MML[\eta^{-1}]\cong \MSL[\eta^{-1}]$ induces an isomorphism of the graded $\GW(F)$-algebras (see \cite[Corollary 1.3(3)]{BHop}) $$ \bigslant{\pi_{2*,*}(\MML)}{{}_\eta\pi_{2*,*}(\MML)}\cong \W(F)[u_4,u_8,\dots],\ \text{where}\ \mathrm{deg}(u_{4i})=(8i,4i).$$
    If $F$ is a field of characteristic $p>2$, then the same result holds away from $p$.
\end{lemma}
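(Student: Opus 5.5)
The plan is to reduce to the analogous statement for $\MSL$ proved in \cite{Egor}, using the main cofiber sequence and Corollary \ref{cor_eta_period_mslc}.

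\emph{Step 1: the localization map has kernel exactly the $\eta$-torsion.} The map $\pi_{2*,*}(\MML)\to\pi_{2*,*}(\MML[\eta^{-1}])$ is the colimit along multiplication by $\eta$. Its kernel is the set of elements killed by some power $\eta^k$, not a priori ${}_\eta\pi_{2*,*}(\MML)$. I will show these agree. Fix a splitting from Theorem \ref{thm:main_cofib_seq}, giving an $\MSL$-linear equivalence $\MML\simeq\MSL\oplus\Sigma^{2,1}\MGL$. This equivalence commutes with multiplication by $\eta$, which acts $\MSL$-linearly via the unit, and $\eta$ acts by zero on $\MGL$ \cite[Theorem 3.8]{Hoy15}. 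Hence
\[\ker\bigl(\pi_{2*,*}(\MML)\to\pi_{2*,*}(\MML)[\eta^{-1}]\bigr)=\ker\bigl(\pi_{2*,*}(\MSL)\to \pi_{2*,*}(\MSL)[\eta^{-1}]\bigr)\oplus \pi_{2*-2,*-1}(\MGL).\]
The same decomposition holds for ${}_\eta\pi_{2*,*}$. On the $\MSL$ summand I will invoke \cite{Egor}, whose computation of the geometric diagonal of $\MSL$ shows that $\eta^k$-torsion in $\pi_{2*,*}(\MSL)$ coincides with $\eta$-torsion, because the quotient embeds into the $\eta$-periodic homotopy. Consequently the quotient $\pi_{2*,*}(\MML)/{}_\eta\pi_{2*,*}(\MML)$ identifies with $\pi_{2*,*}(\MSL)/{}_\eta\pi_{2*,*}(\MSL)$, and it injects into $\pi_{2*,*}(\MSL[\eta^{-1}])\cong\pi_{2*,*}(\MML[\eta^{-1}])$ via Corollary \ref{cor_eta_period_mslc}. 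The identification is compatible with the maps to $\MML[\eta^{-1}]$ because the forgetful map $\MSL\to\MML$ is a ring map.

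\emph{Step 2: identify the image.} By the $\MSL$ result of \cite{Egor}, built on \cite[Corollary 1.3(3)]{BHop}, $\pi_{2*,*}(\MSL[\eta^{-1}])\cong\W(F)[u_4,u_8,\dots]$, with the $u_{4i}$ in bidegree $(8i,4i)$. The same reference shows that the map from $\pi_{2*,*}(\MSL)$ is surjective in these degrees. Since $\MSL\to\MML$ is a map of $\Einf$-rings, the composite $\pi_{2*,*}(\MSL)\to\pi_{2*,*}(\MML)\to\pi_{2*,*}(\MML[\eta^{-1}])$ is a homomorphism of graded $\GW(F)$-algebras. Therefore the induced isomorphism of quotients is multiplicative and $\GW(F)$-linear. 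In characteristic $p>2$ I will run the same argument after inverting $p$, using the corresponding statements from \cite{Egor} and \cite{BHop}.

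\emph{Main obstacle.} The delicate point is Step 1 on the $\MSL$ summand: the equality of $\eta$-torsion and $\eta^\infty$-torsion in the geometric diagonal. If \cite{Egor} states its result directly as ``quotient by the annihilator of $\eta$ equals $\W(F)[u_4,\dots]$'', this is immediate. Otherwise I would deduce it from the injectivity of $\pi_{2*,*}(\MSL)/{}_\eta\to\pi_{2*,*}(\MSL[\eta^{-1}])$, which I would check via real realization or the Witt-theoretic computation in that reference.
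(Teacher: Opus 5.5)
Your argument is correct and is essentially the paper's. Both use the main cofiber sequence and $\eta=0$ on $\MGL$ to reduce two things to the $\MSL$ case in \cite{Egor}: the surjectivity onto $\pi_{2*,*}(\MSL[\eta^{-1}])$, and the equality of $\eta^\infty$-torsion with $\eta$-torsion.

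The paper differs only in two details. It chooses no splitting. And instead of quoting the full $\MSL$ analogue, it argues directly: $\pi_{2*-1,*}(\MGL)=0$ \cite[Theorem 2.1]{LYZ} gives $\pi_{2*+1,*+1}(\MSL)\cong\pi_{2*+1,*+1}(\MML)$, and by \cite[Theorem 5.2]{Egor} a nonzero $\eta\alpha$ in this group has nonzero image in the $\eta$-periodization, so $\eta^m\alpha\neq 0$. That fact is exactly the input that settles your ``main obstacle''.
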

\begin{proof}
    Below we implicitly invert the exponential characteristic. The morphism $\MML\to\MML[\eta^{-1}]$ induces an epimorphism on $\pi_{2*,*}$ by Theorem \ref{thm:main_cofib_seq} and \cite[Theorem 5.2]{Egor}. Hence, it suffices to prove that $\eta$-primary torsion in $\pi_{2*,*}(\MML)$ is actually $\eta$-torsion. Let $\alpha\in\pi_{2*,*}(\MML)$ be an element such that $\eta^m\cdot\alpha=0$, and consider the following commutative diagram:
    \[ \xymatrix{0 \ar[r] & \pi_{2*,*}(\MSL) \ar[r] \ar[d]^-{\eta} & \pi_{2*,*}(\MML) \ar[r] \ar[d]^-{\eta} & \pi_{2*-2,*-1}(\MGL) \ar[r] \ar[d]^-{\eta} & 0 \\
    0 \ar[r] & \pi_{2*+1,*+1}(\MSL) \ar[r] & \pi_{2*+1,*+1}(\MML) \ar[r] & \pi_{2*-1,*}(\MGL) \ar[r] & 0.
    } \]
    Here the right bottom group vanishes by \cite[Theorem 2.1]{LYZ} and, hence, the left bottom horizontal map is an isomorphism. Thus, if $\eta\cdot\alpha\neq 0$, then its image in the $\eta$-periodization is non-trivial by \cite[Theorem 5.2]{Egor}, which contradicts $\eta^m\cdot\alpha=0$.
\end{proof}

\begin{notation}
    For a field $F$, we denote by $\mathrm{I}_{\MML}(F)$ the graded subgroup of $\pi_{2*,*}(\MML)$ defined as 
    \[\mathrm{I}_{\MML}(F)_n := \begin{cases} \eta \cdot \pi_{2n-1,n-1}(\MML), & \text{if } 4 \mid n, \\ 0, & \text{otherwise}. \end{cases}\] 
    This subgroup captures the fundamental ideals in $\pi_{2*,*}(\MML)$ (at least away from the characteristic).
\end{notation}

\begin{remark}\label{remark_I_MML}
\begin{enumerate}
    \item The morphism $\MSL\to\MML$ induces an isomorphism $\mathrm{I}_{\MSL}(F)\cong \mathrm{I}_{\MML}(F)$; see \cite[\S 6.3]{Egor} for the definition of $\mathrm{I}_{\MSL}(F)$. Indeed, both subgroups are trivial in degrees not divisible by $4$ and in the remaining case, multiplying the exact sequence $$0\to\pi_{2*-1,*-1}(\MSL)\to\pi_{2*-1,*-1}(\MML)\to \pi_{2*-3,*-2}(\MGL)\to 0$$ by $\eta$ and using the fact that $\eta$ acts trivially on $\MGL$, we see that the $\eta$-multiples coincide.
    \item We claim that the graded subgroup $\mathrm{I}_{\MML}(F)$ becomes a graded ideal after inverting the exponential characteristic $\neq 2$. Indeed, choose a quadratically closed field $L/F$ and consider the base change of the short exact sequence $$0\to \pi_{2*,*}(\MSL)\to \pi_{2*,*}(\MML)\to \pi_{2*-2,*-1}(\MGL)\to 0$$
    along this extension (here we omit inversion of the exponential characteristic for simplicity). Then using the computation $\pi_{2*,*}(\MGL)\cong \mathbb{L}$ \cite[Proposition 8.2]{Hoy15}, the previous part of the remark, and the proof of \cite[Lemma 6.13]{Egor}, we obtain that $\mathrm{I}_{\MML}(F)$ is the kernel of the ring homomorphism $\pi_{2*,*}(\MML_F)\to \pi_{2*,*}(\MML_L)$.
\end{enumerate}
\end{remark}

\begin{lemma}\label{lemma:rigidity}
    Let $F$ be a field of characteristic zero. Then the quotient graded ring $$ \bigslant{\pi_{2*,*}(\MML)}{\mathrm{I}_{\MML}(F)}$$ is rigid in the sense that for any extension $L/F$, the base change induces an isomorphism on the corresponding quotient rings. The same rigidity statement holds in characteristic $p>2$ away from $p$.
\end{lemma}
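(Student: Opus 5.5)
We reduce rigidity for $\MML$ to the known rigidity for $\MSL$ together with rigidity of $\pi_{2*,*}(\MGL)\cong\mathbb{L}$, using the short exact sequence
\[0\to \pi_{2*,*}(\MSL)\to \pi_{2*,*}(\MML)\xrightarrow{\partial_*} \pi_{2*-2,*-1}(\MGL)\to 0\]
coming from the split cofiber sequence of Theorem~\ref{thm:main_cofib_seq}. Exactness holds because the boundary map $\Sigma^1_{\Proj^1}\MGL\to\Sigma\MSL$ vanishes. Throughout we implicitly invert the exponential characteristic. The sequence is natural in the field: $\MSL$, $\MML$, $\MGL$ and the morphism $\partial$ (characterized by a characteristic class) are all stable under base change. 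So for an extension $L/F$ we get a map of short exact sequences. Since $\mathrm{I}_{\MML}(F)$ lies in the image of $\pi_{2*,*}(\MSL)$ and corresponds to $\mathrm{I}_{\MSL}(F)$ by Remark~\ref{remark_I_MML}(1), we can quotient the left two terms and obtain a natural short exact sequence
\[0\to \pi_{2*,*}(\MSL)/\mathrm{I}_{\MSL}(F)\to \pi_{2*,*}(\MML)/\mathrm{I}_{\MML}(F)\to \pi_{2*-2,*-1}(\MGL_F)\to 0.\]
To check exactness on the left, note that an element of $\pi_{2*,*}(\MSL)$ landing in $\mathrm{I}_{\MML}(F)$ already lies in $\mathrm{I}_{\MSL}(F)$, by injectivity together with the identification $\mathrm{I}_{\MSL}\cong\mathrm{I}_{\MML}$.

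Next we compare these sequences for $F$ and $L$ via base change. For the right-hand term, $\pi_{2*,*}(\MGL_F)\cong\mathbb{L}$ for every field (\cite[Proposition 8.2]{Hoy15}), compatibly with base change, since the isomorphism is induced by the formal group law, which is defined over the base. For the left-hand term, rigidity of $\pi_{2*,*}(\MSL)/\mathrm{I}_{\MSL}(F)$ is the corresponding statement for $\MSL$ in \cite{Egor} (the input behind \cite[Lemma 6.13]{Egor} and its pullback description). The five lemma then shows that base change induces a bijection on the middle terms. Since base change is a ring map and $\mathrm{I}_{\MML}$ is an ideal by Remark~\ref{remark_I_MML}(2), this bijection is a ring isomorphism of the quotients.

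The main obstacle is confirming that the $\MSL$ rigidity in \cite{Egor} holds in exactly the form needed: for arbitrary extensions $L/F$, in characteristic zero and in characteristic $p>2$ away from $p$, and with $\mathrm{I}_{\MSL}$ defined as the $\eta$-multiples in degrees divisible by $4$. If the statement there is only for, say, algebraically closed extensions, I would first reduce to that case. Given $L/F$, take an algebraically closed $\bar L\supset L$ and compare both $F$ and $L$ to $\bar L$; the composite $F\to L\to\bar L$ together with two-out-of-three for isomorphisms then yields the general case. A second point to check is that the identification $\mathrm{I}_{\MSL}(F)\cong\mathrm{I}_{\MML}(F)$ commutes with base change, which follows from naturality of the map $\MSL\to\MML$.
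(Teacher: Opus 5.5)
Your proposal is correct and is essentially the paper's argument. The paper deduces the lemma directly from the main cofiber sequence of Theorem~\ref{thm:main_cofib_seq}, the identification $\mathrm{I}_{\MSL}(F)\cong\mathrm{I}_{\MML}(F)$ from Remark~\ref{remark_I_MML}(1), and the rigidity of $\pi_{2*,*}(\MSL)/\mathrm{I}_{\MSL}(F)$, which it cites as \cite[Proposition 6.14]{Egor} — this is the $\MSL$ input you were unsure about, so your fallback reduction is not needed — and it handles the $\MGL$-term via $\pi_{2*,*}(\MGL)\cong\mathbb{L}$ exactly as you do.
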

\begin{proof}
    This follows from the main cofiber sequence by means of the first item of Remark \ref{remark_I_MML} and the analogous rigidity statement for $\MSL$ (see \cite[Proposition 6.14]{Egor}).
\end{proof}

\begin{remark}\label{remark:rigidity_dvr}
    More generally, the above computations are valid for a local Dedekind domain $R$ with exponential residue characteristic $e\neq 2$, away from $e$ (including the mixed characteristic case). This holds because the results of \cite{Egor} are valid in this generality (for the case of $\MGL$ see \cite{SpiMGL}) and our main cofiber sequence holds over an arbitrary qcqs scheme. In particular, Lemma \ref{lemma:rigidity} shows that the corresponding quotient rings are rigid with respect to a homomorphism of local Dedekind domains whose exponential residue characteristics are different from $2$ (away from the exponential residue characteristic of the target).
\end{remark}

Now, in order to relate the rigid part of the answer to its topological counterpart, we show that the complex realization of $\MML$ is equivalent to  Stong's complex-spin cobordism spectrum $\mathrm{M}\Sigma\in \Sp$ \cite{Stong}, see also Appendix \ref{appendix_Stong} for an overview. Recall that the complex Betti realization is the symmetric monoidal functor $\realiz_{\mathrm{B}\C}\colon \SH(\C)\to \Sp$. 
\begin{proposition}\label{prop:complex_realization_MML}
    There is an equivalence of $\Einf$-ring spectra $\realiz_{\mathrm{B}\C}(\MML_\C)\cong\mathrm{M}\Sigma$.
\end{proposition}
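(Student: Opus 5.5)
We will compute $\realiz_{\mathrm{B}\C}(\MML_\C)$ directly from the definition of $\MML$ as a Thom spectrum. By \cite[\S 16]{BH-Norms}, complex Betti realization is symmetric monoidal, preserves colimits, and commutes with the motivic Thom functor: for $\beta\colon \mathrm{B}\to\K$ in $\PShv(\Sm_\C)_{/\K}$, we have $\realiz_{\mathrm{B}\C}\mathrm{M}(\beta)\simeq \mathrm{M}^{\mathrm{top}}(\realiz(\mathrm{B})\to \realiz(\K)\to \mathrm{ku}\to\Pic(\Sp))$. This equivalence is one of $\Einf$-rings when $\beta$ is a map of $\Einf$-groups, and here the topological J-homomorphism is factored through the realization of $\K$, i.e.\ through $\BU\times\Z$. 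Applying this to $\K^\ML_{\mathrm{rk}=0}\to\K$ reduces the problem to identifying the realization of $\K^\ML_{\mathrm{rk}=0}$ as an $\Einf$-space over $\BU$.

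For this identification we use the cartesian square of Proposition~\ref{prop:KML_pulback_descr}, restricted to rank zero:
\[ \K^\ML_{\mathrm{rk}=0}\simeq \Krk\times_{\Pic}\Pic. \]
Realization does not preserve pullbacks in general, so we would argue via the fiber sequence of Corollary~\ref{cor:KSL_KML_fiber_seq}, namely $\KSLrk\to\K^\ML_{\mathrm{rk}=0}\to\Pic$. Its motivic models $\SGr_\infty\to\MGr_\infty\to\Proj^\infty$ (Remark~\ref{remark:SGrxP=MGr}) realize to $\mathrm{BSU}\to\realiz(\MGr_\infty)\to\C\Proj^\infty=\mathrm{BU}(1)$. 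More concretely, $\MGr_\infty$ is a $\Gm$-torsor over $\Gr_\infty\times\Proj^\infty$, so its realization is the homotopy fiber of the map
\[ \BU\times\mathrm{BU}(1)\to \mathrm{BU}(1)=K(\Z,2) \]
given by $c_1(\det)-2x$. This fiber is exactly $\BU\times_{K(\Z,2)}K(\Z,2)$, with the right-hand map being multiplication by $2$. In other words, it is the classifying space of the double cover of $\mathrm{U}$ associated with $\det$, i.e.\ the group $\mathrm{U}\times_{\mathrm{U}(1)}\mathrm{U}(1)$, whose square-root-of-determinant structure is what is meant by a complex-spin structure. Since the pullback is formed from $\Einf$-maps of $\Einf$-groups, we obtain an equivalence of $\Einf$-spaces over $\BU$ between the realization and $\BU\times_{K(\Z,2)}K(\Z,2)$, provided realization is compatible with the relevant $\Einf$-structures. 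For this we would use the motivic equivalence $\mathrm{B}\ML_\infty\simeq\K^\ML_{\mathrm{rk}=0}$ of Lemma~\ref{lemma:motivic_model_tildaK} together with $\realiz(\mathrm{B}\ML_n)\simeq \mathrm{B}\ML_n(\C)$.

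The Thom spectrum of this space over $\BU$ is then the Thom spectrum of the $2$-fold covering of the unitary groups, which Appendix~\ref{appendix_Stong} identifies with Stong's $\mathrm{M}\Sigma$ (we assume that comparison as stated there). The step we expect to be the main obstacle is promoting the identification to one of $\Einf$-rings. The comparison of spaces is straightforward, but we must check that the $\Einf$-structure on the topological pullback $\BU\times_{K(\Z,2)}K(\Z,2)$ agrees with the one realized from $\K^\ML_{\mathrm{rk}=0}$. The plan is to deduce this by applying the realization functor, which preserves finite products and group objects, to the whole cartesian square of $\Einf$-groups. We would then check that the comparison map to the topological pullback is an equivalence on underlying spaces, which was computed above via the fiber sequence; an $\Einf$-map that is an equivalence on underlying spaces is an equivalence.
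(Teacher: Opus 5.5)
Your argument is correct in outline, but it identifies the realized base differently from the paper. The paper works with the groups themselves. Since $\tilde{\mathrm{U}}(n)$ is a maximal compact subgroup of $\ML_n(\C)$, one has $\realiz_{\mathrm{B}\C}(\mathrm{B}\ML_n)\simeq\mathrm{B}\tilde{\mathrm{U}}(n)$. Running the argument of \cite[Theorem C.7]{MG_realization}, which handles Betti realization of such Thom spectra together with their $\Einf$-structures, then gives $\realiz_{\mathrm{B}\C}(\MML_\C)\simeq\MUtf$ at once, and Corollary~\ref{corollary:model_stong_spectrum} finishes. You instead stay inside the paper's K-theoretic formalism. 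You realize the cartesian square of Proposition~\ref{prop:KML_pulback_descr} as a square of $\Einf$-groups, using only that realization preserves finite products, and you identify the underlying space through the $\Gm$-torsor description of $\MGr_\infty$. This avoids the Lie-theoretic input and makes the $\Einf$-structure on the base explicit. The paper's route is shorter and uniform in the group, but it imports the whole $\Einf$-comparison wholesale.

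Your route needs three points tightened.

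(i) The compatibility of $\realiz_{\mathrm{B}\C}$ with the motivic Thom functor as $\Einf$-rings is exactly the input the paper takes from \cite[Theorem C.7]{MG_realization}. Make sure the reference you cite actually covers the $\Einf$-statement.

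(ii) Invoking Corollary~\ref{cor:KSL_KML_fiber_seq} does not by itself sidestep the failure of realization to preserve pullbacks, since a fiber sequence is a pullback. What actually works is the torsor computation, or the product splitting of Proposition~\ref{prop:KSLxPic=KML}, which realization does preserve. Moreover, the torsor computation only yields an abstract equivalence. To see that your comparison map is an equivalence, note that both it and the torsor-induced equivalence are lifts, to the homotopy fiber of $\BU\times K(\Z,2)\to K(\Z,2)$, of the same map $\realiz(\MGr_\infty)\to\BU\times K(\Z,2)$. The source is simply connected, so $H^1(-;\Z)=0$ forces the two lifts to be homotopic.

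(iii) To apply the Appendix you must identify $\BU\times_{K(\Z,2)}K(\Z,2)$ with $\mathrm{B}\tilde{\mathrm{U}}$ as $\Einf$-groups over $\BU$, which you only assert. It is cleaner to compare directly with $\mathrm{B}\Sigma$. Using the fiber sequence $K(\Z,2)\xrightarrow{2}K(\Z,2)\to K(\Z/2,2)$, your pullback is the fiber of $c_1$ reduced mod $2$. Likewise, $\mathrm{B}\Sigma$ is the fiber of $w_2$ restricted to $\BU$. Both are infinite loop maps $\BU\to K(\Z/2,2)$ that are nonzero on $\pi_2$, and such a map is unique up to homotopy because $[\tau_{\geq 2}\mathrm{ku},\Sigma^2\mathrm{H}\Z/2]\cong\Z/2$. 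This bypasses the $\tilde{\mathrm{U}}$-model, which the paper needs only because its route lands on $\MUtf$.
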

\begin{proof}
    Using the fact that the maximal compact subgroup of $\ML_n(\C)$ is $\tilde{\mathrm{U}}(n)$ (see Notation \ref{notation:two_fold_covering_U(n)}) and repeating the argument of \cite[Theorem C.7]{MG_realization}, we obtain an equivalence of $\Einf$-ring spectra $\realiz_{\mathrm{B}\C}(\MML_\C)\cong \mathrm{M}\tilde{\mathrm{U}}$. To conclude, it remains to apply Corollary \ref{corollary:model_stong_spectrum}.
\end{proof}
\begin{lemma}\label{lemma:quotient_by_fund_ideals}
    Let $F$ be a field of characteristic zero. Then there is an isomorphism of graded $\GW(F)$-algebras
    $$ \bigslant{\pi_{2*,*}(\MML)}{\mathrm{I}_{\MML}(F)}\cong \pi_{2*}(\mathrm{M}\Sigma), $$
    where the action of $\GW(F)$ on the right hand side is induced by the rank homomorphism.
    For $F=\C$ the complex Betti realization induces such an isomorphism. If $F$ is a field of characteristic $p>2$, then the same result holds away from $p$.
\end{lemma}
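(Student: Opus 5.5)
The plan is to reduce to $F=\C$ and then compare with topology using the main cofiber sequence on both sides.

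\emph{Reduction to $F=\C$.} By Lemma~\ref{lemma:rigidity}, the quotient ring $\pi_{2*,*}(\MML)/\mathrm{I}_{\MML}(F)$ is invariant under field extensions. Every characteristic zero field $F$ is connected to $\C$ through the common overfield $\overline{F}\supset\Q$ after a chain of extensions $\Q\subset F$, $\Q\subset\C$; concretely, I would pass through $\overline{\Q}$ and an algebraically closed field containing both $F$ and $\C$. Hence all these quotients are isomorphic, compatibly with the $\GW$-actions. Over a quadratically closed field, $\GW$ acts through the rank homomorphism, and the action on the quotient factors through $\GW(F)\to\GW(\overline{F})=\Z$. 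So it suffices to treat $F=\C$ and show that complex Betti realization induces the isomorphism. For characteristic $p>2$ I would do the same thing away from $p$, using Remark~\ref{remark:rigidity_dvr} to pass from $\overline{\mathbb{F}}_p$ to $\overline{\Q}$ through the Witt vectors (a local Dedekind domain of residue characteristic $p$).

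\emph{The case $F=\C$.} Here $\mathrm{I}_{\MML}(\C)=0$: $\W(\C)=\Z/2$, and by Remark~\ref{remark_I_MML}(2) the ideal is the kernel of base change to a quadratically closed field, which is trivial for $\C$ itself. So we must show that $\realiz_{\mathrm{B}\C}\colon\pi_{2*,*}(\MML_\C)\to\pi_{2*}(\mathrm{M}\Sigma)$ is an isomorphism. It is a ring map by Proposition~\ref{prop:complex_realization_MML}. I would apply realization to the main cofiber sequence of Theorem~\ref{thm:main_cofib_seq}. Realization is symmetric monoidal, so this gives a cofiber sequence
\[
\MSU\to\mathrm{M}\Sigma\to\Sigma^2\MU
\]
of $\MSU$-modules, using $\realiz(\MSL)=\MSU$ and $\realiz(\MGL)=\MU$. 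We then get a map of short exact sequences
\[
0\to\pi_{2*,*}(\MSL)\to\pi_{2*,*}(\MML)\to\pi_{2*-2,*-1}(\MGL)\to 0
\]
to the corresponding topological sequence on $\pi_{2*}$. The motivic sequence is exact because the motivic cofiber sequence splits. The topological sequence is exact because the realized sequence also splits, since the splitting $t$ realizes. The outer vertical maps are isomorphisms: on the right by $\pi_{2*,*}(\MGL_\C)\cong\mathbb{L}\cong\pi_{2*}\MU$ (Hoyois), and on the left by the analogous statement $\pi_{2*,*}(\MSL_\C)\cong\pi_{2*}\MSU$ from \cite{Egor}. The five lemma then finishes.

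\emph{Main obstacle.} The step I expect to be the main obstacle is the left isomorphism for $\MSL$ over $\C$ together with the compatibility of the diagram. Two points need checking. First, the topological boundary map must be the realization of $\partial$, which holds by naturality of the constructions. Second, odd topological homotopy must not interfere: I need $\pi_{2*}$ of the topological sequence to be short exact, which follows from the splitting. For the $\MSL$ input I would cite \cite[Theorem D / \S6]{Egor}, which gives the pullback description whose $\C$-case is precisely $\pi_{2*,*}(\MSL_\C)\cong\pi_{2*}(\MSU)$.
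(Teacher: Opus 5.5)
Your proposal is correct and is essentially the paper's proof. Over $\C$ you apply the five lemma to the Betti realization of the split main cofiber sequence, using Hoyois' computation for $\MGL$ and \cite{Egor} for $\MSL$; you then transfer the result to all characteristic zero fields by Lemma~\ref{lemma:rigidity} and to characteristic $p>2$ by Remark~\ref{remark:rigidity_dvr}. The only cosmetic difference is in positive characteristic: you pass through $\overline{\mathbb{F}}_p$ and its Witt vectors, whose fraction field is some characteristic zero field rather than $\overline{\Q}$ (which is all you need), while the paper uses a Cohen ring of $F$ directly.
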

\begin{proof}
    Consider first the case of $F=\C$. The complex Betti realization induces the following morphism of short exact sequences:
    \[ \xymatrix{0 \ar[r] & \pi_{2*,*}(\MSL_\C) \ar[r] \ar[d]^-\simeq & \pi_{2*,*}(\MML_\C) \ar[r] \ar[d] & \pi_{2*-2,*-1}(\MGL_\C) \ar[r] \ar[d]^-\simeq & 0 \\ 
    0 \ar[r] & \pi_{2*}(\MSU) \ar[r] & \pi_{2*}(\mathrm{M}\Sigma) \ar[r] & \pi_{2*-2}(\MU) \ar[r] & 0,
    }\]
    where the right vertical homomorphism is an isomorphism by \cite[Proposition 8.2]{Hoy15} and the left vertical map is bijective by \cite[Theorem 6.16]{Egor}. It follows that the desired isomorphism holds over $\C$. Using the rigidity property established in Lemma \ref{lemma:rigidity}, we obtain the result over any field of characteristic zero. 

    To prove the result in positive characteristic $p>2$, consider a discrete valuation ring $R$ of mixed characteristic $(0,p)$ whose residue field is $F$ (if $F$ is perfect, one may take the ring of $p$-adic Witt vectors $\mathbb{W}_{p^\infty}(F)$; otherwise, the Cohen ring of $F$ works \cite[\S 6.19, 6.20, 6.23]{HAZEWINKEL}). By Remark \ref{remark:rigidity_dvr}, the base changes along the homomorphisms of local Dedekind domains $F\twoheadleftarrow R \hookrightarrow \mathrm{Frac}(R)=:L$ induce an isomorphism
    $$ \bigslant{\pi_{2*,*}(\MML_F)}{\mathrm{I}_{\MML}(F)}[\nicefrac{1}{p}]\cong \bigslant{\pi_{2*,*}(\MML_{L})}{\mathrm{I}_{\MML}(L)}[\nicefrac{1}{p}]. $$
    Hence, we reduce to the case of a field of characteristic zero, which has already been established.
\end{proof}

\begin{theorem}\label{thm:pullback_descr}
    Let $F$ be a field of characteristic zero. Then the following diagram is a pullback of graded $\GW(F)$-algebras:
        \[\begin{tikzcd} {\pi_{2*,*}(\MML)} \arrow[dr, phantom, "\lrcorner", very near start] \arrow[dd, two heads] \arrow[rr, two heads] & & {\pi_{2*}(\mathrm{M}\Sigma)} \arrow[dd, two heads] \\  & {} & & \\{\W(F)[u_4,u_8,\dots]} \arrow[rr, "\overline{\mathrm{rk}}", two heads] &  & {\Z/2[u_4,u_8,\dots]},\end{tikzcd}\]
    where the top horizontal homomorphism is the map from Lemma \ref{lemma:quotient_by_fund_ideals}, the left vertical homomorphism is the quotient by the annihilator of $\eta$, and the right one is the quotient by the annihilator of $\etatop\in\pi_{1}(\mathrm{M}\Sigma)$. If $F$ is a field of characteristic $p>2$, then the same result holds away from $p$.
\end{theorem}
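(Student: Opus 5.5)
We reduce the pullback statement to the general algebraic fact that for a ring $A$ with ideals $I,J$ satisfying $I\cap J=0$, the square with corners $A$, $A/I$, $A/J$, $A/(I+J)$ is cartesian. We take $A=\pi_{2*,*}(\MML)$, $I={}_\eta\pi_{2*,*}(\MML)$ and $J=\mathrm{I}_{\MML}(F)$. Lemma~\ref{lemma:quotient_by_eta_tors} identifies $A/I$ with $\W(F)[u_4,u_8,\dots]$, and Lemma~\ref{lemma:quotient_by_fund_ideals} identifies $A/J$ with $\pi_{2*}(\mathrm{M}\Sigma)$; by Remark~\ref{remark_I_MML}(2), $J$ is an ideal away from the characteristic. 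Three things then remain to be checked: (a) $I\cap J=0$; (b) under these identifications, $A/(I+J)$ is $\Z/2[u_4,u_8,\dots]$ and the induced maps are $\overline{\mathrm{rk}}$ and the quotient by the annihilator of $\etatop$; (c) all the maps are compatible with the $\GW(F)$-module structures. Item (c) is automatic, since every map in the square is induced by ring maps of spectra or by quotients.

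For (a), we use the main cofiber sequence (Theorem~\ref{thm:main_cofib_seq}) and the first item of Remark~\ref{remark_I_MML}, which says that $\mathrm{I}_{\MML}(F)$ is the image of $\mathrm{I}_{\MSL}(F)$ under the injection $\pi_{2*,*}(\MSL)\hookrightarrow\pi_{2*,*}(\MML)$. Let $x\in J$ be $\eta$-torsion. Then $x$ comes from an element of $\mathrm{I}_{\MSL}(F)$, and its $\eta$-multiple in $\pi_{2*+1,*+1}(\MSL)\cong\pi_{2*+1,*+1}(\MML)$ vanishes; this isomorphism was shown in the proof of Lemma~\ref{lemma:quotient_by_eta_tors}. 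Hence $x$ lies in ${}_\eta\mathrm{I}_{\MSL}(F)=0$, where the vanishing is the corresponding statement in \cite{Egor}: the fundamental-ideal part injects into $\W(F)$ via $\eta$-periodization. Equivalently, $\MSL\to\MML$ induces an isomorphism after inverting $\eta$ (Corollary~\ref{cor_eta_period_mslc}), and elements of $\mathrm{I}_{\MSL}$ map injectively into $\pi_{2*,*}(\MSL[\eta^{-1}])$ by \cite[Theorem 5.2]{Egor}.

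For (b), the image of $J$ in $\W(F)[u_4,\dots]$ is the fundamental ideal part $\mathrm{I}(F)\cdot\W(F)[u_4,\dots]$, because $J$ consists of $\eta$-multiples. We check this using the isomorphism $\MSL[\eta^{-1}]\cong\MML[\eta^{-1}]$ together with the identification of $\mathrm{I}_{\MSL}(F)$ in \cite{Egor}. The quotient by this ideal is $\Z/2[u_4,\dots]$ via the rank mod $2$, that is, via $\overline{\mathrm{rk}}$. On the other side, we must show that the image of ${}_\eta\pi_{2*,*}(\MML)$ in $\pi_{2*}(\mathrm{M}\Sigma)$ is the annihilator of $\etatop$, and that the quotient is $\Z/2[u_4,\dots]$. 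This is the step we expect to be the main obstacle, since the ring structure of $\pi_{*}(\mathrm{M}\Sigma)$ is poorly understood. Our first attempt is to pass to $F=\C$ by rigidity (Lemma~\ref{lemma:rigidity}) and use the Betti realization, which sends $\eta$ to $\etatop$. By Lemma~\ref{lemma:quotient_by_fund_ideals} the top map is an isomorphism over $\C$, so it suffices to compare $\eta$-annihilators in $\pi_{2*,*}(\MML_\C)$ and $\etatop$-annihilators in $\pi_{2*}(\mathrm{M}\Sigma)$. For this we use the map of short exact sequences from the proof of Lemma~\ref{lemma:quotient_by_fund_ideals} together with the analogous comparison for the $\MSL$-to-$\MSU$ row and the $\MGL$-to-$\MU$ row from \cite{Egor}. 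On $\MGL$ and $\MU$ everything is $\eta$-torsion, and on $\pi_{2*+1}$ the bottom row again reduces to $\MSU$, since $\pi_{odd}(\MU)=0$. Running the same diagram chase as in the proof of Lemma~\ref{lemma:quotient_by_eta_tors} shows that $\eta$-multiplication on $\MML$ (respectively on $\mathrm{M}\Sigma$) factors through $\MSL$ (respectively $\MSU$). The known $\MSL$/$\MSU$ result from \cite[Theorem D]{Egor} then identifies the quotients with $\Z/2[u_4,\dots]$.

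Finally, for characteristic $p>2$ we invert $p$ and rerun the same argument. Lemmas~\ref{lemma:quotient_by_eta_tors}, \ref{lemma:rigidity} and~\ref{lemma:quotient_by_fund_ideals} all hold away from $p$, so the algebraic lemma applies verbatim.
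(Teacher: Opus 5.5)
Your proposal is correct and follows the paper's proof: it uses the same observation that $A/(I\cap J)$ is the pullback of $A/I\to A/(I+J)\leftarrow A/J$, and it proves $I\cap J=0$ in the same way, by noting that $\mathrm{I}_{\MML}(F)$ injects into the $\eta$-periodization while ${}_\eta\pi_{2*,*}(\MML)$ is killed by $\eta$. Your step (b), which passes to $\C$ and uses the $\MSL$-linear splitting to reduce the $\eta$-annihilators to the $\MSL$/$\MSU$ case of \cite{Egor}, simply spells out what the paper calls a ``straightforward verification.''
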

\begin{proof}
    We omit inversion of the exponential characteristic throughout the proof. The quotient of $\pi_{2*,*}(\MML)$ by the intersection $\mathrm{I}_{\MML}(F)\cap {}_\eta\pi_{2*,*}(\MML)$ is isomorphic to the pullback of the diagram
    $$ \bigslant{\pi_{2*,*}(\MML)}{\mathrm{I}_{\MML}(F)}\to \bigslant{\pi_{2*,*}(\MML)}{(\mathrm{I}_{\MML}(F)+{}_\eta\pi_{2*,*}(\MML))}\leftarrow \bigslant{\pi_{2*,*}(\MML)}{{}_\eta\pi_{2*,*}(\MML)}. $$
    Applying Lemma \ref{lemma:quotient_by_eta_tors} and Lemma \ref{lemma:quotient_by_fund_ideals}, we obtain that this diagram is isomorphic to
    $$ \pi_{2*}(\mathrm{M}\Sigma)\to \Z/2[u_4,u_8,\dots]\leftarrow\W(F)[u_4,u_8,\dots]. $$
    A straightforward verification shows that the maps in the diagram are as required (cf. discussion just before \cite[Theorem 6.17]{Egor}). It remains to note that the intersection $\mathrm{I}_{\MML}(F)\cap {}_\eta\pi_{2*,*}(\MML)$ is trivial: $\mathrm{I}_{\MML}(F)$ maps injectively into $\eta$-periodization, while ${}_\eta\pi_{2*,*}(\MML)$ is annihilated by $\eta$.
\end{proof}

\begin{corollary}
    Let $F$ be a quadratically closed field of characteristic zero. There is an isomorphism
    $$ \pi_{2*,*}(\MML)\cong \pi_{2*}(\mathrm{M}\Sigma). $$
    If $F$ is a quadratically closed field of characteristic $p>2$, then the same result holds away from $p$.
\end{corollary}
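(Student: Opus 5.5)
The plan is to deduce the corollary directly from Theorem \ref{thm:pullback_descr}. For a quadratically closed field $F$ the Witt ring is $\W(F)\cong\Z/2$: every unit is a square, so $\GW(F)\cong\Z$ via the rank, and $\W(F)=\GW(F)/(h)\cong\Z/2$. I would first check that under this identification the bottom horizontal map
\[\overline{\mathrm{rk}}\colon \W(F)[u_4,u_8,\dots]\to\Z/2[u_4,u_8,\dots]\]
is an isomorphism. On coefficients it is the reduction of the rank modulo $2$, which for $\W(F)\cong\Z/2$ is the identity. On the generators $u_{4i}$ it is the identity by construction of the maps in the proof of Theorem \ref{thm:pullback_descr}, as in the discussion preceding \cite[Theorem 6.17]{Egor}.

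Since pullbacks of abelian groups (or graded rings) along an isomorphism are isomorphisms, the top horizontal map $\pi_{2*,*}(\MML)\to\pi_{2*}(\mathrm{M}\Sigma)$ in the cartesian square is then an isomorphism of graded rings. The same argument works away from $p$ in characteristic $p>2$, using the corresponding case of Theorem \ref{thm:pullback_descr}; inverting $p$ does not affect $\W(F)\cong\Z/2$.

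An alternative route gives the same conclusion. By Remark \ref{remark_I_MML}(2), $\mathrm{I}_{\MML}(F)$ is the kernel of base change to a quadratic closure $L$ of $F$. When $F=L$ this kernel vanishes, so Lemma \ref{lemma:quotient_by_fund_ideals} already yields $\pi_{2*,*}(\MML)\cong\pi_{2*}(\mathrm{M}\Sigma)$. This path does not need the explicit description of $\overline{\mathrm{rk}}$, so I would present it as the main argument and mention the pullback square as a consistency check.

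The main obstacle is only bookkeeping: confirming that $\mathrm{I}_{\MML}(F)=0$ for quadratically closed $F$. Concretely, in degrees divisible by $4$, the group $\eta\cdot\pi_{2n-1,n-1}(\MML)\cong\eta\cdot\pi_{2n-1,n-1}(\MSL)$ vanishes, which follows from the $\MSL$ case in \cite{Egor}, where the fundamental ideals $\mathrm{I}(F)^k$ vanish because $\mathrm{I}(F)=0$.
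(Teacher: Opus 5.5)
Your proposal is correct, and your first argument is exactly the paper's proof. The paper reads the corollary off the cartesian square of Theorem~\ref{thm:pullback_descr}: for quadratically closed $F$ one has $\W(F)\cong\Z/2$, so the bottom surjection $\overline{\mathrm{rk}}$ is bijective, and a pullback along an isomorphism is an isomorphism. You don't need to track the generators to see the bijectivity. Its kernel is $\mathrm{I}(F)[u_4,u_8,\dots]=0$; alternatively, it is a surjection between $\Z/2$-vector spaces of equal finite dimension in each degree.

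The route you prefer as the main one is genuinely different, and also valid. It never uses the pullback square, so it avoids the $\eta$-torsion analysis of Lemma~\ref{lemma:quotient_by_eta_tors}. It needs only Lemma~\ref{lemma:quotient_by_fund_ideals} plus the vanishing $\mathrm{I}_{\MML}(F)=0$.

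Be aware that applying Remark~\ref{remark_I_MML}(2) with $L=F$ is only a formal repackaging. The vanishing of $\mathrm{I}_{\MML}(L)\cong\mathrm{I}_{\MSL}(L)$ for quadratically closed $L$ is an input to that remark (via the proof of \cite[Lemma 6.13]{Egor}), not something it establishes. The real justification is the one in your last paragraph: Remark~\ref{remark_I_MML}(1) together with the vanishing of the $\MSL$ fundamental-ideal part over a quadratically closed field, taken from \cite{Egor}.

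The trade-off is as follows. The paper's argument is a one-line specialization of a theorem already proved in the paper, needing only the elementary fact $\W(F)\cong\Z/2$. Yours uses less of the paper's machinery but imports the vanishing of $\mathrm{I}_{\MSL}(F)$ from \cite{Egor}.
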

\begin{proof}
    This follows directly from Theorem \ref{thm:pullback_descr}, since for a quadratically closed field the bottom epimorphism is an isomorphism.
\end{proof}

Since Voevodsky's algebraic cobordism spectrum $\MGL$ is $\eta$-complete, the $\Einf$-ring map $\MML\to\MGL$ factors through the $\eta$-completion of $\MML$.

\begin{corollary}\label{corollary:geom_diag_away_2}
    Assume that $F$ is a field of characteristic zero. Then the morphisms $\MML\to \MML^\wedge_\eta\to \MGL$ and $\MML\to \MML[\eta^{-1}]\cong\MSL[\eta^{-1}]$ induce isomorphisms of graded $\GW(F)$-algebras $$\pi_{2*,*}(\MML)[\nicefrac{1}{2}]\cong 
    \pi_{2*,*}(\MGL)[\nicefrac{1}{2}]\times \pi_{2*,*}(\MSL[\eta^{-1}])[\nicefrac{1}{2}]\cong \mathbb{L}[\nicefrac{1}{2}]\times \W(F)[\nicefrac{1}{2}][u_4,u_8,\dots],$$
    where $\mathbb{L}$ is the Lazard ring and $\mathrm{deg}(u_{4i})=(8i,4i)$. If $F$ is a field of characteristic $p>2$, then the same result holds away from $p$.
\end{corollary}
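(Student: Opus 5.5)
The plan is to read off Corollary \ref{corollary:geom_diag_away_2} from the pullback square of Theorem \ref{thm:pullback_descr} after inverting $2$. First, note that the two morphisms really do induce the maps in question. The map $\MML\to\MGL$ factors through $\MML^\wedge_\eta$ because $\MGL$ is $\eta$-complete, and the map $\MML\to\MML[\eta^{-1}]\cong\MSL[\eta^{-1}]$ comes from Corollary \ref{cor_eta_period_mslc}. On $\pi_{2*,*}$ these give ring homomorphisms into $\mathbb{L}$ (by \cite[Proposition 8.2]{Hoy15}) and into $\W(F)[u_4,u_8,\dots]$ (by Lemma \ref{lemma:quotient_by_eta_tors}). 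I will check that the product map becomes an isomorphism after inverting $2$.

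Invert $2$ in the square of Theorem \ref{thm:pullback_descr}. Localization is exact, so the square remains a pullback. Its bottom right corner $\Z/2[u_4,u_8,\dots][\nicefrac12]$ vanishes, and a pullback over $0$ is the product. Hence
\[\pi_{2*,*}(\MML)[\nicefrac12]\cong \pi_{2*}(\mathrm{M}\Sigma)[\nicefrac12]\times \W(F)[\nicefrac12][u_4,u_8,\dots].\]
The second factor is already identified with $\pi_{2*,*}(\MSL[\eta^{-1}])[\nicefrac12]$ through the left vertical map, by Lemma \ref{lemma:quotient_by_eta_tors}.

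The remaining task, and the main obstacle, is to identify the first factor with $\pi_{2*,*}(\MGL)[\nicefrac12]\cong\mathbb{L}[\nicefrac12]$, compatibly with the map induced by $\MML\to\MGL$. The top map of the square factors as the quotient by $\mathrm{I}_{\MML}(F)$ (Lemma \ref{lemma:quotient_by_fund_ideals}), and $\mathrm{I}_{\MML}(F)$ lies in the image of $\eta$. Since $\eta$ acts by zero on $\MGL$, the map $\pi_{2*,*}(\MML)\to\mathbb{L}$ factors through $\pi_{2*,*}(\MML)/\mathrm{I}_{\MML}(F)\cong\pi_{2*}(\mathrm{M}\Sigma)$. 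By rigidity (Lemma \ref{lemma:rigidity}) it suffices to work over $F=\C$. There, Betti realization turns this into the topological forgetful map $\pi_{2*}(\mathrm{M}\Sigma)\to\pi_{2*}(\MU)$, using $\realiz_{\mathrm{B}\C}(\MML_\C)\cong\mathrm M\Sigma$ (Proposition \ref{prop:complex_realization_MML}) and $\pi_{2*,*}(\MGL_\C)\cong\pi_{2*}\MU$.

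So I must show that $\pi_{2*}(\mathrm{M}\Sigma)[\nicefrac12]\to\pi_{2*}(\MU)[\nicefrac12]$ is an isomorphism. I would argue through the fibration $\mathrm{B}\tilde{\mathrm U}\to\mathrm{BU}$. Its fibre is $\mathrm{B}\Z/2$, coming from the double cover $\tilde{\mathrm U}(n)\to\mathrm U(n)$ in Appendix \ref{appendix_Stong}, and $\mathrm{B}\Z/2$ is $\Z[\nicefrac12]$-acyclic. Hence the map is a $\Z[\nicefrac12]$-homology equivalence of base spaces. Thom isomorphisms and the Hurewicz/Whitehead theorem then make $\mathrm M\Sigma[\nicefrac12]\to\MU[\nicefrac12]$ an equivalence.

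As a cross-check, I would compare orders of groups. After inverting $2$, the short exact sequence $\pi_{2*}\MSU\to\pi_{2*}\mathrm M\Sigma\to\pi_{2*-2}\MU$ has $\pi_{2*}(\mathrm M\Sigma)[\nicefrac12]$ of the same rank as $\mathbb{L}[\nicefrac12]$, matching the known additive structure.

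Finally, every map involved is a ring map, and the $\GW(F)$-actions are compatible: on $\mathbb{L}$ the group $\GW(F)$ acts through the rank, and on $\W(F)$ it acts through the quotient. So the isomorphism is one of graded $\GW(F)$-algebras. The characteristic $p>2$ case follows verbatim from the "away from $p$" versions of the cited lemmas.
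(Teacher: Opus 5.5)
Your argument is correct and is essentially the paper's. Inverting $2$ in the square of Theorem~\ref{thm:pullback_descr} kills the corner $\Z/2[u_4,u_8,\dots]$, and the topological factor is then matched with $\pi_{2*,*}(\MGL)[\nicefrac{1}{2}]\cong\mathbb{L}[\nicefrac{1}{2}]$ by rigidity, complex realization and $\pi_{2*}(\mathrm{M}\Sigma)[\nicefrac{1}{2}]\cong\pi_{2*}(\MU)[\nicefrac{1}{2}]$. The paper phrases the first step through the plus/minus decomposition, with $\mathrm{I}_{\MML}(F)$ lying in the minus part; this amounts to your observation that $\MML\to\MGL$ kills $\mathrm{I}_{\MML}(F)$.

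There are two real differences. First, you prove the last isomorphism directly from the $\Z[\nicefrac{1}{2}]$-acyclicity of the fiber $\mathrm{B}\Z/2$ of $\mathrm{B}\tilde{\mathrm{U}}\to\mathrm{BU}$, instead of quoting Stong (Theorem~\ref{theorem:homotopy_groups_MSigma}(3)). Second, in characteristic $p>2$ the reduction to $\C$ cannot go through a field extension, so it is not quite ``verbatim''. It must use the mixed-characteristic lift of Lemma~\ref{lemma:quotient_by_fund_ideals} and Remark~\ref{remark:rigidity_dvr}, together with base-change stability of $\pi_{2*,*}(\MGL)\cong\mathbb{L}$ along local Dedekind domains.
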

\begin{proof}
    Below we implicitly invert the exponential characteristic. The second isomorphism is proven in \cite[Proposition 8.2]{Hoy15} and \cite[Corollary 1.3(3)]{BHop}, and we need to prove the first isomorphism. We have the following standard decomposition (see e.g., \cite[Remark 4]{ALP}):
    \begin{align*}
        \pi_{2*,*}(\MML)[\nicefrac{1}{2}]&\cong\pi_{2*,*}(\MML)[\nicefrac{1}{2}]^+\times \pi_{2*,*}(\MML)[\nicefrac{1}{2}]^- \\ 
        & \cong\pi_{2*,*}(\MML^\wedge_\eta)[\nicefrac{1}{2}]\times \pi_{2*,*}(\MML[\eta^{-1}])[\nicefrac{1}{2}].
    \end{align*}
    Here the plus (resp. minus) part of the homotopy groups is by definition the corresponding homotopy groups of the plus (resp. minus) part. The statement about the minus part follows from Corollary \ref{cor_eta_period_mslc}. Let us consider the plus part. The pullback description implies that
    $$ \pi_{2*,*}(\MML^\wedge_\eta)[\nicefrac{1}{2}]\cong \pi_{2*}(\mathrm{M}\Sigma)[\nicefrac{1}{2}]. $$
    Hence, by the proof of Lemma \ref{lemma:quotient_by_fund_ideals}, it suffices to show that 
    \begin{enumerate}
        \item  the rings $\pi_{2*,*}(\MML^\wedge_\eta)[\nicefrac{1}{2}]$ and $\pi_{2*,*}(\MGL)[\nicefrac{1}{2}]$ are stable under base change along a homomorphism of local Dedekind domains (with exponential residue characteristic of the target $e\neq 2$, away from $e$),
        \item the morphism $\mathrm{M}\Sigma\to \MU$ induces an isomorphism $\pi_{2*}(\mathrm{M}\Sigma)[\nicefrac{1}{2}]\xrightarrow{\simeq}\pi_{2*}(\MU)[\nicefrac{1}{2}]$.
    \end{enumerate}
    The first statement follows from Lemma \ref{lemma:rigidity} and Remark \ref{remark:rigidity_dvr} (the ideal $\mathrm{I}_\MML(F)$ belongs to the minus part), together with the computation $\pi_{2*,*}(\MGL)\cong \mathbb{L}$ (see \cite[Theorem 6.5]{SpiMGL}). The second one is exactly item $3$ of Theorem \ref{theorem:homotopy_groups_MSigma}.
\end{proof}
\begin{remark}
    The stated pullback description of $\pi_{2*,*}(\MML)$ (as well as Corollary \ref{corollary:geom_diag_away_2}) holds more generally over a local Dedekind domain $R$ with exponential residue characteristic $e$, away from $e$ (the case $e=2$ can be included using Appendix \ref{appendix_char2}, equivalence \eqref{equa_bach_equiv_minus}, and the computation of $\pi_{*}(\mathrm{MSO})[\nicefrac{1}{2}]$). We leave the details to the interested reader.
\end{remark}

\section{Slices and the category of \texorpdfstring{$2$}{2}-inverted modules}\label{section:6}
In this section, we study the stable category of $2$-inverted $\MML$-modules over a field (away from the characteristic). For this purpose, we compute the slices of $\MML$ and use them to compare $\MML^\wedge_\eta[\nicefrac{1}{2}]$ with $\MGL[\nicefrac{1}{2}]$. We then categorify this comparison, which yields a description of the plus part. This is followed by a characterization of the minus part via the real étale realization.
\subsection{Comparison with \texorpdfstring{$\MGL$}{MGL}}\label{subsection:6-1}
Below we use Voevodsky's slice filtration; see \cite[\S 2]{RO16} for an overview. We denote by $\slice_q(-)$ the $q$-th slice functor. Given a spectrum $\EE\in\SHS$, the slice filtration of $\EE$ induces a conditionally convergent spectral sequence
\[ E^1_{p,q,w}(\EE)=\pi_{p,w}(\slice_q(\EE)) \Longrightarrow \pi_{p,w}(\mathrm{sc}(\EE)), \]
where $\mathrm{sc}(\EE)$ is a motivic spectrum, called the \textit{slice completion of} $\EE$, see \cite[Definition 3.1]{RSO19}.
\begin{lemma}\label{lemma_slice_convergence}
    We have $\mathrm{sc}(\MML)\cong \MML^\wedge_\eta$. In other words, there is a conditionally convergent slice spectral sequence
    \[ E^1_{p,q,w}(\MML)=\pi_{p,w}(\slice_q(\MML)) \Longrightarrow \pi_{p,w}(\MML^\wedge_{\eta}). \]
\end{lemma}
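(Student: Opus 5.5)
The statement concerns $\MML$ over a field $F$ (exponential characteristic implicitly inverted). The plan is to reduce it, via the split cofiber sequence of Theorem~\ref{thm:main_cofib_seq}, to the known slice completions of $\MSL$ and $\MGL$. Slice completion $\mathrm{sc}(-)=\lim_q \mathrm{f}^{q}$-quotients is an exact functor $\SH(F)\to\SH(F)$. Applying it to the split cofiber sequence
\[ \MSL\to\MML\xrightarrow{\partial}\Sigma^{1}_{\Proj^1}\MGL \]
therefore gives a split cofiber sequence
\[ \mathrm{sc}(\MSL)\to\mathrm{sc}(\MML)\to\mathrm{sc}(\Sigma^1_{\Proj^1}\MGL). \]
Since $\Sigma^{2,1}$ shifts the slice filtration by one, it commutes with slice completion, so the last term is $\Sigma^{2,1}\mathrm{sc}(\MGL)$. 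Equivalently, because $\MML\cong\MSL\oplus\Sigma^{2,1}\MGL$ for any choice of $t$, we get $\mathrm{sc}(\MML)\cong \mathrm{sc}(\MSL)\oplus\Sigma^{2,1}\mathrm{sc}(\MGL)$.

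Next I would invoke the known inputs. First, $\MGL$ is slice complete, i.e.\ $\mathrm{sc}(\MGL)\simeq\MGL\simeq\MGL^\wedge_\eta$; this follows from Hopkins--Morel--Hoyois \cite{Hoy15} and the convergence results in \cite{RSO19}. Second, $\mathrm{sc}(\MSL)\simeq\MSL^\wedge_\eta$, which is established in \cite{MSL-slices} (and for connective effective spectra follows from \cite[Theorem 3.50]{RSO19}-type convergence: for very effective $E$, $\mathrm{sc}(E)\simeq E^\wedge_\eta$). Indeed, the cleanest route may be to apply that general criterion directly. $\MML$ is very effective, being a colimit of $\Omega^{2n}_{\Proj^1}\Th(\gamma^{\ML}_{2n})$ as in the description via metalinear Grassmannians, which is a standard cellularity/connectivity argument as for $\MGL$ and $\MSL$. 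Hence $\mathrm{sc}(\MML)\simeq\MML^\wedge_\eta$ directly. I would present this general route and use the splitting only as a cross-check.

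Finally, I would compare with $\eta$-completion. $\eta$-completion is also exact and commutes with $\Sigma^{2,1}$, so $\MML^\wedge_\eta\cong \MSL^\wedge_\eta\oplus\Sigma^{2,1}\MGL^\wedge_\eta$ with $\MGL^\wedge_\eta=\MGL$ (as $\eta=0$ on $\MGL$). The natural maps $E\to\mathrm{sc}(E)$ and $E\to E^\wedge_\eta$ are compatible with the splitting, which gives the identification.

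The main obstacle is justifying the convergence input, i.e.\ that the slice completion of a very effective (or effective, connective) spectrum over a field agrees with its $\eta$-completion. This is exactly where the exponential characteristic must be inverted and where one cites \cite{RSO19} (via Levine's convergence for $\sph$ and module-theoretic extension). Verifying the very effectivity of $\MML$ itself I expect to be routine.
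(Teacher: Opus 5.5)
The route you say you would actually present does not work. There is no theorem that $\mathrm{sc}(\EE)\simeq\EE^\wedge_\eta$ for very effective (or effective and connective) $\EE$, and the statement is false.

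Here is a counterexample over $\R$. Take $\EE=\bigoplus_{j\geq 0}\Sigma^{j,j}\mathrm{M}(\Z/2)$.
\begin{itemize}
\item $\EE$ is very effective: $\Sigma^{j,j}\sph$ is a retract of $\Sigma^\infty_{\Proj^1}(\Gm^{j})_+$, and very effective spectra are closed under smash products and sums.
\item $\eta$ acts by zero on $\mathrm{M}(\Z/2)$-modules, so $\EE=\EE^\wedge_\eta$.
\item Since $\mathrm{f}_1\mathrm{M}(\Z/2)=0$ and the effective covers $\mathrm{f}_q$ commute with sums, $\mathrm{f}_q\EE=\bigoplus_{j\geq q}\Sigma^{j,j}\mathrm{M}(\Z/2)$. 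Hence $\mathrm{sc}(\EE)=\prod_{j\geq 0}\Sigma^{j,j}\mathrm{M}(\Z/2)$.
\item On $\pi_{0,0}$, the map $\EE\to\mathrm{sc}(\EE)$ is the inclusion $\bigoplus_j \mathrm{K}^{\mathrm{M}}_j(\R)/2\to\prod_j\mathrm{K}^{\mathrm{M}}_j(\R)/2$, with each factor $\Z/2$ generated by $\{-1\}^j$. This map is not surjective.
\end{itemize}
What \cite[Lemma~3.13]{RSO19} actually provides is a criterion: for effective $\EE$, $\mathrm{sc}(\EE)\simeq\EE^\wedge_\eta$ once $\EE/\eta$ is slice complete. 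That hypothesis has to be checked for the specific spectrum. Slice completeness does not pass from the sphere to arbitrary colimits of cells, and that passage is exactly what your ``module-theoretic extension'' would need.

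Your splitting argument, by contrast, is correct, and it should be the proof rather than a cross-check. It is essentially the paper's argument, arranged differently.
\begin{itemize}
\item The paper applies \cite[Lemma~3.13]{RSO19} to $\MML$ itself. It uses the splitting only to verify the hypothesis, namely that $\MML/\eta\cong\MSL/\eta\oplus\Sigma^{2,1}(\MGL\oplus\Sigma^{2,1}\MGL)$ is slice complete. For this it cites \cite[Lemma~6.5]{MSL-slices} for $\MSL/\eta$ and the slice completeness of $\MGL$.
\item You split the conclusion instead. You use $\mathrm{sc}(\MSL)\simeq\MSL^\wedge_\eta$, which is itself that lemma plus \cite[Lemma~6.5]{MSL-slices}, together with $\mathrm{sc}(\MGL)\simeq\MGL$.
\end{itemize}
So both versions rest on the same inputs.

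To make sure your version identifies the canonical comparison map, and not just an abstract isomorphism, add one observation. For effective $\EE$, each $\EE/\mathrm{f}_q\EE$ is killed by $\eta^q$, since $\Gm^{\wedge q}\wedge\EE/\mathrm{f}_q\EE$ lies in $\Sigma^{2q,q}$ of the effective part and maps trivially into a $q$-coeffective object. Hence $\mathrm{sc}(\EE)$ is $\eta$-complete, and $\EE\to\mathrm{sc}(\EE)$ factors through a natural map $\EE^\wedge_\eta\to\mathrm{sc}(\EE)$. This is a natural transformation of exact functors commuting with $\Sigma^{2,1}$. It is an equivalence on $\MSL$ and on $\Sigma^{2,1}\MGL$, hence on $\MML$.
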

\begin{proof}
    Since $\MML$ is effective, it suffices to show that $\MML/\eta$ is $\eta$-complete by \cite[Lemma~3.13]{RSO19}. By Theorem \ref{thm:main_cofib_seq}, we need to prove that $\MSL/\eta$ and $\Sigma^{1}_{\Proj^1}\MGL/\eta$ are $\eta$-complete. The $\eta$-completeness of the second spectrum is well known (note that $\MGL/\eta\cong \MGL\oplus \Sigma^{1}_{\Proj^1}\MGL$), while for the first spectrum this is proven in \cite[Lemma~6.5]{MSL-slices}.
\end{proof}
We now compute the slices of $\MML$ over a field away from the characteristic.
\begin{proposition}\label{prop:slices_MML}
    Suppose that $F$ is a field of characteristic zero. The slices of $\MML$ are given by 
    \[\slice_q(\MML)\cong \bigoplus_{0\leq s<\nicefrac{q}{2}} \Sigma^{q+2s,q}\mathrm{M}(\Z/2)^{p(\lfloor\nicefrac{s}{2}\rfloor)}\oplus \Sigma^{2q,q}\mathrm{M}\Z^{p(q)}, \] 
    where $p(-)$ is the partition function and $\lfloor-\rfloor$ is the floor function. In positive characteristic, the same result holds after inverting the characteristic.
\end{proposition}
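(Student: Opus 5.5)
We would fix a Thom class $t$ as in Theorem~\ref{thm:main_cofib_seq}, giving a splitting $\MML\cong\MSL\oplus\Sigma^{1}_{\Proj^1}\MGL$ of motivic spectra. Slices are additive and commute with $\Proj^1$-suspension, so
\[
\slice_q(\MML)\cong \slice_q(\MSL)\oplus \Sigma^{1}_{\Proj^1}\slice_{q-1}(\MGL).
\]
This is the only step where a choice of $t$ enters. We invert the exponential characteristic throughout.

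For the second summand we use Hopkins--Morel--Hoyois \cite[\S 8.3]{Hoy15}: $\slice_{q-1}(\MGL)\cong\Sigma^{2q-2,q-1}\mathrm{M}\mathbb{L}_{q-1}$, where $\mathbb{L}_{q-1}$ is free abelian of rank $p(q-1)$. Hence $\Sigma^1_{\Proj^1}\slice_{q-1}(\MGL)\cong\Sigma^{2q,q}\mathrm{M}\Z^{p(q-1)}$. For the first summand we invoke the slice computation of $\MSL$ from \cite{MSL-slices}. We expect it to be a sum of $\Sigma^{2q,q}\mathrm{M}\Z^{r_q}$ and mod-$2$ pieces $\Sigma^{q+2s,q}\mathrm{M}(\Z/2)^{m_{q,s}}$ for $0\le s<q/2$. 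These should be the motivic analogues of the $\MSU$ Adams--Novikov $E_2$-page: the integral part has rank $\mathrm{rk}\,\pi_{2q}(\MSU)$, and the torsion corresponds to $\eta$-towers on the $\W$-classes $u_{4i}$.

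It then remains to check the numerical identities. For the integral part we need $r_q+p(q-1)=p(q)$. This follows because $\pi_{2*}(\MSU)\otimes\Q$ is polynomial on generators in degrees $2i$, $i\ge2$, so $r_q=p(q)-p(q-1)$. For the torsion part we need $m_{q,s}=p(\lfloor s/2\rfloor)$, which should agree with the count of monomials in the $u_{4i}$ of the appropriate weight from \cite{MSL-slices}. If the result there is stated in a different indexing, I would reindex it, and cross-check the totals against $\pi_{2*}(\mathrm{M}\Sigma)$ via Theorem~\ref{theorem:homotopy_groups_MSigma}.

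The main obstacle is pinning down the exact form of $\slice_q(\MSL)$ from the literature and matching its multiplicities with the partition-function formula. In particular, the torsion summands must come entirely from $\MSL$, since $\MGL$ has torsion-free slices. If \cite{MSL-slices} only gives $\slice_q(\MSL)$ through a cofiber sequence, I would deduce the splitting as follows. The only possible extension is between $\mathrm{M}\Z$ and $\mathrm{M}\Z/2$ summands in different weights, and this is governed by motivic cohomology operations that vanish for degree reasons.
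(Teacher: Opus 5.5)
Your proposal is correct and is essentially the paper's proof. You split $\MML\cong\MSL\oplus\Sigma^{1}_{\Proj^1}\MGL$ via a chosen $t$, use additivity of slices and $\slice_q(\Sigma^{1}_{\Proj^1}\EE)\cong\Sigma^{1}_{\Proj^1}\slice_{q-1}(\EE)$ (\cite[Lemma 2.1]{RO16}), and insert $\slice_q(\MSL)$ from \cite[Theorem~A]{MSL-slices} together with the slices of $\MGL$. Your count $p(q)-p(q-1)$ for the integral part of $\slice_q(\MSL)$ is exactly the bookkeeping that makes the formula add up.

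Your fallback about extensions is not needed, since \cite[Theorem~A]{MSL-slices} already gives $\slice_q(\MSL)$ as an explicit direct sum. It would also not work as stated: all summands of $\slice_q$ have weight $q$, and weight-zero operations such as the reduction $\mathrm{M}\Z\to\mathrm{M}(\Z/2)$ or $\mathrm{Sq}^1$ could a priori produce nontrivial extensions.
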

\begin{proof}
    We implicitly invert the exponential characteristic in the proof. Since slices commute with finite direct sums, we have 
    \[ \slice_q(\MML)\cong \slice_q(\MSL)\oplus \slice_q(\Sigma^{1}_{\Proj^1}\MGL)\cong \slice_q(\MSL)\oplus \Sigma^{1}_{\Proj^1}\slice_{q-1}(\MGL). \]
    Here the first isomorphism is induced by the choice of $t$ in Theorem \ref{thm:main_cofib_seq}, and the second equivalence follows from \cite[Lemma 2.1]{RO16}. Combining this decomposition with the computations of $\slice_q(\MSL)$ \cite[Theorem~A]{MSL-slices} and $\slice_{q-1}(\MGL)$ \cite[Theorem 3.1]{SpiMGL}, we obtain the result.
\end{proof}
\begin{remark}\label{remark:slices_AN}
    The conceptual explanation of the formula above is (here we omit inversion of the exponential characteristic of $F$):
    \[\slice_q(\MML)\cong \bigoplus_{s=0}^q \Sigma^{2q-s,q} \mathrm{M}\bigl(\mathrm{Ext}_{\MU^*\MU}^{s,2q}(\MU^*(\mathrm{M}\Sigma),\MU^*)\bigr). \]
    This follows from the splitting $\MML\cong \MSL\oplus \Sigma^1_{\Proj^1}\MGL$, analogous splitting in topology $\mathrm{M}\Sigma\cong \MSU\oplus \Sigma^{2}\MU$ and a similar view on the slices of $\MSL$, see \cite[Remark 3.11]{MSL-slices}.
\end{remark}
\begin{proposition}\label{prop_mslc_mgl}
    Let $F$ be a field of characteristic zero. Then the canonical map of motivic $\Einf$-ring spectra over $F$ \[ \MML^\wedge_\eta[\nicefrac{1}{2}]\to \MGL[\nicefrac{1}{2}]\] is an equivalence. In characteristic $p>0$, the same result holds away from $p$.
\end{proposition}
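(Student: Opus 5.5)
The approach is to show that the map $\MML\to\MGL$ induces an isomorphism on all slices after inverting $2$, and then to pass from slices to the spectra using convergence of the slice tower. We implicitly invert the exponential characteristic throughout.

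\textbf{Step 1: slices of $\MML[\nicefrac{1}{2}]$.} By Proposition~\ref{prop:slices_MML}, the summands $\Sigma^{q+2s,q}\mathrm{M}(\Z/2)$ vanish after inverting $2$. This gives
\[ \slice_q(\MML)[\nicefrac{1}{2}]\cong\Sigma^{2q,q}\mathrm{M}\Z[\nicefrac{1}{2}]^{p(q)}, \]
which is abstractly isomorphic to $\slice_q(\MGL)[\nicefrac{1}{2}]\cong\Sigma^{2q,q}\mathrm{M}\mathbb{L}_q[\nicefrac{1}{2}]$, since $\mathrm{rk}\,\mathbb{L}_q=p(q)$.

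\textbf{Step 2: the canonical map is an isomorphism on slices.} An abstract isomorphism is not enough, so we must show that the canonical map $\slice_q(\MML)\to\slice_q(\MGL)$ is an isomorphism. Both sides, after inverting $2$, are of the form $\Sigma^{2q,q}\mathrm{M}A$ with $A$ a free $\Z[\nicefrac{1}{2}]$-module. A map between them is determined by the induced map on $\pi_{2q,q}$, which is the degree $q$ part of $E^1$ in the Chow-degree-zero row. For this, use that maps $\Sigma^{2q,q}\mathrm{M}A\to\Sigma^{2q,q}\mathrm{M}B$ correspond to $\mathrm{Hom}(A,B)$, since $[\mathrm{M}\Z,\mathrm{M}\Z]=\Z$ over a field.

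The map on $\pi_{2q,q}$ should then be identified with the map $\pi_{2*,*}(\MML)/(\text{higher filtration})\to\pi_{2*,*}(\MGL)$ on the geometric diagonal. Concretely, the geometric-diagonal part of the slice spectral sequence degenerates, since only the top summand lives there after inverting $2$. Hence the map is detected by $\pi_{2*,*}(\MML^\wedge_\eta)[\nicefrac{1}{2}]\to\pi_{2*,*}(\MGL)[\nicefrac{1}{2}]$. This map is an isomorphism by Corollary~\ref{corollary:geom_diag_away_2}, namely its plus-part identification with $\mathbb{L}[\nicefrac{1}{2}]$, which was itself deduced from $\pi_{2*}(\mathrm{M}\Sigma)[\nicefrac{1}{2}]\cong\pi_{2*}(\MU)[\nicefrac{1}{2}]$.

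An alternative route for this step uses the splitting of Theorem~\ref{thm:main_cofib_seq}: the composite $\MSL\oplus\Sigma^1_{\Proj^1}\MGL\to\MGL$ and the known map $\slice_*(\MSL)\to\slice_*(\MGL)$ from \cite{MSL-slices}. This, however, requires understanding the $\Sigma^1_{\Proj^1}\MGL$ component, so I would go through the homotopy groups instead. Checking that the canonical map on slices is really the one computed by $\pi_{2*,*}$ (with no hidden extension or filtration issues) is the main obstacle. I would handle it by working over $\bar F$ or $\C$ first via rigidity (Lemma~\ref{lemma:rigidity}), and then noting that slices of these spectra are stable under base change.

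\textbf{Step 3: convergence.} Since the map on slices is an equivalence, it induces an equivalence on slice completions. By Lemma~\ref{lemma_slice_convergence}, $\mathrm{sc}(\MML)\cong\MML^\wedge_\eta$, and $\mathrm{sc}(\MGL)\cong\MGL$ because $\MGL$ is $\eta$-complete and effective. Inverting $2$ commutes with slices, and it preserves slice completion here because all slices are $\mathrm{M}\Z$-modules of bounded type, so no $\lim^1$ issue arises. Concretely, we compare the towers $\MML/f_q$ and $\MGL/f_q$ levelwise, which are equivalent after inverting $2$ by induction on $q$, and then pass to the limit. This yields $\MML^\wedge_\eta[\nicefrac{1}{2}]\simeq\MGL[\nicefrac{1}{2}]$. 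The map is one of $\Einf$-rings, since it is the factorization through $\eta$-completion of the $\Einf$-map $\MML\to\MGL$.
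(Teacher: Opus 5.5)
\emph{Approach.} Your strategy is the paper's. After inverting $2$, both slices have the form $\Sigma^{2q,q}\mathrm{M}A$ with $A$ free, so the comparison map on $\slice_q$ is detected on $\pi_{2q,q}$. By vanishing of motivic cohomology, only $E^1_{2q,q,q}$ contributes to $\pi_{2q,q}$, and it survives to $E^\infty$. So everything reduces to the geometric-diagonal isomorphism of Corollary~\ref{corollary:geom_diag_away_2}.

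The only real difference is the last step. The paper passes from an isomorphism of $E^\infty$-pages to homotopy groups by strong convergence, and then to homotopy sheaves by Morel's theorem or cellularity. You instead compare the truncated towers $(\MML/f_q)[\nicefrac{1}{2}]\to(\MGL/f_q)[\nicefrac{1}{2}]$ by induction on $q$ and take the limit. This works directly in $\SH(F)$ and is, if anything, cleaner. The detour over $\bar F$ or $\C$ in Step~2 is unnecessary, since Corollary~\ref{corollary:geom_diag_away_2} already holds over $F$.

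\emph{The error in Step~3.} Inverting $2$ does not commute with the slice completion of $\MML$. Each $f_q$ commutes with inverting $2$, but the limit does not. The ``bounded type'' justification fails because, before inverting $2$, the summands $\Sigma^{q+2s,q}\mathrm{M}(\Z/2)$ of $\slice_q(\MML)$ contribute to a fixed $\pi_{p,w}$ for infinitely many $q$ when $F$ is formally real. For example, $H^{q,q}(\R;\Z/2)\neq 0$ for all $q$, and the resulting limits are $2$-adic.

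Concretely, over $\R$: since $\pi_{-n-1,-n}(\MML)=0$ and $\eta^n\K^{\mathrm{MW}}_n=I^n$, one gets $\pi_{0,0}(\MML/\eta^n)\cong\GW(\R)/I^n\cong\Z\oplus\Z/2^{n-1}$. Hence $\pi_{0,0}(\MML^\wedge_\eta)[\nicefrac{1}{2}]$ surjects onto $\Z[\nicefrac{1}{2}]\oplus\Q_2$. On the other hand, $\pi_{0,0}(\mathrm{sc}(\MML[\nicefrac{1}{2}]))\cong\Z[\nicefrac{1}{2}]\cong\pi_{0,0}(\MGL)[\nicefrac{1}{2}]$. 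So the literal $(\MML^\wedge_\eta)[\nicefrac{1}{2}]$ does not map isomorphically to $\MGL[\nicefrac{1}{2}]$ over $\R$.

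What your tower argument actually proves is $\mathrm{sc}(\MML[\nicefrac{1}{2}])\simeq\MGL[\nicefrac{1}{2}]$. The source in the proposition must therefore be read as the $\eta$-completion of $\MML[\nicefrac{1}{2}]$, i.e.\ $\MML[\nicefrac{1}{2}]^+$. This is how the paper uses it just before Theorem~\ref{thm_plus_part}. It is also how the paper's own spectral-sequence step must be read, since its input, Corollary~\ref{corollary:geom_diag_away_2}, concerns the plus part of $\pi_{2*,*}(\MML)[\nicefrac{1}{2}]$.

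\emph{Fix.} Drop the commutation claim. Replace it by the $2$-inverted counterpart of Lemma~\ref{lemma_slice_convergence}: the slice completion of $\MML[\nicefrac{1}{2}]$ is $\MML[\nicefrac{1}{2}]^+$. The same convergence statement is also what you need in Step~2, to identify $E^\infty_{2q,q,q}$ with the plus part of $\pi_{2q,q}(\MML)[\nicefrac{1}{2}]$ before invoking Corollary~\ref{corollary:geom_diag_away_2}.
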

\begin{proof}
    We implicitly invert the exponential characteristic of $F$ below. Since all ingredients are stable under base change, passing to the perfection, we can assume that $F$ is perfect; see \cite[Corollary~2.1.7]{EK-perfection}.
    
    We claim that the morphism $\MML[\nicefrac{1}{2}]\to \MGL[\nicefrac{1}{2}]$ induces an isomorphism on all slices. From the shape of the slices, we see that the morphism $\slice_q(\MML)[\nicefrac{1}{2}]\to \slice_q(\MGL)[\nicefrac{1}{2}]$ is an equivalence if and only if it induces an isomorphism after applying $\pi_{2q,q}(-)$. There are the following slice spectral sequences (see \cite[8.14]{Hoy15} and Lemma \ref{lemma_slice_convergence}):
    \begin{align*} & E^1_{p,q,w}(\MML)[\nicefrac{1}{2}]=\pi_{p,w}(\slice_q(\MML))[\nicefrac{1}{2}] \Longrightarrow \pi_{p,w}(\MML^\wedge_\eta)[\nicefrac{1}{2}], \\
    & E^1_{p,q,w}(\MGL)[\nicefrac{1}{2}]=\pi_{p,w}(\slice_q(\MGL))[\nicefrac{1}{2}] \Longrightarrow \pi_{p,w}(\MGL)[\nicefrac{1}{2}].
    \end{align*}
    A direct argument using the standard vanishing of motivic cohomology (cf. \cite[Theorem 2.1]{LYZ}) shows that the groups $E^1_{2q,q,q}(\MML)[\nicefrac{1}{2}]$ and $E^1_{2q,q,q}(\MGL)[\nicefrac{1}{2}]$ survive to the $E^\infty$-pages. Moreover, they are the only groups that contribute to $\pi_{2q,q}$. Thus we have the following commutative diagram:
    \begin{center}
        \begin{tikzcd}[column sep=small]
            {\pi_{2q,q}(\slice_q(\MML))[\nicefrac{1}{2}]} \arrow[d] \arrow[r, Rightarrow, no head] & {E^1_{2q,q,q}(\MML)[\nicefrac{1}{2}]} \arrow[r, Rightarrow, no head] \arrow[d] & {E^\infty_{2q,q,q}(\MML)[\nicefrac{1}{2}]} \arrow[d] & {\pi_{2q,q}(\MML^\wedge_\eta)[\nicefrac{1}{2}]} \arrow[d, "\simeq"] \arrow[l, "\simeq"'] \\
            {\pi_{2q,q}(\slice_q(\MGL))[\nicefrac{1}{2}]} \arrow[r, Rightarrow, no head]            & {E^1_{2q,q,q}(\MGL)[\nicefrac{1}{2}]} \arrow[r, Rightarrow, no head]            & {E^\infty_{2q,q,q}(\MGL)[\nicefrac{1}{2}]}   & {\pi_{2q,q}(\MGL)[\nicefrac{1}{2}].} \arrow[l, "\simeq"']
        \end{tikzcd}
    \end{center}
    The right vertical map is an isomorphism by Corollary \ref{corollary:geom_diag_away_2} and Corollary \ref{corollary:MML_MGL_char2}. Therefore, all vertical maps are isomorphisms and the claim follows. 
    
    The isomorphism on the slices induces an isomorphism of the $E^1$-pages:
    \[ E^1_{p,q,w}(\MML)[\nicefrac{1}{2}]\xrightarrow{\simeq} E^1_{p,q,w}(\MGL)[\nicefrac{1}{2}] \enspace \text{for all}\ p,q,w\in\Z. \]
    Consequently, the map $\MML[\nicefrac{1}{2}]\to \MGL[\nicefrac{1}{2}]$ induces an isomorphism between the $E^\infty$-pages of the above spectral sequences. From the strong convergence of these spectral sequences, which holds because for each tuple $(p,q,w)$ there are only finitely many non-zero entering and exiting differentials, we deduce that the desired morphism induces an isomorphism on all homotopy groups. Finally, by Morel's theorem about strictly $\A^1$-invariant Nisnevich sheaves (see \cite[Theorem 2.11]{Morel_hm}), this isomorphism also holds at the level of homotopy sheaves (alternatively, one can conclude using the fact that both spectra are cellular).
\end{proof}
\begin{remark}
    Suppose that $F$ is a field of characteristic zero and $\EE$ is an $\eta$-complete $\ML$-oriented homotopy commutative ring spectrum over $F$. Then, by Proposition \ref{prop_mslc_mgl}, $\EE[\nicefrac{1}{2}]$ admits a $\GL$-orientation (i.e., $\EE[\nicefrac{1}{2}]$ is oriented in the usual sense). Indeed, choose an orientation morphism $\MML\to\EE$ (see Remark \ref{remark:univ_prop_MML}) and consider the following diagram:
    \begin{center}
        \begin{tikzcd}
            \Th_{\Proj^\infty}(\struct(-2)) \arrow[d, "\thom(\struct(-2))"'] \arrow[rr, "\Th(\nu_2)"]       &                        & \Th_{\Proj^\infty}(\struct(-1)) \arrow[d, "\thom(\struct(-1))"] \\
            {\Sigma^{1}_{\Proj^1}\MML^\wedge_\eta[\nicefrac{1}{2}]} \arrow[rd] \arrow[rr, "\simeq"] &                        & {\Sigma^1_{\Proj^1}\MGL[\nicefrac{1}{2}]} \arrow[ld, dashed]        \\
            & {\Sigma^1_{\Proj^1}\EE[\nicefrac{1}{2}]}, &                                      
        \end{tikzcd}
    \end{center}
    where $\Th(\nu_2)$ is the map on the Thom spectra induced by the degree $2$ Veronese map $\nu_2\colon\Proj^\infty\to\Proj^\infty$ and $\thom(\struct(-2))$ (resp. $\thom(\struct(-1))$) is the Thom class of $\struct(-2)$ (resp. $\struct(-1)$). Then composing the dashed morphism with $\thom(\struct(-1))$, we obtain an element that defines the $\GL$-orientation of $\EE[\nicefrac{1}{2}]$. Note that a priori we cannot claim that the dashed morphism gives us a homotopy ring morphism after $\Proj^1$-desuspension, since $\MML\to\EE$ may not be multiplicative. Nevertheless, the above information is enough. In positive characteristic $p>0$, the same conclusion holds if $\EE\in\SH(F)[\nicefrac{1}{p}]$.
\end{remark}
\subsection{Category of modules}
We are now ready to consider the category of $\MML$-modules. First, we note that, as usual, the category of $2$-inverted $\MML$-modules splits as a symmetric monoidal $\infty$-category into the product
\begin{equation}
    \mathrm{Mod}_{\MML}(\SH(F))[\nicefrac{1}{2}]\cong \mathrm{Mod}_{\MML}(\SH(F))[\nicefrac{1}{2}]^+\times \mathrm{Mod}_{\MML}(\SH(F))[\nicefrac{1}{2}]^-.
\end{equation}
The projection of the motivic Hopf element in the plus part is trivial $(\MML\wedge\eta)[\nicefrac{1}{2}]^+=0$, while its minus part $(\MML\wedge\eta)[\nicefrac{1}{2}]^-$ is invertible. Consequently, $\MML[\nicefrac{1}{2}]^+$ coincides with the $\eta$-completion $\MML^\wedge_\eta[\nicefrac{1}{2}]$, which is already known to be equivalent to $\MGL[\nicefrac{1}{2}]$.
\begin{theorem}\label{thm_plus_part}
    Let $F$ be a field of characteristic zero. Then the morphism $\MML\to\MGL$ induces an equivalence of symmetric monoidal $\infty$-categories
    \[\mathrm{Mod}_{\MML}(\SH(F))[\nicefrac{1}{2}]^+\cong\mathrm{Mod}_{\MGL}(\SH(F))[\nicefrac{1}{2}]. \]
    For a field $F$ of characteristic $p>0$ the same result holds after inverting $p$.
\end{theorem}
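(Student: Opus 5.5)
The plan is to reduce the statement to a general fact about base change of module categories along a map of commutative algebras that becomes an equivalence after a localization. I will work throughout in $\SH(F)$ with the exponential characteristic $e$ inverted. First I identify the plus part of $2$-inverted $\MML$-modules with modules over the plus part of $\MML[\nicefrac{1}{2}]$. Inverting $2$ splits $\sph[\nicefrac{1}{2}]$ as an $\Einf$-ring into $\sph[\nicefrac{1}{2}]^+\times\sph[\nicefrac{1}{2}]^-$ via the idempotents coming from $\epsilon=-\langle -1\rangle$. Hence $\SH(F)[\nicefrac{1}{2}]^+\simeq\mathrm{Mod}_{\sph[1/2]^+}(\SH(F))$, and
\[ \mathrm{Mod}_{\MML}(\SH(F))[\nicefrac{1}{2}]^+\simeq \mathrm{Mod}_{\MML[1/2]^+}(\SH(F)) \]
as symmetric monoidal $\infty$-categories. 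This holds because $\MML[\nicefrac{1}{2}]^+=\MML\wedge\sph[\nicefrac{1}{2}]^+$ is an $\Einf$-algebra, and modules over it are the same as $\MML$-modules in the plus part.

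Next I identify $\MML[\nicefrac{1}{2}]^+$ with $\MML^\wedge_\eta[\nicefrac{1}{2}]$. In the plus part $\eta$ is zero, since $\eta h=0$ and $h=1+\langle-1\rangle$ becomes the invertible element $2$ there. So $\MML[\nicefrac{1}{2}]^+$ is $\eta$-complete. Conversely, the minus part is $\eta$-periodic and therefore dies under $\eta$-completion. Hence the canonical maps $\MML[\nicefrac{1}{2}]\to\MML[\nicefrac{1}{2}]^+$ and $\MML[\nicefrac{1}{2}]\to\MML^\wedge_\eta[\nicefrac{1}{2}]$ agree as $\Einf$-maps under $\MML[\nicefrac{1}{2}]$. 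I will need to check that $\eta$-completion commutes with inverting $2$ here. This follows because $\MML^\wedge_\eta=\lim\MML/\eta^n$ and each $\MML/\eta^n$ lies in the plus part after inverting $2$; the limit commutes with the filtered colimit defining $[\nicefrac{1}{2}]$ up to the usual connectivity argument. Alternatively, I can argue with the plus part directly, since it is already $\eta$-complete. Then Proposition \ref{prop_mslc_mgl} gives an $\Einf$-equivalence $\MML[\nicefrac{1}{2}]^+\simeq\MGL[\nicefrac{1}{2}]$ compatible with the canonical map $\MML\to\MGL$.

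Finally, I note that $\MGL[\nicefrac{1}{2}]$ lies entirely in the plus part, because $\eta=0$ on $\MGL$. Therefore $\mathrm{Mod}_{\MGL}(\SH(F))[\nicefrac{1}{2}]=\mathrm{Mod}_{\MGL[1/2]}(\SH(F))$. Base change along the equivalence of $\Einf$-rings $\MML[\nicefrac{1}{2}]^+\to\MGL[\nicefrac{1}{2}]$ is then a symmetric monoidal equivalence (by \cite[Proposition 4.6.2.17]{LHA}-type functoriality), and the composite with the previous identifications is exactly the functor $\MGL\wedge_{\MML}-$ restricted to the plus part. The main obstacle I expect is the bookkeeping in the second step, namely making precise that the $\eta$-completion appearing in Proposition \ref{prop_mslc_mgl} agrees with the plus part as an $\Einf$-algebra. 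I would handle this by characterizing both as the localization of $\MML[\nicefrac{1}{2}]$ at the idempotent $e^+$, which is a smashing localization and so automatically $\Einf$. In positive characteristic the same argument runs with $p$ inverted throughout.
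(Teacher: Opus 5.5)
Your proposal follows the paper's proof. The paper also identifies $\mathrm{Mod}_{\MML}(\SH(F))[\nicefrac{1}{2}]^+$ with modules over $\MML[\nicefrac{1}{2}]^+$, citing the smashing-localization formalism of \cite[Construction 5.23, Proposition 5.29]{MNN-mod-cat}; your idempotent splitting of $\sph[\nicefrac{1}{2}]$ is the explicit form of that step. It then replaces $\MML[\nicefrac{1}{2}]^+$ by $\MGL[\nicefrac{1}{2}]$ using Proposition \ref{prop_mslc_mgl}, together with the identification of the plus part with the $\eta$-completion stated just before the theorem. Finally it treats $\MGL$ by the same argument.

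The one thing to repair is your first justification in the second step. $\eta$-completion does not commute with inverting $2$ here, and no connectivity argument rescues it.
\begin{itemize}
\item Take $F=\R$. By Proposition \ref{prop:pi_0_MML}, the cofiber sequence $\Sigma^{n,n}\MML\xrightarrow{\eta^n}\MML\to\MML/\eta^n$ gives $\pi_{0,0}(\MML/\eta^n)\cong\GW(\R)/I^n\cong\Z\oplus\Z/2^{n-1}$.
\item By the Milnor sequence, $\pi_{0,0}(\MML^\wedge_\eta)$ therefore surjects onto $\Z\oplus\Z_2$, where $\Z_2$ denotes the $2$-adic integers.
\item So $\pi_{0,0}(\MML^\wedge_\eta)[\nicefrac{1}{2}]$ surjects onto the uncountable group $\Z[\nicefrac{1}{2}]\oplus\Q_2$.
\item On the other hand, $\pi_{0,0}(\MML[\nicefrac{1}{2}]^+)\cong\GW(\R)[\nicefrac{1}{2}]^+\cong\Z[\nicefrac{1}{2}]$.
\end{itemize}
Each term of the tower becomes $\Z[\nicefrac{1}{2}]$ after inverting $2$, but the limit does not. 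Hence $(\MML^\wedge_\eta)[\nicefrac{1}{2}]\not\simeq(\MML[\nicefrac{1}{2}])^\wedge_\eta$.

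Drop that argument and use your alternative instead. On $\MML[\nicefrac{1}{2}]^+$ the element $\eta$ is zero. On $\MML[\nicefrac{1}{2}]^-$ it is invertible, since there $\eta[-1]=\langle-1\rangle-1=-2$. Hence $(\MML[\nicefrac{1}{2}])^\wedge_\eta\simeq\MML[\nicefrac{1}{2}]^+$ as $\Einf$-rings under $\MML$.

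Accordingly, what you need from Proposition \ref{prop_mslc_mgl} is that $\MML[\nicefrac{1}{2}]^+\to\MGL[\nicefrac{1}{2}]$ is an equivalence, i.e.\ with the completion taken after inverting $2$. This is how the paper uses it when it asserts $\MML[\nicefrac{1}{2}]^+\simeq\MML^\wedge_\eta[\nicefrac{1}{2}]$ just before the theorem. By the computation above, that completion must be understood as taken after inverting $2$, not before. With this correction your argument goes through.
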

\begin{proof}
    We implicitly invert the characteristic below. Consider the following adjunction
    \[ \SH(F)\rightleftarrows \mathrm{Mod}_{\MML}(\SH(F))[\nicefrac{1}{2}]^+, \]
    given by the composition of the free-forgetful adjunction with a left Bousfield localization.
    By abstract nonsense (see \cite[Construction 5.23]{MNN-mod-cat}), this adjunction induces the adjunction
    \[ \mathrm{Mod}_{\MML[\nicefrac{1}{2}]^+}(\SH(F))\rightleftarrows \mathrm{Mod}_{\MML}(\SH(F))[\nicefrac{1}{2}]^+, \]
    which is an equivalence of symmetric monoidal $\infty$-categories by \cite[Proposition 5.29]{MNN-mod-cat}. Now the left-hand side is equivalent to the category of modules over $\MGL[\nicefrac{1}{2}]$ by Proposition \ref{prop_mslc_mgl}. It remains to identify $\MGL[\nicefrac{1}{2}]$-modules with $2$-inverted $\MGL$-modules. This can be done using the same argument applied to the adjunction $\SH(F)\rightleftarrows\mathrm{Mod}_{\MGL}(\SH(F))[\nicefrac{1}{2}]$.
\end{proof}
We now describe the minus part. For that we use Bachmann's equivalence \cite{Bachmann_ret} (below, $\rho$ denotes the map $\sph\to\Gm$ corresponding to $-1$):
\[\SH(F)[\rho^{-1}]\cong\Sp(\Sper(F)),\] 
which is induced by the real \'etale realization (see Appendix \ref{appendix_real_etale} for an overview). It follows from \cite[Lemma 39]{Bachmann_ret} that away from $2$, inverting $\rho$ is equivalent to inverting $\eta$. In particular, away from $2$, the real \'etale realization factors through the symmetric monoidal equivalence
\begin{equation}\label{equa_bach_equiv_minus}
    \SH(F)[\nicefrac{1}{2}]^-\cong \Sp(\Sper(F))[\nicefrac{1}{2}].
\end{equation}
\begin{theorem}
    Let $F$ be a field. The real \'etale realization induces an equivalence of the symmetric monoidal $\infty$-categories
    \[ \mathrm{Mod}_{\MML}(\SH(F))[\nicefrac{1}{2}]^-\cong \mathrm{Mod}_{\underline{\mathrm{MSO}}}(\Sp(\Sper(F)))[\nicefrac{1}{2}]. \]
    Here $\underline{\mathrm{MSO}}\in\mathrm{CAlg}(\Sp(\Sper(F)))$ denotes the constant sheaf on the Thom spectrum $\mathrm{MSO}\in\mathrm{CAlg}(\Sp)$.
\end{theorem}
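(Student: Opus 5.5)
The plan mirrors the proof of Theorem \ref{thm_plus_part}, with Bachmann's equivalence \eqref{equa_bach_equiv_minus} replacing Proposition \ref{prop_mslc_mgl}. The argument has three steps.

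\textbf{Step 1 (reduce to a module category).} I would first identify the left-hand side with modules over the minus part of $\MML[\nicefrac{1}{2}]$. Applying \cite[Construction 5.23 and Proposition 5.29]{MNN-mod-cat} to the adjunction $\SH(F)\rightleftarrows \mathrm{Mod}_{\MML}(\SH(F))[\nicefrac{1}{2}]^-$ gives
\[ \mathrm{Mod}_{\MML}(\SH(F))[\nicefrac{1}{2}]^-\cong \mathrm{Mod}_{\MML[\nicefrac{1}{2}]^-}(\SH(F)[\nicefrac{1}{2}]^-). \]
Since the equivalence \eqref{equa_bach_equiv_minus} is symmetric monoidal, it induces an equivalence of module categories. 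The target is $\mathrm{Mod}_{R}(\Sp(\Sper(F))[\nicefrac{1}{2}])$, where $R$ is the real étale realization of $\MML$ with $2$ inverted. The same argument then identifies this with $2$-inverted $R$-modules. So it remains to identify $R$ with $\underline{\mathrm{MSO}}[\nicefrac{1}{2}]$ as $\Einf$-rings.

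\textbf{Step 2 (reduce to $\MSL$).} Corollary \ref{cor_eta_period_mslc} gives $\MML[\eta^{-1}]\cong\MSL[\eta^{-1}]$ as $\Einf$-rings. Away from $2$, inverting $\eta$ is the same as inverting $\rho$, so the forgetful map $\MSL\to\MML$ becomes an $\Einf$-equivalence after real étale realization with $2$ inverted. Alternatively, the cofiber $\Sigma^1_{\Proj^1}\MGL$ realizes to zero because $\eta$ acts trivially on $\MGL$. Hence $R\cong \realiz_{\ret}(\MSL)[\nicefrac{1}{2}]$.

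\textbf{Step 3 (compute the realization).} For this I would use the computation in Appendix \ref{appendix_real_etale} for motivic $\Einf$-rings defined over $\Spec(\Z)$. Since $\MSL$ and $\MML$ are stable under base change, the real étale realization over $F$ is the constant sheaf on the real Betti realization of the spectrum over $\Z$ (or over $\R$). The real points of $\SL_n$ have maximal compact subgroup $\mathrm{SO}(n)$. An argument parallel to Proposition \ref{prop:complex_realization_MML} and \cite[Theorem C.7]{MG_realization} then gives an $\Einf$-equivalence $\realiz_{\mathrm{B}\R}(\MSL)\cong\mathrm{MSO}$. For $\MML$ directly, the maximal compact subgroup of $\ML_n(\R)$ is $\mathrm{SO}(n)\times\{\pm1\}$, and the extra $\mathrm{B}\Z/2$ factor disappears after inverting $2$. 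This is consistent with Step 2.

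\textbf{Main obstacle.} The hard part is Step 3, specifically making the identification with $\underline{\mathrm{MSO}}$ hold as $\Einf$-rings in $\Sp(\Sper(F))$ rather than just as underlying objects. I would first try to get it from the appendix's statement that realization of $\Einf$-rings over $\Spec(\Z)$ is the constant sheaf on their real Betti realization, together with the symmetric monoidality of the Thom functor. On the topological side, this needs the real Betti realization of the motivic Thom spectrum of $\mathrm{B}\SL_\infty$ to agree with the topological Thom spectrum of $\mathrm{BSO}\to\mathrm{BO}\to\mathrm{Pic}(\Sp)$. That agreement holds because realization commutes with colimits and sends $\mathrm{B}\SL_n$ to $\mathrm{BSL}_n(\R)\simeq\mathrm{BSO}(n)$.
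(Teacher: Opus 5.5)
Your proposal is correct and follows essentially the same route as the paper. Both arguments reduce via \cite[Proposition 5.29]{MNN-mod-cat} to modules over $\MML[\nicefrac{1}{2}]^-$, replace $\MML$ by $\MSL$ using Corollary \ref{cor_eta_period_mslc}, transport along Bachmann's equivalence \eqref{equa_bach_equiv_minus}, and identify $\realiz_{\ret}(\MSL)$ with $\underline{\mathrm{MSO}}$ as an $\Einf$-ring via Theorem \ref{thm:real_et_realiz} and Example \ref{example_mso}.

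The only difference is in Step 3. The paper does not re-derive $\realiz_{\mathrm{B}\R}(\MSL)\cong\mathrm{MSO}$; it cites it from \cite[Theorem A]{MG_realization}. So your ``main obstacle'' is settled by that citation together with the $\Einf$-compatibility already built into Theorem \ref{thm:real_et_realiz}.
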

\begin{proof}
    Repeating the same argument as in the proof of Theorem \ref{thm_plus_part}, we obtain the following symmetric monoidal equivalence
    \[ \mathrm{Mod}_{\MML}(\SH(F))[\nicefrac{1}{2}]^-\cong \mathrm{Mod}_{\MML[\nicefrac{1}{2}]^-}(\SH(F)). \]
    In the right-hand category of modules, we may replace $\MML$ by $\MSL$ since they become equivalent as motivic $\Einf$-ring spectra after inverting $\eta$ (see Corollary \ref{cor_eta_period_mslc}). The equivalence \eqref{equa_bach_equiv_minus} then induces the symmetric monoidal equivalence\footnote{Note that the $\infty$-category $\mathrm{Mod}_{\MSL[\nicefrac{1}{2}]^-}(\SH(F))$ is equivalent to $\mathrm{Mod}_{\MSL[\nicefrac{1}{2}]^-}(\SH(F)[\nicefrac{1}{2}]^-)$.}
    \[ \mathrm{Mod}_{\MSL[\nicefrac{1}{2}]^-}(\SH(F))\cong \mathrm{Mod}_{\realiz_{\ret}(\MSL)[\nicefrac{1}{2}]}(\Sp(\Sper(F))).\]
    Thus we need to compute $\realiz_{\ret}(\MSL)$ as a sheaf of $\Einf$-ring spectra. By Example \ref{example_mso}, it is given by $\underline{\mathrm{MSO}}$ equipped with its natural $\Einf$-ring structure. Identifying modules over $\underline{\mathrm{MSO}}[\nicefrac{1}{2}]$ with $2$-inverted $\underline{\mathrm{MSO}}$-modules yields the desired result.
\end{proof}

\begin{remark}
    It follows that if $F$ is not formally real, then the canonical morphism $\MML\to \MGL$ induces an equivalence (here we omit inversion of the exponential characteristic for simplicity): $$\mathrm{Mod}_{\MML}(\SH(F))[\nicefrac{1}{2}]\cong \mathrm{Mod}_{\MGL}(\SH(F))[\nicefrac{1}{2}].$$
    It would be interesting to obtain this equivalence from the description of these categories in terms of transfers given in \cite{Mb-modules}.
    More precisely, the category of $\MGL$-modules admits a description as the category of motivic spectra with transfers along finite syntomic morphisms (see \cite[Theorem~4.1.3]{Mb-modules}), whereas the category of $\MML$-modules corresponds to motivic spectra with transfers along finite syntomic morphisms $f\colon Z\to X$ with a choice $\omega_f \cong \mathcal{L}^{\otimes 2}$ for an invertible sheaf $\mathcal{L}$ over $Z$ (see \cite[Remark 7.11]{HJNY-HermKtheory} and \cite[Remark 4.2.4]{Mb-modules}). Hence, away from $2$, the additional condition $\omega_f \cong \mathcal{L}^{\otimes 2}$ can be omitted when $F$ is not formally real.
\end{remark}
\begin{remark}\label{remark:rat_decompos}
    One can deduce that the canonical map $(\MML_S\otimes\Q)^+\to \MGL_S\otimes\Q$ is an equivalence for an arbitrary qcqs scheme $S$. Indeed, by \cite[Proposition B.3]{BH-Norms} it sufficient to consider the case $S=\Spec(F)$ for a field $F$ (in order to remove the assumption that $S$ is locally of finite Krull dimension, we first reduce by base change to $\Spec(\Z)$ and then to its residue fields), which follows from Theorem~\ref{thm_plus_part}. One can further obtain a decomposition of  the rational metalinear algebraic cobordism spectrum $\MML_S\otimes\Q$ into the product of polynomial homotopy commutative ring spectra:
    \[ \mathrm{M}\Q[a_1,a_2,\dots]\times \mathrm{W}\Q[u_4,u_8,\dots]\xrightarrow{\simeq}\MML_S\otimes \Q, \]
    where $\mathrm{M}\Q$ is the rational motivic cohomology spectrum and $\mathrm{W}\Q$ is the rational Witt motivic cohomology spectrum. Here the generator $a_i$ has bidegree $(2i,i)$ and the generator $u_{4i}$ has bidegree $(8i,4i)$. The decomposition of the plus part follows from the above observation together with the decomposition of $\MGL_S\otimes\Q$ (see \cite[Theorem 10.5]{NSOLandw}; assumptions on the scheme $S$ can be removed by base change from $\Spec(\Z)$), while the decomposition of the minus part follows from Corollary \ref{cor_eta_period_mslc} together with the corresponding decomposition of $(\MSL_S\otimes\Q)^-$ obtained in \cite[Lemma 7.7]{MSL-slices}.
\end{remark}
\appendix
\section{Stong's complex-spin cobordism spectrum}\label{appendix_Stong}
In the paper \cite{Stong}, Stong introduced the complex-spin Thom spectrum in topology and computed its homotopy groups. In this appendix, we give an overview of his results and prove that his spectrum is equivalent to the Thom spectrum built from the $2$-fold coverings of the unitary groups. This comparison is inspired by the classical work of Atiyah \cite{Atiyah}.

\begin{notation}
    Consider the space $\mathrm{B}\Sigma(n)$ defined as the pullback of the following diagram: \[\BU(n)\rightarrow \mathrm{BSO}\leftarrow \mathrm{BSpin},\] where the left map is induced by the inclusion $\mathrm{U}(n)\hookrightarrow \mathrm{SO}(2n)$, and the right map is induced by the $2$-fold covering $\mathrm{Spin}\twoheadrightarrow \mathrm{SO}$. 
    We denote by $\mathrm{B}\Sigma$ the colimit \[\colim_n \mathrm{B}\Sigma(n),\] which is equivalent to the pullback of the diagram $\BU\rightarrow \mathrm{BSO}\leftarrow \mathrm{BSpin}$. Since all maps involved are morphisms of $\Einf$-groups, $\mathrm{B}\Sigma$ inherits a canonical $\Einf$-group structure.
\end{notation}
\begin{definition}[{\cite{Stong}}]
    The \textit{complex-spin Thom spectrum} $\mathrm{M}\Sigma\in \mathrm{CAlg}(\Sp)$ is the Thom spectrum associated with the composite $\mathrm{B}\Sigma\to\mathrm{BU}\to\mathrm{BO}$. By construction, there are forgetful $\Einf$-ring morphisms $\MSU\to\mathrm{M}\Sigma\to \MU$.
\end{definition}
\begin{remark}
    The Thom spectrum $\mathrm{M}\Sigma$ corresponds, under the Pontryagin--Thom construction, to the cobordism theory of manifolds equipped with compatible stable unitary and spinor structures.
\end{remark}
\begin{notation}\label{notation:two_fold_covering_U(n)}
    We denote by $\tilde{\mathrm{U}}(n)$ the kernel of the homomorphism of Lie groups
        \[\mathrm{U}(n)\times \mathrm{U}(1)\to \mathrm{U}(1),\, (u,t)\mapsto t^{-2}\cdot\det(u).\]
    This group is a $2$-fold covering of the unitary group, i.e., there is an exact sequence of Lie groups \[1\to C_2\to \tilde{\mathrm{U}}(n)\to\mathrm{U}(n)\to 1.\] The special unitary group $\mathrm{SU}(n)$ embeds into $\tilde{\mathrm{U}}(n)$ via $u\mapsto (u,1)$. We denote by $\MUtf$ the Thom spectrum associated with the colimit $\mathrm{B}\tilde{\mathrm{U}}:=\colim_n \mathrm{B}\tilde{\mathrm{U}}(n)$ along the composite $\mathrm{B}\tilde{\mathrm{U}}\to\mathrm{BU}\to\mathrm{BO}$. This is an $\Einf$-ring spectrum.
\end{notation}
The inclusion $\mathrm{U}(n)\hookrightarrow\mathrm{SO}(2n)$ lifts to an inclusion of the $2$-fold covering groups $\tilde{\mathrm{U}}(n)\hookrightarrow \mathrm{Spin}(2n)$ (see e.g., \cite[2.11]{BU_tilde_physics}). Hence, we have the following commutative (up to homotopy) diagram 
\[\begin{tikzcd}
	{\mathrm{B}\tilde{\mathrm{U}}(n)} & {\mathrm{BSpin}} \\
	{\mathrm{BU}(n)} & {\mathrm{BSO}.}
	\arrow[from=1-1, to=1-2]
	\arrow[from=1-1, to=2-1]
	\arrow[from=1-2, to=2-2]
	\arrow[from=2-1, to=2-2]
\end{tikzcd}\]
This induces a morphism $\mathrm{B}\tilde{\mathrm{U}}(n)\to \mathrm{B}\Sigma(n)$ from $\mathrm{B}\tilde{\mathrm{U}}(n)$ to the pullback of this diagram.
\begin{proposition}\label{prop:BU_BSigma}
    For $n\geq 1$, the morphism $\mathrm{B}\tilde{\mathrm{U}}(n)\to \mathrm{B}\Sigma(n)$ constructed above is an equivalence.
\end{proposition}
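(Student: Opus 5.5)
The plan is to compare fibration sequences over $\mathrm{BU}(n)$. Both $\mathrm{B}\tilde{\mathrm{U}}(n)$ and $\mathrm{B}\Sigma(n)$ come with maps to $\mathrm{BU}(n)$, and the constructed morphism $\mathrm{B}\tilde{\mathrm{U}}(n)\to\mathrm{B}\Sigma(n)$ commutes with them.

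\textbf{Identifying the fibers.} From $1\to C_2\to\tilde{\mathrm{U}}(n)\to\mathrm{U}(n)\to1$ we get a fiber sequence $\mathrm{B}C_2\to\mathrm{B}\tilde{\mathrm{U}}(n)\to\mathrm{BU}(n)$. Since $\mathrm{B}\Sigma(n)$ is defined as a pullback, its fiber over $\mathrm{BU}(n)$ equals the fiber of $\mathrm{BSpin}\to\mathrm{BSO}$, which is $\mathrm{B}C_2$ by $1\to C_2\to\mathrm{Spin}\to\mathrm{SO}\to1$.

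\textbf{Comparing the fiber map.} The induced map on fibers $\mathrm{B}C_2\to\mathrm{B}C_2$ is $\mathrm{B}$ of the restriction to kernels of $\tilde{\mathrm{U}}(n)\hookrightarrow\mathrm{Spin}(2n)$, viewed as a map of $2$-fold coverings of $\mathrm{U}(n)\hookrightarrow\mathrm{SO}(2n)$. This restriction $C_2\to C_2$ is an isomorphism because the lift is injective, so the kernel $C_2$ of $\tilde{\mathrm{U}}(n)\to\mathrm{U}(n)$ maps injectively into $\ker(\mathrm{Spin}(2n)\to\mathrm{SO}(2n))$. The five lemma on long exact sequences of homotopy groups, together with Whitehead's theorem (all spaces are connected CW), then gives the equivalence.

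\textbf{Main obstacle.} The delicate point is that the lift $\tilde{\mathrm{U}}(n)\to\mathrm{Spin}(2n)$ is a genuine lift covering $\mathrm{U}(n)\to\mathrm{SO}(2n)$, not an arbitrary homomorphism. In particular the square must commute on the nose at the group level, so the map on fibers is really induced by the kernel inclusion, and $\tilde{\mathrm{U}}(n)$ must be the nontrivial connected double cover. For $n\ge1$ one checks this via $\pi_1$: $\pi_1\mathrm{U}(n)=\Z\to\pi_1\mathrm{SO}(2n)=\Z/2$ (or $\Z$ when $n=1$) is surjective. Hence the pulled-back cover is the connected one corresponding to $2\Z\subset\Z$, which is $\tilde{\mathrm{U}}(n)$ because $t^2=\det u$ extracts a square root of $\det$. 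This identifies $\tilde{\mathrm{U}}(n)$ with the pullback $\mathrm{U}(n)\times_{\mathrm{SO}(2n)}\mathrm{Spin}(2n)$ as groups, and then $\mathrm{B}$ of a pullback of a covering is the pullback of classifying spaces.

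I would cite \cite[2.11]{BU_tilde_physics} for the existence of the lift. I would verify connectedness of $\tilde{\mathrm{U}}(n)$ directly: the element $(1,-1)$ is joined to $(1,1)$ by the path $(\mathrm{diag}(e^{2i\theta},1,\dots),e^{i\theta})$ for $\theta\in[0,\pi]$.
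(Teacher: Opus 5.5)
Your proof is correct and is essentially the paper's argument, delooped. The paper compares the fibers $C_2$ of $\tilde{\mathrm{U}}(n)\to\mathrm{U}(n)$ and $\mathrm{Spin}\to\mathrm{SO}$, concludes that $\tilde{\mathrm{U}}(n)$ is a pullback of $\mathbb{E}_1$-groups, and then applies $\mathrm{B}$; you instead compare the fibers $\mathrm{B}C_2$ over $\mathrm{BU}(n)$ directly, and you get $C_2\to C_2$ to be an isomorphism from injectivity of the lift, where the paper uses surjectivity of $\pi_1(\mathrm{U}(n))\to\pi_1(\mathrm{SO})$ (the check your last paragraph also carries out).
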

\begin{proof}
    We claim that $\tilde{\mathrm{U}}(n)$ is the pullback of the diagram $\mathrm{U}(n)\to \mathrm{SO}\leftarrow \mathrm{Spin}$ in the $\infty$-category of $\Eone$-groups.
    Indeed, the induced map $\mathrm{fib}(\tilde{\mathrm{U}}(n)\to\mathrm{U}(n))\to\mathrm{fib}(\mathrm{Spin}\to \mathrm{SO})$ is an equivalence, since both fibers are equivalent to $C_2$ and the homomorphism $\pi_1(\mathrm{U}(n))\to \pi_1(\mathrm{SO})$ is surjective. 
    The result follows from the claim and the definition of $\mathrm{B}\Sigma(n)$, since the functor 
    \[\mathrm{B}-\colon \mathrm{Grp}_{\Eone}(\Spc)\to \Spc_{*,\geq 1},\] 
    defined via the usual bar construction, is an equivalence of $\infty$-categories and therefore preserves pullbacks (its inverse is given by the loop functor $\Omega$). 
    Here, $\mathrm{Grp}_{\Eone}(\Spc)$ denotes the $\infty$-category of $\Eone$-groups in spaces and $\Spc_{*,\geq 1}$ denotes the $\infty$-category of pointed connected spaces.
\end{proof}
\begin{corollary}\label{corollary:model_stong_spectrum}
    The map $\mathrm{B}\tilde{\mathrm{U}}\to\mathrm{B}\Sigma$ induces an equivalence of the $\mathbb{E}_\infty$-ring spectra $\MUtf\xrightarrow{\simeq}\mathrm{M}\Sigma$.
\end{corollary}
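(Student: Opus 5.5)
The plan is to pass from the levelwise equivalences of Proposition~\ref{prop:BU_BSigma} to the colimit, and then apply the Thom spectrum functor. The Thom spectrum functor $\Spc_{/\mathrm{BO}}\to\Sp$ (equivalently, the colimit of the composite with $\mathrm{J}\colon\mathrm{BO}\to\Pic(\Sp)$) is symmetric monoidal and inverts equivalences over $\mathrm{BO}$. So it suffices to show that $\mathrm{B}\tilde{\mathrm{U}}\to\mathrm{B}\Sigma$ is an equivalence of $\Einf$-groups over $\mathrm{BU}$, and hence over $\mathrm{BO}$.

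First, I would check that the maps $\mathrm{B}\tilde{\mathrm{U}}(n)\to\mathrm{B}\Sigma(n)$ are compatible with the stabilization maps induced by the inclusions $\tilde{\mathrm{U}}(n)\hookrightarrow\tilde{\mathrm{U}}(n+1)$, given by $(u,t)\mapsto(u\oplus 1,t)$, and $\mathrm{U}(n)\hookrightarrow\mathrm{U}(n+1)$. This follows because the lifts $\tilde{\mathrm{U}}(n)\hookrightarrow\mathrm{Spin}(2n)$ are compatible with $\mathrm{Spin}(2n)\hookrightarrow\mathrm{Spin}(2n+2)$: lifts of a homomorphism of connected groups through a covering are unique once we require them to be homomorphisms. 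Since filtered colimits of spaces commute with the finite limits defining $\mathrm{B}\Sigma(n)$, passing to the colimit and using Proposition~\ref{prop:BU_BSigma} gives an equivalence $\mathrm{B}\tilde{\mathrm{U}}\xrightarrow{\simeq}\mathrm{B}\Sigma$ of spaces over $\mathrm{BU}$. By construction this equivalence commutes with the projections to $\mathrm{BU}$.

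Second, to get an $\Einf$-ring equivalence rather than just an equivalence of spectra, the equivalence must be one of $\Einf$-monoids over $\mathrm{BO}$. Rather than working with the colimit, I would rerun the argument of Proposition~\ref{prop:BU_BSigma} stably. The group $\tilde{\mathrm{U}}=\colim\tilde{\mathrm{U}}(n)$ maps to $\mathrm{Spin}$, and the square of $\Einf$-groups $\Omega\mathrm{B}\tilde{\mathrm{U}}\to\Omega\mathrm{BU}$, $\Omega\mathrm{BSpin}\to\Omega\mathrm{BSO}$ (obtained by looping the infinite loop maps) is cartesian. Indeed, the fibers of the vertical maps are both $C_2$, and $\pi_1\mathrm{U}\to\pi_1\mathrm{SO}$ is surjective. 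Delooping in connective spectra, where $\mathrm{B}$ is an equivalence onto connected objects, then gives a pullback of $\Einf$-groups. The map $\mathrm{B}\tilde{\mathrm{U}}\to\mathrm{BSpin}$ is an $\Einf$-map because it arises from the direct-sum-compatible homomorphisms above. Since the forgetful functor from $\Einf$-groups to spaces preserves pullbacks, $\mathrm{B}\tilde{\mathrm{U}}\simeq\mathrm{B}\Sigma$ as $\Einf$-groups over $\mathrm{BU}$. Applying the Thom functor then yields an $\Einf$-equivalence $\MUtf\simeq\mathrm{M}\Sigma$.

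The main obstacle is this coherence step: making the map $\mathrm{B}\tilde{\mathrm{U}}\to\mathrm{BSpin}$ an $\Einf$-map. If a direct argument is awkward, I would model both sides as group completions of symmetric monoidal topological groupoids $\coprod_n\mathrm{B}\tilde{\mathrm{U}}(n)$ and $\coprod_n\mathrm{BSpin}(2n)$ under block sum. There the homomorphisms commute strictly with block sum, since uniqueness of lifts makes them compatible with the block inclusions and the symmetry permutations. This gives a symmetric monoidal functor and hence an $\Einf$-map after group completion.
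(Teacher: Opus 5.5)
Your proposal is correct and follows the paper's route. You pass to the colimit of the levelwise equivalences of Proposition~\ref{prop:BU_BSigma}, note that the resulting equivalence is one of $\Einf$-groups over $\mathrm{BO}$, and apply the symmetric monoidal Thom functor. The paper simply asserts the $\Einf$-compatibility that your second step tries to justify. Once the square $\mathrm{B}\tilde{\mathrm{U}}\to\mathrm{BU}$, $\mathrm{BSpin}\to\mathrm{BSO}$ is known to commute in $\Einf$-groups, your first step already shows, by conservativity of the forgetful functor, that the induced $\Einf$-map is an equivalence, so rerunning the pullback argument stably is not needed.

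One caution about your fallback: block sum on $\coprod_n\mathrm{B}\tilde{\mathrm{U}}(n)$ over all ranks admits no strict symmetry. For odd $n$, any lift $(\sigma_{n,n},s)\in\tilde{\mathrm{U}}(2n)$ of the block swap has $s^2=\det\sigma_{n,n}=-1$, so it squares to $(1,-1)\neq 1$; similarly, the swap lifts only to an element of order $4$ in $\mathrm{Spin}(4n)$. You should therefore restrict to even complex ranks, which is cofinal and mirrors the paper's use of $\Vect^\ML_\ev$.
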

\begin{proof}
    This follows directly from the Proposition \ref{prop:BU_BSigma}, since the colimit equivalence $\mathrm{B}\tilde{\mathrm{U}}\xrightarrow{\simeq} \mathrm{B}\Sigma$ is the map of $\Einf$-groups over $\mathrm{BO}$.
\end{proof}

We conclude this appendix by summarizing Stong's computation of the homotopy groups of $\mathrm{M}\Sigma$ \cite{Stong}. To the best of our knowledge, an explicit description of the ring structure of $\pi_*(\mathrm{M}\Sigma)$ is not known (similarly to $\pi_*(\MSU)$).

\begin{theorem}\label{theorem:homotopy_groups_MSigma}
    \begin{enumerate}
        \item The coefficient ring $\pi_*(\mathrm{M}\Sigma)$ contains no odd torsion. 
        \item The map $\pi_*(\MSU)\to \pi_*(\mathrm{M}\Sigma)$ induces an isomorphism on the $2$-primary torsion subgroups.
        \item The ring homomorphism $\pi_*(\mathrm{M}\Sigma)\to \pi_*(\MU)$ becomes an isomorphism after inverting $2$. 
    \end{enumerate}
\end{theorem}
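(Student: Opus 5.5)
Theorem~\ref{theorem:homotopy_groups_MSigma} summarizes Stong's results, so the plan is to deduce them from the splitting $\mathrm{M}\Sigma\simeq \MSU\oplus\Sigma^2\MU$ (in $\MSU$-modules). This is the topological analogue of Theorem~\ref{thm:main_cofib_seq}, and I would use it instead of redoing Stong's Adams spectral sequence computation. To get the splitting, I would repeat the argument of Section~\ref{section:4} in topology. By Proposition~\ref{prop:BU_BSigma} and Corollary~\ref{corollary:model_stong_spectrum} we may work with $\MUtf$. The fiber sequence $\mathrm{BSU}\to\mathrm{B}\tilde{\mathrm{U}}\to \mathrm{BU}(1)$, given by the square root of the determinant, has a section $L\mapsto L^{\otimes 2}\oplus \C$. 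This gives $\mathrm{BSU}\times\mathrm{BU}(1)\simeq \mathrm{B}\tilde{\mathrm{U}}$ over $\mathrm{BO}$, exactly as in Proposition~\ref{prop:KSLxPic=KML}, and hence $\MUtf\simeq \MSU\wedge \Sigma^{-2}\Th_{\C\Proj^\infty}(\struct(-2))$, as in Theorem~\ref{thm:mslc_free}. Next, cancelling squares of line bundles (the topological version of Lemma~\ref{lemma:modulo_squares}, via $L\oplus L\simeq L\oplus \bar L$ as real bundles, and $L\oplus\bar L$ has an $\mathrm{SU}$-structure) identifies this with $\MSU\wedge \Sigma^\infty_+\C\Proj^\infty$. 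Finally, the topological counterpart of \cite[Corollary 3.3]{Egor}, namely $\MSU\wedge \Sigma^{\infty}\C\Proj^\infty\simeq\Sigma^2\MU$, which is classical, yields $\mathrm{M}\Sigma\simeq\MSU\oplus\Sigma^2\MU$ as spectra. A shortcut is also available: apply complex Betti realization to the splitting of Theorem~\ref{thm:main_cofib_seq} over $\C$, and combine it with Proposition~\ref{prop:complex_realization_MML} and the standard realizations $\realiz_{\mathrm{B}\C}\MSL\simeq\MSU$ and $\realiz_{\mathrm{B}\C}\MGL\simeq\MU$.

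With the splitting in hand, items (1) and (2) follow from known facts. $\pi_*(\MSU)$ has no odd torsion (Novikov; see \cite{CLP}) and $\pi_*(\MU)$ is torsion free, so $\pi_*(\mathrm{M}\Sigma)\cong\pi_*(\MSU)\oplus\pi_{*-2}(\MU)$ has no odd torsion. Moreover, the torsion of $\pi_*(\mathrm{M}\Sigma)$ is exactly the torsion of the $\MSU$-summand. Since that summand is the image of the forgetful map $\pi_*(\MSU)\to\pi_*(\mathrm{M}\Sigma)$ (the first map of the cofiber sequence), the forgetful map is an isomorphism on $2$-primary torsion.

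For item (3), the rational statement and the torsion statement are not enough, so I would argue after inverting $2$. $\pi_*(\MSU)[\nicefrac12]$ is polynomial on generators in degrees $2i$, $i\ge2$ \cite{CLP}. Thus $\pi_*(\mathrm{M}\Sigma)[\nicefrac12]$ and $\pi_*(\MU)[\nicefrac12]$ are free $\Z[\nicefrac12]$-modules of the same rank in each degree: $p_{\geq 2}(n)+p(n-1)=p(n)$, where $p_{\ge2}$ counts partitions with no part equal to $1$. It therefore suffices to show that $\pi_*(\mathrm{M}\Sigma)\to\pi_*(\MU)$ is surjective after inverting $2$. Alternatively, one can work spectrum-level: $\mathrm{B}\tilde{\mathrm{U}}\to\mathrm{BU}$ has fiber $\mathrm{B}C_2$, hence is a $\Z[\nicefrac12]$-homology equivalence, so $\mathrm{H}\Z[\nicefrac12]_*\mathrm{M}\Sigma\to \mathrm{H}\Z[\nicefrac12]_*\MU$ is an isomorphism by the Thom isomorphism. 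Both spectra are connective, so $\mathrm{M}\Sigma[\nicefrac12]\simeq\MU[\nicefrac12]$ by the Hurewicz/Whitehead theorem. I would take this spectrum-level argument as the proof of (3), since it avoids identifying generators.

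The main obstacle is making the cancellation equivalence in topology precise as a map of $\MSU$-modules, i.e.\ transporting Lemma~\ref{lem:red_to_det} and Lemma~\ref{lemma:modulo_squares} to $\mathrm{Sp}$. The Betti realization shortcut avoids this entirely, so that is the route I would try first.
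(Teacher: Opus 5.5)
Your argument is correct, but it follows a different route from the paper. The paper gives no argument for Theorem~\ref{theorem:homotopy_groups_MSigma}; it only cites Stong's Theorems 1 and 2, which come from his direct computation of $\mathrm{M}\Sigma$. You instead derive the statement from the paper's own main result.

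\textbf{Your route.} Betti-realizing the split sequence of Theorem~\ref{thm:main_cofib_seq} over $\C$ and using Proposition~\ref{prop:complex_realization_MML} gives $\mathrm{M}\Sigma\simeq\MSU\oplus\Sigma^2\MU$. Item (2) then follows because $\pi_*(\MU)$ is torsion free. Your $\Z[\nicefrac{1}{2}]$-homology argument for (3) is complete and needs no splitting: the fiber of $\mathrm{B}\Sigma\to\mathrm{BU}$ is $\mathrm{B}C_2$, the base is simply connected, and you conclude with the Thom isomorphism and Hurewicz applied to the connective cofiber.

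\textbf{No circularity.} The paper uses Theorem~\ref{theorem:homotopy_groups_MSigma} only in Corollary~\ref{corollary:geom_diag_away_2} and later results. It does not use it in Theorem~\ref{thm:main_cofib_seq} or Proposition~\ref{prop:complex_realization_MML}.

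\textbf{What to state explicitly.} You should say that the identification $\realiz_{\mathrm{B}\C}(\MML_\C)\simeq\mathrm{M}\Sigma$ is compatible with the forgetful maps from $\realiz_{\mathrm{B}\C}(\MSL_\C)\simeq\MSU$. This follows from naturality of the realization of Thom spectra. The paper tacitly uses the same compatibility in the proof of Lemma~\ref{lemma:quotient_by_fund_ideals}, where the bottom row is exactly your realized sequence.

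\textbf{Novikov is not needed.} Item (1) already follows from (3): odd torsion survives inverting $2$, and $\pi_*(\MU)[\nicefrac{1}{2}]$ is torsion free. Item (2) only needs the splitting and torsion-freeness of $\pi_*(\MU)$. As a byproduct, your argument recovers that $\pi_*(\MSU)$ has no odd torsion.

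\textbf{Comparison.} Your route makes the appendix essentially self-contained and shows that the motivic splitting recovers Stong's results. The citation avoids depending on realization compatibilities. It also refers to Stong's finer $2$-primary analysis, which your argument does not reproduce but the theorem as stated does not require.
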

\begin{proof}
    \cite[Theorems 1 and 2]{Stong}
\end{proof}

\section{Real \'etale realization of \texorpdfstring{$\Einf$}{E∞}-ring spectra defined over \texorpdfstring{$\Z$}{Z}}\label{appendix_real_etale}
In this appendix, we recall basic facts about the real \'etale topology and outline the construction of the real \'etale realization following \cite{Bachmann_ret}. Then we compute it for motivic $\mathbb{E}_\infty$-ring spectra that are defined over $\Spec(\Z)$. Consequently, we obtain formulas for the real \'etale realizations of some algebraic cobordism spectra.

Recall that the real spectrum $\Sper(A)$ of a commutative ring $A$ is the set of pairs $(\mathfrak{p},\alpha)$, where $\mathfrak{p}\in\Spec(A)$ and $\alpha$ is an ordering of the residue field $\kappa(\mathfrak{p})$. This set can be equipped with the Harrison topology. More generally, for a scheme $S$, there is a topological space $RS$, which is obtained by gluing real spectra and satisfies $R\Spec(A)=\Sper(A)$. A family of morphisms $\{U_i\to S\}_{i\in I}$ is said to be a \textit{real \'etale covering} if each morphism $U_i\to S$ is \'etale and $RS=\bigcup_{i\in I} RU_i$.
\begin{notation}
    For a qcqs scheme $S$, we denote by $S_{\ret}$ the \textit{small real \'etale site}, consisting of the category $\text{\'Et}_S$ of \'etale $S$-schemes with the real \'etale topology on it, and by $\Sm_{S,\ret}$ the big real \'etale site (i.e., $\SmS$ with the real \'etale topology).
\end{notation}
\begin{notation}
    We write $\SH_{\ret}(S)$ for the localization of $\SHS$ at the real \'etale coverings, $\SH^{S^1}_{\ret}(S)$ for the localization of $\SH^{S^1}(S)$ at the real \'etale coverings, $\Sp(S_{\ret})$ for the stable $\infty$-category of sheaves of spectra on $S_{\ret}$, and $\Sp(RS)$ for the stable $\infty$-category of sheaves of spectra on the topological space $RS$. All these categories are equipped with natural symmetric monoidal structures. 
\end{notation}
    Bachmann's theorem (see \cite[Theorem 35]{Bachmann_ret} and \cite[Theorem 4.2]{BH-rmk-ret}) states that the following natural symmetric monoidal functors are equivalences:
    \begin{equation}
        \Sp(S_{\ret})\xrightarrow{i^*} \SH^{S^1}_{\ret}(S)\xrightarrow{\sigma^\infty_{\ret}} \SH_{\ret}(S).
    \end{equation}
    Here the first functor is induced by the morphism of sites $i\colon S_{\ret}\hookrightarrow \Sm_{S,\ret}$ (see \cite[Theorem 8]{Bachmann_ret}), and the second one is the $\Gm$-stabilization.
\begin{definition}
    The \textit{real \'etale realization functor} is the symmetric monoidal functor given by the following composition 
    \[ \realiz_{\ret}\colon\SHS \xrightarrow{\mathrm{L}_{\ret}} \SH_{\ret}(S) \xrightarrow{\simeq} \Sp(S_{\ret}), \]
    where the last functor is the inverse of Bachmann's equivalence stated above. There is a canonical equivalence $\Sp(S_{\ret})\cong \Sp(RS)$, which is the stabilization of the Elmanto--Shah--Scheiderer equivalence of the underlying $\infty$-topoi; see \cite[Theorem B.10]{ES-ret-topos} and \cite[Theorem 1.3]{Sch-ret-topos}. Composing this equivalence with the realization functor above, we obtain the symmetric monoidal real \'etale realization functor (we use the same name and notation for it)
    \[ \realiz_{\ret}\colon\SHS \to \Sp(RS). \]
\end{definition}
\begin{notation}
    For a scheme $S$, we denote by $\underline{\mathrm{E}}\in\Sp(RS)$ the constant sheaf associated with a spectrum $\mathrm{E}\in\Sp$. The assignment $\mathrm{E}\mapsto \underline{\mathrm{E}}$ preserves $\Einf$-rings since it is defined as the composition of the base change along $RS\to *$ with the sheafification functor.
\end{notation}
\begin{theorem}\label{thm:real_et_realiz}
    Suppose that $A$ is a subring of $\R$ such that $\Sper(A)=\Sper(\R)=*$ (e.g., $A=\Z$), $\EE$ is a motivic spectrum over $A$, and $S\in\mathrm{Sch}_A$. Then the real \'etale realization of $\EE_S$ is given by the constant sheaf associated with the real Betti realization of $\EE$
    \[ \realiz_{\ret}(\EE_S)\cong \underline{\realiz_{\mathrm{B}\R}(\EE_\R)}\in \Sp(RS). \]
    Moreover, if $\EE\in \mathrm{CAlg}(\SH(A))$, then the isomorphism is an equivalence of sheaves of $\mathbb{E}_\infty$-ring spectra.
\end{theorem}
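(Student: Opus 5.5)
The plan is to reduce everything to base change plus Bachmann's description over a real closed point, exploiting that $\EE$ and its $\Einf$-structure are pulled back from $\Spec(A)$.

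\textbf{Step 1: reduce to $S=\Spec(A)$.} Let $f\colon S\to\Spec(A)$ be the structure map. The functors $\mathrm{L}_{\ret}$ and the equivalence $\SH_{\ret}(-)\cong\Sp(R(-))$ are compatible with pullback. Concretely, there is a symmetric monoidal square identifying $\realiz_{\ret}\circ f^*$ with $(Rf)^*\circ\realiz_{\ret}$, where $(Rf)^*\colon\Sp(R\Spec(A))\to\Sp(RS)$ is pullback of sheaves along the continuous map $Rf\colon RS\to\Sper(A)$. This holds because the inverse of Bachmann's equivalence is induced by $i^*$ and $\sigma^\infty_{\ret}$, both of which commute with base change on small real \'etale sites. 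Since $\EE_S=f^*\EE$, we get
\[\realiz_{\ret}(\EE_S)\cong (Rf)^*\realiz_{\ret}(\EE),\]
and the symmetric monoidality of the square makes this an equivalence of $\Einf$-rings when $\EE$ is an $\Einf$-ring. As $\Sper(A)=*$, a sheaf on it is just a spectrum. The pullback of a spectrum $X$ along $RS\to *$ is the constant sheaf $\underline{X}$, and the constant-sheaf functor preserves $\Einf$-rings. It therefore suffices to identify the spectrum $\realiz_{\ret}(\EE)\in\Sp(\Sper(A))=\Sp$.

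\textbf{Step 2: pass from $A$ to $\R$.} Apply Step 1 to $g\colon\Spec(\R)\to\Spec(A)$. The map $Rg\colon\Sper(\R)\to\Sper(A)$ is a homeomorphism of points, so $(Rg)^*$ is the identity on $\Sp$. Hence
\[\realiz_{\ret}(\EE)\cong\realiz_{\ret}(\EE_\R),\]
again as $\Einf$-rings.

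\textbf{Step 3: compare with real Betti realization over $\R$.} We need a symmetric monoidal natural equivalence $\realiz_{\ret}\simeq\realiz_{\mathrm{B}\R}$ of functors $\SH(\R)\to\Sp$. Both functors are colimit-preserving and symmetric monoidal. Both also invert $\rho$: for $\realiz_{\mathrm{B}\R}$ this is because the real points of $\Gm$ give $\R^\times\simeq S^0$, and $-1$ realizes to an equivalence. So both factor through $\SH(\R)[\rho^{-1}]$, which Bachmann identifies with $\Sp$, and on $\Sigma^\infty_+ X$ for $X$ smooth they send the generators to $\Sigma^\infty_+X(\R)$. I would argue via the universal property of $\SH(\R)[\rho^{-1}]\cong\Sp$: this category is the unit-generated stable presentably symmetric monoidal category, and $\Sp$ is initial among such. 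Any colimit-preserving symmetric monoidal functor $\Sp\to\Sp$ is then canonically equivalent to the identity, as a symmetric monoidal functor. This upgrades the comparison to $\Einf$-algebras automatically.

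\textbf{Main obstacle.} The delicate points are the base-change compatibility of Bachmann's equivalence in Step 1, and checking that $\realiz_{\mathrm{B}\R}$ does factor symmetric monoidally through the $\rho$-localization, which requires the localization to be smashing. For the first, I would use that $i^*$ commutes with pullback along morphisms of sites, together with \cite[Theorem 4.2]{BH-rmk-ret}, which is stated naturally in $S$.
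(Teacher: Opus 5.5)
Your proposal is correct and follows the paper's proof. The paper uses the same two base-change squares, along $\Spec(\R)\to\Spec(A)$ and along $S\to\Spec(A)$, identifies pullback from the point with the sheafified constant presheaf, and gets the $\Einf$-statement from symmetric monoidality. The only difference is that, for the identification $\realiz_{\ret}\simeq\realiz_{\mathrm{B}\R}$ over $\R$, the paper simply cites \cite[Proposition 36]{Bachmann_ret}, whereas you sketch the universal-property argument (both functors invert $\rho$, and $\Sp$ is initial among presentably symmetric monoidal stable $\infty$-categories); that argument is a valid substitute.
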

\begin{proof}
    First, consider the following commutative diagram:
    \begin{center}
        \begin{tikzcd}
            \SH(A) \arrow[rr, "\realiz_{\ret}"'] \arrow[d]              &  & \Sp(\Sper(A)) \arrow[d, "\simeq"]                     \\
            \SH(\R) \arrow[rr, "\realiz_{\ret}\simeq\realiz_{\mathrm{B}\R}"'] &  & \Sp(\Sper(\R))\simeq\Sp.
        \end{tikzcd}
    \end{center}
    Here the vertical functors are base changes along $A\hookrightarrow\R$ and $\Sper(\R)\xrightarrow{=}\Sper(A)$ and the horizontal functors are given by the real \'etale realizations. The real \'etale realization over $\R$ is canonically equivalent to the real Betti realization by \cite[Proposition 36]{Bachmann_ret}. Hence, we have 
    \[ \realiz_{\ret}(\EE)\cong \realiz_{\mathrm{B}\R}(\EE_{\R})\in \Sp. \]
    Now denote by $\pi_S\colon S\to \Spec(A)$ the structure morphism of $S$ and consider the following diagram:
    \begin{center}
        \begin{tikzcd}
            \SH(A) \arrow[rr, "\realiz_{\ret}"'] \arrow[dd, "\pi_S^*"] &  & \Sp(\Sper(A)) \arrow[dd, "\pi_{RS}^*"] \arrow[rr, "\simeq", "\mathrm{forgetful}"'] &  & \PShv(\Sper(A),\Sp)\simeq\Sp \arrow[dd, "\pi_{RS}^*"]                              \\
                                                           &  &                                                                                 &  &                                                      \\
            \SH(S) \arrow[rr, "\realiz_{\ret}"']                       &  & \Sp(RS)                                                                &  & \PShv(RS,\Sp), \arrow[ll, "\mathrm{sheafification}"]
        \end{tikzcd}
    \end{center}
    where $\PShv(X,\Sp)$ denotes the stable $\infty$-category of presheaves of spectra on a topological space $X$.
    The left square obviously commutes and the right square commutes  by the assumption on $\Sper(A)$. The right vertical base change functor sends a spectrum to the associated constant presheaf. The desired equivalence follows immediately. It automatically preserves the $\mathbb{E}_\infty$-ring structure, since all functors in both diagrams are symmetric monoidal.
\end{proof}
\begin{example}\label{example_mso}
    Let $S$ be a qcqs scheme. Then the real \'etale realizations of the standard algebraic cobordism spectra are given by the following constant sheaves of $\Einf$-ring spectra on $RS$:
    \[ \realiz_{\ret}(\MGL_S)\cong \underline{\mathrm{MO}}, \hspace{4em} \realiz_{\ret}(\MSL_S)\cong \underline{\mathrm{MSO}}, \hspace{4em} \realiz_{\ret}(\MSp_S)\cong\underline{\MU}. \]
    This follows from Theorem \ref{thm:real_et_realiz} and the computations of the corresponding real Betti realizations (see \cite[Theorem A]{MG_realization}), since these algebraic cobordism spectra are defined over $\Spec(\Z)$.
\end{example}
\begin{remark}
    Note that the above situation is radically different from that of the $l$-adic \'etale realization; see \cite[\S 4]{BBX-Galois} for the case of $\MGL$. The reason is the non-triviality of the small \'etale site of $\Spec(\Z[\nicefrac{1}{l}])$ (in contrast to the small real \'etale site of $\Spec(\Z)$). 
\end{remark}

\section{Geometric diagonals of \texorpdfstring{$\MSL$}{MSL} and \texorpdfstring{$\MML$}{MML} in characteristic 2}\label{appendix_char2}
In this appendix, we compute the geometric diagonals of the homotopy groups of $\MSL$ and $\MML$ over a field $F$ of characteristic $2$ after inverting $2$. The case of $\MSL$ was excluded in \cite{Egor} because of the reference \cite{BHop}. Here we replace it by using the real \'etale realization, taking advantage of working away from $2$.

First, recall that away from $2$, the homotopy group $\pi_{p,q}(\EE)$ of a motivic spectrum $\EE$ (over any base) splits into the product (see e.g., \cite[Remark 4]{ALP}):
\begin{equation}\label{eq:decomp_pm_parts}
    \pi_{p,q}(\EE)[\nicefrac{1}{2}]\cong \pi_{p,q}(\EE)[\nicefrac{1}{2}]^+\times \pi_{p,q}(\EE)[\nicefrac{1}{2}]^-.
\end{equation} 
Here the plus (resp. minus) part of the homotopy group is by definition the corresponding homotopy group of the plus (resp. minus) part of $\EE$. 
Now we recall some notations from \cite{Egor} that are used below.

\begin{notation}
    We denote by $\mathrm{MWL}$ the $c_1$-spherical algebraic cobordism spectrum (see \cite[\S 2]{Egor}), which is equivalent to the $\eta$-cofiber of $\MSL$, by $d\colon \mathrm{MWL}\to \Sigma^{2,1}\MSL$ the boundary morphism in the cofiber sequence \cite[Theorem A(1)]{Egor}, and by $\delta\colon \mathrm{MWL}\to \Sigma^{2,1}\mathrm{MWL}$ the composite of $d$ with the $(2,1)$-suspension of the forgetful morphism $\mathrm{forg}\colon\MSL\to \mathrm{MWL}$.
\end{notation}

\begin{notation}
    By construction, $\delta^2=\Sigma^{2,1}\delta\circ\delta$ is null-homotopic. We denote by $Z_{i,j}(\mathrm{MWL})$ (resp. $H_{i,j}(\mathrm{MWL})$) the group of cycles (resp. homology) of the chain complex $(\pi_{i,j}(\mathrm{MWL}),\delta_*)$ in the degree corresponding to $\pi_{i,j}(\mathrm{MWL})$.
\end{notation}

\begin{lemma}\label{lemma:eta_stab_char2}
    Let $R$ be a local Dedekind domain (i.e., a field or a discrete valuation ring) with residue characteristic $2$. Then the multiplication with the motivic Hopf element $\eta$ induces an isomorphism
    $$\eta\cdot\pi_{2n-1,n-1}(\MSL)[\nicefrac{1}{2}]\xrightarrow{\simeq} \eta\cdot\pi_{2n,n}(\MSL)[\nicefrac{1}{2}].$$
    Moreover, the canonical map $\MSL\to \MSL[\eta^{-1}]$ identifies the right group with $\pi_{2n+1,n+1}(\MSL)[\nicefrac{1}{2}]^-$.
\end{lemma}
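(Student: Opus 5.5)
Working away from $2$, I will use the decomposition \eqref{eq:decomp_pm_parts} into plus and minus parts. On the plus part $\eta$ acts by zero, since $\eta$ becomes nilpotent there and $\eta\cdot(\text{plus part})=0$ after inverting $2$. So both $\eta\cdot\pi_{2n-1,n-1}(\MSL)[\nicefrac{1}{2}]$ and $\eta\cdot\pi_{2n,n}(\MSL)[\nicefrac{1}{2}]$ lie in the minus parts, where $\eta$ is invertible. On the minus part, multiplication by $\eta$ is therefore an isomorphism $\pi_{p,q}[\nicefrac12]^-\to\pi_{p+1,q+1}[\nicefrac12]^-$. Consequently $\eta\cdot\pi_{2n-1,n-1}(\MSL)[\nicefrac12]=\pi_{2n,n}(\MSL)[\nicefrac12]^-$ and $\eta\cdot\pi_{2n,n}(\MSL)[\nicefrac12]=\pi_{2n+1,n+1}(\MSL)[\nicefrac12]^-$, and the map between them is multiplication by the unit $\eta$ on the minus part. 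This gives both the isomorphism and the identification of the right-hand group with $\pi_{2n+1,n+1}(\MSL)[\nicefrac12]^-$.

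The canonical map $\MSL\to\MSL[\eta^{-1}]$ restricted to minus parts is an equivalence, since $\MSL[\nicefrac12]^-\cong\MSL[\nicefrac12][\eta^{-1}]$ (inverting $\eta$ kills the plus part, and on the minus part $\eta$ is already invertible). Using \cite[Lemma 39]{Bachmann_ret}, $\rho$- and $\eta$-inversion agree away from $2$.

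To make the first step rigorous I will check that $\eta$ kills the plus part: the plus/minus idempotents come from $\epsilon=-\langle-1\rangle$, and $\eta\epsilon=\eta$ gives $\eta\cdot e^+=\eta(1-\epsilon)/2=0$. Thus $\eta$ maps the plus component to zero, and $\eta\cdot x=\eta\cdot x^-$.

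The statement then appears formal and uses no hypothesis on $R$, which is suspicious. The main obstacle is to verify that this idempotent argument genuinely works over a DVR with residue characteristic $2$, where $\epsilon$ is still defined in $\pi_{0,0}(\sph)$ over any base, and that the intended content really is this minus-part identification. If more is intended, namely a description of $\pi_{2n+1,n+1}(\MSL)[\nicefrac12]^-$ itself, I would compute it via the real \'etale realization (Example \ref{example_mso}) as $\pi_{n}$ of $\underline{\mathrm{MSO}}[\nicefrac12]$ on $\Sper(R)$, in the spirit of the appendix.
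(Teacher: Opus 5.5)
Your argument is correct, and it takes a genuinely different, more formal route than the paper. The paper works inside the framework of \cite{Egor}:
\begin{itemize}
\item it uses the exact sequence $H_{2n+2,n+1}(\mathrm{MWL})\to \eta\cdot\pi_{2n-1,n-1}(\MSL)\xrightarrow{\eta}\eta\cdot\pi_{2n,n}(\MSL)\to H_{2n,n}(\mathrm{MWL})$ of \cite[Lemma 6.2]{Egor};
\item it kills the outer $\delta$-homology groups after inverting $2$ using the computations \cite[Theorem 4.14 and Proposition B.9]{Egor};
\item it appeals to \cite[Theorem 5.2]{Egor} to identify the result with the $\eta$-periodization.
\end{itemize}

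You instead observe that, once $2$ is inverted, $\eta$ is zero on the plus part (via $\eta\epsilon=\eta$). It is a unit on the minus part, via $\langle -1\rangle=1+\eta[-1]$, which gives $\eta[-1]=-2$ there. Hence for every motivic spectrum $\EE$ one has
\[
\eta\cdot\pi_{p,q}(\EE)[\nicefrac{1}{2}]=\pi_{p+1,q+1}(\EE)[\nicefrac{1}{2}]^-\cong\pi_{p+1,q+1}(\EE[\eta^{-1}])[\nicefrac{1}{2}],
\]
and multiplication by $\eta$ between consecutive such groups is bijective.

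The point you flagged as suspicious is not a problem. The relations $\epsilon^2=1$, $\eta\epsilon=\eta$ and $\langle-1\rangle=1+\eta[-1]$ hold in $\pi_{*,*}(\sph)$ over $\Spec(\Z)$ and pull back to $R$. If $R$ is a field of characteristic $2$, the minus part is zero, so both groups vanish. The paper itself relies on the same facts over $R$ in the proof of Lemma~\ref{lemma:plus_part_char2}, where it asserts that the splitting obtained there is the plus/minus decomposition.

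What your route buys is generality and economy. It shows that neither the residue characteristic hypothesis nor anything specific to $\MSL$ is needed, and it avoids the nontrivial $\mathrm{MWL}$ computations. What the paper's route buys is integral information your argument cannot see: the failure of $\eta$ to act bijectively on $\eta$-multiples is controlled by the ($2$-primary) $\delta$-homology of $\mathrm{MWL}$. It also keeps the argument in the same exact-sequence framework used in Lemmas~\ref{lemma:kernels_char2} and \ref{lemma:plus_part_char2}. For the statement as given, with $2$ inverted, none of that is needed, and the real \'etale computation you sketch at the end is unnecessary.
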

\begin{proof}
    By \cite[Lemma 6.2]{Egor}, we have an exact sequence:
    \[ H_{2n+2,n+1}(\mathrm{MWL})\to \eta\cdot\pi_{2n-1,n-1}(\MSL)\xrightarrow{\eta} \eta\cdot\pi_{2n,n}(\MSL)\to H_{2n,n}(\mathrm{MWL}). \]
    After inverting $2$, the left and right groups vanish by \cite[Theorem 4.14 and Proposition B.9]{Egor}. This proves the first statement. The resulting group coincides with the corresponding homotopy group of the $\eta$-periodization by \cite[Theorem 5.2]{Egor}.
\end{proof}

\begin{lemma}\label{lemma:kernels_char2}
    Let $R$ be a local Dedekind domain with residue characteristic $2$. Then the inclusion
    $$ \mathrm{Ker}(\pi_{2n,n}(d))\subset Z_{2n,n}(\mathrm{MWL}) $$
    becomes an equality after inverting $2$.
\end{lemma}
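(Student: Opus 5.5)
The plan is to unwind the definitions of $d$ and $\delta$ and reduce the claim to the injectivity statement in Lemma~\ref{lemma:eta_stab_char2}. Inverting $2$ is exact, so I would work throughout with $\pi_{*,*}(-)[\nicefrac{1}{2}]$ and use the long exact sequences without change.

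First I would write out the $\eta$-cofiber sequence underlying $\mathrm{MWL}$:
\[ \Sigma^{1,1}\MSL\xrightarrow{\eta}\MSL\xrightarrow{\mathrm{forg}}\mathrm{MWL}\xrightarrow{d}\Sigma^{2,1}\MSL. \]
Here I am taking the identification of $\mathrm{MWL}$ with the $\eta$-cofiber of $\MSL$, with boundary map $d$, from \cite[Theorem A(1)]{Egor}. Since $\delta=\Sigma^{2,1}\mathrm{forg}\circ d$, an element $x\in\pi_{2n,n}(\mathrm{MWL})$ lies in $Z_{2n,n}(\mathrm{MWL})$ exactly when $d_*(x)\in\pi_{2n-2,n-1}(\MSL)$ is killed by $\mathrm{forg}_*$. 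This gives the inclusion $\mathrm{Ker}(\pi_{2n,n}(d))\subset Z_{2n,n}(\mathrm{MWL})$ immediately. It also shows that the reverse inclusion reduces to one claim: any $y:=d_*(x)$ with $\mathrm{forg}_*(y)=0$ must vanish after inverting $2$.

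Next I would use exactness of the long exact sequence at the two relevant spots. From $\mathrm{forg}_*(y)=0$ we get $y=\eta\cdot z$ for some $z\in\pi_{2n-3,n-2}(\MSL)$, so $y\in\eta\cdot\pi_{2(n-1)-1,(n-1)-1}(\MSL)$. Since $y$ lies in the image of $d_*$, exactness at $\MSL$ in the rotated sequence gives $\eta\cdot y=0$ in $\pi_{2n-1,n}(\MSL)$.

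Finally, I would apply Lemma~\ref{lemma:eta_stab_char2} with $n-1$ in place of $n$. It says that multiplication by $\eta$,
\[ \eta\cdot\pi_{2n-3,n-2}(\MSL)[\nicefrac{1}{2}]\to\eta\cdot\pi_{2n-2,n-1}(\MSL)[\nicefrac{1}{2}], \]
is an isomorphism, and in particular injective. Since $y$ lies in the source and $\eta y=0$, we conclude $y=0$ after inverting $2$, so $x\in\mathrm{Ker}(\pi_{2n,n}(d))[\nicefrac{1}{2}]$.

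The only real content is therefore Lemma~\ref{lemma:eta_stab_char2}. The step I expect to need the most care is the degree bookkeeping: checking that the bidegree shifts of $\eta\in\pi_{1,1}$ and of $d$ land $y$ in exactly the group $\eta\cdot\pi_{2m-1,m-1}(\MSL)$ that the lemma controls, with $m=n-1$. I would also check that the source and target of that lemma really are $\pi_{2n-3,n-2}$ and $\pi_{2n-2,n-1}$ as used above, rather than groups off by a shift.
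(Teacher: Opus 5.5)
Your proof is correct and follows the paper's argument. Both reduce the claim to showing that $\mathrm{Im}(d_*)[\nicefrac{1}{2}]$ meets $\mathrm{Ker}(\mathrm{forg}_*)[\nicefrac{1}{2}]=\eta\cdot\pi_{2n-3,n-2}(\MSL)[\nicefrac{1}{2}]$ trivially: the former consists of $\eta$-torsion, the latter has none by Lemma~\ref{lemma:eta_stab_char2} applied with $n-1$ in place of $n$, and your degree bookkeeping for that application checks out. The only cosmetic difference is that you use just the injectivity of a single multiplication by $\eta$, whereas the paper phrases the same fact through the identification with the $\eta$-periodization.
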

\begin{proof}
    We need to show that the image of $d_*[\nicefrac{1}{2}]\colon\pi_{2n,n}(\mathrm{MWL})[\nicefrac{1}{2}]\to \pi_{2n-2,n-1}(\MSL)[\nicefrac{1}{2}]$ does not intersect the kernel of $\mathrm{forg}_*[\nicefrac{1}{2}]\colon\pi_{2n-2,n-1}(\MSL)[\nicefrac{1}{2}]\to \pi_{2n-2,n-1}(\mathrm{MWL})[\nicefrac{1}{2}]$. By the exact sequence \cite[(5.1)]{Egor}, we have an equality $\mathrm{Ker}(\mathrm{forg}_*)=\eta\cdot\pi_{2n-3,n-2}(\MSL)$. After tensoring with $\Z[\nicefrac{1}{2}]$, the latter group is equal to the corresponding homotopy group of the $\eta$-periodization by Lemma \ref{lemma:eta_stab_char2}. In turn, the image of $d_*[\nicefrac{1}{2}]$ consists of $\eta$-torsion elements, since the motivic Hopf element is trivial in the $c_1$-spherical algebraic cobordism spectrum, see \cite[Theorem A]{Egor}. Hence, the intersection $\mathrm{Im}(d_*[\nicefrac{1}{2}])\cap \mathrm{Ker}(\mathrm{forg}_*[\nicefrac{1}{2}])$ is trivial and the result follows.
\end{proof}

\begin{lemma}\label{lemma:plus_part_char2}
    Let $R$ be a discrete valuation ring of mixed characteristics $(0,2)$ with residue field $F$. Then base change along the span $L:=\mathrm{Frac}(R)\leftarrow R\twoheadrightarrow F$ induce isomorphisms
    \begin{align*} 
    & \pi_{2*,*}(\MSL_L)[\nicefrac{1}{2}]^+\cong \pi_{2*,*}(\MSL_{F})[\nicefrac{1}{2}]^+, \\
    & \pi_{2*,*}(\MML_L)[\nicefrac{1}{2}]^+\cong \pi_{2*,*}(\MML_{F})[\nicefrac{1}{2}]^+.
    \end{align*}
\end{lemma}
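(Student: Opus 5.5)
The MML statement follows from the MSL statement by the main cofiber sequence, so the real work is for $\MSL$. Theorem \ref{thm:main_cofib_seq} gives, after choosing a splitting $t$, a natural decomposition $\MML\cong\MSL\oplus\Sigma^{2,1}\MGL$ of $\MSL$-modules, and it is compatible with base change. The spectrum $\MGL$ is $\eta$-complete, so after inverting $2$ its homotopy lies entirely in the plus part. Hence
\[\pi_{2*,*}(\MML)[\nicefrac{1}{2}]^+\cong \pi_{2*,*}(\MSL)[\nicefrac{1}{2}]^+\oplus \pi_{2*-2,*-1}(\MGL)[\nicefrac{1}{2}].\]
The $\MGL$-summand is rigid along $L\leftarrow R\to F$, because $\pi_{2*,*}(\MGL)\cong\mathbb{L}$ over any local Dedekind domain by \cite[Theorem 6.5]{SpiMGL}.

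For $\MSL$, I would describe the plus part as the quotient of $\pi_{2*,*}(\MSL)[\nicefrac12]$ by the minus part, that is, by the kernel of $\MSL\to \MSL^\wedge_\eta$, and identify this minus part with $\eta\cdot\pi_{2*-1,*-1}(\MSL)[\nicefrac12]$.

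\begin{itemize}
\item Lemma \ref{lemma:eta_stab_char2} identifies $\eta$-multiples with homotopy of the $\eta$-periodization.
\item By Bachmann's equivalence \eqref{equa_bach_equiv_minus} and Example \ref{example_mso}, the minus part is $\pi_*(\underline{\mathrm{MSO}})[\nicefrac12]$ evaluated on the real spectrum. This gives the decomposition \eqref{eq:decomp_pm_parts} concretely.
\end{itemize}

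Then I would use the exact sequences of \cite{Egor}, namely the $\eta$-cofiber sequence $\MSL\xrightarrow{\eta}\MSL\to \mathrm{MWL}$ and $d$, together with Lemma \ref{lemma:kernels_char2}. These give an exact description
\[0\to \eta\pi_{2*-1,*-1}(\MSL)[\nicefrac12]\to \pi_{2*,*}(\MSL)[\nicefrac12]\to Z_{2*,*}(\mathrm{MWL})[\nicefrac12]\to 0,\]
using that $\mathrm{Ker}(\mathrm{forg}_*)=\eta\cdot\pi$ and the image of $\mathrm{forg}_*$ equals $\mathrm{Ker}(d_*)=Z$. So the plus part is $Z_{2*,*}(\mathrm{MWL})[\nicefrac12]$.

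It then remains to show that $Z_{2*,*}(\mathrm{MWL})[\nicefrac12]$ is rigid along $L\leftarrow R\to F$. The cycles of $\delta_*$ on $\pi_{2*,*}(\mathrm{MWL})$ should be rigid because \cite{Egor} computes $\pi_{2*,*}(\mathrm{MWL})$ for local Dedekind domains in terms of $\MGL$-type data (the $c_1$-spherical cobordism ring), independently of the base. The differential $\delta$ is defined over $\Z$, so base change commutes with it.

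The main obstacle is checking that the results of \cite{Egor} used here are genuinely available in residue characteristic $2$ after inverting $2$ (the computation of $\pi_{2*,*}(\mathrm{MWL})$ and its rigidity), since \cite{Egor} excluded characteristic $2$ only because of \cite{BHop}. I expect that those inputs avoid \cite{BHop} and only rely on $\MGL$ computations valid over any local Dedekind domain, which is what I would verify first. If that fails, the fallback is to compare plus parts directly using the slice spectral sequences, as in Proposition \ref{prop_mslc_mgl}, with slices known over Dedekind bases via \cite{SpiMGL}.
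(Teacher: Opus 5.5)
Your proposal is correct and is essentially the paper's proof. The paper likewise splits the exact row $0\to\eta\cdot\pi_{2n-1,n-1}(\MSL_A)[\nicefrac{1}{2}]\to\pi_{2n,n}(\MSL_A)[\nicefrac{1}{2}]\to\mathrm{Ker}(\pi_{2n,n}(d))[\nicefrac{1}{2}]\to 0$ using Lemmas~\ref{lemma:eta_stab_char2} and~\ref{lemma:kernels_char2}, identifies the plus part with $Z_{2n,n}(\mathrm{MWL}_A)[\nicefrac{1}{2}]$ (rigid by \cite[Theorem~4.14]{Egor}), and then passes to $\MML$ via the main cofiber sequence and $\pi_{2*,*}(\MGL_A)[\nicefrac{1}{2}]\cong\mathbb{L}[\nicefrac{1}{2}]$, using the short exact sequence of plus parts and the five lemma in place of your chosen splitting.

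One justification needs repair. $\MGL[\nicefrac{1}{2}]$ is purely plus because $\eta$ acts by zero on $\MGL$, not because $\MGL$ is $\eta$-complete: completion does not commute with inverting $2$, and e.g.\ $\sph^\wedge_\eta[\nicefrac{1}{2}]$ has nonzero minus part over $\R$. For the same reason, the minus part of $\pi_{2*,*}(\MSL)[\nicefrac{1}{2}]$ is the kernel of the map to $(\MSL[\nicefrac{1}{2}])^\wedge_\eta$, not to $\MSL^\wedge_\eta$. Since you immediately identify it with $\eta\cdot\pi_{2*-1,*-1}(\MSL)[\nicefrac{1}{2}]$, nothing downstream is affected.
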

\begin{proof}
    First, we deal with $\MSL$. Below $A$ denotes one of the rings $L$, $R$, or $F$. 
    Consider the following diagram (see \cite[(6.1)]{Egor} for the exact row):
    \[
    \begin{tikzcd}[column sep=small]
    0 \ar[r] & 
    \eta\cdot\pi_{2n-1,n-1}(\mathrm{MSL}_A) [\nicefrac{1}{2}] \ar[r] \ar[d, "\simeq", "\eta"'] & 
    \pi_{2n,n}(\MSL_A)[\nicefrac{1}{2}] \ar[r] \ar[ld, "\eta"] & 
    \mathrm{Ker} (\pi_{2n,n}(d))[\nicefrac{1}{2}] \ar[r] & 0 \\
    & 
    \eta\cdot\pi_{2n,n}(\mathrm{MSL}_A)[\nicefrac{1}{2}]. 
    \end{tikzcd}
    \]
    Here the vertical map is an isomorphism by Lemma \ref{lemma:eta_stab_char2} and the right corner is equal to the group $Z_{2n,n}(\mathrm{MWL}_A)[\nicefrac{1}{2}]$ by Lemma \ref{lemma:kernels_char2}. The diagonal map gives a splitting of the short exact sequence and we obtain 
    $$ \pi_{2n,n}(\MSL_A)[\nicefrac{1}{2}]\cong Z_{2n,n}(\mathrm{MWL}_A)[\nicefrac{1}{2}]\times \eta\cdot\pi_{2n-1,n-1}(\MSL_A)[\nicefrac{1}{2}]. $$
    This decomposition coincides with the decomposition of $\pi_{2n,n}(\MSL_A)[\nicefrac{1}{2}]$ into the product of the plus and minus parts. Hence, the desired plus part over $A$ is isomorphic to the group $Z_{2n,n}(\mathrm{MWL}_A)[\nicefrac{1}{2}]$, which is stable under base changes along the homomorphisms $R\to L$ and $R\to F$ by \cite[Theorem~4.14]{Egor}.

    Now we consider the case of $\MML$. The cofiber sequence from Theorem \ref{thm:main_cofib_seq} induces the short exact sequence (note that $\Sigma^1_{\Proj^1}\MGL_A[\nicefrac{1}{2}]$ is canonically equivalent to its plus part):
    $$ 0\to \pi_{2n,n}(\MSL_A)[\nicefrac{1}{2}]^+\to \pi_{2n,n}(\MML_A)[\nicefrac{1}{2}]^+\to \pi_{2n-2,n-1}(\MGL_A)[\nicefrac{1}{2}]\to 0.$$
    The result follows by taking the base change of this sequence along the span $L\leftarrow R\to F$ and using $\pi_{2*,*}(\MSL_A)[\nicefrac{1}{2}]\cong \mathbb{L}[\nicefrac{1}{2}]$ \cite[Corollary 6.6]{SpiMGL} together with the result already proven for $\MSL$.
\end{proof}

\begin{theorem}\label{theorem:geom_diags_char2}
    Suppose that $F$ is a field of characteristic $2$. Then there are isomorphisms of $\GW(F)$-algebras:
    \begin{align*}
        & \pi_{2*,*}(\MSL)[\nicefrac{1}{2}]\cong \pi_{2*}(\MSU)[\nicefrac{1}{2}], \\
        & \pi_{2*,*}(\MML)[\nicefrac{1}{2}]\cong \pi_{2*}(\mathrm{M}\Sigma)[\nicefrac{1}{2}].
    \end{align*}
\end{theorem}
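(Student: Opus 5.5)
We split $\pi_{2*,*}(-)[\nicefrac{1}{2}]$ into plus and minus parts via \eqref{eq:decomp_pm_parts}, identify each part for $F$ with the corresponding part over a characteristic-zero field, and then apply the results proven in characteristic zero.

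\emph{Minus part.} Away from $2$, inverting $\eta$ is the same as inverting $\rho$, and \eqref{equa_bach_equiv_minus} identifies the minus part with $\Sp(\Sper(F))[\nicefrac{1}{2}]$. A field of characteristic $2$ is not formally real, so $\Sper(F)=\emptyset$ and this category vanishes. Hence $\pi_{2*,*}(\MSL_F)[\nicefrac{1}{2}]^-=0$. By Corollary \ref{cor_eta_period_mslc}, the same holds for $\MML_F$.

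\emph{Plus part.} Choose a discrete valuation ring $R$ of mixed characteristic $(0,2)$ with residue field $F$ (a Cohen ring, as in the proof of Lemma \ref{lemma:quotient_by_fund_ideals}), and put $L=\mathrm{Frac}(R)$. Lemma \ref{lemma:plus_part_char2} gives isomorphisms of the plus parts over $F$ and over $L$. These isomorphisms are induced by base change maps of ring spectra, so they are ring isomorphisms compatible with the $\GW$-actions.

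\emph{Characteristic zero input.} Over $L$, the plus part equals $\pi_{2*,*}(-^\wedge_\eta)[\nicefrac{1}{2}]$. For $\MML_L$, Corollary \ref{corollary:geom_diag_away_2} (more precisely, its proof via Theorem \ref{thm:pullback_descr}) identifies this with $\pi_{2*}(\mathrm{M}\Sigma)[\nicefrac{1}{2}]$. For $\MSL_L$, the analogous pullback description in \cite[Theorem D]{Egor} gives $\pi_{2*}(\MSU)[\nicefrac{1}{2}]$. In both cases the identification comes from killing the minus/$\W$-part of the pullback square: the map $\W(L)[\dots]\to\Z/2[\dots]$ becomes $0\to 0$ after inverting $2$, so the pullback collapses to the topological corner.

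\emph{Assembling.} Combining the three steps gives $\pi_{2*,*}(\EE_F)[\nicefrac{1}{2}]=\pi_{2*,*}(\EE_F)[\nicefrac{1}{2}]^+\cong\pi_{2*,*}(\EE_L)[\nicefrac{1}{2}]^+\cong \pi_{2*}(\realiz_{\mathrm{B}\C}\EE)[\nicefrac{1}{2}]$ for $\EE=\MSL$ and $\EE=\MML$. The $\GW(F)$-action factors through the rank, since the minus part vanishes.

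\emph{Main obstacle.} The delicate point is the characteristic-zero input for $\MSL$. \cite[Theorem D]{Egor} is stated over fields of characteristic different from $2$, which covers $L$, but we must make sure the plus-part identification does not secretly use \cite{BHop} over $F$. It does not: the minus part over $F$ was handled by the real étale argument instead. If a cleaner route is needed over $L$, the alternative is to read the plus part off as $Z_{2*,*}(\mathrm{MWL}_L)[\nicefrac{1}{2}]$, as in the proof of Lemma \ref{lemma:plus_part_char2}, and compare it with $\pi_{2*}(\MSU)[\nicefrac{1}{2}]$ via \cite[Theorem 6.16]{Egor} and rigidity.
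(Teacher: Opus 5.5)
Your proposal is correct and matches the paper's proof. The minus part vanishes by Bachmann's equivalence \eqref{equa_bach_equiv_minus} since $\Sper(F)=\emptyset$; the plus part is moved to $L=\mathrm{Frac}(R)$ by Lemma~\ref{lemma:plus_part_char2} and identified with the topological counterpart via \cite[Theorem D]{Egor} and the characteristic-zero results for $\MML$.

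One minor imprecision: $\W(L)[\nicefrac{1}{2}]=0$ holds here only because your Cohen ring is complete, so $L\supseteq\mathbb{Q}_2$ is not formally real. For a general characteristic-zero $L$, the $2$-inverted pullback is a product whose plus factor is the topological corner, and that is all you need.
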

\begin{proof}
    Consider a discrete valuation ring $R$ of mixed characteristic $(0,2)$ whose residue field is $F$ (see the proof of Lemma \ref{lemma:quotient_by_fund_ideals}). Then base change along the span $\mathrm{Frac}(R)\leftarrow R\to F$ reduces the plus parts to the corresponding plus parts over $\mathrm{Frac}(R)$ by Lemma \ref{lemma:plus_part_char2}. They are isomorphic to the topological counterparts by \cite[Theorem D]{Egor} and Lemma \ref{lemma:quotient_by_fund_ideals}. Hence, it remains to show that the minus parts of the desired geometric diagonals are trivial. In fact, it follows from Bachmann's equivalence \eqref{equa_bach_equiv_minus} that the minus part of any motivic spectrum over $F$ is zero, since $\Sper(F)=\emptyset$.
\end{proof}

\begin{remark}
    Formally, the above isomorphisms can be viewed as parts of the cartesian squares stated in \cite[Theorem D]{Egor} and Theorem \ref{theorem_C} away from the characteristic. Indeed, in characteristic $2$, after inverting $2$, the bottom rings (in both cases) are trivial ($\mathrm{W}(F)$ is a $2$-primary torsion group by \cite[Proposition 31.4(6)]{EKM}). For this reason, we do not distinguish the case of characteristic $2$ from the general answer, even though this case turned out to be degenerate.
\end{remark}

\begin{corollary}\label{corollary:MML_MGL_char2}
    Let $F$ be a field of characteristic $2$. Then the canonical morphism $\MML\to \MGL$ induces an isomorphism
    \[ \pi_{2*,*}(\MML)[\nicefrac{1}{2}]\cong \pi_{2*,*}(\MGL)[\nicefrac{1}{2}]. \]
\end{corollary}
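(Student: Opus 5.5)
The plan is to use the plus/minus decomposition of homotopy groups away from $2$, as in the proof of Theorem \ref{theorem:geom_diags_char2}. Over a field $F$ of characteristic $2$ we have $\Sper(F)=\emptyset$. By Bachmann's equivalence \eqref{equa_bach_equiv_minus}, every $2$-inverted motivic spectrum over $F$ therefore has trivial minus part. Consequently
\[ \pi_{2*,*}(\MML)[\nicefrac{1}{2}]\cong \pi_{2*,*}(\MML)[\nicefrac{1}{2}]^+ \quad\text{and}\quad \pi_{2*,*}(\MGL)[\nicefrac{1}{2}]\cong\pi_{2*,*}(\MGL)[\nicefrac{1}{2}]^+ , \]
and it suffices to compare the plus parts.

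Next we use the main cofiber sequence of Theorem \ref{thm:main_cofib_seq}, base-changed to $F$ with $2$ inverted. Since the minus part vanishes, the short exact sequence appearing in the proof of Lemma \ref{lemma:plus_part_char2} reads
\[ 0\to \pi_{2n,n}(\MSL)[\nicefrac{1}{2}]\to \pi_{2n,n}(\MML)[\nicefrac{1}{2}]\to \pi_{2n-2,n-1}(\MGL)[\nicefrac{1}{2}]\to 0. \]
Theorem \ref{theorem:geom_diags_char2} gives $\pi_{2*,*}(\MSL)[\nicefrac{1}{2}]\cong\pi_{2*}(\MSU)[\nicefrac{1}{2}]$. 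By \cite[Proposition 8.2]{Hoy15}, $\pi_{2*,*}(\MGL)\cong\mathbb{L}$, and $\mathbb{L}\cong\pi_{2*}(\MU)$. Hence both sides of the desired map are abstractly free $\Z[\nicefrac{1}{2}]$-modules of the same finite rank in each degree.

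Abstract ranks alone do not show that the specific map $\MML\to\MGL$ is an isomorphism, so I will argue via lifting. Choose a discrete valuation ring $R$ of mixed characteristic $(0,2)$ with residue field $F$, and set $L=\mathrm{Frac}(R)$. The plus parts of $\pi_{2*,*}(\MML)[\nicefrac{1}{2}]$ are identified along $L\leftarrow R\to F$ by Lemma \ref{lemma:plus_part_char2}. For $\pi_{2*,*}(\MGL)[\nicefrac{1}{2}]$ the same identification holds, since this ring is $\mathbb{L}[\nicefrac{1}{2}]$ over each of $L$, $R$, $F$ by \cite[Theorem 6.5]{SpiMGL}. The map $\MML\to\MGL$ is defined over $R$ and is compatible with base change, so it suffices to show that it is an isomorphism on plus parts over $L$. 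Since $L$ has characteristic zero, Corollary \ref{corollary:geom_diag_away_2} applies over $L$ and shows that $\MML\to\MML^\wedge_\eta\to\MGL$ identifies the plus part with $\pi_{2*,*}(\MGL)[\nicefrac{1}{2}]$. Transporting back to $F$ gives the claim.

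The main obstacle is the compatibility in the last step. One must check that the isomorphisms of Lemma \ref{lemma:plus_part_char2}, and the corresponding ones for $\MGL$, form a commutative square with the maps induced by $\MML\to\MGL$ over $L$, $R$, and $F$. This follows from naturality of base change functors and of the plus/minus projections, but it has to be stated explicitly.
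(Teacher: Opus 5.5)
Your proposal is correct and is essentially the paper's argument. The minus parts vanish because $\Sper(F)=\emptyset$, and Lemma~\ref{lemma:plus_part_char2}, together with the base-change stability of $\pi_{2*,*}(\MGL)[\nicefrac{1}{2}]\cong\mathbb{L}[\nicefrac{1}{2}]$, transports the characteristic-zero statement of Corollary~\ref{corollary:geom_diag_away_2} from $\mathrm{Frac}(R)$ to $F$ through the naturality square you describe; the middle paragraph comparing abstract ranks is not needed.
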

\begin{proof}
    The same as in the proof of Corollary \ref{corollary:geom_diag_away_2} using Lemma \ref{lemma:plus_part_char2} instead of Lemma \ref{lemma:rigidity} and Remark \ref{remark:rigidity_dvr}.
\end{proof}

\bibliographystyle{amsalpha}
\bibliography{MML}

@article{BH-Norms,
    author = {Bachmann, T. AND Hoyois, M.},
    title = {Norms in motivic homotopy theory},
    journal = {Ast\'{e}risque},
    Volume = {425},
    year = {2021},
    pages = {208 pp.},
    note = {\href{https://doi.org/10.24033/ast.1147}{doi:10.24033/ast.1147}}
}

@incollection{AnSL,
    author = {Ananyevskiy, A.},
    title = {{$SL$}-oriented cohomology theories},
    booktitle = {Motivic Homotopy Theory and Refined Enumerative Geometry},
    publisher = {Contemp. Math.},
    year = {2020},
    volume = {745},
    pages = {1--20},
    note = {\href{https://doi.org/10.1090/conm/745}{doi:10.1090/conm/745}}
}

@article{Haut, 
    author = {Haution, O.},
    title = {Odd rank vector bundles in eta-periodic motivic homotopy theory}, 
    DOI = {10.1017/S1474748023000294}, 
    journal = {J. Inst. Math. Jussieu}, 
    year = {2025},
    volume = {24},
    number = {5},
    pages = {pp. 1733--1764},
    note = {\href{https://doi.org/10.1017/S1474748023000294}{doi:10.1017/S1474748023000294}}
}

@article{HJNY-HermKtheory,
    author = {Hoyois, M. AND Jelisiejew, J. AND Nardin, D. AND Yakerson, M.},
    title = {Hermitian {K}-theory via oriented {G}orenstein algebras},
    journal = {J. Reine Angew. Math.},
    volume = {793},
    year = {2022},
    pages = {pp. 105--142},
    note = {\href{https://doi.org/10.1515/crelle-2022-0063}{doi:10.1515/crelle-2022-0063}}
}

@unpublished{Ahina,
    author = {Nandy, A.},
    title = {An interpolation between special linear and general algebraic cobordism {$MSL$} and {$MGL$}},
    year = {2024},
    note = {preprint, \href{https://arxiv.org/abs/2310.15721}{arXiv:2310.15721}}
}

@article{Egor,
    author = {Zolotarev, E.},
    title = {The geometric diagonal of the special linear algebraic cobordism},
    journal = {Adv. Math.},
    year = {2026},
    pages = {Paper no. 110922, 60 pp.},
    volume = {493},
    note = {\href{https://doi.org/10.1016/j.aim.2026.110922}{doi:10.1016/j.aim.2026.110922}}
}

@article{Voev98,
    author = {Voevodsky, V.},
    title = {$\mathbb{A}^1$-homotopy theory},
    journal = {Doc. Math.},
    volume = {Extra Vol. I},
    year = {1998},
    pages = {pp. 579--604}
}

@article{PW-BO,
    author = {Panin, I. AND Walter, C.},
    title = {On the motivic commutative ring spectrum {$BO$}},
    journal = {St. Petersburg Math. J.},
    year = {2018},
    volume = {30},
    number = {6},
    pages = {pp. 933--972},
    note = {\href{https://doi.org/10.1090/spmj/1578}{doi:10.1090/spmj/1578}}
}

@article{PW-Thom,
    author = {Panin, I. AND Walter, C.},
    title = {On the algebraic cobordism spectra {$MSL$} and {$MSp$}},
    journal = {St. Petersburg Math. J.},
    year = {2023},
    volume = {34},
    number = {1},
    pages = {pp. 144--187},
    note = {\href{https://doi.org/10.1090/spmj/1748}{doi:10.1090/spmj/1748}}
}

@unpublished{MSL-slices,
    author = {Nandy, A. AND R\"{o}ndigs, O. AND Zolotarev, E.},
    title = {Slices of the special linear algebraic cobordism spectrum},
    year = {2026},
    note = {preprint, \href{https://arxiv.org/abs/2606.05020}{arXiv:2606.05020}}
}

@article{Mb-modules, 
    title = {Modules over algebraic cobordism}, 
    volume = {8},
    journal = {Forum Math. Pi}, 
    author = {Elmanto, E. and Hoyois, M. and Khan, A. A. and Sosnilo, V. and Yakerson, M.}, 
    year = {2020}, 
    pages = {e14, 44 pp.},
    note = {\href{https://doi.org/10.1017/fmp.2020.13}{doi:10.1017/fmp.2020.13}}
}

@book{Morel_book,
  title={$\mathbb{A}^1$-Algebraic Topology over a Field},
  author={Morel, F.},
  isbn={9783642295157},
  series={Lecture Notes in Mathematics},
  url={https://books.google.de/books?id=ptYsjgEACAAJ},
  year={2012},
  publisher={Springer Berlin Heidelberg}
}

@article{BW_Euler, 
    title={EULER CLASSES: SIX-FUNCTORS FORMALISM, DUALITIES, INTEGRALITY AND LINEAR SUBSPACES OF COMPLETE INTERSECTIONS},
    volume={22}, 
    number = {2},
    journal={J. Inst. Math. Jussieu},
    author={Bachmann, T. and Wickelgren, K.}, 
    year={2023}, 
    pages={pp. 681--746},
    note={\href{https://doi.org/10.1017/s147474802100027x}{doi:10.1017/s147474802100027x}}
}

@article{MS_Cellular,
    title = {Cellular $\mathbb{A}^1$-homology and the motivic version of {M}atsumoto's theorem},
    journal = {Adv. Math.},
    volume = {434},
    pages = {Paper no. 109346, 110 pp.},
    year = {2023},
    author = {Morel, F. and Sawant, A.},
    note = {\href{https://doi.org/10.1016/j.aim.2023.109346}{doi:10.1016/j.aim.2023.109346}}
}

@article{AHW_BG,
    author = {Asok, A. AND Hoyois, M. AND Wendt, M.},
    title = {Affine representability results in $\mathbb{A}^1$–homotopy theory, {II}: {P}rincipal bundles and homogeneous spaces},
    journal = {Geom. Topol.},
    year = {2018},
    volume = {22},
    number = {2},
    pages = {pp. 1181--1225},
    note = {\href{https://doi.org/10.2140/gt.2018.22.1181}{doi:10.2140/gt.2018.22.1181}}
}

@article{Stong,
    author = {Stong, R. E.},
    title = {On {C}omplex-{S}pin {M}anifolds},
    journal = {Ann. of Math. (2)},
    year = {1967},
    volume = {85},
    number = {3},
    pages = {pp. 526--536},
    note = {\href{https://doi.org/10.2307/1970357}{doi:10.2307/1970357}}
}

@article{Atiyah,
    author = {Atiyah, M. F.},
    title = {Riemann surfaces and spin structures},
    journal = {Ann. Sci. \'Ecole Norm. Sup. (4)},
    year = {1971},
    volume = {4},
    number = {1},
    pages = {pp. 47--62},
    note = {\href{https://doi.org/10.24033/asens.1205}{doi:10.24033/asens.1205}}
}

@unpublished{BrazeltonWendtMSLc,
    author = {Brazelton, T. AND Wendt, M.},
    title = {The {C}how--{W}itt rings of the classifying space of quadratically oriented bundles},
    note = {preprint, \href{https://arxiv.org/abs/2501.0630}{arXiv:2501.0630}},
    year = {2025}
}

@unpublished{Hoyois_plus,
    author = {Hoyois, M.},
    title = {ON {Q}UILLEN’S PLUS CONSTRUCTION},
    year = {2019},
    note =  {note, \href{https://hoyois.app.uni-regensburg.de/papers/acyclic.pdf}{hoyois.app.uni-regensburg.de/papers/acyclic.pdf}}
}

@article{Inf_loop_plus,
    author = {Bachmann, T. AND Elmanto, E. AND Hoyois, M. AND Khan, A. A. AND Sosnilo, V. AND Yakerson, M.},
    title = {On the infinite loop spaces of algebraic cobordism and the motivic sphere},
    journal = {\'Epijournal G\'eom. Alg\'ebrique},
    year = {2021},
    pages = {Article Nr. 9, 13 pp.},
    volume = {5},
    note = {\href{https://doi.org/10.46298/epiga.2021.volume5.6581}{doi:10.46298/epiga.2021.volume5.6581}}
}

@article{McDuff-Segal,
    author = {McDuff, D. AND Segal, G.},
    title = {Homology fibrations and the ``group-completion'' theorem},
    journal = {Invent. Math.},
    year = {1976},
    volume = {31},
    pages = {pp. 279--284},
    note = {\href{https://doi.org/10.1007/BF01403148}{doi:10.1007/BF01403148}}
}

@article{Splitting_lemma,
    author = {Devalapurkar, S. AND Haine, P.},
    title = {On the {J}ames and {H}ilton--{M}ilnor splittings, and the metastable {EHP} sequence},
    journal = {Doc. Math.},
    year = {2021},
    volume  = {26},
    pages = {pp. 1423--1464},
    note = {\href{https://doi.org/10.4171/dm/845}{doi:10.4171/dm/845}}
}

@article{DFJK,
    author = {D{\'e}glise, F. AND Fasel, J. AND Jin, F. AND Khan, A. A.},
    title = {On the rational motivic homotopy category},
    journal = {J. \'Ec. polytech. Math.},
    year = {2021},
    volume = {8},
    pages = {pp. 533--583},
    note = {\href{https://doi.org/10.5802/jep.153}{doi:10.5802/jep.153}}
}

@article{Hoy15,
    author = {Hoyois, M.},
    title = {From algebraic cobordism to motivic cohomology},
    journal = {J. Reine Angew. Math.},
    year = {2015},
    volume = {702},
    pages = {pp. 173--226},
    note = {\href{https://doi.org/10.1515/crelle-2013-0038}{doi:10.1515/crelle-2013-0038}}
}

@article{ARO,
    author = {Ananyevskiy, A. AND R{\"o}ndigs, O. AND {\O}stv{\ae}r, P. A.},
    title = {On very effective hermitian {K}-theory},
    journal = {Math. Z.},
    volume = {294},
    year = {2020},
    pages = {pp. 1021--1034},
    note = {\href{https://doi.org/10.1007/s00209-019-02302-z}{doi:10.1007/s00209-019-02302-z}},
}

@article{SpiMGL,
    author = {Spitzweck, M.},
    title = {Algebraic {C}obordism in mixed characteristic},
    journal = {Homology Homotopy Appl.},
    year = {2020},
    volume = {22},
    number = {2},
    pages = {pp. 91--103},
    note = {\href{https://doi.org/10.4310/HHA.2020.v22.n2.a5}{doi:10.4310/HHA.2020.v22.n2.a5}}
}

@article{Morel_hm,
    author = {Morel, F.},
    title = {An introduction to $\mathbb{A}^1$-homotopy theory},
    journal = {ICTP lecture notes series},
    year = {2003},
    volume = {15},
    pages = {pp. 357--441}
}

@article{Bachmann_ret,
    author = {Bachmann, T.},
    title = {Motivic and real \'etale stable homotopy theory},
    journal = {Compos. Math.},
    year = {2018},
    volume = {154},
    number = {5},
    pages = {pp. 883--917},
    note = {\href{https://doi.org/10.1112/S0010437X17007710}{doi:10.1112/S0010437X17007710}}
}

@article{RSO19,
	Author = {R{\"o}ndigs, O. AND Spitzweck, M. AND {\O}stv{\ae}r, P. A.},
	Journal = {Ann. of Math. (2)},
	Pages = {pp. 1--74},
	Title = {The first stable homotopy groups of motivic spheres},
	Volume = {189},
    number = {1},
	Year = {2019},
    note = {\href{https://doi.org/10.4007/annals.2019.189.1.1}{doi:10.4007/annals.2019.189.1.1}}
}

@unpublished{BHop,
    author = {Bachmann, T. AND Hopkins, M.},
    title = {$\eta$-periodic motivic stable homotopy theory over fields},
    note = {preprint, \href{https://arxiv.org/abs/2005.06778}{arXiv:2005.06778}},
    year = {2021}
}

@unpublished{BBX-Galois,
    author = {Bachmann, T. AND Burklund, R. AND Xu, Z.},
    title = {Motivic stable stems and {G}alois approximations of cellular motivic categories},
    year = {2025},
    note = {preprint, \href{https://arxiv.org/abs/2503.12060}{arXiv:2503.12060}}
}

@article{ES-ret-topos,
    author = {Elmanto, E. AND Shah, J.},
    title = {Scheiderer motives and equivariant higher topos theory},
    journal = {Adv. Math.},
    year = {2021},
    volume = {382},
    pages = {Paper no. 107651, 116 pp.},
    note = {\href{https://doi.org/10.1016/j.aim.2021.107651}{doi:10.1016/j.aim.2021.107651}}
}

@book{Sch-ret-topos,
    author = {Scheiderer, C.},
    title = {Real and \'{E}tale Cohomology},
    series={Lecture Notes in Mathematics},
    publisher = {Springer Berlin, Heidelberg},
    year = {1994},
    note = {\href{https://doi.org/10.1007/BFb0074269}{doi:10.1007/BFb0074269}}
}

@unpublished{BH-rmk-ret,
    author = {Bachmann, T. AND Hoyois, M.},
    title = {Remarks on \'etale motivic stable homotopy theory},
    year = {2021},
    note = {preprint, \href{https://arxiv.org/abs/2104.06002}{arXiv:2104.06002}, to appear in Math. Res. Lett.}
}

@article{EK-perfection,
    author = {Elmanto, E. AND Khan, A. A.},
    title = {Perfection in motivic homotopy theory},
    journal = {Proc. Lond. Math. Soc. (3)},
    year = {2020},
    volume = {120},
    number = {1},
    pages = {pp. 28--38},
    note = {\href{https://doi.org/10.1112/plms.12280}{doi:10.1112/plms.12280}}
}

@article{LYZ,
    author = {Levine, M. AND Yang, Y. AND Zhao, G.},
    title = {Algebraic elliptic cohomology and flops {II}: {$SL$}-cobordism},
    journal = {Adv. Math.},
    year = {2021},
    volume = {384},
    pages = {Paper no. 107726, 66 pp.},
    note = {\href{https://doi.org/10.1016/j.aim.2021.107726}{doi:10.1016/j.aim.2021.107726}}
}

@article{RO16,
    author = {R{\"o}ndigs, O. AND {\O}stv{\ae}r, P. A.},
    title = {Slices of hermitian {K}-theory and {M}ilnor's conjecture on quadratic forms},
    journal = {Geom. Topol.},
    year = {2016},
    volume = {20},
    number = {2},
    pages = {pp. 1157--1212},
    note = {\href{https://doi.org/10.2140/gt.2016.20.1157}{doi:10.2140/gt.2016.20.1157}}
}

@article{MNN-mod-cat,
    author = {Mathew, A. AND Naumann, N. AND Noel, J.},
    title = {Nilpotence and descent in equivariant stable homotopy theory},
    journal = {Adv. Math.},
    year = {2017},
    volume = {305},
    pages = {pp. 994--1084},
    note = {\href{https://doi.org/10.1016/j.aim.2016.09.027}{doi:10.1016/j.aim.2016.09.027}}
}

@unpublished{MG_realization,
    author = {Bannwart, J.},
    title = {The real {B}etti realization of motivic {T}hom spectra and of very effective {H}ermitian {K}-theory},
    year = {2025},
    note = {preprint, \href{https://arxiv.org/abs/2505.07297}{arXiv:2505.07297}}
}

@article{RSO24,
    Author = {R{\"o}ndigs, O. AND Spitzweck, M. AND {\O}stv{\ae}r, P. A.},
	Journal = {Duke Math. J.},
	Pages = {pp. 1017--1084},
	Title = {The second stable homotopy groups of motivic spheres},
	Volume = {173},
    number = {6},
	Year = {2024},
    note = {\href{https://doi.org/10.1215/00127094-2023-0023}{doi:10.1215/00127094-2023-0023}}
}

@incollection{HAZEWINKEL,
    title = {Witt vectors. {P}art 1},
    series = {Handbook of Algebra},
    publisher = {North-Holland},
    volume = {6},
    pages = {319--472},
    year = {2009},
    booktitle = {Handbook of Algebra},
    issn = {1570-7954},
    note = {\href{https://doi.org/10.1016/S1570-7954(08)00207-6}{doi:10.1016/S1570-7954(08)00207-6}},
    author = {Hazewinkel, M.},
}

@article{BU_tilde_physics,
    author = {Forger, M. and Hess, H.},
    title = {{Universal metaplectic structures and geometric quantization}},
    volume = {64},
    number = {3},
    journal = {Comm. Math. Phys.},
    publisher = {Springer},
    pages = {pp. 269--278},
    year = {1979},
    note = {\href{https://doi.org/10.1007/BF01221734}{doi:10.1007/BF01221734}}
}

@article{PPR08,
    author = {Panin, I. AND Pimenov, K. AND R{\"o}ndigs, O.},
    title = {A universality theorem for {V}oevodsky's algebraic cobordism spectrum},
    journal = {Homology Homotopy Appl.},
    year = {2008},
    volume = {10},
    number = {2},
    pages = {pp. 211--226},
    note = {\href{http://doi.org/10.4310/HHA.2008.v10.n2.a11}{doi:10.4310/HHA.2008.v10.n2.a11}}
}

@article{Quillen,
    author = {Quillen, D.},
    title = {On the formal group laws of unoriented and complex cobordism theory},
    journal = {Bull. Amer. Math. Soc.},
    year = {1969},
    volume = {75},
    number = {6},
    pages = {pp. 1293--1298}
}

@article{Milnor_Witt_motives,
    author = {Bachmann, T. AND Calm\'es, B. AND D\'eglise, F. AND Fasel, J. and {\O}stv{\ae}r, P. A.},
    title = {Milnor--{W}itt {M}otives},
    journal = {Mem. Amer. Math. Soc.},
    year = {2025},
    pages = {vii+201 pp.},
    volume = {311},
    note = {\href{https://doi.org/10.1090/memo/1572}{doi:10.1090/memo/1572}}
}

@article{Einf-semirings,
    author = {Gepner, D. AND Groth, M. and Nikolaus, T.},
    title = {Universality of multiplicative infinite loop space machines},
    journal = {Algebr. Geom. Topol.},
    year = {2015},
    volume = {15},
    number = {6},
    pages = {pp. 3107--3153},
    note = {\href{http://dx.doi.org/10.2140/agt.2015.15.3107}{doi:10.2140/agt.2015.15.3107}}
}

@unpublished{LHA,
    author = {Lurie, J.},
    title = {Higher {A}lgebra},
    year = {2017},
    note = {book \href{http://www.math.harvard.edu/~lurie/papers/HA.pdf}{math.harvard.edu/~lurie/papers/HA.pdf}}
}

@article{Levine,
    author = {Levine, M.},
    title = {Comparison of cobordism theories},
    journal = {J. Algebra},
    year = {2009},
    volume = {322},
    number = {9},
    pages = {pp. 3291--3317},
    note = {\href{https://doi.org/10.1016/j.jalgebra.2009.03.032}{doi:10.1016/j.jalgebra.2009.03.032}}
}

@unpublished{Hoyois_Land,
    author = {Hoyois, M. AND Land, M.},
    title = {Grothendieck--{W}itt theory of derived schemes},
    year = {2025},
    note = {preprint, \href{https://arxiv.org/abs/2508.08905}{arXiv:2508.08905}}
}

@book{EKM,
    author = {Elman, R. AND Karpenko, N. AND Merkurjev, A.},
    title = {The {A}lgebraic and {G}eometric Theory of {Q}uadratic {F}orms},
    publisher = {Colloquium Publications},
    year = {2008},
    volume = {58},
    pages = {435}
}

@unpublished{KRO_hermitian_k_theory_char2,
    author = {Kolderup, H. AND R\"ondigs, O. AND {\O}stv{\ae}r, P. A.},
    title = {Hermitian {K}-theory and {M}ilnor--{W}itt motivic cohomology over $\mathbb{Z}$},
    year = {2025},
    note = {preprint, \href{https://arxiv.org/abs/2509.16404v1}{arXiv:abs/2509.16404v1}}
}

@article{ALP,
    author = {Ananyevskiy, A. AND Levine, M. AND Panin, I.},
    title = {Witt sheaves and the $\eta$-inverted sphere spectrum},
    journal = {J. Topol.},
    year = {2017},
    volume = {10},
    number = {2},
    pages = {pp. 370--385},
    note = {\href{https://doi.org/10.1112/topo.12015}{doi:10.1112/topo.12015}}
}

@article{NSOLandw,
    author = {Naumann, N. AND Spitzweck, M. AND {\O}stv{\ae}r, P. A.},
    title = {Motivic {L}andweber exactness},
    journal = {Doc. Math.},
    year = {2009},
    volume = {14},
    pages = {pp. 551-593},
    note = {\href{https://doi.org/10.4171/dm/282}{doi:10.4171/dm/282}}
}

@book{LHTT,
    author = {Lurie, J.},
    title = {Higher topos theory},
    series = {Ann. Math. Stud.},
    volume = {170},
    publisher = {Princeton, NJ: Princeton University Press},
    year = {2009},
    note = {\href{https://doi.org/10.1515/9781400830558}{doi:10.1515/9781400830558}}
}

@unpublished{Voev_MC,
    author = {Voevodsky, V.},
    title = {The {M}ilnor {C}onjecture},
    year = {1996},
    note = {preprint}
}

@article{Voevodsky_MC,
    author = {Voevodsky, V.},
    title = {Motivic cohomology with $\mathbb{Z}/2$-coefficients},
    journal = {Publ. Math. Inst. Hautes \'Etudes Sci.},
    year = {2003},
    volume = {98},
    pages = {pp. 59--104},
    note = {\href{https://doi.org/10.1007/s10240-003-0010-6}{doi:10.1007/s10240-003-0010-6}}
}

@article{Oliver_theta,
    author = {R{\"o}ndigs, O.},
    title = {THETA CHARACTERISTICS AND STABLE HOMOTOPY TYPES OF CURVES},
    journal = {Q. J. Math.},
    year = {2010},
    volume = {61},
    number = {3},
    pages = {pp. 351--362},
    note = {\href{https://doi.org/10.1093/qmath/hap005}{doi:10.1093/qmath/hap005}}
}

@article{CLP,
	Author = {Chernykh, G. AND Limonchenko, I. AND Panov, T.},
	Journal = {Russ. Math. Surv.},
	Pages = {pp. 461--524},
	Title = {{SU}-bordism: structure results and geometric representatives},
	Volume = {74},
    Number = {3},
	Year = {2019},
    note = {\href{https://doi.org/10.1070/rm9883}{doi:10.1070/rm9883}}
}

@article{Hilb_K_theory,
    author = {Hoyois, M. AND Jelisiejew, J. AND Nardin, D. AND Totaro, B. AND Yakerson, M.},
    title = {The {H}ilbert scheme of infinite affine space and algebraic {K}-theory},
    journal = {J. Eur. Math. Soc. (JEMS)},
    year = {2025},
    volume = {27},
    number = {5},
    pages = {pp. 2037--2071},
    note = {\href{https://doi.org/10.4171/jems/1340}{doi:10.4171/jems/1340}}
}

@article{Levine_coniveau,
    author = {Levine, M.},
    title = {The homotopy coniveau tower},
    journal = {J. Topol.},
    volume = {1},
    number = {1},
    pages = {pp. 217--267},
    doi = {https://doi.org/10.1112/jtopol/jtm004},
    note = {\href{https://doi.org/10.1112/jtopol/jtm004}{doi:10.1112/jtopol/jtm004}},
    year = {2008}
}

@inproceedings{Voev_slices,
    author = {Voevodsky, V.},
    title = {A possible new approach to the motivic spectral sequence for algebraic {K}-theory},
    booktitle = {Recent progress in homotopy theory},
    year = {2002},
    volume = {293},
    pages = {371--379},
    series = {Contemp. Math.},
    publisher = {Amer. Math. Soc., Providence, RI}
}

@article{Oliver_Suslin,
    author = {R{\"o}ndigs, O.},
    title = {Endomorphisms of the projective plane and the image of the {S}uslin--{H}urewicz map},
    journal = {Invent. Math.},
    volume = {232},
    number = {3},
    pages = {pp. 1161--1194},
    year = {2023},
    note = {\href{https://doi.org/10.1007/s00222-023-01179-4}{doi:10.1007/s00222-023-01179-4}}
}

\end{document}